\newcommand{\parag}[1]{\textbf{#1.}}
\titleformat*{\section}{\large\bfseries}
\theoremstyle{plain}
\newtheorem{theorem}{Theorem}[section]
\newtheorem*{theorem*}{Theorem}
\newtheorem{proposition}[theorem]{Proposition}
\newtheorem{lemma}[theorem]{Lemma}
\newtheorem{corollary}[theorem]{Corollary}
\theoremstyle{definition}
\newtheorem{definition}[theorem]{Definition}
\newtheorem*{problem*}{Problem}
\newtheorem{remark}[theorem]{Remark}
\newtheorem{example}[theorem]{Example}
\DeclarePairedDelimiter\floor{\lfloor}{\rfloor}
\newcommand{\regularize}[1]{\underaccent{\wtilde}{#1}}
\DeclareMathOperator{\supp}{supp}
\DeclareMathOperator{\rank}{R}
\DeclareMathOperator{\subrank}{Q}
\DeclareMathOperator{\bordersubrank}{\underline{Q}}
\DeclareMathAccent{\wtilde}{\mathord}{largesymbols}{"65}
\DeclareMathOperator{\asymprank}{\underaccent{\wtilde}{R}}
\DeclareMathOperator{\asympsubrank}{\underaccent{\wtilde}{Q}}
\DeclareMathOperator{\borderrank}{\underline{R}}
\newcommand{\FF}{\mathbb{F}}
\newcommand{\CC}{\mathbb{C}}
\newcommand{\NN}{\mathbb{N}}
\newcommand{\GL}{\mathrm{GL}}
\newcommand{\End}{\mathrm{End}}
\newcommand{\id}{\mathrm{Id}}
\newcommand{\ZZ}{\mathbb{Z}}
\newcommand{\RR}{\mathbb{R}}
\newcommand{\bases}{\mathcal{C}}
\newcommand{\filt}{\mathcal{F}}
\DeclareMathOperator{\type}{type}
\DeclareMathOperator{\Span}{Span}
\DeclareMathOperator{\Hom}{Hom}
\DeclareMathOperator{\diag}{diag}
\newcommand{\ketbra}[2]{\ket{#1}\!\!\bra{#2}}
\DeclareMathOperator{\Id}{Id}
\newcommand{\defin}[1]{\emph{#1}}
\newcommand{\CW}{\mathrm{CW}}
\newcommand{\asympcombsubrank}{\underaccent{\wtilde}{\mathrm{Q}}}
\newcommand{\degengeq}{\unrhd}
\newcommand{\degenleq}{\unlhd}
\newcommand{\asympgeq}{\gtrsim}
\newcommand{\asympleq}{\lesssim}
\newcommand{\eps}{\varepsilon}
\DeclareMathOperator{\slicerank}{slicerank}
\DeclareMathOperator{\multislicerank}{multislicerank}
\newcommand{\s}{\mathrm{s}}
\newcommand{\prob}{\mathcal{P}}
\DeclareMathOperator{\hook}{hook}
\DeclareMathOperator{\downset}{\downarrow\hspace{-0.1em}}
\newcommand{\efftens}{\mathcal{T}}
\DeclareMathOperator{\flatten}{flatten}
\newcommand{\HH}{\mathcal{H}}
\newcommand{\states}{\mathcal{S}}
\newcommand{\nc}{\mathrm{nc}}
\DeclareMathOperator{\Tr}{Tr}
\newcommand{\SSS}{\mathbb{S}}
\newcommand{\domleq}{\preccurlyeq}
\newcommand{\complete}{\mathrm{c}} %
\newcommand{\cont}{\mathcal{C}}
\newcommand{\semiring}[1]{\mathcal{#1}}
\newcommand{\conv}{\mathrm{conv}}
\newcommand{\msr}{\mathrm{MSR}^{\sim}}
\newcommand{\sr}{\mathrm{SR}^{\sim}}
\DeclareMathOperator{\SL}{SL}
\DeclareMathOperator{\SU}{SU}
\DeclareMathOperator{\instab}{instab}
\begin{document}

\vspace*{1em}
\begin{center}
\Large\textbf{Universal points in the\\[1ex] asymptotic spectrum of tensors}\par
\vspace{1.5em}
\large Matthias Christandl, Péter Vrana and Jeroen Zuiddam\par
\end{center}
\vspace{0.5em}

\begin{abstract}
The \emph{asymptotic restriction problem} for tensors is to decide,
given tensors $s$ and $t$, whether the $n$th tensor power of $s$ can be obtained from the $(n+o(n))$th tensor power of $t$ by applying linear maps to the tensor legs (this we call restriction), when~$n$ goes to infinity.
In this context, Volker Strassen, striving to understand the complexity of matrix multiplication, introduced in 1986 
the asymptotic spectrum of tensors.
Essentially, the asymptotic restriction problem for a family of tensors~$\semiring{X}$, closed under direct sum and tensor product, reduces to finding all maps from~$\semiring{X}$ to the nonnegative reals that are monotone under restriction,  normalised on diagonal tensors, additive under direct sum and multiplicative under tensor product, which Strassen named spectral points. 
Spectral points are by definition an upper bound on asymptotic subrank and a lower bound on asymptotic rank.
Strassen created the support functionals, which are spectral points for oblique tensors, a strict subfamily of all tensors.

\emph{Universal spectral points} are spectral points for the family of all tensors.
The construction of nontrivial universal spectral points has been an open problem for more than thirty years.
We construct for the first time a family of nontrivial universal spectral points over the complex numbers, using the theory of quantum entropy and covariants: the \emph{quantum functionals}. In the process we connect the asymptotic spectrum of all tensors to the quantum marginal problem and to the entanglement polytope. 
In en\-tang\-lement theory, our results amount to the first construction of additive entanglement monotones for the class of stochastic local operations and classical communication.

To demonstrate the asymptotic spectrum, we reprove (in hindsight) recent results on the \emph{cap set problem} by reducing this problem to computing the lowest point in the asymptotic spectrum of the reduced polynomial multiplication tensor, a prime example of Strassen.
A better understanding of our universal spectral points construction may lead to further progress on related combinatorial questions.  
We additionally show that the quantum functionals characterise asymptotic slice rank for complex tensors. %
\end{abstract}

\vspace{1em}
{\small\noindent
\parag{Keywords} asymptotic restriction, asymptotic spectrum, tensors, fast matrix multiplication, cap set problem, reduced polynomial multiplication, stochastic local operations and classical communication (slocc), entanglement monotones, quantum entropy, moment polytope\\[1ex]
\noindent
\parag{MSC} 
15A69, %
14L24, %
68Q17 %
}
\newpage
\tableofcontents
\newpage
\section{Introduction}

\subsection{The asymptotic restriction problem}
We study the asymptotic restriction problem, following the pioneering work of Volker Strassen \cite{Strassen:1986:AST:1382439.1382931, strassen1987relative, strassen1988asymptotic, strassen1991degeneration}. The asymptotic restriction problem is a problem about multilinear maps
$f : \FF^{n_1} \times \cdots \times \FF^{n_k} \to \FF$
over an arbitrary field $\FF$. Letting $(e_1, \ldots, e_{n_i})$ be the standard basis of $\FF^{n_i}$, one may equivalently think of~$f$ as the $k$-tensor $t \in \FF^{n_1} \otimes \cdots \otimes \FF^{n_k}$ defined~by 
$t = \sum f(e_{a_1}, \ldots, e_{a_k}) \,e_{a_1} \otimes \cdots \otimes e_{a_k}$
where $a_i$ goes over $\{1, \ldots, n_i\}$. %
To state the asymptotic restriction problem we need the concepts restriction and tensor product.  %
Let $f : \FF^{n_1} \times \cdots \times \FF^{n_k} \to \FF$ and ${g : \FF^{m_1} \times \cdots \times \FF^{m_k} \to \FF}$ be multilinear maps. 
We say \defin{$f$ restricts to~$g$}, and write~$f\geq g$, if there are linear maps $A_i : \FF^{m_i} \to \FF^{n_i}$ such that $g = f \circ (A_1, \ldots, A_k)$ where $\circ$ denotes composition. 
We naturally define the \defin{tensor product}
$f \otimes g$ as the multilinear map 
$(\FF^{n_1} \otimes \FF^{m_1}) \times \cdots \times (\FF^{n_k} \otimes \FF^{m_k}) \to \FF$ defined by
$(v_1 \otimes w_1, \ldots, v_k \otimes w_k) \mapsto f(v_1, \ldots, v_k) g(w_1, \ldots, w_k)$.
We say~$f$ \defin{restricts asymptotically to} $g$, written~$f \asympgeq g$, if there is a sequence of natural numbers $a(n) \in o(n)$ %
such that 
\[
f^{\otimes n + a(n)} \geq g^{\otimes n} \quad\textnormal{when}\quad n \to \infty.
\]
The asymptotic restriction problem is:  %
given $f$ and $g$, decide whether $f\asympgeq g$.

Applications of the asymptotic restriction problem include computing the computational complexity of matrix multiplication in algebraic complexity theory \cite{ALDER1981201, blaser20015, Burgisser:2011:GCT:1993636.1993704,landsberg2014new, strassen1969gaussian, MR1056627,stothers2010complexity,MR2961552,le2014powers, cohn2003group, cohn2005group} (see also \cite{burgisser1997algebraic, landsberg2012tensors, blaser2013fast, landsberg2017}), deciding the feasibility of an asymptotic transformation between pure quantum states via stochastic local operations and classical communication (slocc) in quantum information theory \cite{bennett2000exact, MR1804183, MR1910235,MR2515619}, bounding the size of combinatorial structures like cap sets and tri-colored sum-free sets in additive combinatorics \cite{MR2031694, tao2008structure, Alon2013, MR3583357,MR3583358,tao,MR3631613,kleinberg2016growth,sawin}, and bounding the query complexity of certain properties in algebraic property testing \cite{Kaufman:2008:APT:1374376.1374434, bhattacharyya2010testing, Shapira:2009:GCT:1536414.1536438,  Bhattacharyya2015, Haviv2017, fu_et_al:LIPIcs:2014:4730}.
There are naturally two directions in the asymptotic restriction problem, namely finding
(1) \emph{constructions}, i.e.~matrices that carry out~$f\asympgeq g$, and
(2)~\emph{obstructions}, i.e.~certificates that prohibit $f\asympgeq g$.
For constructions one should think of fast matrix multiplication algorithms or efficient quantum protocols.
For obstructions one should think of lower bounds in the sense of computational complexity theory.
Strassen introduced in 1986 the theory of asymptotic spectra of tensors to understand the asymptotic restriction problem \cite{Strassen:1986:AST:1382439.1382931, strassen1988asymptotic}. Deferring the details to the next subsection, this can be viewed as the theory of obstructions in the above sense.
A remarkable result of this theory is that the asymptotic restriction problem for a family of tensors $\semiring{X}$ that is closed under direct sum and tensor product and contains the diagonal tensors $\langle n\rangle$, reduces to finding all maps $\semiring{X} \to \RR_{\geq 0}$ that are
\begin{enumerate}[label=(\alph*)]
\item monotone under restriction $\geq$\label{a}
\item multiplicative under tensor product~$\otimes$\label{b}
\item additive under direct sum $\oplus$\label{c}
\item normalised to have value~$n$ at the unit tensor $\langle n \rangle$.\label{d}
\end{enumerate} %
Such maps are called \defin{spectral points}.
Here the \defin{direct sum} $f \oplus g$ is defined naturally as the multilinear map $(\FF^{n_1} \oplus \FF^{m_1}) \times \cdots \times (\FF^{n_k} \oplus \FF^{m_k}) \to \FF$ such that
$(v_1 + w_1, \ldots, v_k + w_k) \mapsto f(v_1, \ldots, v_k) + f(w_1, \ldots, w_k)$
where $v_i \in \FF^{n_i}, w_i \in \FF^{m_i}$, %
and  the \defin{unit tensor} $\langle n \rangle$ for $n\in \NN$ is defined as the multilinear map $(\FF^n)^{\times k} \to \FF$ that maps $(e_{i_1}, \ldots, e_{i_k})$ to~1 if $i_1 = \cdots = i_k$ and to 0 otherwise. 

Properties \ref{a} and \ref{b} are natural properties to obtain an obstruction. Namely, suppose $\xi$ is such a map, and let $f,g\in \semiring{X}$. If $f \asympgeq g$, then by definition $f^{\otimes n + o(n)} \geq g^{\otimes n}$, and \ref{a} and \ref{b} imply $\xi(f)^{n + o(n)} = \xi(f^{n + o(n)}) \geq \xi(g^n) = \xi(g)^n$, which implies $\xi(f) \geq \xi(g)$. Turning this around, if $\xi(f) < \xi(g)$ then \emph{not} $f\asympgeq g$, so~$\xi$ yields an obstruction to $f\asympgeq g$.
Strassen in~\cite{strassen1991degeneration} created a family of spectral points.
Let $\Theta$ be the set of all probability distributions on $\{1, 2, \ldots, k\}$.
Strassen defined a family of maps $\zeta^\theta : \{\textnormal{$k$-tensors}\} \to \RR_{\geq 0}$ parametrised by $\theta \in \Theta$, named the support functionals, and he proved that the $\zeta^\theta$ are spectral points for the family $\semiring{X}$ of oblique tensors,  tensors whose support in some basis is an antichain, a strict (and nongeneric) subfamily of all tensors.
In \cite{MR2138544}, for such tensors, $\zeta^\theta$ has been given a formulation in terms of moment polytopes.

\parag{Our main result}
\defin{Universal spectral points} are spectral points for the family of \emph{all} tensors. The construction of nontrivial universal spectral points has been an open problem for more than thirty years.
We introduce maps $F_\theta : \{\textnormal{complex $k$-tensors}\}\to \RR_{\geq 0}$ called the \defin{quantum functionals} and we prove that they are universal spectral points.
The quantum functionals are defined as follows (we will carefully define the quantum concepts later). For any $\theta \in \Theta$, define
\begin{align*}
F_\theta(t) &= 2^{E_{\theta}(t)}\\
E_\theta(t) &= \sup_{g_i} \sum_{i=1}^k \theta(i)\, H\bigl( \Tr_i\, (g_1, \ldots, g_k) \cdot t \bigr)
\end{align*}
where the supremum goes over invertible maps $g_i \in \GL(\CC^{n_i})$, $H$ denotes the quantum entropy i.e.~von Neumann entropy, and $\Tr_i\, (g_1, \ldots, g_k) \cdot t$ denotes a partial trace of $(g_1, \ldots, g_k) \cdot t$ interpreted as a pure quantum state. To prove properties \ref{a}--\ref{d} we draw from the theory on invariants, quantum entropy, entanglement polytopes, Kronecker coefficients and Littlewood--Richardson coefficients. %
Let us briefly sketch the connection to moment polytopes, which in this context are called  entanglement polytopes. Given a tensor~$s$, let $r_1(s), \ldots, r_k(s)$ be the single-system quantum marginal entropies of the tensor $s$ viewed as a quantum state. 
Let $G$ be the group $\GL(\CC^{n_1}) \times \cdots \times \GL(\CC^{n_k})$.
For a tensor~$t$,  define the set $\Delta_t = \{(r_1(s), \ldots, r_k(s)) \mid s \in \overline{G\cdot t} \}$, where $\overline{G \cdot t}$ denotes the Euclidean closure (or equivalently Zariski closure) of the orbit $G\cdot t$. It is a nontrivial fact that $\Delta_t$ is a polytope, named the entanglement polytope.
Then $E_\theta(t)$ equals the following optimisation over the entanglement polytope,
\[
E_\theta(t) = \sup \Bigl\{ \sum_{i=1}^k \theta(i)\, H(\lambda^{(i)}) \,\Big|\, (\lambda^{(1)}, \ldots, \lambda^{(k)}) \in \Delta_t \Bigr\},
\]
which is a convex optimisation problem.

The definition of $F_\theta$ can in fact be extended to probability distributions $\theta$ on \emph{subsets} of $\{1,2, \ldots, k\}$ in several ways. We explore these extensions and study the properties \ref{a}--\ref{d}. One such extension, the lower quantum functional, satisfies \ref{a} and \ref{d} and is super-additive and super-multiplicative. Another extension, the upper quantum functional, satisfies \ref{a} and \ref{d} and is sub-additive and sub-multiplicative for $\theta$ that are what we call noncrossing. (The Strassen support functionals in fact similarly come in an upper and a lower version.)

\parag{Asymptotic rank and asymptotic subrank}
In applications (e.g.~the complexity of matrix multiplication) one is often interested in asymptotic restriction to or from a unit tensor. One commonly defines the \defin{tensor rank}
$\rank(f) = \min \{ r \in \NN \mid f \leq \langle r \rangle \}$, and the \defin{subrank} $\subrank(f) = \max \{ s \in \NN \mid \langle s \rangle \leq f \}$,
and asymptotically the \defin{asymptotic rank} $\asymprank(f) = \lim_{n \to \infty} \rank(f^{\otimes n})^{1/n}$ and the \defin{asymptotic subrank} $\asympsubrank(f) = \lim_{n\to \infty} \subrank(f^{\otimes n})^{1/n}$.
Clearly, maps that satisfy~\ref{a},~\ref{d} and super-multiplicativity are lower bounds on asymptotic rank, and maps that satisfy \ref{a},~\ref{d} and sub-multiplicativity are upper bounds on asymptotic subrank. Spectral points (maps satisfying \ref{a}--\ref{d}) are thus between asymptotic subrank and asymptotic rank. 
The defining expression of asymptotic rank does not suggest any algorithm for computing its value other than computing the rank for high tensor powers. In information theory, such an expression is called a multi-letter formula, and maps satisfying \ref{a}--\ref{d} are called single-letter formulas.

\parag{Cap sets and slice rank}
To demonstrate an application of the asymptotic spectrum we go on a brief combinatorial excursion to the cap set problem. (Full details are in \cref{capsetsec}.)
A subset $A \subseteq (\ZZ/3\ZZ)^n$ is called a \defin{cap set} if any line in $A$ is a point, a line being a triple of points of the form $(u, u+v, u+2v)$. The \defin{cap set problem} is to decide whether the maximal size of a cap set in $(\ZZ/3\ZZ)^n$ grows like $3^{n - o(n)}$ or like~$c^{n+o(n)}$ for some $c < 3$ when $n\to \infty$. Gijswijt and Ellenberg in \cite{MR3583358}, inspired by the work of Croot, Lev and Pach in~\cite{MR3583357}, settled this problem, showing that $c\leq 3 (207 + 33 \sqrt{33})^{1/3} /8 \approx 2.755$.
Tao realised in \cite{tao} that the cap set problem may naturally be phrased as the problem of computing the size of the largest main diagonal in powers of the \defin{cap set tensor} $\sum_{\alpha} e_{\alpha_1} \otimes e_{\alpha_2} \otimes e_{\alpha_3}$
where the sum is over $\alpha_1, \alpha_2, \alpha_3 \in \ZZ/3\ZZ$ with $\alpha_1 + \alpha_2 + \alpha_3 = 0$. Here main diagonal refers to a subset $A$ of the basis elements such that restricting to $A\times A \times A$ gives the tensor $\sum_{v \in A} v \otimes v \otimes v$. A main diagonal is essentially a unit tensor, so it is sufficient to upper bound the asymptotic subrank of the cap set tensor interpreted as a tensor over $\FF_3$.
We show (in hindsight) that the cap set tensor is in the $\GL_3(\FF_3)^{\times 3}$-orbit of the structure tensor of the algebra~$\FF_3[x]/(x^3)$. %
This implies that the asymptotic spectrum of the cap set tensor and the asymptotic spectrum of $\FF_3[x]/(x^3)$ coincide. The tensor $\FF_3[x]/(x^3)$ is the prime example in \cite{strassen1991degeneration} for which Strassen computes the whole asymptotic spectrum. The minimal point in this asymptotic spectrum, which corresponds to the asymptotic subrank, is computed by Strassen to be $3 (207 + 33 \sqrt{33})^{1/3} /8 \approx 2.755$ (see~\cite[Table 1]{strassen1991degeneration}). This reproves the bound by Ellenberg--Gijswijt.

Although our universal spectral points, the quantum functionals, do not play a role for cap sets (we cannot work over $\CC$ for this problem), we think that a better understanding of our universal spectral points construction may lead to further progress on related combinatorial questions.

In the study of the cap set problem, slice rank and multi-slice rank were introduced as upper bounds on subrank \cite{tao, naslund2017multi}. 
We show that asymptotically the (upper) quantum functionals and the Strassen (upper) support functionals are an upper bound on slice rank and multi-slice rank. As a consequence we prove that for the family of tight 3-tensors asymptotic subrank and asymptotic slice rank coincide. For complex tensors we characterise the asymptotic slice rank in terms of the quantum functionals.

\subsection{Asymptotic spectra for tensors}\label{asympspec}

To understand why we focus on maps satisfying the properties \ref{a}--\ref{d} we have to give a brief introduction to Strassen's theory of asymptotic spectra for tensors.

We begin by putting an equivalence relation on tensors to get rid of trivialities.
Restriction $\geq$ and asymptotic restriction $\asympgeq$ are both preorders (reflexive and transitive).
We say \defin{$f$ is isomorphic to $g$}, and write $f\cong g$, if there are bijective linear maps $A_i : \FF^{m_i} \to \FF^{n_i}$ such that $g = f \circ (A_1, \ldots, A_k)$. We say $f$ and $g$ are \defin{equivalent}, and write $f \sim g$, if there are null maps $f_0 : \FF^{a_1} \times \cdots \times \FF^{a_k} \to \FF : x \mapsto 0$ and $g_0 : \FF^{b_1} \times \cdots \times \FF^{b_k} \to \FF : x \mapsto 0$ such that $f \oplus f_0 \cong g \oplus g_0$. The equivalence relation~$\sim$ is in fact the equivalence relation generated by the restriction preorder~$\geq$.
Let $\efftens$ be the set of $\sim$-equivalence classes of $\FF$-multilinear maps of order~$k$. Direct sum and tensor product naturally carry over to $\efftens$\!, and $\efftens$ becomes a semiring with additive unit $\langle 0\rangle$ and multiplicative unit $\langle 1\rangle$ (more precisely, the equivalence classes of those tensors).
Restriction $\geq$ induces a partial order on $\efftens$ (reflexive, antisymmetric, transitive), and asymptotic restriction~$\asympgeq$ induces a preorder on $\efftens$. Both behave well with respect to the semiring operations, and naturally $n\geq m$ if and only if $\langle n\rangle \geq \langle m\rangle$.

The theory of asymptotic spectra revolves around the following theorem proved by Strassen \cite{Strassen:1986:AST:1382439.1382931, strassen1988asymptotic}. Given a topological space $\Delta$ we denote by~$\cont^+(\Delta)$ the semiring of continuous maps $\Delta \to \RR_{\geq 0}$.

\begin{theorem}[Spectral theorem]\label{spectraltheorem}
For any semiring $\semiring{X} \subseteq \efftens$ there are
\begin{enumerate}
\item a compact space $\Delta$
\item a homomorphism $\phi: \semiring{X} \to \cont^+(\Delta)$ of semirings
\end{enumerate}
such that $\phi(\semiring{X})$ separates points and such that
$a \asympleq b$ if and only if $\phi(a) \leq \phi(b)$ pointwise on $\Delta$, 
and the pair $(\Delta, \phi)$ is essentially unique. We call $(\Delta,\phi)$ an asymptotic spectrum for $\semiring{X}$. Explicitly, $(\Delta, \phi)$ may be taken as follows:
\begin{enumerate}
\item $\Delta = \{ \textnormal{$\geq$-monotone homomorphisms $\semiring{X} \to \RR_{\geq 0}$}\}$.
\item $\phi: \semiring{X} \to \cont^+(\Delta) : a \mapsto \hat{a}$ where $\hat{a}  : \Delta \to \RR_{\geq 0} : \xi \mapsto \xi(a)$.
\end{enumerate}
We call this pair $(\Delta,\phi)$ the asymptotic spectrum of $\semiring{X}$ and refer to it as $\Delta(\semiring{X})$.
\end{theorem}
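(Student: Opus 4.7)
The construction of $(\Delta, \phi)$ is what I would write down first. Topologise $\Delta$ as a subspace of $\RR_{\geq 0}^{\semiring{X}}$ under pointwise convergence. Because $\semiring{X}$ is a subsemiring of $\efftens$, every $\langle n \rangle$ lies in $\semiring{X}$, and the homomorphism property forces $\xi(\langle n \rangle) = n$ for all $\xi \in \Delta$. Combined with $a \leq \langle \rank(a)\rangle$ and $\geq$-monotonicity, this yields the uniform bound $\xi(a) \leq \rank(a)$, so that $\Delta$ embeds into the compact Hausdorff product $\prod_{a \in \semiring{X}} [0, \rank(a)]$. The defining conditions (additivity, multiplicativity, monotonicity, normalisation of $\langle 1 \rangle$) are each closed in this product topology, hence $\Delta$ is compact. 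Continuity of $\hat{a}$ amounts to a coordinate projection, $\phi$ is a semiring homomorphism because the algebraic operations on $\cont^+(\Delta)$ are pointwise, and $\phi(\semiring{X})$ separates points of $\Delta$ tautologically, since distinct functionals must differ on some $a$.

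The forward direction of the main equivalence is routine: from $b^{\otimes n + o(n)} \geq a^{\otimes n}$ one applies $\xi$, takes $n$-th roots, and lets $n \to \infty$ to get $\xi(a) \leq \xi(b)$ (the case $\xi(b)=0$ handled separately). The substantive direction is the converse, which I would argue by contrapositive: assuming $a \not\asympleq b$, exhibit some $\xi \in \Delta$ with $\xi(a) > \xi(b)$. This reduces to the following Positivstellensatz for commutative semirings, in the spirit of Kadison--Dubois: if $\semiring{X}$ is a commutative semiring containing $\NN$ and $\preceq$ is a preorder on $\semiring{X}$ compatible with $+$ and $\cdot$, refining the natural order on $\NN$, and satisfying the Archimedean property that every $x \in \semiring{X}$ is bounded above by some integer, then $x \preceq y$ if and only if $\xi(x) \leq \xi(y)$ for every monotone semiring homomorphism $\xi \colon \semiring{X} \to \RR_{\geq 0}$. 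The Archimedean property for $\asympleq$ follows from the finiteness of asymptotic rank, and compatibility with $+$ and $\cdot$ is inherited from the behaviour of $\oplus$ and $\otimes$ under restriction.

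The main obstacle, and the heart of Strassen's argument, is this Positivstellensatz. The standard proof uses Zorn's lemma to extend $\preceq$ (while avoiding the unwanted pair $(a,b)$) to a maximal compatible preorder $\preceq^*$; maximality together with the Archimedean condition forces $\preceq^*$ to behave like a total order modulo its symmetric part, and the formula $\xi(x) = \inf\{p/q \in \QQ_{\geq 0} : q x \preceq^* p\}$ then defines a monotone semiring homomorphism into $\RR_{\geq 0}$ that separates $a$ from $b$. Verifying additivity and multiplicativity of this infimum requires delicate cancellation arguments and exploits both the Archimedean condition and the semiring axioms. Finally, essential uniqueness of $(\Delta, \phi)$ follows from Stone--Weierstrass applied to the point-separating image $\phi(\semiring{X})$: any other pair satisfying the theorem's conclusions induces a natural homeomorphism of the underlying compact spaces intertwining the two homomorphisms.
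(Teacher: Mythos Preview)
Your sketch is correct and matches the approach the paper attributes to Strassen. Note, however, that the paper itself does not give a proof of this theorem: it simply states that the result ``is proved by a nontrivial reduction to the structure theory of Stone--Kadison--Dubois for a certain class of ordered rings'' and refers to Strassen's original papers, declining to elaborate on either the proof or the uniqueness assertion. So there is no paper-proof to compare against in detail; your outline is essentially what one finds in those references.

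One small difference worth flagging: the phrasing in the paper (and in Strassen's papers) invokes Kadison--Dubois for ordered \emph{rings}, which means the standard route first passes from the semiring $\semiring{X}$ to its Grothendieck ring of formal differences, extends the preorder there, and then applies the ring-theoretic Positivstellensatz. Your sketch instead stays entirely in the semiring category and appeals to a semiring version of the representation theorem. Both routes are valid and lead to the same conclusion; the semiring version is arguably cleaner here since it avoids checking that the preorder extends well to formal differences, while the ring version has the advantage of citing a more classical statement off the shelf. Either way, the Archimedean hypothesis you identify (via $a \leq \langle \rank(a)\rangle$, hence $a \asympleq \langle \rank(a)\rangle$) is exactly what drives the argument, and your observation that $\geq$-monotone and $\asympgeq$-monotone homomorphisms coincide is the bridge between the Positivstellensatz output and the stated description of $\Delta$.
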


\cref{spectraltheorem} is proved by a nontrivial reduction to the structure theory of Stone--Kadison--Dubois for a certain class of ordered rings (see \cite{MR707730}). We will not go into the details of this proof here, nor do we elaborate on how $(\Delta, \phi)$ is unique. We have taken the name spectral theorem from the talk \cite{strassentalk}.

\begin{remark}\label{degenremark}
We note that $\Delta(\semiring{X})$ may equivalently be defined with degeneration~$\degengeq$ instead of restriction $\geq$.
Over~$\CC$, we say $f$ \defin{degenerates to} $g$, written, $f \degengeq g$, if $g$ is in the Euclidean closure (or equivalently Zariski closure) of the orbit $\GL(\CC^{n_1})\times \cdots \times \GL(\CC^{n_k}) \cdot f$. It is a nontrivial fact from algebraic geometry (see \cite[Lemma~III.2.3.1]{kraft1984geometrische} or \cite{burgisser1997algebraic}) that there is a degeneration $f\degengeq g$ if and only if there are linear maps $A_i \in \GL(\CC(\eps)^{n_i})$ such that $(A_1, \ldots, A_k) \cdot f = g + \eps^1 g_1 + \cdots + \eps^e g_e$ for some elements $g_1, \ldots, g_e$, where $\CC(\eps)$ is the field $\CC$ extended with the formal variable $\eps$. The latter definition of degeneration is valid when $\CC$ is replaced by an arbitrary field $\FF$ and that is how degeneration is defined for an arbitrary field. 
Degeneration is weaker than restriction: $f\geq g$ implies $f\degengeq g$. Asymptotically, however, the notions coincide: $f \asympgeq g$ if and only if $f^{\otimes n + o(n)} \degengeq g^{\otimes n}$.
We mention that, analogous to restriction, degeneration gives rise to \defin{border rank} and \defin{border subrank}, $\borderrank(f) = \min\{r \in \NN \mid f \degenleq \langle r \rangle \}$, $\bordersubrank(f) = \max \{s \in \NN \mid \langle s \rangle \degenleq f\}$.
\end{remark}
\begin{remark}
Let $\semiring{X}$ be a family of tensors over $\FF$. Let $\FF \subseteq K$ be a field extension. We may view $\semiring{X}$ as a family of tensors $\semiring{X}^K$ over $K$. The asymptotic spectra of $\semiring{X}$ and $\semiring{X}^K$ are equal \cite[Theorem 3.10]{strassen1988asymptotic}.
In particular, if $t \in \{0,1\}^{n_1} \times \cdots \times \{0,1\}^{n_k}$, then $t$ can be viewed as a tensor $t^\FF$ in $\FF^{n_1} \otimes \cdots \otimes \FF^{n_k}$ over any field $\FF$, and by the above, if $K \subseteq \FF$ is the prime subfield of $\FF$, then $\Delta(t^K) = \Delta(t^\FF)$, and e.g.~$\asymprank(t^K) = \asymprank(t^\FF)$. A well-known example is that the exponent $\omega_\FF$ of matrix multiplication over $\FF$ depends only on the characteristic of~$\FF$.
\end{remark}

We reduced the asymptotic restriction problem $a \asympleq b$ to the problem of computing the asymptotic spectrum of $\efftens$ or a subsemiring $\semiring{X}\subseteq\efftens$.  
An element $\xi \in \Delta(\semiring{X})$ is called a \defin{spectral point}.
It is clear that constructing explicit elements in the asymptotic spectrum~$\Delta(\efftens)$ of all tensors $\efftens$ is the holy grail in the development of the theory of asymptotic spectra of tensors. Elements of~$\Delta(\efftens)$ are called \defin{universal spectral points}, and they correspond precisely to maps $\xi : \{\textnormal{$k$-tensors over $\FF$}\} \to \RR_{\geq 0}$ satisfying $\xi(s \oplus t) = \xi(s) + \xi(t)$, $\xi(s\otimes t) = \xi(s) \xi(t)$, $\xi(\langle 1 \rangle) = 1$, and $\xi(s) \leq \xi(t)$ whenever~$s \leq t$ (i.e.~properties \ref{a}--\ref{d}). 

\begin{example}[Gauge points]
In \cite[Equation 3.10]{strassen1988asymptotic} $k$ universal spectral points are given. Namely, given a multilinear map $f : \FF^{n_1} \times \cdots \times \FF^{n_k} \to \FF$, let $V_1 = \{ v \in \FF^{n_1} \mid f(v, \FF^{n_2}, \ldots, \FF^{n_k}) \neq \{0\} \}$
and similarly define $V_i$ for $i \geq 2$. Define $\zeta_{(i)} : \efftens \to \RR_{\geq 0} : f \mapsto \dim V_i$.
The maps $\zeta_{(i)}$ are universal spectral points and are named \defin{gauge points}.
\end{example}

In some applications it is useful to think of the asymptotic spectrum as a compact subspace of $\RR^X$.

\begin{definition}
Let $X\subseteq \efftens$ be a set. Let $\mathcal{X} \subseteq \efftens$ be the semiring generated by $X$. A~homomorphism $\xi: \mathcal{X} \to \RR_{\geq 0}$ is determined by the values $(\xi(x) : x\in X) \in \RR_{\geq 0}^X$. We may thus identify the asymptotic spectrum of the semiring generated by $X$ with a subspace of $\RR_{\geq0}^X$. 
\begin{enumerate}
\item $\Delta(X) = \{\textnormal{$\xi \in \RR^X$ generating a $\geq$-monotone homomorphism $\semiring{X} \to \RR_{\geq 0}$}\}$.
\item $\phi(a)(\xi) = \xi(a)$.
\end{enumerate}
We call $(\Delta(X), \phi)$ \defin{the asymptotic spectrum of $X$}, and write $\Delta(X)$.
When $X = \{a\}$ we will denote $\Delta(X)$ by $\Delta(a)$.
\end{definition}

We finish by stating the important relationship between the asymptotic spectrum and the asymptotic (sub)rank, which follows from \cref{spectraltheorem} (see \cite[Theorem 3.8]{strassen1988asymptotic}).

\begin{proposition}%
\label{minmax}
Let $\semiring{X} \subseteq \efftens$ be a semiring.
Let $(\Delta, \phi)$ be an asymptotic spectrum of $\semiring{X}$.
Let~$a \in \semiring{X}$.
Then
\[
\asympsubrank(a) = \min_{\xi \in \Delta} \phi(a)(\xi)
\qquad
\asymprank(a) = \max_{\xi \in \Delta} \phi(a)(\xi).
\]
\end{proposition}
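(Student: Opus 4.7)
The plan is to combine the spectral theorem (\cref{spectraltheorem}) with a tensor-power trick to pass from integer-valued to real-valued inequalities. Throughout, note that $\Delta$ is compact, so the max and min are attained, and every $\xi \in \Delta$ satisfies $\xi(\langle s\rangle) = s$, since $\langle s\rangle$ is the $s$-fold direct sum of $\langle 1\rangle$ and $\xi$ is a semiring homomorphism sending the multiplicative unit $\langle 1\rangle$ to~$1$.

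For the easy inequalities, fix $\xi \in \Delta$. From the restriction $a^{\otimes n} \leq \langle \rank(a^{\otimes n})\rangle$, monotonicity and the semiring-homomorphism properties of $\xi$ yield $\xi(a)^n = \xi(a^{\otimes n}) \leq \rank(a^{\otimes n})$, and taking $n$-th roots followed by $n\to\infty$ gives $\xi(a) \leq \asymprank(a)$. The symmetric computation starting from $\langle \subrank(a^{\otimes n})\rangle \leq a^{\otimes n}$ yields $\asympsubrank(a) \leq \xi(a)$. Hence $\max_\xi \phi(a)(\xi) \leq \asymprank(a)$ and $\asympsubrank(a) \leq \min_\xi \phi(a)(\xi)$.

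For the matching inequalities, the key step is the following integer-indexed equivalence: for every positive integer $s$,
\[
\asymprank(a) \leq s \iff a \asympleq \langle s\rangle \iff \max_{\xi \in \Delta} \phi(a)(\xi) \leq s,
\]
and dually $\asympsubrank(a) \geq s$ iff $\langle s\rangle \asympleq a$ iff $\min_{\xi\in \Delta} \phi(a)(\xi) \geq s$. The first equivalence is a direct unwinding of the definitions, using $\langle s\rangle^{\otimes n} = \langle s^n\rangle$; the second is \cref{spectraltheorem} applied with $b = \langle s\rangle$, together with the fact that $\phi(\langle s\rangle)$ is the constant function $s$ on $\Delta$.

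The last step, where the only real (if minor) obstacle lies, is to sharpen this integer-valued statement to the desired real equality, since for example the two conclusions $\asymprank(a) \leq s$ and $\max_\xi \phi(a)(\xi) \leq s$ for all integer $s$ only pin the two quantities down to within a distance of $1$. To fix this, I would apply the equivalence to $a^{\otimes n}$ and use the identities $\asymprank(a^{\otimes n}) = \asymprank(a)^n$ (immediate from the definition) and $\max_\xi \phi(a^{\otimes n})(\xi) = (\max_\xi \phi(a)(\xi))^n$ (from multiplicativity of each $\xi$) to obtain, for all positive integers $s,n$, the equivalence $\asymprank(a)^n \leq s$ iff $(\max_\xi \phi(a)(\xi))^n \leq s$. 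Since $\{s^{1/n} : s,n \in \NN_{\geq 1}\}$ is dense in $\RR_{\geq 0}$, this forces $\asymprank(a) = \max_\xi \phi(a)(\xi)$; the analogous argument gives $\asympsubrank(a) = \min_\xi \phi(a)(\xi)$.
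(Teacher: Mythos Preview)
Your argument is correct and is the natural derivation of the proposition from \cref{spectraltheorem}. The paper itself does not supply a proof here; it only remarks that the statement follows from the spectral theorem and refers to \cite[Theorem~3.8]{strassen1988asymptotic} for the details, so there is no in-paper proof to compare against. Two cosmetic points worth tightening: (i) you sometimes write $\xi(a)$ where, for a general asymptotic spectrum $(\Delta,\phi)$, one should write $\phi(a)(\xi)$, since $\xi$ is just a point of $\Delta$; the facts you need (e.g.\ that $\phi(\langle s\rangle)$ is the constant function $s$) come from $\phi$ being a semiring homomorphism into $\cont^+(\Delta)$ rather than from $\xi$ being one; (ii) the set $\{s^{1/n}:s,n\in\NN_{\geq 1}\}$ is dense only in $[1,\infty)$, not in all of $\RR_{\geq 0}$, but this suffices since for nonzero $a$ both $\asymprank(a)$ and $\max_\xi\phi(a)(\xi)$ lie in $[1,\infty)$ (and the case $a=0$ is trivial).
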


\begin{remark}
Obviously $\asymprank$ and $\asympsubrank$ are $\geq$-monotones and have value $n$ on $\langle n \rangle$. They are not universal spectral points however. Namely, the asymptotic rank of $e_1 \otimes e_1 \otimes 1 + e_2 \otimes e_2 \otimes 1$, $e_1 \otimes 1 \otimes e_1 + e_2 \otimes 1 \otimes e_2$ and $1 \otimes e_1 \otimes e_1 + 1 \otimes e_2 \otimes e_2$ is 2, whereas the tensor product equals the matrix multiplication tensor whose asymptotic rank is strictly smaller than $2^3$. With the same tensors one shows that asymptotic subrank is not multiplicative.
\end{remark}

\subsection{This paper}

\parag{Main results}
The main results of this paper are summarised as follows.
\begin{enumerate}
\item We construct for the first time a nontrivial family of maps
\[
F_\theta : \{\textnormal{$k$-tensors}\}\to\RR_{\geq 0},
\]
named quantum functionals,
that are nonincreasing under restriction $\geq$, normalised on the unit tensor~$\langle r \rangle$, additive under direct sum $\oplus$ and multiplicative under tensor product~$\otimes$, i.e.~universal spectral points, advancing Strassen's theory of asymptotic spectra.
\item We connect Strassen's asymptotic spectra to entanglement polytopes and the quantum marginal problem. This is to our knowledge the first time that information about asymptotic transformations is obtained from entanglement polytopes, as opposed to information about single-copy transformations.
\item We put recent progress on the cap set problem in the framework of Strassen, which may prove useful in solving variations on the cap set problem. We characterise asymptotic slice rank in terms of the quantum functionals %
and show that asymptotic slice rank coincides with asymptotic subrank for tight 3-tensors.
\end{enumerate}

\parag{Paper overview}
In \cref{strassenfunc} we set the scene by reviewing the theory of the Strassen upper and lower support functionals~$\zeta^\theta$ and~$\zeta_\theta$ introduced in \cite{strassen1991degeneration}. 
This is important background material for our work.
The support functionals allow us to compute elements in the asymptotic spectrum for a class of tensors called oblique tensors. Our exposition follows closely the exposition in \cite{strassen1991degeneration} with the exception that we consider multilinear maps $V_1 \times \cdots \times V_k \to \FF$ with $k\geq 3$ instead of bilinear maps $V_1 \times V_2 \to V_3$. Strassen already observed that his results generalise to the multilinear regime, but we will make this explicit for the sake of understanding and comparison to our results.

\cref{quantum} contains our main result. Here we introduce two new families of functionals over the complex numbers. These functionals are denoted by $F^{\theta}$ and~$F_\theta$ and called the \emph{upper and lower quantum functionals}. Both functionals are $\geq$-monotone. The upper quantum functional $F^{\theta}$ is sub-additive and sub-multiplicative when $\theta$ is what we call noncrossing. The lower quantum functional~$F_\theta$ is super-additive and super-multiplicative. For singleton $\theta$ the two functionals coincide and thus yield an additive, multiplicative $\degengeq$-monotone, a universal spectral point. 

In \cref{tightfreegeneric} we consider several families of tensors. %
We introduce the family of free tensors. We show that for free tensors, the Strassen upper support functional coincides with the quantum functionals for singleton $\theta$. %
This is useful computationally, since the upper support functional is defined as a minimisation while the quantum functionals are defined as a maximisation. 
We next compute generic values of the quantum functionals for tensor formats in which a quantum state with completely mixed marginals exists. Our results extend over $\CC$ the results by Verena Tobler~\cite{tobler} on the generic value of the upper support functional.
Finally, we reprove recent results on the cap set problem, by reducing this problem to a result of Strassen on reduced polynomial multiplication. %

In \cref{slicesec} we show that asymptotically the Strassen upper support functional and the upper quantum functional are upper bounds for slice rank and multi-slice rank, recently introduced $\geq$-monotones that are neither super-multiplicative nor sub-multiplicative. As a consequence, we find that slice rank coincides asymptotically with subrank for tight 3-tensors.
We additionally show that the quantum functionals characterise asymptotic slice rank for complex tensors.

\section{Strassen support functionals}\label{strassenfunc}

In this section the field $\FF$ is arbitrary.  After having introduced the concept of the asymptotic spectrum of tensors in \cite{strassen1988asymptotic}, Strassen constructed a nontrivial family of spectral points in the asymptotic spectrum of \emph{oblique} tensors in~\cite{strassen1991degeneration}. Oblique tensors are tensors for which the support is an antichain in some basis. In plain English, Strassen constructed a family of functions from $k$-tensors to~$\RR$ that are $\geq$-monotone, are normalised to attain value $r$ at $\langle r\rangle$, and, when restricted to oblique tensors, are additive under $\oplus$ and multiplicative under~$\otimes$. He calls his functions the \emph{support functionals}, because the support of a tensor plays an important role in the definition. Not all tensors are oblique, and obliqueness is not a generic property. Many tensors that are of interest in the field of algebraic complexity theory, however, turn out to be oblique, notably the structure tensor of the algebra of $n\times n$ matrices. 
This section is devoted to explaining the construction of these spectral points for oblique tensors. We do this not only to set the scene and provide benchmarks for our new functionals, but also because the Strassen support functionals remain relevant today, for example in the context of combinatorial problems like the cap set problem, as we will make clear in \cref{capset}.

The construction goes in four steps. First we define for probability distributions $\theta$ on $\{1, 2, \ldots, k\}$ the \emph{upper support functional} $\zeta^\theta$, which is $\geq$-monotone, normalised at $\langle r\rangle$, sub-additive under $\oplus$ and sub-multiplicative under $\otimes$. Second we define the \emph{lower support functional} $\zeta_\theta$, which is $\geq$-monotone, normalised at~$\langle r\rangle$, super-additive under $\oplus$ and super-multiplicative under $\otimes$. Third we show that $\zeta^\theta(t) \geq \zeta_\theta(t)$ for any $k$-tensor $t$. Fourth we show that for oblique tensors $t$ the upper support functional $\zeta^\theta(t)$ and lower support functional $\zeta_\theta(t)$ coincide, which gives a spectral point.

\parag{Notation} We use the following standard notation.
For any natural number~$n$ we write~$[n]$ for the set $\{1, 2, \ldots, n\}$. 
For any finite set $X$, let $\prob(X)$ be the set of all probability distributions on~$X$. For any probability distribution $P\in \prob(X)$ the \defin{Shannon entropy} of $P$ is defined as $H(P) = -\sum_{x\in X} P(x)\log_2 P(x)$ with $0 \log_2 0$ understood as~0. The \defin{support} of $P$ is $\supp P = \{x \in X \mid P(x)\neq 0\}$. Given finite sets $I_1,\ldots, I_k$ and a probability distribution $P \in \prob(I_1 \times \cdots \times I_k)$ on the product set $I_1 \times \cdots \times I_k$ we denote the \defin{marginal distribution} of $P$ on~$I_i$ by $P_i$, that is, $P_i(a) = \sum_{x : x_i = a} P(x)$ for any $a \in I_i$. 

We set some specific notation for this text. Let $k\geq3$. For $i\in [k]$, let $V_i$ be a vector space over $\FF$ of dimension $n_i$. Let $I_i$ be a finite set. Let $f : V_1 \times \cdots \times V_k \to \FF$ be a multilinear map. We order the set $[n_i]$ naturally by $1< 2< \cdots < n_i$. When the sets $I_1, \ldots, I_k$ are ordered, we give $I_1 \times \cdots \times I_k$ the \defin{product order}, defined by $x \leq y$ iff for all $i\in [k]$: $x_i \leq y_i$. %

\subsection{Upper support functional}

We begin with introducing the upper support functional. We naturally define the notion of the support of a multilinear map $V_1 \times \cdots \times V_k \to \FF$.

\begin{definition} 
Let $f: V_1 \times \cdots \times V_k \to \FF$ be a multilinear map. %
Let~$\bases(f)$ denote the set of $k$-tuples of bases for $V_1, \ldots, V_k$.
That is, each element $C \in \mathcal{C}(f)$ is a $k$-tuple %
$((v_{1, 1}, v_{1, 2}, \ldots, v_{1, n_1}), \ldots,  (v_{k, 1}, v_{k, 2}, \ldots, v_{k, n_k}))$ in which $(v_{i, 1}, v_{i, 2}, \ldots, v_{i, n_i})$ is a basis of $V_i$. Define the \defin{support of $f$ with respect to $C$} as
\[
\supp_C f \coloneqq \bigl\{ (\alpha_1, \ldots, \alpha_k) \in [n_1]\times \cdots \times [n_k] \,\big|\, f(v_{1, \alpha_1}, \ldots, v_{k, \alpha_k}) \neq 0 \bigr\}.
\]
\end{definition}

\begin{definition} Let $\theta \in \prob([k])$. Let $P \in \prob(I_1\times \cdots \times I_k)$ be a probability distribution. Define $H_\theta(P)$ as the $\theta$-weighted average of the Shannon entropies of the marginal distributions~$P_i$ of~$P$,
\[
H_\theta(P) = \sum_{i\in [k]} \theta(i) H(P_i).
\]
Let $\Phi \subseteq I_1 \times \cdots \times I_k$ be a nonempty subset. Define~$H_\theta(\Phi)$ as the maximum~$H_\theta(P)$ over all probability distributions $P \in \prob(\Phi)$,
\begin{equation}\label{convprog}
H_\theta(\Phi) = \max_{P \in \prob(\Phi)} H_\theta(P). %
\end{equation}
\end{definition}

\begin{definition}[Strassen upper support functional]\label{upper}
Let $\theta \in \prob([k])$. Let $f$ be a nonzero multilinear map. Define $\rho^\theta(f)$ by
\begin{align*}
\rho^\theta(f) &= \min_{\smash{C\in \bases(f)}} H_\theta(\supp_C f)
\intertext{and define $\zeta^\theta$ by}
\zeta^\theta(f) &= 2^{\rho^\theta(f)}.
\end{align*}
We call $\zeta^\theta$ the \defin{Strassen upper support functional} and we call $\rho^\theta$ the \defin{logarithmic Strassen upper support functional}. When $f$ is 0 we naturally define $\zeta^\theta(f) = 0$ and $\rho^\theta(f) = -\infty$.
\end{definition}

The key properties of the upper support functional are as follows.

\begin{theorem}[\cite{strassen1991degeneration}]\label{us} Let $f : V_1 \times \cdots \times V_k\to\FF$ and $g : W_1 \times \cdots \times W_k\to\FF$ be multilinear maps. Let~$\theta \in \prob([k])$. The Strassen upper support functional $\zeta^\theta$ has the following properties.
\begin{enumerate}
\item $\zeta^\theta\bigl(\langle r \rangle\bigr) = r$ for $r \in \NN$.
\item $\zeta^\theta(f \oplus g) = \zeta^\theta(f) + \zeta^\theta(g)$.
\item $\zeta^\theta(f \otimes g) \leq \zeta^\theta(f) \zeta^\theta(g)$.
\item If $f \geq g$, then $\zeta^\theta(f) \geq \zeta^\theta(g)$.
\item $0 \leq \zeta^\theta(f) \leq (\dim V_1)^{\theta(1)} \cdots (\dim V_k)^{\theta(k)}$.
\end{enumerate}
\end{theorem}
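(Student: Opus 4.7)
The plan is to prove the five properties in an order reflecting their dependencies: first well-definedness together with (5), then monotonicity (4), then additivity (2) (the main obstacle), and finally sub-multiplicativity (3) together with the normalisation (1). Although $\bases(f)$ is infinite, the family $\{\supp_C f : C\in\bases(f)\}$ is a finite collection of subsets of $[n_1]\times\cdots\times[n_k]$, so the outer minimum in \cref{upper} is attained; and $H_\theta(\Phi)$ is a continuous concave maximisation over a compact simplex, attained as well. Property (5) then follows by taking $P$ uniform on $\supp_C f$: each marginal $P_i$ is supported on at most $n_i$ elements so $H(P_i)\leq \log n_i$, giving the bound $\rho^\theta(f)\leq \sum_i\theta(i)\log n_i$.

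For monotonicity (4), the plan is to show that for any basis $C$ of $V$ and any restriction $g = f\circ(A_1,\ldots,A_k)$, there exists a basis $B'$ of $W$ with $H_\theta(\supp_{B'} g)\leq H_\theta(\supp_C f)$; taking $C$ to be optimal then gives $\rho^\theta(g)\leq \rho^\theta(f)$. The strategy is to perform column operations on each $A_i$ (i.e., change the basis of $W_i$) to bring it into reduced column echelon form relative to $C$, producing injections $\sigma_i:[r_i]\hookrightarrow[n_i]$ from pivot columns to pivot rows with $r_i=\rk A_i$; the remaining columns of $A_i$ vanish and the corresponding coordinates of $g$ are identically zero. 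An additional genericity step, using a scaling by powers of a formal parameter $\eps$ that makes non-pivot entries negligibly small and taking a leading-order limit, then yields $\supp_{B'} g \subseteq \sigma^{-1}(\supp_C f)$ inside $[r_1]\times\cdots\times[r_k]$ with $\sigma=\sigma_1\times\cdots\times\sigma_k$ injective. Since injectivity preserves marginal entropy under pushforward, any $P\in\prob(\supp_{B'} g)$ satisfies $H_\theta(P)=H_\theta(\sigma_* P)\leq H_\theta(\supp_C f)$, completing the argument.

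Additivity (2) is the main obstacle. For the $\leq$ direction, concatenating optimal bases of $V$ and $W$ gives the support of $f\oplus g$ in the combined basis as the disjoint union of the two supports, placed block-diagonally. Any $P$ on this disjoint union decomposes as $pP_A+(1-p)P_B$, and because the marginals too live on disjoint subsets, $H(P_i)=h(p)+pH((P_A)_i)+(1-p)H((P_B)_i)$; optimising over $p$, $P_A$, $P_B$ yields $\max_P H_\theta(P)=\log(\zeta^\theta(f)+\zeta^\theta(g))$, so $\zeta^\theta(f\oplus g)\leq\zeta^\theta(f)+\zeta^\theta(g)$. The reverse inequality requires showing that no basis of $V\oplus W$ that mixes the summands can achieve a strictly smaller $H_\theta$; the plan is to argue that any basis $C'$ of $V\oplus W$ can be replaced, without increasing $H_\theta(\supp(f\oplus g))$, by a basis respecting the canonical decomposition, by exploiting the filtration induced by the projections $V\oplus W\to V$ and $V\oplus W\to W$ and passing to an associated graded basis, after which the entropy calculation above applies.

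For sub-multiplicativity (3), tensor-product bases give $\supp(f\otimes g)=\supp f\times\supp g$, and one uses the identity $H_\theta(A\times B)=H_\theta(A)+H_\theta(B)$: the $\leq$ half follows from Shannon subadditivity of each marginal, and the $\geq$ half from the product of marginal-optimising distributions. For (1), the standard basis of $\langle r\rangle$ has support equal to the size-$r$ diagonal, on which the uniform distribution maximises $H_\theta$ at $\log r$, giving $\zeta^\theta(\langle r\rangle)\leq r$; equality then follows from (2) applied to $\langle r\rangle=\langle 1\rangle^{\oplus r}$ together with the trivial value $\zeta^\theta(\langle 1\rangle)=1$ (whose support is a single point).
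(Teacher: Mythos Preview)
The paper itself does not give a proof; it says statements 1 and 5 are verified directly and defers 2--4 to Strassen's original arguments, whose key tool is the filtration characterisation stated here as \cref{upperfilt}. Your treatment of (1), (3), (5), and the $\leq$ direction of (2) is correct and matches the standard approach.

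The genuine gap is in your proof of monotonicity (4). After putting each $A_i$ into reduced column echelon form you obtain $A_i e_j = v_{\sigma_i(j)} + (\text{terms in non-pivot rows of }V_i)$, and the inclusion $\supp_{B'} g \subseteq \sigma^{-1}(\supp_C f)$ you want simply need not hold: when you expand $g(e_\alpha)=f(A_1 e_{\alpha_1},\ldots,A_k e_{\alpha_k})$ multilinearly, it can happen that $f(v_{\sigma(\alpha)})=0$ while some cross term involving non-pivot rows survives, so $\alpha\in\supp_{B'}g$ without $\sigma(\alpha)\in\supp_C f$. Your proposed fix by $\eps$-scaling does not resolve this: rescaling basis vectors of $V_i$ leaves $\supp_C f$ unchanged and leaves $\supp_{B'} g$ unchanged (since $g$ is a function on $W$); rescaling basis vectors of $W_i$ likewise leaves $\supp_{B'}g$ unchanged; and replacing $A_i$ by a genuinely different map changes $g$ to a different tensor, which tells you nothing about $\rho^\theta(g)$. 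What the echelon argument \emph{does} give is $\sigma(\supp_{B'}g)\subseteq\downset\supp_C f=\supp_F f$ for the flag $F$ associated to $C$, but $H_\theta(\supp_F f)$ can exceed $H_\theta(\supp_C f)$, so this alone is not enough.

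The clean fix is exactly \cref{upperfilt}: work with complete flags rather than bases. Given an optimal flag $F$ for $f$, pull back along each $A_i$ to obtain a (possibly non-strict) decreasing filtration of $W_i$, refine to a complete flag $G$, and then $\supp_G g$ injects into $\supp_F f$ because $A_iW_{i,\beta}\subseteq V_{i,\sigma_i(\beta)}$ on the nose, with no cross terms to worry about. You already invoke a filtration idea for the $\geq$ direction of (2), so using \cref{upperfilt} for (4) as well would make your argument uniform and close the gap; indeed Strassen's proofs of both (2) and (4) proceed this way.
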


(Note that $\zeta^\theta$ is not just sub-additive, but even additive.)
One verifies directly that statement~1 and 5 of \cref{us} are true. 
Statement 2--4 can be proved with generalisations of the arguments in \cite[Section 2]{strassen1991degeneration}.

In the rest of this section we discuss an important alternative characterisation of the upper support functional in terms of filtrations. This alternative definition we need later to relate the upper and lower support functionals.

\begin{definition}
A \defin{proper filtration} or \defin{complete flag} of a vector space $U$ is a sequence $(U_\alpha)_{1 \leq \alpha \leq a}$ of subspaces of $U$ with $U = U_1 \supseteq \cdots \supseteq U_a = \{0\}$ and $\dim U_\alpha / U_{\alpha+1} = 1$ for $1 \leq \alpha < a$. (In this text all flags will be decreasing.)
We denote by $\filt_{\complete}(f)$ the set of $k$-tuples of complete flags $F = ((V_{1, \alpha_1})_{\alpha_1}, \ldots, (V_{k, \alpha_k})_{\alpha_k})$ of $V_1, \ldots, V_k$. For $F \in \filt_\complete(f)$ we define the support of $f$ with respect to $F$ as
\[
\supp_F f = \{ (\alpha_1, \ldots, \alpha_k) \mid f(V_{1, \alpha_1} \times \cdots \times V_{k, \alpha_k}) \neq 0\} \subseteq [n_1] \times \cdots \times [n_k].
\]
\end{definition}

The alternative characterisation of the upper support functional in terms of filtrations is as follows.
We refer to \cite[Section 2]{strassen1991degeneration} for the proof.

\begin{proposition}\label{upperfilt}
$\rho^\theta(f) = \min_{F \in \filt_\complete(f)} H_{\theta}(\supp_F f)$. %
\end{proposition}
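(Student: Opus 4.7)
My plan is to prove the equality $\rho^\theta(f) = \min_{F \in \filt_\complete(f)} H_\theta(\supp_F f)$ via two opposite inequalities, each established by a construction linking bases and complete flags.

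For the inequality $\rho^\theta(f) \leq \min_F H_\theta(\supp_F f)$, given any flag $F$, I choose a compatible basis $C$ by picking representatives $v_{i,\alpha} \in V_{i,\alpha} \setminus V_{i,\alpha+1}$. The inclusion $\supp_C f \subseteq \supp_F f$ is immediate since each $v_{i,\alpha_i} \in V_{i,\alpha_i}$. For the reverse inclusion, each $(\alpha_1, \ldots, \alpha_k) \in \supp_F f$ is witnessed by the non-vanishing of $f$ on $V_{1,\alpha_1} \times \cdots \times V_{k,\alpha_k}$, so the condition $f(v_{1,\alpha_1}, \ldots, v_{k,\alpha_k}) \neq 0$ defines a non-empty Zariski-open subset of the (finite-dimensional) variety of compatible bases. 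The intersection of these finitely many open sets is non-empty, giving a basis $C$ with $\supp_C f = \supp_F f$, whence $H_\theta(\supp_C f) = H_\theta(\supp_F f)$ and $\rho^\theta(f) \leq H_\theta(\supp_F f)$; minimizing over $F$ completes this direction.

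For the reverse inequality $\rho^\theta(f) \geq \min_F H_\theta(\supp_F f)$, let $C$ be a basis and $P \in \prob(\supp_C f)$ an optimal distribution with $H_\theta(P) = H_\theta(\supp_C f)$. Choose permutations $\tau_i$ of $[n_i]$ so that $P_i \circ \tau_i$ is non-increasing, and form the flag $V_{i,\alpha} = \Span(v_{i,\tau_i(\alpha)}, \ldots, v_{i,\tau_i(n_i)})$. Unwinding the definitions, $\supp_F f$ equals the downward closure of $\tau^{-1}(\supp_C f)$ in the product order on $[n_1] \times \cdots \times [n_k]$. Since the reordering pushes the high-probability mass of $P$ to small indices, one argues via a rearrangement / Schur-concavity argument that the max-entropy distribution on this downward closure has $H_\theta$ no greater than $H_\theta(P)$, giving $H_\theta(\supp_F f) \leq H_\theta(\supp_C f)$ as required.

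The main obstacle is this last claim. Naively, the downward closure strictly enlarges the support and can increase $H_\theta$; the crux is that sorting marginals by decreasing probability places the optimal distribution's support near the product-order minimum, so any marginal-entropy gain one could hope to extract by spreading mass across the downward closure is already absorbed by the non-increasing structure of $P_i \circ \tau_i$. Making this rigorous follows the rearrangement argument of Strassen in \cite[Section~2]{strassen1991degeneration}.
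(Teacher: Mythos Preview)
Your outline is correct and follows the approach of Strassen, which the paper simply cites without reproducing. Two remarks.

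For the direction $\rho^\theta(f)\le\min_F H_\theta(\supp_F f)$, your Zariski argument is more than you need: the inclusion $\supp_C f\subseteq\supp_F f$ (which you already observe) immediately gives $H_\theta(\supp_C f)\le H_\theta(\supp_F f)$ by monotonicity of $\Phi\mapsto H_\theta(\Phi)$ in the support, and that suffices. The stronger claim $\supp_C f=\supp_F f$ for a generic compatible $C$ is true over infinite fields but unnecessary here, and would require extra care over finite fields (where the paper still claims the proposition holds).

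For the direction $\rho^\theta(f)\ge\min_F H_\theta(\supp_F f)$, you correctly isolate the crux: after permuting each factor so that the marginals of the optimal $P$ are non-increasing, one must show that passing to the downward closure does not increase $H_\theta$, i.e.\ $H_\theta(\downset\Psi)\le H_\theta(\Psi)$ for $\Psi=\tau^{-1}(\supp_C f)$. This is precisely the nontrivial combinatorial lemma in Strassen's Section~2, and your heuristic (mass already sits near the product-order minimum, so the downward closure adds no usable room for the marginals) captures the right intuition; the rigorous argument uses the optimality conditions for $P$. Since the paper itself defers entirely to Strassen here, your treatment is at least as complete as the paper's.
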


\begin{remark}
The characterisation of $\rho^\theta$ as a minimisation over filtrations can be thought of as an asymptotic version of the filtration method explained in \cite[Exercise 15.13]{burgisser1997algebraic}.
\end{remark}

\subsection{Lower support functional}

The upper support functional $\zeta^\theta$ has a companion called the lower support functional $\zeta_\theta$ which we introduce here. We shift focus from the support to the maximal points in the support. %

\begin{definition}
Let $\Phi \subseteq [n_1] \times \cdots \times [n_k]$ be a subset.
We define the \emph{maximal points} of $\Phi$ with respect to the product order of the natural orders on $[n_1],\ldots, [n_k]$ as
\[
\max \Phi = \bigl\{ \alpha \in \Phi \bigm| \nexists \beta \in \Phi : \alpha < \beta \bigr\}.
\]
\end{definition}

\begin{definition}\label{lower}
Let $f$ be a nonzero multilinear map. %
Given $C\in \bases(f)$ we 
set the notation $M_C f = \max \supp_C f$. %
Define $\rho_\theta(f)$ by
\[
\rho_\theta(f) = \max_{C \in \bases(f)} H_\theta(M_C f)
\]
and define $\zeta_\theta(f)$ by
\[
\zeta_\theta(f) = 2^{\rho_\theta(f)}.
\]
We call $\zeta_\theta$ the \defin{Strassen lower support functional} and we call $\rho_\theta$ the \defin{logarithmic Strassen lower support functional}. When $f$ is 0 we naturally define $\zeta_\theta(f) = 0$ and $\rho_\theta(f) = -\infty$.
\end{definition}

\begin{theorem}[\cite{strassen1991degeneration}]\label{ls}  Let $f : V_1 \times \cdots \times V_k\to\FF$ and $g : W_1 \times \cdots \times W_k\to \FF$. Let $\theta \in \prob([k])$. The Strassen lower support functional $\zeta_\theta$ has the following properties.
\begin{enumerate}
\item $\zeta_\theta\bigl(\langle r \rangle\bigr) = r$ for $r \in \NN$.
\item $\zeta_{\theta}(f \oplus g) \geq \zeta_\theta(f) + \zeta_\theta(g)$.
\item $\zeta_{\theta}(f\otimes g) \geq \zeta_\theta(f) \zeta_\theta(g)$.
\item If $f \geq g$, then $\zeta_\theta(f) \geq \zeta_\theta(g)$.
\item $0 \leq \zeta_\theta(f) \leq (\dim V_1)^{\theta(1)} \cdots (\dim V_k)^{\theta(k)}$.
\end{enumerate}
\end{theorem}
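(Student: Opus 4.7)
The plan is to verify the five properties directly from the definition, paralleling the proof of \cref{us} for $\zeta^\theta$ but with the $\max$ structure of $\zeta_\theta$. Property~(5) is immediate: any $P$ has $H(P_i) \leq \log_2 n_i$, so $H_\theta(P) \leq \sum_i \theta(i) \log_2 n_i$. Property~(1) follows by combining the lower bound from the standard basis of $\langle r\rangle$---whose antichain support $\{(i,\ldots,i)\}_{i=1}^r$ admits a uniform distribution with $H_\theta = \log_2 r$---with the matching upper bound from~(5).

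For super-multiplicativity~(3), I would fix bases $C^f, C^g$ attaining the maxima and order the product basis of $V_i \otimes W_i$ lexicographically by $[n_i] \times [m_i]$. A componentwise check shows that if $(\alpha', \beta') > (\alpha, \beta)$ in the resulting product order and $\alpha' \in \supp f$, $\beta' \in \supp g$, then either $\alpha' > \alpha$ in $\prod_i [n_i]$ or else $\alpha' = \alpha$ and $\beta' > \beta$ in $\prod_i [m_i]$, contradicting maximality of $\alpha$ or $\beta$. Hence $M_{C^f}f \times M_{C^g}g \subseteq M(f\otimes g)$, and placing the product $P_f \times P_g$ of optimal distributions there yields $H_\theta = \rho_\theta(f) + \rho_\theta(g)$ by additivity of Shannon entropy for independent marginals.

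For restriction monotonicity~(4), write $g = f \circ (A_1, \ldots, A_k)$; after restricting to complements of $\ker A_i$ we may assume each $A_i$ is injective. Choose $C^g$ optimal for $\rho_\theta(g)$ and extend $(A_i C^g_{i,\alpha})_\alpha$ to a basis $C'_i$ of $V_i$ by inserting the new vectors in the low positions $1, \ldots, n_i - m_i$; the embedding $\iota_i : \alpha \mapsto n_i - m_i + \alpha$ then lands in the top positions. Any $\beta \in \supp_{C'}f$ dominating $\iota(\alpha)$ for $\alpha \in M_{C^g}g$ must itself lie entirely in the top range, hence equal $\iota(\beta')$ with $\beta' > \alpha$ and $\beta' \in \supp_{C^g}g$, contradicting maximality of $\alpha$. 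Thus $\iota(M_{C^g}g) \subseteq M_{C'}f$ and the pushforward of the optimal distribution preserves $H_\theta$.

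The main obstacle is super-additivity~(2), because a naive block-diagonal basis makes every type-$g$ point dominate or be dominated by every type-$f$ point, so $M_{C^f \oplus C^g}(f \oplus g)$ collapses to one summand's maximal set. Exploiting $k \geq 3$, I would partition $[k] = S \sqcup ([k]\setminus S)$ into nonempty parts, and on coordinate $i \in S$ place $C^g$ in the low positions with $C^f$ on top, reversing this on $i \notin S$. Type-$f$ and type-$g$ points then become incomparable---$\alpha_i > \beta_i$ on $S$ and $\alpha_i < \beta_i$ off $S$---so $M_{C^f \oplus C^g}(f \oplus g) \supseteq \iota_f(M_{C^f}f) \sqcup \iota_g(M_{C^g}g)$. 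The mixture $P = p\,\iota_f(P_f) + (1-p)\,\iota_g(P_g)$ has disjoint-support marginals in every coordinate, giving $H_\theta(P) = h(p) + p\,\rho_\theta(f) + (1-p)\,\rho_\theta(g)$ with $h$ the binary entropy; optimising at $p = \zeta_\theta(f)/(\zeta_\theta(f)+\zeta_\theta(g))$ yields $2^{\rho_\theta(f \oplus g)} \geq \zeta_\theta(f) + \zeta_\theta(g)$, the same closed-form identity underlying additivity of $\zeta^\theta$ but here realised as the $\geq$ direction.
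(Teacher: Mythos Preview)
The paper itself does not prove this theorem: it remarks that (1) and (5) are direct and refers to \cite[Section~3]{strassen1991degeneration} for (2)--(4). Your sketch reconstructs essentially Strassen's arguments. Properties (1), (3), (5) are handled correctly; for (3), the lexicographic order on $[n_i]\times[m_i]$ indeed forces $\alpha'\ge\alpha$ coordinatewise, after which your dichotomy goes through. For (2), the interleaved ordering making the two blocks incomparable is exactly Strassen's device (and needs only $k\ge 2$, not $k\ge 3$), with the closing optimisation being \cref{entropytrick}.

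The one step that is thinner than it looks is the reduction to injective $A_i$ in (4). Your injective-case argument correctly gives $\rho_\theta(f)\ge\rho_\theta(g')$ for $g'=g|_{W'_1\times\cdots\times W'_k}$, but concluding $\rho_\theta(f)\ge\rho_\theta(g)$ still requires $\rho_\theta(g)\le\rho_\theta(g')$, and this does not follow from the injective case: the maps witnessing $g'\ge g$ are the projections $\pi_i:W_i\to W'_i$ along $\ker A_i$, which are surjective, not injective. This inequality does hold, but it needs an argument. Via the flag description (\cref{lowerfilt}): given a complete flag $(W_{i,\beta})_\beta$ of $W_i$, the images $\pi_i W_{i,\beta}$ form a weakly decreasing chain in $W'_i$ whose distinct terms give a complete flag $F'$; writing $\phi_i(\beta)$ for the index of $\pi_i W_{i,\beta}$ in $F'$, one has $\supp_F g=\phi^{-1}(\supp_{F'}g')$, and a short check shows that $M_F g$ is carried bijectively onto $M_{F'}g'$ by the coordinatewise maps $\gamma_i\mapsto\max\phi_i^{-1}(\gamma_i)$, preserving all marginal entropies. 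This is the substance of the monotonicity proof in \cite{strassen1991degeneration} and should be mentioned rather than absorbed into ``we may assume''.
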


\begin{remark}\label{notequal}
Regarding statement 2 in \cref{ls}, 
Bürgisser \cite{burg} shows that the lower support functional $\zeta_\theta$ is not in general additive under the direct sum when $\theta_i>0$ for all $i$. See also~\cite[Comment (iii)]{strassen1991degeneration}. In particular, this implies that the upper support functional $\zeta^\theta(f)$ and the lower support functional~$\zeta_\theta(f)$ are not equal in general, the upper support functional being additive. In fact, to show that the lower support functional is not additive, Bürgisser first shows that (with $\FF$ algebraically closed) the \defin{typical value} of $\zeta_\theta$ on $\FF^n \otimes \FF^n \otimes \FF^n$ equals $(1-\min_i \theta_i) \log_2 n + o(n)$; on the other hand, Tobler~\cite{tobler} shows that the typical value of $\zeta^\theta$ on $\FF^n \otimes \FF^n \otimes \FF^n$ equals $\log_2 n$. (So even generically $\zeta^\theta$ and $\zeta_\theta$ are different on $\FF^n \otimes \FF^n \otimes \FF^n$.) We discuss typical values in \cref{generic}.
\end{remark}

One verifies directly that statement 1 and 5 of \cref{ls} are true. For the proofs of statements~2--4 we refer to \cite[Section 3]{strassen1991degeneration}.

We finish this section by giving an alternative definition of $\rho_\theta$ in terms of complete flags, in the same spirit as \cref{upperfilt} for the upper support functional. Let us first make some general remarks about how (the maximal points in) the support with respect to a basis and the support with respect to a flag are related.

\begin{definition}\label{associatedflag}
Let $\Phi \subseteq [n_1] \times \cdots \times [n_k]$ be a subset. We define the \defin{downward closure} of $\Phi$ with respect to the product of the natural orders on $[n_1], \ldots, [n_k]$ as
\[
\downset\Phi =  \bigl\{ \alpha \in [n_1] \times \cdots \times [n_k] \bigm| \exists \beta \in \Phi : \alpha \leq \beta \bigr\}.
\]
To any $C = ((v_{1, \alpha_1}), \ldots, (v_{k, \alpha_k})) \in \bases(f)$ we can naturally associate a $k$-tuple of complete flags $F = ((V_{1, \alpha_1}), \ldots, (V_{k, \alpha_k})) \in \filt_\complete(f)$, by defining the subspace $V_{i, \alpha_i}$ as $\Span\{v_{i, \alpha_i}, v_{i, \alpha_i + 1}, \ldots \}$. %
\end{definition}

The following key properties of the downward closure and the maximal points follow directly from the definitions.

\begin{lemma}\label{downwardtrick}
Let $C \in \bases(f)$ and $F \in \filt_\complete(f)$ be associated. Then %
\begin{align*}
\downset \supp_C f &= \supp_F f,\\
\max \supp_C f &= \max \supp_F f.
\end{align*}
\end{lemma}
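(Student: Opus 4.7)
The plan is to establish the first equality $\downset \supp_C f = \supp_F f$ directly from the definitions, and then deduce the second equality from the first via a general poset-theoretic observation.

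\textbf{First equality.} I would argue by double inclusion. The key observation is that the associated flag satisfies $V_{i,\beta_i} \subseteq V_{i,\alpha_i}$ whenever $\alpha_i \leq \beta_i$, since adding indices can only shrink the span. For the inclusion $\downset \supp_C f \subseteq \supp_F f$, given $\alpha \in \downset \supp_C f$, pick $\beta \in \supp_C f$ with $\alpha \leq \beta$; then $(v_{1,\beta_1}, \ldots, v_{k,\beta_k}) \in V_{1,\alpha_1} \times \cdots \times V_{k,\alpha_k}$ is a tuple on which $f$ does not vanish, so $\alpha \in \supp_F f$. For the reverse inclusion, I would take $w_i \in V_{i,\alpha_i}$ with $f(w_1, \ldots, w_k) \neq 0$, expand each $w_i = \sum_{\beta_i \geq \alpha_i} c_{i,\beta_i}\, v_{i,\beta_i}$, and use multilinearity to write $f(w_1, \ldots, w_k)$ as a sum over $\beta \geq \alpha$ of terms $c_{1,\beta_1} \cdots c_{k,\beta_k}\, f(v_{1,\beta_1}, \ldots, v_{k,\beta_k})$. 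At least one summand must be nonzero, providing the required $\beta \in \supp_C f$ with $\alpha \leq \beta$.

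\textbf{Second equality.} I would invoke the elementary fact that $\max \Phi = \max \downset \Phi$ for any subset $\Phi$ of a finite poset. Both inclusions fall out of the definitions: a strict upper bound in $\downset \Phi$ for some $\alpha \in \max \Phi$ would itself be dominated by some $\gamma \in \Phi$ strictly above $\alpha$, contradicting maximality; conversely, an element of $\max \downset \Phi$ is dominated by some $\beta \in \Phi$ which, by maximality in $\downset \Phi$, must coincide with $\alpha$. Combined with the first equality, this chain gives $\max \supp_C f = \max \downset \supp_C f = \max \supp_F f$.

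The only step with any real content is the multilinear expansion argument in the reverse inclusion of the first equality; everything else is routine bookkeeping with the product order and the inclusions of flag subspaces. I do not anticipate any obstacle.
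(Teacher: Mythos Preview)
Your proposal is correct. The paper does not actually give a proof of this lemma; it simply states that the two equalities ``follow directly from the definitions'' and moves on. Your write-up is precisely the natural way to unpack that sentence: the multilinear expansion over $\beta \geq \alpha$ for the nontrivial inclusion, and the poset identity $\max \Phi = \max \downset \Phi$ for the second equality, are exactly what one would do. There is nothing to compare.
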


For $C \in \bases(f)$ we have set the notation $M_C f = \max \supp_C f$. For $F \in \filt(f)$ %
we similarly set the notation $M_F f = \max \supp_F f$. %
\begin{proposition}\label{lowerfilt}
$\rho_\theta(f) = \max_{F \in \filt_\complete(f)} H_\theta(M_F f)$ %
for $\theta\in \prob([k])$.  %
\end{proposition}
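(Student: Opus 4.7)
The plan is to prove the two inequalities $\rho_\theta(f) \leq \max_{F \in \filt_\complete(f)} H_\theta(M_F f)$ and $\rho_\theta(f) \geq \max_{F \in \filt_\complete(f)} H_\theta(M_F f)$ separately, both via the map sending a basis to its associated complete flag introduced in \cref{associatedflag}, using \cref{downwardtrick} as the key technical input.

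For the upper bound on $\rho_\theta(f)$, I would fix any $C \in \bases(f)$ and consider the associated $k$-tuple of complete flags $F \in \filt_\complete(f)$ from \cref{associatedflag}. By \cref{downwardtrick} one has $M_C f = M_F f$, hence $H_\theta(M_C f) = H_\theta(M_F f) \leq \max_{F' \in \filt_\complete(f)} H_\theta(M_{F'} f)$. Taking the maximum over $C \in \bases(f)$ yields $\rho_\theta(f) \leq \max_{F \in \filt_\complete(f)} H_\theta(M_F f)$.

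For the other direction, I would show that every complete flag arises as the flag associated to some basis. Given $F = ((V_{1,\alpha_1})_{\alpha_1}, \ldots, (V_{k,\alpha_k})_{\alpha_k}) \in \filt_\complete(f)$, the completeness condition $\dim V_{i,\alpha_i}/V_{i,\alpha_i+1} = 1$ lets me pick any representative $v_{i,\alpha_i} \in V_{i,\alpha_i} \setminus V_{i,\alpha_i+1}$ for each $i$ and each $\alpha_i$; the strict-inclusion condition then forces $(v_{i,1}, \ldots, v_{i,n_i})$ to be a basis of $V_i$, and by construction $V_{i,\alpha_i} = \Span\{v_{i,\alpha_i}, v_{i,\alpha_i+1}, \ldots\}$, so the resulting $C \in \bases(f)$ has $F$ as its associated flag. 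Applying \cref{downwardtrick} again, $M_F f = M_C f$, so $H_\theta(M_F f) = H_\theta(M_C f) \leq \rho_\theta(f)$, and taking the maximum over $F$ gives the reverse inequality.

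There is no real obstacle here: the statement is essentially a transport of \cref{downwardtrick} along a surjection $\bases(f) \twoheadrightarrow \filt_\complete(f)$, and the only point that requires (minor) care is verifying that the lifting of a complete flag to a basis is always possible, which is immediate from the rank-one quotient condition in the definition of completeness.
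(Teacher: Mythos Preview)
Your proposal is correct and follows essentially the same approach as the paper's proof: both arguments use \cref{downwardtrick} together with the observation that every $C \in \bases(f)$ is associated to some $F \in \filt_\complete(f)$ and vice versa. The paper is simply more terse, stating this bijection-up-to-fibers in one sentence, whereas you spell out the lifting of a flag to a basis explicitly.
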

\begin{proof}
Let $C \in \bases(f)$ and $F \in \filt_\complete(f)$ be associated. Then $M_C f = M_F f$ by \cref{downwardtrick} and hence $H_\theta(M_C f) = H_\theta(M_F f)$. Since any $C$ is associated to some $F$ and vice versa we conclude that 
\[
\max_{C \in \bases(f)} H_\theta(M_C f) = \max_{F \in \filt_\complete(f)} H_\theta(M_F f).\qedhere
\]
\end{proof}

\subsection{Comparing the support functionals}

Strassen calls his functionals \emph{upper} and \emph{lower} because he proves that the upper support functional is at least the lower support functional  \cite[Corollary 4.3]{strassen1991degeneration}. 

\begin{theorem}%
\label{supportupperlower}
$\rho^\theta(f) \geq \rho_\theta(f)$.
\end{theorem}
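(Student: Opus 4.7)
The plan is to work entirely with the flag formulations: by \cref{upperfilt,lowerfilt} we have
\[
\rho^\theta(f) = \min_{F' \in \filt_\complete(f)} H_\theta(\supp_{F'} f) \quad\text{and}\quad \rho_\theta(f) = \max_{F \in \filt_\complete(f)} H_\theta(M_F f),
\]
so the desired inequality $\rho^\theta(f) \geq \rho_\theta(f)$ reduces to the stronger pairwise claim:
for every pair $F, F' \in \filt_\complete(f)$ of complete flags,
\[
H_\theta(M_F f) \;\leq\; H_\theta(\supp_{F'} f). \tag{$\ast$}
\]
If $(\ast)$ is established, maximising over $F$ on the left and minimising over $F'$ on the right gives the theorem.

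To prove $(\ast)$, I would fix $F, F'$ and let $P \in \prob(M_F f)$ attain $H_\theta(M_F f)$. The strategy is to transport $P$ to a probability distribution $P''$ supported in $\supp_{F'} f$ with $H_\theta(P'') \geq H_\theta(P)$, which by definition of $H_\theta$ as a maximum gives $H_\theta(\supp_{F'} f) \geq H_\theta(P) = H_\theta(M_F f)$. The transport I have in mind proceeds as follows: pick bases $C, C'$ associated to $F, F'$ as in \cref{associatedflag}, and for the $t$-parameter family $C_t = (1-t)C + tC'$ consider the associated flag $F_t$. For all but finitely many values of $t \in \FF$ (away from a proper Zariski-closed subset determined by the nonvanishing conditions that define the support), the support $\supp_{F_t} f$ is generically constant, and upper semicontinuity of the support under flag deformation forces
\[
M_F f \;\subseteq\; \supp_{F_t} f \;\subseteq\; \supp_{F'} f
\]
after pulling back indices via the change-of-basis map from $F_t$ to $F'$. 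More precisely, a maximal support point $\alpha \in M_F f$ is detected by the non-vanishing of $f$ on the product of top flag spaces $V_{1,\alpha_1} \times \cdots \times V_{k,\alpha_k}$; this is an open algebraic condition and thus survives generic perturbation of the flag. By the $\GL$-action of $(g_1,\ldots,g_k)$ taking $F$ to $F'$, the image of $\alpha$ is a well-defined point of $[n_1]\times\cdots\times[n_k]$ lying in $\supp_{F'} f$, and the resulting map $M_F f \to \supp_{F'} f$ preserves each marginal distribution on $[n_i]$ up to a bijection of coordinates. Pushing $P$ forward under this map gives the desired $P''$ with identical marginals (hence identical $H_\theta$).

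The main obstacle is making the "transport" step rigorous: it is not literally true that $M_F f$ embeds into $\supp_{F'} f$ as subsets of $[n_1]\times\cdots\times[n_k]$, and the relation I want is really an embedding after a per-coordinate permutation that may depend on the path between $F$ and $F'$. The care lies in (i) showing the interpolating flag has a support that is \emph{contained in} $\supp_{F'} f$ (a semicontinuity statement, likely proved by the same generic non-vanishing argument used for \cref{upperfilt}), and (ii) identifying $M_F f$ with a subset of this interpolating support via the action of $(g_1,\ldots,g_k)$ so that marginal distributions are preserved and hence $H_\theta$ is preserved. Once the embedding and marginal-preservation are in place, $(\ast)$ follows by monotonicity of the max-entropy $H_\theta$ under inclusion, and the theorem follows immediately.
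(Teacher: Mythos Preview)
Your overall architecture is exactly that of the paper: reduce via \cref{upperfilt,lowerfilt} to the pairwise inequality $H_\theta(M_F f)\le H_\theta(\supp_{F'} f)$, then transport an optimal $P\in\prob(M_F f)$ into $\supp_{F'} f$ by a product of coordinate permutations so that all marginals, and hence $H_\theta$, are preserved. That reduction is correct and is precisely what the paper does.

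The gap is in how you produce the permutations. Your interpolation $C_t=(1-t)C+tC'$ does not yield the required map. First, neither inclusion in your displayed chain $M_F f\subseteq\supp_{F_t} f\subseteq\supp_{F'} f$ holds in general: nonvanishing of $f$ on a product of flag subspaces is an \emph{open} condition in the flag variable, so $\supp_{F_t} f$ is generically the \emph{largest} support along the path and need not be contained in $\supp_{F'} f$; and semicontinuity gives you containment of the generic support in the special one, not the other way. Second, even granting some relation between the supports, the phrase ``the image of $\alpha$ under the $\GL$-action taking $F$ to $F'$'' does not define a map on index tuples: the group element moves flags, not coordinates, and there is no canonical identification of the index sets for $F$ and $F'$ coming from this action alone. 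You correctly sense that what is needed is a product of bijections $\phi_i:[n_i]\to[n_i]$ with $(\phi_1\times\cdots\times\phi_k)(M_F f)\subseteq\supp_{F'} f$, but your plan does not construct them.

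What the paper supplies (its \cref{short}) is exactly this missing lemma, and the construction is the classical relative position of two complete flags. For each factor, writing $V_\alpha=\Span\{v_\alpha,v_{\alpha+1},\ldots\}$ and $W_\beta=\Span\{w_\beta,w_{\beta+1},\ldots\}$, one sets
\[
\phi(\beta)=\max\{\alpha:\ V_\alpha\cap(w_\beta+W_{\beta+1})\neq\emptyset\},
\]
checks directly that $\phi$ is a bijection, and then uses \emph{maximality} of $\beta\in M_G f$ to kill all higher-order correction terms when expanding $f(\overline{w_{1,\beta_1}},\ldots,\overline{w_{k,\beta_k}})$, forcing $(\phi_1(\beta_1),\ldots,\phi_k(\beta_k))\in\supp_F f$. (Equivalently, over $\CC$, this is the Schubert cell decomposition: every flag lies in $U^-\phi\cdot x$ for a unique permutation $\phi$, and the unipotent part contributes only strictly lower terms that vanish on maximal support points.) Once you have these $\phi_i$, pushing $P$ forward gives a distribution on $\supp_{F'} f$ with permuted---hence equal-entropy---marginals, and $(\ast)$ follows. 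Replace your interpolation paragraph with this construction and the proof goes through.
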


This is a useful property when doing computations, since $\rho^\theta$ is defined as a minimisation and $\rho_\theta$ is defined as a maximisation. 

In fact, in \cite[Corollary 4.2]{strassen1991degeneration} Strassen first proves $\zeta^\theta(f \otimes g) \geq \zeta^\theta(f)\zeta_\theta(g)$ and then proves \cref{supportupperlower} as a corollary. The purpose of this section is to give a more direct proof of \cref{supportupperlower} for the benefit of the reader. The proof essentially comes down to the following proposition.

\begin{proposition}\label{short}
Let $f : V_1 \otimes \cdots \otimes V_k \to \FF$.
Let $F, G \in \filt_\complete(f)$. There are permutations $\phi_i : [n_i] \to [n_i]$ $(i \in [k])$ such that $(\phi_1 \times \cdots \times \phi_k) M_G f \subseteq \supp_F f$.
\end{proposition}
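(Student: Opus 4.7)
The plan is to construct, for each coordinate $i \in [k]$, a basis of $V_i$ that is simultaneously adapted to both flags $F$ and $G$ in a Bruhat-type fashion, and then to exploit the maximality of $\alpha \in M_G f$ to propagate a single nonvanishing value of $f$ from the $G$-picture to the $F$-picture.

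The key structural lemma is the following adapted-basis claim: for each $i \in [k]$ there exist a basis $(u^i_1, \ldots, u^i_{n_i})$ of $V_i$ and a permutation $\phi_i$ of $[n_i]$ such that
\begin{enumerate}
\item $V^G_{i,\alpha} = \Span(u^i_\alpha, u^i_{\alpha+1}, \ldots, u^i_{n_i})$ for every $\alpha \in [n_i]$, and
\item $u^i_\alpha \in V^F_{i,\phi_i(\alpha)} \setminus V^F_{i,\phi_i(\alpha)+1}$ for every $\alpha \in [n_i]$.
\end{enumerate}
This is essentially the content of the Bruhat decomposition of $\GL(V_i)$ with respect to the two Borel subgroups stabilising $F$ and $G$. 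I would prove it by downward induction on $\alpha$ using a Gaussian elimination: having fixed $u^i_{\alpha+1}, \ldots, u^i_{n_i}$ and $\phi_i(\alpha+1), \ldots, \phi_i(n_i)$, start with any $v \in V^G_{i,\alpha} \setminus V^G_{i,\alpha+1}$ and, while $\mathrm{ord}_F(v) \coloneqq \max\{\beta : v \in V^F_{i,\beta}\}$ lies in $\{\phi_i(\alpha+1), \ldots, \phi_i(n_i)\}$, subtract a suitable scalar multiple of the matching $u^i_\gamma$ from $v$ so as to kill its leading $F$-coordinate. Each such replacement keeps $v$ in $V^G_{i,\alpha} \setminus V^G_{i,\alpha+1}$ (since the subtracted vector lies in $V^G_{i,\alpha+1}$) but strictly increases $\mathrm{ord}_F(v)$, so the procedure terminates, and then $u^i_\alpha := v$ and $\phi_i(\alpha) := \mathrm{ord}_F(v)$ complete the step with the newly assigned index necessarily distinct from all previously assigned ones.

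Given the adapted basis, fix any $\alpha = (\alpha_1,\ldots,\alpha_k) \in M_G f$ and choose $v_i \in V^G_{i,\alpha_i}$ with $f(v_1, \ldots, v_k) \neq 0$. Expanding $v_i = \sum_{\gamma_i \geq \alpha_i} c^i_{\gamma_i}\, u^i_{\gamma_i}$ in the adapted basis and using multilinearity,
\[
f(v_1,\ldots,v_k) = \sum_{\gamma \geq \alpha} c^1_{\gamma_1} \cdots c^k_{\gamma_k}\, f(u^1_{\gamma_1}, \ldots, u^k_{\gamma_k}).
\]
For any $\gamma > \alpha$ in the product order, maximality of $\alpha$ forces $\gamma \notin \supp_G f$, so $f$ vanishes on $V^G_{1,\gamma_1} \times \cdots \times V^G_{k,\gamma_k}$ and the corresponding summand is zero. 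Only the $\gamma = \alpha$ term can contribute, which forces $f(u^1_{\alpha_1},\ldots,u^k_{\alpha_k}) \neq 0$. Since $u^i_{\alpha_i} \in V^F_{i,\phi_i(\alpha_i)}$ by item (2) of the adapted-basis claim, this gives $f(V^F_{1,\phi_1(\alpha_1)} \times \cdots \times V^F_{k,\phi_k(\alpha_k)}) \neq 0$, i.e., $(\phi_1(\alpha_1), \ldots, \phi_k(\alpha_k)) \in \supp_F f$.

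The main obstacle is the adapted-basis lemma. The crux is that $\mathrm{ord}_F$ strictly increases at every subtraction step, which simultaneously guarantees termination and ensures that the final value $\phi_i(\alpha)$ lies outside $\{\phi_i(\alpha+1), \ldots, \phi_i(n_i)\}$, so that $\phi_i$ ends up a bijection of $[n_i]$ rather than merely a function. Everything else is a direct unwinding of definitions.
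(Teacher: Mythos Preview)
Your proof is correct and takes essentially the same approach as the paper. The paper gives two variants: one defines $\phi_i(\beta)$ directly as $\max\{\alpha : V^F_{i,\alpha} \cap (w_{i,\beta} + V^G_{i,\beta+1}) \neq \emptyset\}$ and picks witness vectors $\overline{w_{i,\beta}}$ in that intersection, and the other invokes the Bruhat/Schubert-cell decomposition over $\CC$; your adapted-basis lemma via Gaussian elimination is exactly the arbitrary-field version of the latter, and your $u^i_\beta$ coincide with the paper's $\overline{w_{i,\beta}}$, after which both proofs finish with the same multilinearity-plus-maximality argument.
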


\begin{proof}
Let $V$ be some $n$-dimensional vector space and let $(V_\alpha)_\alpha$ and $(W_\beta)_\beta$ be complete flags of $V$ such that
\begin{align*}
V_\alpha &= \Span \{ v_\alpha, v_{\alpha+1}, \ldots \}\\
W_\beta &= \Span \{w_\beta, w_{\beta + 1}, \ldots \}. 
\end{align*}
Define the map
\begin{equation}\label{phidef}
\phi : [n] \to [n] : \beta \mapsto \max \bigl\{ \alpha \in [n] : V_\alpha \cap (w_\beta + W_{\beta + 1}) \neq \emptyset \bigr\}.
\end{equation}
The map $\phi$ is injective. Let $\beta, \gamma \in [n]$ with $\beta\leq\gamma$ and suppose $\alpha = \phi(\beta) = \phi(\gamma)$. Then \eqref{phidef} gives
\begin{align}
(\FF^\times v_\alpha + V_{\alpha + 1} )\cap (w_\beta + W_{\beta + 1}) &\neq \emptyset \label{precise1}\\
V_{\alpha + 1} \cap (w_\beta + W_{\beta + 1}) &= \emptyset \label{precise2}\\
(\FF^\times v_\alpha + V_{\alpha + 1}) \cap (w_\gamma + W_{\gamma + 1}) &\neq \emptyset \label{precise3}%
\end{align}
If $\beta<\gamma$, then using \eqref{precise1} and \eqref{precise3} we may obtain a contradiction to~\eqref{precise2}. We conclude that $\beta = \gamma$.

We turn to $F$ and $G$. Write
\begin{align*}
F &= ((V_{1, \alpha_1}), \ldots, (V_{k, \alpha_k}))\\
G &= ((W_{1, \beta_1}), \ldots, (W_{k, \beta_k}))
\end{align*}
and for each pair of complete flags $(V_{i, \alpha_i}), (W_{i, \beta_i})$ of $V_i$ we define the permutation $\phi_i : [n_i] \to [n_i]$ in the same way as above.
We prove $(\phi_1 \times \cdots \times \phi_k) M_G f \subseteq \supp_F f$. 
Let $(\beta_1, \ldots, \beta_k)$ be in $M_G f$. Let $(\alpha_1, \ldots, \alpha_k) = (\phi_1 \times \cdots \times \phi_k)(\beta_1, \ldots, \beta_k)$. Then by definition of the $\phi_i$ the intersection $V_{i, \alpha_i} \cap (w_{i, \beta_i} + W_{i, \beta_i + 1})$ is not empty. Choose %
\[
\overline{w_{i, \beta_i}} \in V_{i, \alpha_i} \cap (w_{i, \beta_i} + W_{i, \beta_i + 1}). %
\]
Since $f$ is multilinear we have for some $x_{\beta'} \in \FF$
\[
f(\overline{w_{1, \beta_1}}, \ldots, \overline{w_{k, \beta_k}}) = f(w_{1, \beta_1}, \ldots, w_{k, \beta_k}) + \sum_{\beta'> \beta} x_{\beta'}  f(w_{1, \beta'_1}, \ldots, w_{k, \beta'_k})
\]
with the sum over tuples $\beta' = (\beta'_1, \ldots, \beta'_k)$ that are strictly larger than $\beta$ in the product order. Since $\beta$ is a maximal element in $\supp_G f$ with respect to the product order, the sum over $\beta' > \beta$ equals zero.
We conclude that $f(\overline{w_{1, \beta_1}}, \ldots, \overline{w_{k, \beta_k}}) = f(w_{1, \beta_1}, \ldots, w_{k, \beta_k}) \neq 0$. Therefore, $f(V_{1, \alpha_1} \times \cdots \times V_{k, \alpha_k})$ is not zero and thus $(\alpha_1, \ldots, \alpha_k) \in \supp_F f$.
\end{proof}

The above proof of \cref{short} may more naturally be phrased in the language of Schubert cells, when $\FF = \CC$, as follows.
We use the notation and definitions of \cite{MR2143072} with the difference that our complete flags are decreasing instead of increasing so we need to use the group of lower unitriangular matrices~$U^-$ instead of the group of upper unitriangular matrices $U$.%

\begin{proof}
Let $U_n^- \subseteq \GL(\CC^{n})$ be the unipotent subgroup of lower triangular $n \times n$ matrices with ones on the diagonal. Let $Y$ be the variety of complete (decreasing) flags in~$\CC^n$.  The group $\GL(\CC^{n})$ naturally acts on $Y$. Let the symmetric group~$S_n$ act  on $Y$ via its natural action on the standard basis of $\CC^n$. 
It is well-known (see e.g.~\cite[Proposition~1.2.1]{MR2143072}) that the variety $Y$ is the disjoint union of the $U_n^{-}$-orbits $C^\phi \coloneqq U_n^- \phi\, x$ over all permutations~$\phi$ in the symmetric group $S_n$, where~$x$ is the standard flag $\Span \{e_1, \ldots, e_n \} \supset \Span \{ e_2, \ldots, e_n \} \supset \cdots \supset \Span \{ e_n \}$ in~$\CC^n$. These orbits are called the \emph{Schubert cells}. %

We turn to the $k$-tuples of flags $F$ and $G$. Write again
\begin{align*}
F &= ((V_{1, \alpha_1}), \ldots, (V_{k, \alpha_k}))\\
G &= ((W_{1, \beta_1}), \ldots, (W_{k, \beta_k})).
\end{align*}
Without loss of generality we may assume that each $(W_{i, \beta_i})$ is the standard flag in $\CC^{n_i}$. %
By the previous paragraph, for each~$i \in [k]$ there is a permutation $\phi_i \in S_{n_i}$ such that the flag $(V_{i, \phi_i(\beta_i)})$ is in the $U_{n_i}^-$-orbit of the flag $(W_{i, \beta_i})$, that is, there is a group element $g_i \in U_{n_i}^{-}$ such that $V_{i, \phi_i(\beta_i)} = g_i  W_{i, \beta_i}$ for every $\beta_i \in [n_i]$. Let  $(\beta_1, \ldots, \beta_k)$ be in $\supp_G f$, so
\[
f(W_{1,\beta_1} \times \cdots \times W_{k,\beta_k}) \neq \{0\}.
\]
Suppose $(\beta_1, \ldots, \beta_k)$ is maximal. Then since $g_i \in U_{n_i}^-$ and since $f$ is multilinear,
\[
f(W_{1,\beta_1} \times \cdots \times W_{k,\beta_k}) = f(g_1 W_{1,\beta_1} \times \cdots \times g_k W_{k,\beta_k}).
\]
By our choice of $g_i$,
\[
f(g_1 W_{1,\beta_1} \times \cdots \times g_k W_{k,\beta_k}) = f(V_{1, \phi_1(\beta_1)} \times \cdots \times V_{k, \phi_k(\beta_k)}).
\]
We conclude that $(\phi_1(\beta_1), \ldots, \phi_k(\beta_k))$ is in $\supp_F f$.
\end{proof}

\begin{proof}[\bfseries\upshape Proof of \cref{supportupperlower}]
By \cref{upperfilt} and \cref{lowerfilt} we have
\begin{align}
\rho^\theta(f) &= \min_{F\in \filt_\complete(f)} H_\theta(\supp_F f),\label{minx}\\
\rho_\theta(f) &= \max_{G \in \filt_\complete(f)} H_\theta(M_G f).\label{maxx}
\end{align}
Let $F \in \filt_\complete(f)$ minimise \eqref{minx} and let $G \in \filt_\complete(f)$ maximise \eqref{maxx}.
It suffices to show
\begin{equation}\label{coolthing}
H_\theta(M_G f) \leq H_\theta(\supp_F f).
\end{equation}
Let $P \in \prob(M_G f)$ such that $H_\theta(P) = H_\theta(M_G f)$.
\cref{short} gives permutations $\phi_i : [n_i] \to [n_i]$ such that
\[
(\phi_1 \times \cdots \times \phi_k) M_G f \subseteq \supp_F f.
\]
Let $Q \in \prob(\supp_F f)$ be defined by $Q(\alpha) = P((\phi_1^{-1} \times \cdots \times \phi_k^{-1})(\alpha))$.
Then $H_\theta(P) = H_\theta(Q)$. This proves \eqref{coolthing}.
\end{proof}

\subsection{Robust and oblique tensors; spectral points}

At this point we have the upper support functional $\zeta^\theta$, which is additive and sub-multiplicative, we have the lower support functional $\zeta_\theta$, which is super-additive and super-multiplicative, and we know that $\zeta^\theta(t) \geq \zeta_\theta(t)$ for any $t$. When the functionals $\zeta^\theta$ and~$\zeta_\theta$ coincide we have an additive and multiplicative $\leq$-monotone, a spectral point. In general (in fact generically for certain formats) the functionals $\zeta^\theta$ and $\zeta_\theta$ do \emph{not} coincide, however (\cref{notequal}). To get spectral points from the support functionals we should thus restrict to smaller families of tensors $X\subseteq \efftens$. The following definition just gives a name to those tensors for which the support functionals do coincide.

\begin{definition}[Robust] For $\theta \in \prob([k])$ we say $f$ is \defin{$\theta$-robust} if ${\zeta^\theta(f) = \zeta_\theta(f)}$. We say $f$ is \defin{robust} if $f$ is $\theta$-robust for all $\theta \in \prob([k])$.
\end{definition}

We try to understand what robust tensors look like.
Let $f$ be nonzero. Clearly~$f$ is $\theta$-robust if and only if 
\begin{equation}\label{thetarobust}
\zeta^\theta(f) \leq \zeta_\theta(f)
\end{equation}
Being $\theta$-robust is closed under $\oplus$ and $\otimes$, since $\zeta^\theta(f \oplus g) = \zeta^\theta(f) + \zeta^\theta(g) = \zeta_\theta(f) + \zeta_\theta(g) \leq \zeta_\theta(f \oplus g)$, and $\zeta^\theta(f \otimes g) \leq \zeta^\theta(f)\zeta^\theta(g) = \zeta_\theta(f)\zeta_\theta(g) \leq \zeta_\theta(f \otimes g)$.
Equation \eqref{thetarobust} means precisely that there exist  $C, D\in \bases(f)$ and $P \in \prob(M_D f)$ such that  
\begin{equation}\label{robustcrit}
H_\theta(\supp_{C} f) \leq H_\theta(P).
\end{equation}
In this case we have $\zeta_\theta(f) = \zeta^\theta(f) = 2^{H_\theta(P)}$.
In particular, $f$ is $\theta$-robust if there is a $C \in \bases(f)$ such that the maximisation $H_\theta(\supp_C f)$ is attained by a probability distribution $P \in \prob(M_C f)$. %
This criterion is automatically satisfied for all $\theta$ when in some basis the support of~$f$ equals the maximal points in the support of~$f$. %
Recall that a subset $\Phi$ of a partially ordered set is called an \defin{antichain} if all elements in $\Phi$ are pairwise incomparable.

\begin{definition}[Oblique]
We say $f$ is \defin{oblique} if there exists a $C \in \bases(f)$ such that $\supp_C f = M_C f$, that is, such that $\supp_C f$ is an antichain.
\end{definition}

Note that the family of oblique tensors is a semiring under $\oplus$ and $\otimes$.
Suppose~$f$ is oblique. Let $\supp_C f$ be an antichain. Then $f$ is robust and we have simply $\rho_\theta(f) = \rho^\theta(f) = H_\theta(\supp_C f)$. In the language of asymptotic spectra this means the following. 

\begin{theorem}[Strassen]\label{obliquetheorem}
Let~$\semiring{X}\subseteq\efftens$ be the family of all oblique $k$-tensors. Let $\theta \in \prob([k])$. Given $f\in \semiring{X}$, let $C_f \in \bases(f)$ be any bases in which $\supp_{C_f} f$ is an antichain. Then
\[
\zeta^\theta(f) = \zeta_\theta(f) = 2^{H_\theta(\supp_{C_f} f)}
\]
and thus $\zeta^\theta : \semiring{X} \to \RR_{\geq 0}$ is a restriction monotone, that is normalised on $\langle r\rangle$, additive under direct sum $\oplus$ and multiplicative under tensor product $\otimes$.
So~$\zeta^\theta$ is an element of the asymptotic spectrum~$\Delta(\semiring{X})$.
\end{theorem}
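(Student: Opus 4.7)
The plan is to sandwich $\rho^\theta(f)$ and $\rho_\theta(f)$ using the antichain witness $C_f$ on both sides. Since $\supp_{C_f} f$ is an antichain, every support element is already maximal, so $M_{C_f} f = \supp_{C_f} f$. Plugging $C_f$ into the minimisation of \cref{upper} and the maximisation of \cref{lower} yields
\[
\rho^\theta(f) \leq H_\theta(\supp_{C_f} f) \quad\text{and}\quad \rho_\theta(f) \geq H_\theta(M_{C_f} f) = H_\theta(\supp_{C_f} f).
\]
Combining these with \cref{supportupperlower}, which asserts $\rho^\theta \geq \rho_\theta$ for every tensor, produces the sandwich
\[
H_\theta(\supp_{C_f} f) \geq \rho^\theta(f) \geq \rho_\theta(f) \geq H_\theta(\supp_{C_f} f),
\]
so all four quantities coincide. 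Exponentiating yields $\zeta^\theta(f) = \zeta_\theta(f) = 2^{H_\theta(\supp_{C_f} f)}$, the claimed closed form.

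For the spectral point claim, it remains to check that the restriction of $\zeta^\theta$ to $\semiring{X}$ satisfies properties \ref{a}--\ref{d}. Three of the four are inherited verbatim from \cref{us} and require no obliqueness hypothesis: normalisation at $\langle r \rangle$ (statement 1), additivity under $\oplus$ (statement 2), and $\geq$-monotonicity (statement 4). The only property needing real work is multiplicativity, since statement 3 of \cref{us} supplies only the inequality $\zeta^\theta(f \otimes g) \leq \zeta^\theta(f) \zeta^\theta(g)$.

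To close the reverse inequality on $\semiring{X}$, I would use that oblique tensors form a semiring under $\otimes$: the cartesian product of two antichains is an antichain in the product order, so $f, g \in \semiring{X}$ implies $f \otimes g \in \semiring{X}$. The collapse proved in the first paragraph then gives $\zeta^\theta(f \otimes g) = \zeta_\theta(f \otimes g)$, and super-multiplicativity of the lower functional (statement 3 of \cref{ls}) yields $\zeta_\theta(f \otimes g) \geq \zeta_\theta(f) \zeta_\theta(g) = \zeta^\theta(f) \zeta^\theta(g)$, forcing equality. No real obstacle arises here: every ingredient has been prepared in the preceding subsections, and the proof is essentially a clean assembly of the sandwich with the one-sided sub-/super-multiplicativity of the two functionals; the main conceptual input is \cref{supportupperlower}, which is already in hand.
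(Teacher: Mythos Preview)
Your proof is correct and follows essentially the same approach as the paper: the paper establishes the collapse $\zeta^\theta=\zeta_\theta$ for oblique tensors via exactly your sandwich (antichain $\Rightarrow M_C f=\supp_C f$, plug into the min/max definitions, invoke \cref{supportupperlower}), and then derives multiplicativity on the oblique semiring by the same squeeze between sub-multiplicativity of $\zeta^\theta$ and super-multiplicativity of $\zeta_\theta$. The paper phrases this through the intermediate notion of $\theta$-robust tensors and shows robustness is closed under $\otimes$, but the underlying chain of inequalities is identical to yours.
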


\begin{definition}
[Support simplex]\label{supportsimplex}
For any family of oblique tensors $\semiring{Y}$,
we call the image of $\prob([k]) \to \Delta(\semiring{Y}) : \theta \to \zeta_\theta$ the \defin{support simplex} in the asymptotic spectrum of $\semiring{Y}$. We denote the support simplex by $\zeta_{\prob([k])}(\semiring{Y})$.
\end{definition}

We stress again that the lower support functionals are not additive and can thus not be universal spectral points. 
The upper support functionals may be universal spectral points, but this can, however, not be shown with the help of the lower support functionals.

\section{Quantum functionals}\label{quantum}
In this section we let the base field $\FF$ be the complex numbers $\CC$.
The goal of this section is to construct an explicit family of universal spectral points in the asymptotic spectrum of tensors~$\Delta(\efftens)$. In other words, we construct maps $\{\textnormal{complex $k$-tensors}\} \to \RR_{\geq 0}$ that are $\geq$-monotone, multiplicative under $\otimes$, additive under $\oplus$ and normalised on the unit tensor $\langle n\rangle$. (In fact our maps will be monotone under degeneration $\degengeq$, cf.~\cref{degenremark}.)  

To achieve this we introduce two families of functionals. The \defin{upper quantum functional}~$F^{\theta}$ is defined in terms of isotypic projections; the \defin{lower quantum functional}~$F_\theta$ is defined in terms of quantum entropy. Recall that the Strassen support functionals $\zeta^\theta$ and~$\zeta_\theta$ are parametrised by the probability distributions~$\theta$ on~$[k]$, in other words by the probability distributions~$\theta$ on bipartitions of~$[k]$ of the form~$\{\{j\}, [k]\setminus \{j\}\}$.  
Our functionals~$F^{\theta}$ and $F_\theta$ are parametrised by the probability distributions $\theta$ on all bipartitions $\{S, [k]\setminus S\}$ of~$[k]$.  %
We show that the upper functional is always at least the lower functional, $F^{\theta}(t) \geq F_{\theta}(t)$. 
When $\theta$ is supported on bipartitions of the form $\{\{j\}, [k]\setminus\{j\}\}$ we show that~$F^{\theta}(t)$ and~$F_\theta(t)$ are equal and are thus universal spectral points, and moreover $\zeta^\theta(t) \geq F^{\theta}(t)$. 
Finally, we show that the regularised upper support functional equals the upper quantum functional, $\lim_{n\to \infty} \zeta^{\theta}(t^{\otimes n})^{1/n} = F^{\theta}(t)$, which implies that $F^\theta(t) \geq \zeta_\theta(t)$.

\parag{Notation} 
As always $[k]$ denotes the set $\{1, 2, \ldots, k\}$. For $S \subseteq [k]$ we define the complement $\overline{S} = [k]\setminus S$ and for $j \in [k]$ we define the complement $\overline{j} = [k] \setminus \{j\}$. As before, $V_1, \ldots, V_k$ are vector spaces over the base field, which is the complex numbers in this section. For a subset $S \subseteq [k]$ we define $V_S = \bigotimes_{\smash{j \in S}} V_j$.

A \defin{bipartition} of $[k]$ is an unordered pair $\{S, \overline{S}\}$ with $S\subseteq [k]$, $S \neq \emptyset$ and $\overline{S} \neq \emptyset$. We say that the bipartitions $\{S, \overline{S}\}$ and $\{T, \overline{T}\}$ are \defin{noncrossing} if $S\subseteq T$ or $T \subseteq S$ or $S\cap T = \emptyset$. The set of bipartitions of $[k]$ is denoted by $B$.
We will consider two subsets of the set $\prob(B)$ of probability distributions on $B$. Let~$\prob_\s(B)$ be the set of distributions supported on bipartitions of the form $\{\{j\}, \overline{j}\}$. We say a distribution $\theta \in \prob(B)$ is \defin{noncrossing} if $\theta$ is supported on pairwise noncrossing partitions. Let $\prob_\nc(B)$ be the set of noncrossing distributions. Note that the latter is not a convex set in general. Clearly $\prob_\s(B) \subseteq \prob_{\nc}(B) \subseteq \prob(B)$, with equality if and only if $k\leq 3$.

An \defin{integer partition} is a sequence $\lambda = (\lambda_1, \lambda_2, \ldots, \lambda_d)$ of nonnegative integers satisfying $\lambda_1\geq \lambda_2 \geq \cdots \geq \lambda_d$. We say that $\lambda$ is a partition of $n$ and write $\lambda \vdash n$ if $\lambda_1 + \cdots + \lambda_d = n$.
We denote the number of nonzero parts of $\lambda$ by $\ell(\lambda)$.
We write $\lambda \vdash_d n$ if $\lambda \vdash n$ and $\ell(\lambda) \leq d$.
Let~$\prob_n$ be the set of partitions of $n$.
For a partition $\lambda \vdash n$ let $\overline{\lambda}$ be the sequence $(\tfrac{\lambda_1}{n}, \ldots, \tfrac{\lambda_d}{n})$. For $e\geq d$ we may view $\overline{\lambda}$ as a probability distribution on $[e]$, and we can consider the Shannon entropy~$H(\overline{\lambda})$ of $\overline{\lambda}$. %

\subsection{Upper quantum functional}

To define the upper quantum functional we need concepts from representation theory.
Let~$V$ be a complex vector space of dimension $d$. Let the symmetric group~$S_n$ act on the tensor power $V^{\otimes n}$ by permuting the tensor legs and let the general linear group~$\GL(V)$ act on $V^{\otimes n}$ by acting on the $n$ tensor legs simultaneously. These actions commute, so $V^{\otimes n}$ is a $\GL(V) \times S_n$-module. It decomposes into irreducible modules as
\begin{equation}\label{sw}
V^{\otimes n} \cong \bigoplus_{\lambda \vdash n} \SSS_\lambda(V) \otimes [\lambda],
\end{equation}
where $[\lambda]$ is an irreducible $S_n$-module and $\SSS_\lambda(V)$ is an irreducible $\GL(V)$-module if $\ell(\lambda) \leq d$ and 0 otherwise. Decomposition \eqref{sw} is referred to as Schur--Weyl duality. Let $P_\lambda^V \in \End(V^{\otimes n})$ be the equivariant projection onto the subspace isomorphic to $\SSS_\lambda(V) \otimes [\lambda]$.
Recall that for $S\subseteq [k]$ we defined~$V_S$ as $\bigotimes_{j \in S} V_j$.
Generalising the above discussion, for any subset $S \subseteq [k]$, the group $\GL(V_S) \times S_n$ acts naturally on $(V_1 \otimes \cdots \otimes V_k)^{\otimes n}$, which decomposes as
\[
(V_1 \otimes \cdots \otimes V_k)^{\otimes n} \cong \bigoplus_{\lambda\vdash n} \SSS_\lambda(V_S) \otimes [\lambda] \otimes (V_{\overline{S}})^{\otimes n}
\]
with $\GL(V_S)$ acting trivially on $(V_{\overline{S}})^{\otimes n}$. We define $P_\lambda^{V_S} \in \End((V_{[k]})^{\otimes n})$ to be the equivariant projection  onto the subspace isomorphic to $\SSS_\lambda(V_S) \otimes [\lambda] \otimes (V_{\overline{S}})^{\otimes n}$.

We are interested in powers $t^{\otimes n}$ so we want to restrict $(V_{[k]})^{\otimes n}$ to the symmetric subspace $\SSS_{(n)}(V_{[k]})$. The following observation is well-known.

\begin{lemma}\label{SSbar}
Let $S \subseteq [k]$. Let $\lambda \vdash n$.
\begin{enumerate}
\item The projections $P_{(n)}^{V_{[k]}}$ and $P_\lambda^{V_S}$ commute.
\item The projections $P_\lambda^{V_S} P_{(n)}^{V_{[k]}}$ and $P_\lambda^{V_{\overline{S}}} P_{(n)}^{V_{[k]}}$ are equal.
\end{enumerate}
\end{lemma}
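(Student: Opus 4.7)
The plan is to apply Schur--Weyl duality simultaneously to $V_S^{\otimes n}$ and $V_{\overline{S}}^{\otimes n}$. First I would use the canonical reshuffling isomorphism $V_{[k]}^{\otimes n} \cong V_S^{\otimes n} \otimes V_{\overline{S}}^{\otimes n}$ of $\GL(V_S) \times \GL(V_{\overline{S}}) \times S_n$-modules, under which the $S_n$-action permuting the $n$ copies of $V_{[k]}$ on the left becomes the diagonal $S_n$-action on the right. Applying Schur--Weyl duality in each tensor factor then produces
\[
V_{[k]}^{\otimes n} \;\cong\; \bigoplus_{\mu, \nu \,\vdash\, n} \SSS_\mu(V_S) \otimes \SSS_\nu(V_{\overline{S}}) \otimes [\mu] \otimes [\nu],
\]
with $\GL(V_S)$ and $\GL(V_{\overline{S}})$ acting on the respective Schur functors and with $S_n$ acting diagonally on $[\mu] \otimes [\nu]$.

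Next I would identify the two projectors in this decomposition. Since $P_\lambda^{V_S}$ is the $\GL(V_S)$-isotypic projector for $\lambda$, it acts on the display above by selecting the summands with $\mu = \lambda$ and as the identity on the remaining tensor factors. The symmetrizer $P_{(n)}^{V_{[k]}}$ projects onto the $S_n$-invariants, so it acts as the identity on $\SSS_\mu(V_S) \otimes \SSS_\nu(V_{\overline{S}})$ and as the projection onto $([\mu] \otimes [\nu])^{S_n}$ on the multiplicity part. By Schur's lemma combined with the self-duality $[\mu]^* \cong [\mu]$ of symmetric-group irreducibles, the invariant space $([\mu] \otimes [\nu])^{S_n} \cong \Hom_{S_n}([\mu],[\nu])$ is one-dimensional when $\mu = \nu$ and zero otherwise.

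From this description the two operators act on structurally disjoint components of the decomposition---$P_\lambda^{V_S}$ on the $\GL$-index $\mu$, and $P_{(n)}^{V_{[k]}}$ within each $[\mu] \otimes [\nu]$ block---so they commute, which gives part 1. Moreover, either composition $P_\lambda^{V_S} P_{(n)}^{V_{[k]}} = P_{(n)}^{V_{[k]}} P_\lambda^{V_S}$ is the orthogonal projection onto
\[
\SSS_\lambda(V_S) \otimes \SSS_\lambda(V_{\overline{S}}) \otimes \bigl([\lambda] \otimes [\lambda]\bigr)^{S_n},
\]
since after imposing $\mu = \lambda$ the invariants force $\nu = \lambda$ as well. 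This subspace is manifestly symmetric under exchanging $S$ and $\overline{S}$, so the same argument with $S$ replaced by $\overline{S}$ gives that $P_\lambda^{V_{\overline{S}}} P_{(n)}^{V_{[k]}}$ is the orthogonal projection onto the same subspace---hence the same operator---proving part 2.

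The only real obstacle I anticipate is careful bookkeeping: one needs to verify that under the reshuffling $V_{[k]}^{\otimes n} \cong V_S^{\otimes n} \otimes V_{\overline{S}}^{\otimes n}$ the ``natural'' $S_n$-action on the left genuinely becomes the diagonal $S_n$-action on the right, since the whole argument rests on this identification together with Schur's lemma; once that is in hand the rest is a direct application of Schur--Weyl duality on both factors.
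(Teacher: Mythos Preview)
Your proposal is correct and follows essentially the same route as the paper: apply Schur--Weyl duality to both $V_S^{\otimes n}$ and $V_{\overline S}^{\otimes n}$, use $([\mu]\otimes[\nu])^{S_n}\cong\Hom_{S_n}([\mu],[\nu])$ to see that the symmetric subspace decomposes as $\bigoplus_\lambda \SSS_\lambda(V_S)\otimes\SSS_\lambda(V_{\overline S})$, and conclude that both $P_\lambda^{V_S}P_{(n)}^{V_{[k]}}$ and $P_\lambda^{V_{\overline S}}P_{(n)}^{V_{[k]}}$ project onto the same summand. The only difference is that for part~1 the paper gives a one-line argument---$P_{(n)}^{V_{[k]}}=\tfrac{1}{n!}\sum_{\pi\in S_n}\pi$ commutes with any $S_n$-equivariant map---rather than reading commutation off the decomposition as you do.
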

\begin{proof}
For the first statement, the projector $P_{\smash{(n)}}^{V_{[k]}}$ is just the symmetriser $\tfrac{1}{n!} \sum_{\pi \in S_n} \pi$, which commutes with the $S_n$-equivariant projector $P_\lambda^{\smash{V_S}}$.
We now prove the second statement. By Schur--Weyl duality,
\[
(V_S)^{\otimes n} \otimes (V_{\overline{S}})^{\otimes n} \cong \bigoplus_{\lambda, \mu} \SSS_\lambda(V_S) \otimes [\lambda] \otimes \SSS_\mu(V_{\overline{S}}) \otimes [\mu].
\]
Considering the symmetric subspace gives
\begin{align*}
\bigl((V_S \otimes V_{\overline{S}})^{\otimes n}\bigr)^{S_n} &\cong \bigoplus_{\lambda, \mu} \SSS_\lambda(V_S)  \otimes \SSS_\mu(V_{\overline{S}}) \otimes ([\lambda] \otimes [\mu])^{S_n}\\
&\cong \bigoplus_{\lambda} \SSS_\lambda(V_S)  \otimes \SSS_\lambda(V_{\overline{S}})
\end{align*}
since $([\lambda] \otimes [\mu])^{S_n} \cong ([\lambda]^* \otimes [\mu])^{S_n} \cong \Hom_{S_n}([\lambda], [\mu])$ is one-dimensional if $\lambda = \mu$ and the zero space otherwise. 
The image of $P_\lambda^{V_S} P_{(n)}^{V_{[k]}}$ and the image of $P_\lambda^{V_S} P_{(n)}^{V_{[k]}}$ are both equal to the subspace of $(V_S \otimes V_{\overline{S}})^{\otimes n}$ isomorphic to $\SSS_\lambda(V_S) \otimes \SSS_\lambda(V_{\overline{S}})$.
\end{proof}

\cref{SSbar} says that we can unambiguously label isotypical projections on the symmetric subspace by a pair $(b, \lambda) \in B \times \prob_n$. For $b = \{S, \overline{S}\} \in B$ and a partition $\lambda\vdash n$, define $P_\lambda^{V_b}\coloneqq P_\lambda^{V_{\smash{S}}} P_{\smash{(n)}}^{V_{[k]}}$.

\begin{lemma}\label{comm}
If $b_1,b_2 \in B$ are noncrossing bipartitions and $\lambda^{(1)}, \lambda^{(2)}$ are partitions, then $P_{\lambda^{(1)}}^{V_{\smash{b_1}}}$ and $P_{\lambda^{(2)}}^{V_{\smash{b_2}}}$ commute.
\end{lemma}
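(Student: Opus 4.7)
The plan is to use the noncrossing hypothesis to pick disjoint set representatives $S_1 \in b_1$ and $S_2 \in b_2$, and then factor $(V_{[k]})^{\otimes n}$ so that the two isotypic projections act on disjoint tensor slots.

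First I would verify the following elementary reformulation of noncrossing: two bipartitions $b_1 = \{S, \bar{S}\}$ and $b_2 = \{T, \bar{T}\}$ are noncrossing iff at least one of the four intersections $S \cap T$, $S \cap \bar{T}$, $\bar{S} \cap T$, $\bar{S} \cap \bar{T}$ is empty, iff one can choose representatives $S_1 \in b_1$ and $S_2 \in b_2$ with $S_1 \cap S_2 = \emptyset$. (For instance, $S \subseteq T$ is equivalent to $S \cap \bar{T} = \emptyset$, and then $S$ and $\bar{T}$ are disjoint representatives.) By \cref{SSbar} the choice of representative is immaterial on the symmetric subspace: $P_\lambda^{V_b}$ equals both $P_\lambda^{V_S} P_{(n)}^{V_{[k]}}$ and $P_\lambda^{V_{\bar{S}}} P_{(n)}^{V_{[k]}}$.

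With $S_1, S_2$ disjoint, set $R = [k] \setminus (S_1 \cup S_2)$ (possibly empty, in which case $V_R = \CC$), and factor $V_{[k]} = V_{S_1} \otimes V_{S_2} \otimes V_R$. After reordering the $n$ tensor copies this yields
\[
(V_{[k]})^{\otimes n} \;\cong\; (V_{S_1})^{\otimes n} \otimes (V_{S_2})^{\otimes n} \otimes (V_R)^{\otimes n}.
\]
Under this identification the $\GL(V_{S_1})$-action touches only the first tensor factor and the $\GL(V_{S_2})$-action only the second, so the isotypic projections take the forms $P_{\lambda^{(1)}}^{V_{S_1}} = Q_1 \otimes \id \otimes \id$ and $P_{\lambda^{(2)}}^{V_{S_2}} = \id \otimes Q_2 \otimes \id$, which commute by inspection. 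Since by \cref{SSbar} the symmetriser $P_{(n)}^{V_{[k]}}$ commutes with each $P_{\lambda^{(i)}}^{V_{S_i}}$, the full products $P_{\lambda^{(i)}}^{V_{b_i}} = P_{\lambda^{(i)}}^{V_{S_i}} P_{(n)}^{V_{[k]}}$ commute as well.

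The main obstacle is really a bookkeeping one: checking that disjoint representatives exist for every noncrossing pair, including the degenerate case $b_1 = b_2$ in which we are forced to take $S_2 = \bar{S}_1$ so $R = \emptyset$. In that degenerate case the argument runs with only two tensor factors rather than three, and one simply recovers the elementary fact that two isotypic projections under a single group action are either equal or orthogonal.
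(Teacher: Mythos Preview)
Your proof is correct and follows essentially the same approach as the paper: choose disjoint representatives $S_1 \in b_1$, $S_2 \in b_2$ (which the paper asserts ``without loss of generality''), observe that $P_{\lambda^{(1)}}^{V_{S_1}}$ and $P_{\lambda^{(2)}}^{V_{S_2}}$ commute because they act on disjoint tensor slots, and then invoke \cref{SSbar} to handle the symmetriser. The paper's version is simply terser and omits the explicit case analysis for noncrossing and the $b_1 = b_2$ degeneracy that you spell out.
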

\begin{proof}
Without loss of generality we may say $b_1 = \{S_1, \overline{S_1}\}$ and $b_2 = \{S_2, \overline{S_2}\}$ with $S_1 \cap S_2 = \emptyset$. Clearly  the projectors $P^{V_{\smash{S_1}}}_{\lambda^{(1)}}$ and $P^{V_{\smash{S_2}}}_{\lambda^{(2)}}$ commute, since the intersection $S_1 \cap S_2$ is empty. By statement 1 in  \cref{SSbar} we obtain
\[
P_{\lambda^{(1)}}^{V_{b_1}}P_{\lambda^{(2)}}^{V_{b_2}} = P_{\lambda^{(1)}}^{V_{S_1}} P_{(n)}^{V_{[k]}} P_{\lambda^{(2)}}^{V_{S_2}} P_{(n)}^{V_{[k]}} = P_{\lambda^{(2)}}^{V_{b_2}} P_{\lambda^{(1)}}^{V_{b_1}}.\qedhere
\]
\end{proof}

We can now define the upper quantum functional.

\begin{definition}\label{upperdef}
Let $t \in V_1 \otimes \cdots \otimes V_k$. Let $\theta \in \prob(B)$. %
Define
\[
E^{\theta}(t) = \sup_{(\lambda^{(b)})} \sum_{b\in \supp \theta} \theta(b)\, H(\overline{\lambda^{(b)}})
\]
with the supremum over all tuples $(\lambda^{(b)})_{b\in B}$ of partitions $\lambda^{(b)} \vdash n$ such that
\[
\prod_{b\in \supp\theta} P_{\lambda^{(b)}}^{V_b} \, t^{\otimes n} \neq 0.
\]
We define $F^{\theta}(t) = 2^{E^{\theta}(t)}$ if $t \neq 0$ and $F^{\theta}(0) = 0$. We call $F^{\theta}$ the \defin{upper quantum functional} and $E^{\theta}$ the \defin{logarithmic upper quantum functional}.
\end{definition}

\begin{remark}
The definition of $E^\theta(t)$ in \cref{upperdef} depends on the order of the product $\prod_{b\in \supp \theta} P_{\lambda^{(b)}}^{V_b}$, since the projectors in general do not commute. For $\theta \in \prob_{\nc}$, however, the projectors do commute and the order does not matter. Since we will mostly focus on $\theta \in \prob_{\nc}$ we will leave the order of the elements of~$B$ implicit.
\end{remark}

\begin{theorem}\label{upperkeyprops}
Let $\theta \in \prob_{\nc}(B)$. Let $s \in V_1 \otimes \cdots \otimes V_k$ and $t \in W_1 \otimes \cdots \otimes W_k$.
\begin{enumerate}
\item $F^{\theta}(\langle r \rangle) = r$ for $r \in \NN$.
\item $F^{\theta}(s \oplus t) \leq F^{\theta}(s) + F^{\theta}(t)$.
\item $F^{\theta}(s \otimes t) \leq F^{\theta}(s) F^{\theta}(t)$.
\item If $s \degengeq t$, then $F^{\theta}(s) \geq F^{\theta}(t)$.
\end{enumerate}
\end{theorem}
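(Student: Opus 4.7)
My approach is to translate each of the four statements into an assertion about the nonvanishing of the product of isotypical projectors in \cref{upperdef}, and to handle each via the appropriate branching rule for $\GL$-representations. Say that a tuple $(\lambda^{(b)})_{b \in \supp\theta}$ with $\lambda^{(b)} \vdash n$ is \emph{admissible} for $u \in V_{[k]}^{\otimes n}$ if $\prod_b P^{V_b}_{\lambda^{(b)}} u \neq 0$, so that $E^\theta(u) = \sup\{\sum_b \theta(b)\, H(\overline{\lambda^{(b)}}) : (\lambda^{(b)})\text{ admissible for } u^{\otimes n},\ n \in \NN\}$. The commutativity of the projectors afforded by \cref{comm} is what makes the restriction $\theta \in \prob_{\nc}(B)$ indispensable and is used implicitly throughout.

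The unit-tensor normalisation (1) is a direct computation: $\langle r\rangle^{\otimes n}$ is $S_n$-invariant and has maximally mixed marginal of rank $r^n$ on every bipartition, so Schur--Weyl duality pins the admissible tuples (for $r \mid n$) uniquely to $\lambda^{(b)} = (n/r, \ldots, n/r)$ for every $b$, giving $H(\overline{\lambda^{(b)}}) = \log_2 r$ and hence $E^\theta(\langle r\rangle) = \log_2 r$. The degeneration monotonicity (4) follows from $\GL$-equivariance of the projectors, combined with the openness of the nonvanishing locus: if $s \degengeq t$, then $t^{\otimes n}$ is a limit of points in the $\prod_j \GL(V_j)$-orbit of $s^{\otimes n}$, and since the set $\{v : \prod_b P^{V_b}_{\lambda^{(b)}} v \neq 0\}$ is both open and $\prod_j \GL(V_j)$-invariant, admissibility for $t^{\otimes n}$ passes automatically to $s^{\otimes n}$.

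The heart of the proof lies in (2) and (3), for which the essential inputs are the branching rules
\[
\SSS_\lambda(V \oplus W) \cong \bigoplus_{\mu, \nu} c^\lambda_{\mu\nu}\, \SSS_\mu(V) \otimes \SSS_\nu(W) \quad\text{and}\quad \SSS_\lambda(V \otimes W) \cong \bigoplus_{\mu, \nu} g_{\lambda\mu\nu}\, \SSS_\mu(V) \otimes \SSS_\nu(W)
\]
under $\GL(V) \times \GL(W)$, together with the associated quantum-marginal entropy inequalities: $c^\lambda_{\mu\nu} > 0$ implies $H(\overline\lambda) \leq H(p, 1 - p) + p H(\overline\mu) + (1 - p) H(\overline\nu)$ for $p = |\mu|/|\lambda|$ (from concavity of $H$ applied to $p \rho_\mu + (1-p) \rho_\nu$ combined with the Hermitian-sum interpretation of LR), and $g_{\lambda\mu\nu} > 0$ implies $H(\overline\lambda) \leq H(\overline\mu) + H(\overline\nu)$ (from subadditivity of quantum entropy on a bipartite state with the prescribed marginal spectra). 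For (3), admissibility of $(\lambda^{(b)})$ for $(s \otimes t)^{\otimes n} = s^{\otimes n} \otimes t^{\otimes n}$ forces, via the Kronecker branching applied bipartition-by-bipartition, the existence of $(\mu^{(b)}), (\nu^{(b)}) \vdash n$ with $g_{\lambda^{(b)} \mu^{(b)} \nu^{(b)}} > 0$ that are admissible for $s^{\otimes n}$ and $t^{\otimes n}$; summing the Kronecker inequality against $\theta$ gives $E^\theta(s \otimes t) \leq E^\theta(s) + E^\theta(t)$. For (2), expanding $(s \oplus t)^{\otimes n}$ as a sum over subsets $I \subseteq [n]$ and applying the LR branching yields admissible tuples $(\mu^{(b)}) \vdash k$ for $s^{\otimes k}$ and $(\nu^{(b)}) \vdash n - k$ for $t^{\otimes n-k}$ sharing a common splitting $k = |I|$; the LR entropy inequality combined with optimisation over the ratio $q = k/n$ in the limit produces $F^\theta(s \oplus t) \leq F^\theta(s) + F^\theta(t)$, with the optimum attained at $q = F^\theta(s)/(F^\theta(s) + F^\theta(t))$.

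The main obstacle is that each branching argument above is naturally a \emph{single}-$b$ statement, whereas admissibility is the nonvanishing of a product of projectors that couples all bipartitions simultaneously. Extracting from admissibility of $(\lambda^{(b)})$ for the composite tensor a coherent family of $(\mu^{(b)}, \nu^{(b)})$-decompositions --- and, in the $\oplus$ case, with a common splitting integer $k$ across all $b$ --- is where the noncrossing hypothesis enters crucially: \cref{comm} lets us order and decompose the projectors one $b$ at a time without interference, so that the branching arguments carry through to the full tuple.
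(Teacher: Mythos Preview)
Your outline for statements (2), (3), (4) is essentially the paper's proof: degeneration monotonicity via equivariance and continuity of the projectors (Lemma~3.5), sub-multiplicativity via the Kronecker branching and the entropy inequality $g_{\lambda\mu\nu}>0 \Rightarrow H(\overline\lambda)\le H(\overline\mu)+H(\overline\nu)$, and sub-additivity via the Littlewood--Richardson branching and the corresponding mixed-entropy inequality (both inequalities are the paper's Lemma~3.7). Your invocation of \cref{comm} to decouple the bipartitions and extract a coherent family $(\mu^{(b)},\nu^{(b)})$ with a common splitting integer is exactly the mechanism the paper uses. One cosmetic difference: the paper proves Lemma~3.7 via the semigroup property and dimension bounds, whereas you appeal to subadditivity of von Neumann entropy (for Kronecker) and the Horn/Klyachko Hermitian-sum realisation plus the mixture bound $H(p\sigma+(1-p)\tau)\le pH(\sigma)+(1-p)H(\tau)+h(p)$ (for LR). Both routes are valid; note though that it is this upper bound on the entropy of a mixture, not concavity, that you need.

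Your argument for statement (1) contains a genuine error. The admissible tuple for $\langle r\rangle^{\otimes n}$ is \emph{not} unique: already for $r=2$, $k=3$, $n=2$ the tuple $((2),(2),(2))$ is admissible, and in fact every $\lambda^{(b)}$ with $\ell(\lambda^{(b)})\le r$ occurs in some admissible tuple (while, conversely, the all-rectangular tuple $((1,1),(1,1),(1,1))$ is \emph{not} admissible there). The correct upper bound $E^\theta(\langle r\rangle)\le\log_2 r$ comes instead from the observation that the flattening of $\langle r\rangle$ across any bipartition $b$ has rank $r$, so $\langle r\rangle^{\otimes n}$ lies in $U^{\otimes n}\otimes(U')^{\otimes n}$ for $r$-dimensional $U,U'$, forcing $\ell(\lambda^{(b)})\le r$ and hence $H(\overline{\lambda^{(b)}})\le\log_2 r$. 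The matching lower bound requires exhibiting, for every noncrossing family $\supp\theta$, an admissible tuple with all entropies approaching $\log_2 r$; this is not as immediate as you suggest and is worth spelling out (the paper itself does not give an explicit argument for statement~(1)).
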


Statement 4 is a special case of the following lemma.

\begin{lemma}\label{degen}
Let $\theta \in \prob(B)$. %
If~$s \degengeq t$, then $F^{\theta}(s) \geq F^{\theta}(t)$.
\end{lemma}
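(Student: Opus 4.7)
The plan is to prove monotonicity under degeneration in two steps: first, verify that $F^\theta$ is constant on the orbit $G\cdot s$, where $G = \GL(V_1)\times\cdots\times\GL(V_k)$; second, verify that the nonvanishing condition defining the supremum in $E^\theta$ is preserved under passing to the orbit closure. Since $s\degengeq t$ means exactly that $t\in\overline{G\cdot s}$ inside a common ambient space (pad both $s$ and $t$ with zero slots if necessary), combining the two facts yields $E^\theta(s)\geq E^\theta(t)$, and hence $F^\theta(s)\geq F^\theta(t)$.

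For the $G$-invariance step, I would show that each projector $P_{\lambda^{(b)}}^{V_b}$ commutes with the diagonal action of $G$ on $(V_{[k]})^{\otimes n}$ induced from its action on $V_{[k]}$. Writing $b=\{S,\overline{S}\}$ and using $P_\lambda^{V_b}=P_\lambda^{V_S}P_{(n)}^{V_{[k]}}$, the symmetriser $P_{(n)}^{V_{[k]}}$ commutes with all of $\GL(V_{[k]})$, while $P_\lambda^{V_S}$ is the equivariant projection onto the $\SSS_\lambda(V_S)$-isotypic component in the Schur--Weyl decomposition of $(V_S)^{\otimes n}$, and hence commutes with $\GL(V_S)\times\GL(V_{\overline{S}})$, which contains the natural image of $G$. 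Consequently, for every $g\in G$,
\[
\prod_{b\in\supp\theta} P_{\lambda^{(b)}}^{V_b}\,(g\cdot r)^{\otimes n} \;=\; g^{\otimes n}\prod_{b\in\supp\theta} P_{\lambda^{(b)}}^{V_b}\,r^{\otimes n}.
\]
Since $g^{\otimes n}$ is invertible, the left-hand side is nonzero if and only if the right-hand side is. Thus the set of admissible tuples $(\lambda^{(b)})_b$ in the definition of $E^\theta$ is $G$-invariant in $r$, so $E^\theta$ is constant on $G$-orbits.

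For the closure step, I would observe that for each fixed $n$ and each tuple $(\lambda^{(b)})_b$, the map $r\mapsto \prod_{b\in\supp\theta} P_{\lambda^{(b)}}^{V_b}\,r^{\otimes n}$ is polynomial in $r$, so its nonvanishing locus $U_{(\lambda^{(b)})}\subseteq V_{[k]}$ is Zariski-open (equivalently Euclidean-open). If $(\lambda^{(b)})$ is admissible for $t$, then $t\in U_{(\lambda^{(b)})}$; since $U_{(\lambda^{(b)})}$ is open and $t\in\overline{G\cdot s}$, some point $g\cdot s$ lies in $U_{(\lambda^{(b)})}$. By the equivariance of the previous paragraph, $s$ itself is then admissible for $(\lambda^{(b)})$. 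Taking the supremum over admissible tuples for $t$ therefore shows $E^\theta(s)\geq E^\theta(t)$.

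The step I expect to cost the most care is the equivariance claim: the projector $P_\lambda^{V_b}$ is defined as a product of two operators coming from different Schur--Weyl decompositions, and one must invoke \cref{SSbar} both to see that the product is well-defined (independent of the ordering of the two factors) and to place it inside the commutant of $\GL(V_S)\times\GL(V_{\overline{S}})$. Once that is set up, the closure half is simply the standard observation that nonvanishing is an open condition. A minor bookkeeping nuisance is that in general $s$ and $t$ may live in tensor spaces of different formats; padding them into a common ambient space on which $G$ acts is routine and does not affect $E^\theta$ since adjoining zero basis vectors leaves the relevant projectors unchanged.
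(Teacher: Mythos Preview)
Your argument is correct and is essentially the same as the paper's: both rest on the intertwining relation between the isotypic projectors and the diagonal $\GL$-action, together with the fact that nonvanishing is an open (equivalently, vanishing is a closed) condition. The paper packages the two steps slightly more economically by writing the intertwining relation for arbitrary linear maps $A_i:V_i\to W_i$ (not just invertible ones), which absorbs both your $G$-invariance step and your padding-to-a-common-space step, and then invokes continuity directly; your two-step version with $G$-invariance followed by openness is an equivalent reformulation.
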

\begin{proof}
We may assume $s$ and $t$ are not zero. Let $(\lambda^{(b)})_b$ be a tuple of partitions of $n$. 
It is sufficient to show
\[
\prod_{\mathclap{b \in \supp \theta}} P_{\lambda^{(b)}}^{W_b}\, t^{\otimes n} \neq 0 \implies \prod_{\mathclap{b \in \supp \theta}} P_{\lambda^{(b)}}^{V_b}\, s^{\otimes n} \neq 0,
\]
since then $E^\theta(s)$ is a supremum over a larger set than $E^{\theta}(t)$.
Suppose that 
\[
\prod_{\mathclap{b \in \supp\theta}} P_{\lambda^{(b)}}^{V_{b}}\, s^{\otimes n} = 0.
\]
Let $A_i : V_i \to W_i$ $(i \in [k])$ be linear maps. For any $S \subseteq [k]$ and $\lambda \vdash n$
\[
P_\lambda^{W_S}(A_1 \otimes \cdots \otimes A_k)^{\otimes n} = (A_1 \otimes \cdots \otimes A_k)^{\otimes n} P_\lambda^{V_S}.
\]
Therefore 
\[
\prod_{\mathclap{b\in \supp \theta}} P_{\lambda^{(b)}}^{W_b} (A_1 \otimes \cdots \otimes A_k)^{\otimes n} s^{\otimes n} = (A_1 \otimes \cdots \otimes A_k)^{\otimes n} \prod_{\mathclap{b\in \supp \theta}} P_{\lambda^{(b)}}^{V_b}\, s^{\otimes n} = 0,
\]
and by continuity $\prod_{b\in \supp \theta} P_{\lambda^{(b)}}^{W_b}\, t^{\otimes n} = 0$.
\end{proof}

In the rest of this section we prove statement 2 and 3, for which we need bounds on the dimension of irreducible $\GL(V)$- and $S_n$-representations, and the semigroup property of Kronecker coefficients and Littlewood--Richardson coefficients.

\begin{remark}[Dimension bounds]
The number of partitions of $n$ into at most~$d$ parts is upper bounded by~$(n+1)^d$. For $\lambda \vdash n$ the dimension of the irreducible $S_n$-module~$[\lambda]$ is given by the hook-length formula
\[
\dim {[\lambda]} = \frac{n!}{\prod_{(i,j)\in Y(\lambda)} \hook(i,j)}
\]
where $Y(\lambda)$ is the Young diagram of shape $\lambda$, and $\hook(i,j)$ equals the number of boxes in $Y(\lambda)$ directly below $(i,j)$ plus the number of boxes directly to the right of $(i,j)$ plus one, and the product is over all coordinates of $Y(\lambda)$. For $\lambda \vdash_d n$ the dimension of the irreducible $\GL(V)$-module $\SSS_\lambda(V)$ is given by the formula
\[
\dim \SSS_\lambda(V) = \prod_{1\leq i< j \leq d} \frac{\lambda_i - \lambda_j + j-i}{j -i}.
\]
We will make use of the following estimates:
\begin{equation}\label{sym}
\frac{n!}{\prod_{\ell=1}^d (\lambda_\ell + d- \ell)!} \leq \dim {[\lambda]} \leq \frac{n!}{\prod_{\ell=1}^d \lambda_\ell!}
\end{equation}
\begin{equation}\label{gl}
\dim \SSS_\lambda(V) \leq (n+1)^{d(d-1)/2}.
\end{equation}
\end{remark}

\begin{definition}
Let $\mu, \nu \vdash n$ be partitions. Restrict the irreducible $S_n \times S_n$-representation $[\mu]\otimes [\nu]$ to an $S_n$-representation via $S_n \to S_n \times S_n : \pi \mapsto (\pi, \pi)$ and consider the isotypic decomposition,
\begin{equation}\label{krondef}
[\mu] \otimes [\nu] \!\downarrow^{S_n \times S_n}_{S_n}\, \cong \bigoplus_{\lambda \vdash n} \CC^{g_{\lambda, \mu, \nu}} \otimes [\lambda]
\end{equation}
where $\CC^{g_{\lambda, \mu, \nu}}$ is the multiplicity space for the irreducible representation $[\lambda]$ in its isotypic component. The number $g_{\lambda, \mu, \nu}$ is called a \defin{Kronecker coefficient}. (Equivalently, $g_{\lambda, \mu, \nu} = \dim ([\lambda] \otimes [\mu] \otimes [\nu])^{S_n}$ which justifies the symmetric notation.) Let $\lambda \vdash_{a+b}$ be a partition. Restrict the irreducible $\GL_{a+b}$-module $\SSS_\lambda(\CC^{a+b})$ to a $\GL_a \times \GL_b$-module via the block diagonal embedding $\GL_a \times \GL_b \to \GL_{a + b}$, and consider the isotypic decomposition,
\begin{equation}\label{lrdef}
\SSS_\lambda(\CC^{a+b}) \!\downarrow^{\GL_{a+b}}_{\GL_a \times \GL_b}\, \cong  \bigoplus_{\substack{\mu \vdash_a\\ \nu \vdash_b}} \CC^{c^\lambda_{\mu, \nu}} \otimes \SSS_\mu(\CC^{a}) \otimes \SSS_\nu(\CC^{b})
\end{equation}
where $\CC^{c^{\lambda}_{\mu, \nu}}$ is the multiplicity space for the irreducible module $\SSS_\mu(\CC^{a}) \otimes \SSS_\nu(\CC^{b})$ in its isotypic component. The number $c^{\lambda}_{\mu, \nu}$ is called a \defin{Littlewood--Richardson coefficient}.
\end{definition}

\begin{remark}[Semigroup property]
If $\lambda$ and $\lambda'$ are partitions, then $\lambda + \lambda'$ is defined by elementwise addition.
A fundamental property of the Kronecker coefficients and the Littlewood--Richardson coefficients is the well-known \emph{semigroup property} (see e.g.~\cite{MR2276458}): 
\[
\textnormal{if } g_{\lambda, \mu, \nu} > 0 \textnormal{ and } g_{\alpha, \beta, \gamma} > 0 \textnormal{, then } g_{\lambda + \alpha, \mu + \beta, \nu + \gamma} >0;
\]
\[
\textnormal{if } c^{\lambda}_{\mu, \nu} > 0 \textnormal{ and } c^{\alpha}_{\beta, \gamma} > 0 \textnormal{, then } c^{\lambda + \alpha}_{\mu + \beta, \nu + \gamma} >0.
\]
In other words, the triples of partitions for which the Kronecker coefficients are nonzero form a semigroup under elementwise addition, and the same is true for Littlewood--Richardson coefficients.
\end{remark}

The semigroup properties can be used to prove the following lemma. Of this lemma, the first statement can be found in \cite{MR2197548}, while we do not know of any source that explicitly states the second statement. For the convenience of the reader we give the proofs of both statements. Let $H(P)$ denote the Shannon entropy and let $h(p)$ denote the binary entropy function.

\begin{lemma}\label{dim}
 Let $\lambda, \mu, \nu$ be integer partitions.
\begin{enumerate}
\item If $g_{\lambda, \mu, \nu}$ is nonzero, then $H(\overline{\lambda}) \leq H(\overline{\mu}) + H(\overline{\nu})$.
\item If $c^\lambda_{\mu, \nu}$ is nonzero, then $H(\overline{\lambda}) \leq \tfrac{\abs[0]{\mu}}{\abs[0]{\nu}+\abs[0]{\mu}} H(\overline{\mu}) + \tfrac{\abs[0]{\nu}}{\abs[0]{\nu}+\abs[0]{\mu}} H(\overline{\nu}) + h\bigl(\tfrac{\abs[0]{\mu}}{\abs[0]{\nu}+\abs[0]{\mu}}\bigr)$.
\end{enumerate}
\end{lemma}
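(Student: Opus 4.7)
The plan is to reduce both statements to dimension inequalities for irreducible $S_n$-representations by exploiting the semigroup property, and then extract the entropy inequality by taking $N \to \infty$ and applying Stirling's formula (in the form of the estimate \eqref{sym}).

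For statement 1, I would first invoke the semigroup property of Kronecker coefficients: if $g_{\lambda, \mu, \nu} > 0$, then by iterating, $g_{N\lambda, N\mu, N\nu} > 0$ for every $N \geq 1$. By the definition \eqref{krondef}, this means $[N\lambda]$ appears as a summand in $[N\mu] \otimes [N\nu]$ restricted along the diagonal $S_{Nn} \hookrightarrow S_{Nn}\times S_{Nn}$, hence
\[
\dim[N\lambda] \;\leq\; g_{N\lambda, N\mu, N\nu}\,\dim[N\lambda] \;\leq\; \dim[N\mu]\cdot\dim[N\nu].
\]
Next I apply the sandwich \eqref{sym} and Stirling's formula to the three partitions $N\lambda, N\mu, N\nu$; since the number of parts remains bounded as $N$ grows, both bounds in \eqref{sym} yield $\log_2 \dim[N\lambda] = Nn\, H(\overline{\lambda}) + O(\log N)$ with $n = |\lambda| = |\mu| = |\nu|$. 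Taking $\log_2$ of the inequality above, dividing by $N$, and letting $N\to\infty$ then gives $nH(\overline{\lambda}) \leq nH(\overline{\mu}) + nH(\overline{\nu})$, which is the desired bound.

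For statement 2, I would argue analogously but use Frobenius reciprocity together with the representation-theoretic form of the Littlewood--Richardson rule: if $c^{\lambda}_{\mu,\nu} > 0$ then $[\lambda]$ is a summand of $\mathrm{Ind}_{S_{a}\times S_{b}}^{S_{a+b}}([\mu]\otimes[\nu])$, where $a=|\mu|$, $b=|\nu|$. The semigroup property for LR coefficients upgrades this to $c^{N\lambda}_{N\mu, N\nu} > 0$ for all $N\geq 1$, giving
\[
\dim[N\lambda] \;\leq\; c^{N\lambda}_{N\mu, N\nu}\,\dim[N\lambda] \;\leq\; \binom{N(a+b)}{Na}\dim[N\mu]\dim[N\nu].
\]
Applying \eqref{sym} and Stirling as before to the three dim $[N\,\cdot\,]$ factors, together with the standard estimate $\log_2\binom{N(a+b)}{Na} = N(a+b)\, h\!\bigl(\tfrac{a}{a+b}\bigr) + O(\log N)$ for the binomial coefficient, and dividing by $N(a+b)$ as $N\to\infty$ yields exactly
\[
H(\overline{\lambda}) \;\leq\; \tfrac{a}{a+b}H(\overline{\mu}) + \tfrac{b}{a+b}H(\overline{\nu}) + h\!\bigl(\tfrac{a}{a+b}\bigr).
\]

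The one potentially delicate step is making the Stirling approximation uniform: I want both sides of \eqref{sym} to yield the same leading-order $nH(\overline{\lambda})$ term for the scaled partitions $N\lambda$. This works because the number of parts $d = \ell(\lambda)$ stays fixed while $N\to\infty$, so the correction $\prod_\ell (N\lambda_\ell + d - \ell)!$ differs from $\prod_\ell (N\lambda_\ell)!$ only by a factor polynomial in $N$. Everything else is routine manipulation of Stirling's formula, and the whole argument is essentially an entropy-theoretic repackaging of the fact that the ``asymptotic dimension'' $\lim_{N\to\infty}\tfrac{1}{N}\log_2 \dim[N\lambda]$ of a Young diagram is $|\lambda|H(\overline{\lambda})$.
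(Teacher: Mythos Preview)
Your proof is correct and follows essentially the same strategy as the paper: use the semigroup property to scale to $N\lambda, N\mu, N\nu$, bound $\dim[N\lambda]$ by the dimension of the ambient representation, and extract entropies via \eqref{sym} as $N\to\infty$. The only minor difference is in how you obtain the bound $\dim[\lambda] \leq \binom{a+b}{a}\dim[\mu]\dim[\nu]$ for statement~2: you invoke the standard fact that $c^\lambda_{\mu,\nu}$ is the multiplicity of $[\lambda]$ in $\mathrm{Ind}_{S_a\times S_b}^{S_{a+b}}([\mu]\boxtimes[\nu])$, whereas the paper derives the same inequality by decomposing $(\CC^{a+b})^{\otimes n}$ in two ways via Schur--Weyl duality; the end result and the rest of the argument are identical.
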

\begin{proof}
We begin with the first statement. Suppose $g_{\lambda, \mu, \nu}$ is nonzero. The semigroup property of Kronecker coefficients implies $g_{N\lambda, N\mu, N\nu} \neq 0$ for any positive integer $N$, where $N\lambda$ is the partition with parts $N\lambda_i$. Then the irreducible representation $[N \lambda]$ is isomorphic to a subspace of $[N\mu] \otimes [N\nu]$ by~\eqref{krondef}. So obviously $\dim {[N \lambda]} \leq \dim {[N \mu]} \dim {[N \nu]}$. 
We use the dimension bounds \eqref{sym} to see that $Nn H(\overline{\lambda}) - o(N) \leq Nn H(\overline{\mu}) + Nn H(\overline{\nu})$ when $N \to \infty$. We thus obtain the required inequality $H(\overline{\lambda}) \leq H(\overline{\mu}) + H(\overline{\nu})$.

We prove the second statement. By Schur--Weyl duality, as an $S_n \times \GL_{a+b}$-representation $(\CC^{a+b})^{\otimes n}$ decomposes as
\[
(\CC^{a+b})^{\otimes n} \cong\; \bigoplus_{\mathclap{\lambda \vdash_{a+b} n}}\; [\lambda] \otimes \SSS_\lambda(\CC^{a+b})
\]
and for the restriction via $\GL_a \times \GL_b \to \GL_{a+b}$ (denoted by $\downarrow$) we obtain
\[
(\CC^{a+b})^{\otimes n} \!\downarrow\, \cong \bigoplus_{\lambda \vdash_{a+b} n} \bigoplus_{\substack{\mu \vdash_a\\\nu \vdash_b}}\; [\lambda] \otimes \CC^{c^\lambda_{\mu, \nu}} \otimes \SSS_\mu(\CC^a) \otimes \SSS_\nu(\CC^b).
\]
On the other hand, we may first expand $(\CC^a \oplus \CC^b)^{\otimes n}$ and then apply Schur--Weyl duality for $\GL_a$ and $\GL_b$ separately,
\begin{align*}
(\CC^{a+b})^{\otimes n} \cong (\CC^a \oplus \CC^b)^{\otimes n} &\cong\; \bigoplus_{k=0}^n\; \CC^{\binom{n}{k}} \otimes  (\CC^a)^{\otimes k} \otimes (\CC^b)^{\otimes n-k}\\
&\cong\; \bigoplus_{k=0}^n \bigoplus_{\substack{\mu\vdash_a k\\ \nu\vdash_b n-k}}\! \CC^{\binom{n}{k}} \otimes  [\mu] \otimes \SSS_\mu(\CC^a) \otimes [\nu] \otimes \SSS_\nu(\CC^b).
\end{align*}
Suppose $c^{\lambda}_{\mu, \nu}$ is nonzero.
Then the irreducible representation $[\lambda]$ is isomorphic to a subspace of $\CC^{\smash{\binom{n}{\abs[0]{\mu}}}} \otimes [\mu] \otimes [\nu]$. So obviously $\dim {[\lambda]} \leq \binom{n}{\abs[0]{\mu}} \dim {[\mu]} \dim {[\nu]}$. Because of the semigroup property we may repeat everything for $N\lambda, N\mu, N\nu$ instead of $\lambda, \mu, \nu$ and obtain $\dim {[N\lambda]} \leq \binom{Nn}{\smash{N\abs[0]{\mu}}} \dim {[N\mu]} \dim {[N\nu]}$. We use the dimension bounds \eqref{sym}   to see that $N H(\overline{\lambda}) - o(N) \leq N h(\tfrac{\abs[0]{\mu}}{n}) + \tfrac{\abs[0]{\mu}}{n} N H(\overline{\mu}) + \tfrac{\abs[0]{\nu}}{n} N H(\overline{\nu})$ when~$N \to \infty$, where $h(p)$ denotes the binary entropy function.
\end{proof}

Let $V$ and $W$ be vector spaces of dimension $d$ and $e$.
The tensor power $(V \oplus W)^{\otimes n}$ is both a $\GL(V) \times \GL(W)$-module and a $\GL(V \oplus W)$-module. Let $\lambda\vdash_{d+e} n+m$, $\mu \vdash_d n$ and $\nu \vdash_e m$. The projections $P_\lambda^{V\oplus W}$ and $P_{\smash{\mu}}^{V} \otimes P_\nu^W$ commute, and their product is nonzero if and only if the Littlewood--Richardson coefficient $c^\lambda_{\mu, \nu}$ is nonzero.

\begin{lemma}
Let $\theta \in \prob_{\nc}(B)$. Then
$F^{\theta}(s \oplus t) \leq F^{\theta}(s) + F^{\theta}(t)$.
\end{lemma}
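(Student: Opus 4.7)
The plan is to reduce the inequality to the Littlewood--Richardson entropy bound of \cref{dim}.2 via the expansion
\[
(s \oplus t)^{\otimes n} = \sum_{k=0}^n \psi_k,
\]
where $\psi_k$ collects the terms of the binomial expansion in which $s$ appears at exactly $k$ of the $n$ tensor positions. Crucially, $k$ is a global quantity: for every bipartition $b = \{S, \overline S\} \in \supp \theta$, the tensor $\psi_k$ lies in the $S_n$-invariant subspace $(T_b)_k \otimes U_{\overline b}^{\otimes n}$ of $U_b^{\otimes n} \otimes U_{\overline b}^{\otimes n}$, where $U_b = \bigotimes_{j \in S}(V_j \oplus W_j)$ and
\[
(T_b)_k = \sum_{A \subseteq [n],\, |A| = k} V_b^{\otimes A} \otimes W_b^{\otimes \overline{A}} \;\subseteq\; (V_b \oplus W_b)^{\otimes n} \;\subseteq\; U_b^{\otimes n}.
\]

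The key intermediate claim is: if $\prod_{b \in \supp \theta} P^{V_b}_{\lambda^{(b)}}(s \oplus t)^{\otimes n} \neq 0$ for some $\lambda^{(b)} \vdash n$, then there exist a single $0 \le k \le n$ and, for every $b \in \supp \theta$, partitions $\mu^{(b)} \vdash k$ and $\nu^{(b)} \vdash n-k$ with $c^{\lambda^{(b)}}_{\mu^{(b)}, \nu^{(b)}} > 0$, $\prod_b P^{V_b}_{\mu^{(b)}} s^{\otimes k} \neq 0$ and $\prod_b P^{W_b}_{\nu^{(b)}} t^{\otimes n-k} \neq 0$. There are two ingredients. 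First, each $P^{V_b}_{\lambda^{(b)}}$ is realised by Young's central idempotent in $\CC[S_n]$ acting on $U_b^{\otimes n}$, so it preserves each $S_n$-invariant $(T_b)_k$; together with the commutativity of these projectors for noncrossing $\theta$ (\cref{comm}), the nonvanishing of the product forces nonvanishing on some single $\psi_k$. Second, on the subspace $(V_b \oplus W_b)^{\otimes n}$ the projector $P^{V_b}_{\lambda^{(b)}}$ coincides with $P^{V_b \oplus W_b}_{\lambda^{(b)}}$ (both are realised by the same central element of $\CC[S_n]$), and the Littlewood--Richardson branching $\GL(V_b) \times \GL(W_b) \hookrightarrow \GL(V_b \oplus W_b)$ combined with the observation preceding this lemma produces the partitions $\mu^{(b)}, \nu^{(b)}$ with the stated properties.

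Given the claim, \cref{dim}.2 bounds $H(\overline{\lambda^{(b)}}) \le \tfrac{k}{n} H(\overline{\mu^{(b)}}) + \tfrac{n-k}{n} H(\overline{\nu^{(b)}}) + h(k/n)$ for each $b$; taking the $\theta$-weighted sum and using the definition of $E^\theta$ to bound $\sum_b \theta(b) H(\overline{\mu^{(b)}}) \le E^\theta(s)$ and $\sum_b \theta(b) H(\overline{\nu^{(b)}}) \le E^\theta(t)$ gives
\[
\sum_b \theta(b) H(\overline{\lambda^{(b)}}) \le \tfrac{k}{n} E^\theta(s) + \tfrac{n-k}{n} E^\theta(t) + h\!\bigl(\tfrac{k}{n}\bigr) \le \log_2\bigl(F^\theta(s) + F^\theta(t)\bigr),
\]
where the last inequality is the elementary identity $\max_{p \in [0,1]}(p\alpha + (1-p)\beta + h(p)) = \log_2(2^\alpha + 2^\beta)$ applied with $\alpha = E^\theta(s)$ and $\beta = E^\theta(t)$. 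Taking the supremum over valid $(\lambda^{(b)})$ and exponentiating yields $F^\theta(s \oplus t) \le F^\theta(s) + F^\theta(t)$.

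The main obstacle is the key claim when $|S| > 1$, since then $U_b \neq V_b \oplus W_b$. One has to verify carefully that the $U_b$-content of $(s \oplus t)^{\otimes n}$ at each of the $n$ tensor positions lies in the direct summand $V_b \oplus W_b$ of $U_b$, so the whole tensor lies in $(V_b \oplus W_b)^{\otimes n} \otimes U_{\overline b}^{\otimes n}$; and that the $\GL(U_b)$-isotypic projector restricts to the $\GL(V_b \oplus W_b)$-isotypic projector on this subspace, which is precisely where the $\CC[S_n]$-group-algebra realisation of the projectors becomes essential.
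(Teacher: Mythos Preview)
Your proof is correct and follows essentially the same route as the paper: expand $(s\oplus t)^{\otimes n}$ binomially, pick out a single degree $k$ on which the product of projectors is nonzero, then use the Littlewood--Richardson branching together with \cref{dim}.2 and \cref{entropytrick}. You are in fact more careful than the paper on one point: you correctly flag that $(V\oplus W)_b = U_b \supsetneq V_b \oplus W_b$ when $|S|>1$, and explain why the $\GL(U_b)$-isotypic projector nonetheless restricts to the $\GL(V_b\oplus W_b)$-isotypic projector on $(V_b\oplus W_b)^{\otimes n}$ via the common $\CC[S_n]$ central idempotent --- the paper's step from \eqref{cc1} to $c^{\lambda^{(b)}}_{\mu^{(b)},\nu^{(b)}}\neq 0$ silently uses this. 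One notational slip: in your key claim the projector acting on $(s\oplus t)^{\otimes n}$ should be $P^{U_b}_{\lambda^{(b)}}$, not $P^{V_b}_{\lambda^{(b)}}$; and to pass cleanly from nonvanishing on $\psi_k$ to nonvanishing on $s^{\otimes k}\otimes t^{\otimes n-k}$ you should use (as the paper does) that $P^{U_b}_{\lambda^{(b)}}$ already contains the full symmetriser $P^{U_{[k]}}_{(n)}$, so $P^{U_b}_{\lambda^{(b)}}\psi_k = \binom{n}{k}P^{U_b}_{\lambda^{(b)}}(s^{\otimes k}\otimes t^{\otimes n-k})$.
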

\begin{proof}
Let $(\lambda^{(b)})_{b\in \supp \theta}$ be a tuple of partitions of $n$ such that
\begin{equation}\label{aa}
\prod_{\mathclap{b \in \supp \theta}} P_{\lambda^{(b)}}^{(V \oplus W)_b} (s \oplus t)^{\otimes n} \neq 0.
\end{equation}
Since $P_{\lambda^{(b)}}^{(V \oplus W)_b} = P_{\lambda^{(b)}}^{(V \oplus W)_b}P_{(n)}^{(V \oplus W)}$ statement \eqref{aa} is equivalent to
\begin{equation}\label{sum}
\sum_{m=0}^n \binom{n}{m} \prod_{b \in \supp \theta} P_{\lambda^{(b)}}^{(V \oplus W)_b} (s^{\otimes m} \otimes t^{\otimes(n-m)}) \neq 0.
\end{equation}
At least one of the summands in \eqref{sum} is nonzero, i.e.~for some $m$
\[
\prod_{\mathclap{b \in \supp \theta}} P_{\lambda^{(b)}}^{(V \oplus W)_b} (s^{\otimes m} \otimes t^{\otimes(n-m)}) \neq 0.
\]
We may write $s^{\otimes m} \otimes t^{\otimes (n-m)}$ as
\[
s^{\otimes m} \otimes t^{\otimes (n-m)} =
\sum_{\substack{(\mu^{(b)})\\ (\nu^{(b)})}} \prod_{b \in \supp \theta} P_{\mu^{(b)}}^{V_b}\, s^{\otimes m} \otimes \prod_{b \in \supp \theta} P_{\nu^{(b)}}^{W_b}\, t^{\otimes (n-m)}
\]
where the sum is over tuples of partitions $\mu^{(b)} \vdash m$ and $\nu^{(b)} \vdash n-m$.
There thus exist tuples $(\mu^{(b)})_b$ and $(\nu^{(b)})_b$ such that
\begin{align}
&P_{\lambda^{(b)}}^{(V \oplus W)_b} \bigl( P_{\mu^{(b)}}^{V_b} \otimes P_{\nu^{(b)}}^{W_b}\bigr) \neq 0 \quad\textnormal{for all $b\in \supp \theta$} \label{cc1}\\
&\prod_{\mathclap{b \in \supp \theta}} P_{\mu^{(b)}}^{V_b}\, s^{\otimes m} \neq 0 \label{cc2}\\
&\prod_{\mathclap{b \in \supp \theta}} P_{\nu^{(b)}}^{W_b}\, t^{\otimes n-m} \neq 0.\label{cc3}
\end{align}
\cref{cc1} implies that the Littlewood--Richardson coefficient $c^{\lambda^{(b)}}_{\mu^{(b)}, \nu^{(b)}}$ is nonzero and thus by \cref{dim},
$H(\lambda^{(b)}) \leq \tfrac{m}{n} H(\mu^{(b)}) + (1-\tfrac{m}{n}) H(\nu^{(b)}) + h(\tfrac{m}{n})$.
Setting $p = m/n$ we can therefore write
\[
\sum_{\mathclap{b\in \supp \theta}} \theta(b) H(\lambda^{(b)}) \leq p \sum_{\mathclap{b\in \supp \theta}} \theta(b) H(\mu^{(b)}) + (1-p) \sum_{\mathclap{b\in \supp \theta}} \theta(b) H(\nu^{(b)}) + h(p)
\]
which because of \eqref{cc2} and \eqref{cc3} is at most $p E^\theta(s) + (1-p) E^\theta(t) + h(p)$. Taking the supremum over the admissible tuples $(\lambda^{(b)})_b$ gives the inquality $E^\theta(s \oplus t) \leq p E^\theta(s) + (1-p) E^\theta(t) + h(p)$. We are done by the following \cref{entropytrick}.
\end{proof}

\begin{lemma}[{See e.g.~\cite[Eq.~2.13]{strassen1991degeneration}}]\label{entropytrick}
Let $x, y \in \RR_{\geq0}$. Then 
\[
\max_{0 \leq p \leq 1} 2^{p x + (1-p)y + h(p)} = 2^x + 2^y.
\]
\end{lemma}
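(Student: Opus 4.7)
The plan is to prove the identity by direct optimization over $p \in [0,1]$, taking logarithms so that we maximize the concave function $f(p) = px + (1-p)y + h(p)$. The claim is equivalent to $\max_{p \in [0,1]} f(p) = \log_2(2^x + 2^y)$.

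First I would compute the derivative $f'(p) = x - y + \log_2\bigl((1-p)/p\bigr)$ and set it to zero, which gives the unique critical point
\[
p^* = \frac{2^x}{2^x + 2^y},\qquad 1-p^* = \frac{2^y}{2^x + 2^y}.
\]
Concavity of $h$ ensures $f$ is concave, so $p^*$ is the global maximizer on $(0,1)$; the boundary values $f(0) = y$ and $f(1) = x$ are both at most $\log_2(2^x + 2^y)$, so they do not beat $p^*$. Substituting $p^*$ into $f$ gives
\[
f(p^*) = \frac{2^x x + 2^y y}{2^x + 2^y} + h(p^*),
\]
and expanding $h(p^*) = -p^*\log_2 p^* - (1-p^*)\log_2(1-p^*)$ using $\log_2 p^* = x - \log_2(2^x + 2^y)$ and $\log_2(1-p^*) = y - \log_2(2^x+2^y)$ yields
\[
h(p^*) = -\frac{2^x x + 2^y y}{2^x + 2^y} + \log_2(2^x + 2^y),
\]
so $f(p^*) = \log_2(2^x + 2^y)$ as required. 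Exponentiating gives the lemma.

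Alternatively, and perhaps more elegantly, one may deduce the upper bound $f(p) \leq \log_2(2^x + 2^y)$ for every $p \in [0,1]$ from Gibbs' inequality applied to the two-point distributions $P = (p, 1-p)$ and $Q = (2^x/Z, 2^y/Z)$ with $Z = 2^x + 2^y$: the nonnegativity of the relative entropy $D(P\|Q)$ rearranges exactly to $px + (1-p)y + h(p) \leq \log_2 Z$, with equality iff $P = Q$, i.e.\ iff $p = p^*$. There is no real obstacle here — the only thing to be careful about is the boundary cases $x = 0$ or $y = 0$ (or the degenerate choices $p = 0, 1$), where $0 \log_2 0$ must be interpreted as $0$, but this matches the standard convention for $h$ and creates no issue.
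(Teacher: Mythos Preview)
Your proof is correct; both the calculus argument and the Gibbs-inequality argument are valid and complete. The paper itself does not prove this lemma at all---it merely states it and cites Strassen's paper---so there is no approach to compare against.
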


The tensor power $(V\otimes W)^{\otimes n}$ is both a $\GL(V) \times \GL(W)$-module and a $\GL(V \otimes W)$-module. Let $\lambda \vdash_{de} n$, $\mu \vdash_d n$ and $\nu\vdash_e n$. The projections $P^{V\otimes W}_\lambda$ and $P_\mu^V \otimes P_\nu^W$ commute, and their product is nonzero if and only if the Kronecker coefficient $g_{\lambda, \mu, \nu}$ is nonzero. 

\begin{lemma}
Let $\theta \in \prob_{\nc}(B)$. Then
$F^{\theta}(s \otimes t) \leq F^\theta(s)F^\theta(t)$.
\end{lemma}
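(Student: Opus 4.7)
The plan is to mirror the direct-sum argument in the preceding lemma, but with Littlewood--Richardson coefficients replaced by Kronecker coefficients, using the second observation in \cref{dim} replaced by the first.

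First I would fix a tuple $(\lambda^{(b)})_{b\in \supp\theta}$ of partitions of $n$ that is admissible for $s\otimes t$, i.e.\ satisfies
\[
\prod_{b\in \supp\theta} P_{\lambda^{(b)}}^{(V\otimes W)_b}\, (s\otimes t)^{\otimes n} \neq 0.
\]
Using Schur--Weyl duality on each factor of $(V_b)^{\otimes n} \otimes (W_b)^{\otimes n}$, I can expand the identity on $(V_b\otimes W_b)^{\otimes n}$ as a sum of the commuting projectors $P^{V_b}_{\mu^{(b)}} \otimes P^{W_b}_{\nu^{(b)}}$ over pairs of partitions $\mu^{(b)}, \nu^{(b)}\vdash n$. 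Inserting this resolution of the identity for each $b \in \supp\theta$ (the order of the projectors is irrelevant since $\theta$ is noncrossing, by \cref{comm}), at least one of the resulting summands must be nonzero. So there exist tuples $(\mu^{(b)})_b$ and $(\nu^{(b)})_b$ of partitions of $n$ such that, for every $b \in \supp\theta$,
\[
P_{\lambda^{(b)}}^{(V\otimes W)_b}\bigl(P_{\mu^{(b)}}^{V_b} \otimes P_{\nu^{(b)}}^{W_b}\bigr) \neq 0,
\]
and moreover
\[
\prod_{b \in \supp\theta} P_{\mu^{(b)}}^{V_b}\, s^{\otimes n} \neq 0, \qquad \prod_{b\in \supp\theta} P_{\nu^{(b)}}^{W_b}\, t^{\otimes n} \neq 0.
\]

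The first nonvanishing implies, by the discussion preceding the lemma, that the Kronecker coefficient $g_{\lambda^{(b)}, \mu^{(b)}, \nu^{(b)}}$ is nonzero for every $b$. Applying the first statement of \cref{dim} gives $H(\overline{\lambda^{(b)}}) \leq H(\overline{\mu^{(b)}}) + H(\overline{\nu^{(b)}})$. Taking the $\theta$-weighted sum yields
\[
\sum_{b \in \supp\theta}\theta(b) H(\overline{\lambda^{(b)}}) \leq \sum_{b\in \supp\theta}\theta(b) H(\overline{\mu^{(b)}}) + \sum_{b\in \supp\theta}\theta(b) H(\overline{\nu^{(b)}}) \leq E^\theta(s) + E^\theta(t),
\]
where the last inequality uses that $(\mu^{(b)})_b$ and $(\nu^{(b)})_b$ are admissible for $s$ and $t$ respectively. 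Taking the supremum over admissible $(\lambda^{(b)})_b$ gives $E^\theta(s\otimes t) \leq E^\theta(s) + E^\theta(t)$, which exponentiates to the desired bound $F^\theta(s\otimes t) \leq F^\theta(s) F^\theta(t)$.

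The main technical step is the expansion of the identity into commuting block projectors on each tensor factor $V_b \otimes W_b$: this is the place where the noncrossing assumption matters, since we rely on \cref{comm} both to justify the unordered product and to compose projectors across different $b$. Everything else is bookkeeping and the Kronecker-coefficient entropy bound from \cref{dim}, which is why this case is strictly easier than the direct-sum case (no binary entropy term appears).
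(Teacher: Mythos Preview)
Your proof is correct and follows essentially the same route as the paper's: expand $(s\otimes t)^{\otimes n}=s^{\otimes n}\otimes t^{\otimes n}$ via the isotypic projectors $P_{\mu^{(b)}}^{V_b}\otimes P_{\nu^{(b)}}^{W_b}$, pick a nonvanishing summand, and invoke the Kronecker-coefficient entropy bound from \cref{dim}. The paper phrases the expansion as a decomposition of $s^{\otimes n}\otimes t^{\otimes n}$ rather than as a resolution of the identity, but this is only cosmetic.
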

\begin{proof}
Let $(\lambda^{(b)})_b$ be a tuple of partitions of $n$ such that
\[
\prod_{\mathclap{b \in \supp \theta}} P_{\lambda^{(b)}}^{(V \otimes W)_b} (s \otimes t)^{\otimes n} \neq 0.
\]
We may write
\begin{align*}
s^{\otimes n} \otimes t^{\otimes n}= \sum_{\substack{(\mu^{(b)})_b: \mu^{(b)} \vdash n\\ (\nu^{(b)})_b: \nu^{(b)} \vdash n}}\; \prod_{b\in \supp \theta}\!\! P_{\mu^{(b)}}^{V_b} s^{\otimes n} \otimes \prod_{b\in \supp \theta}\!\! P_{\nu^{(b)}}^{W_b} t^{\otimes n}.
\end{align*}
There exist tuples $(\mu^{(b)})_b$ and $(\nu^{(b)})_b$ such that 
\begin{align}
&P_{\lambda^{(b)}}^{(V \otimes W)_b} \bigl( P_{\mu^{(b)}}^{V_b} \otimes P_{\nu^{(b)}}^{W_b}\bigr)  \neq 0 \quad\textnormal{for all $b\in \supp \theta$} \label{ee1}\\
&\prod_{\mathclap{b \in \supp \theta}} P_{\mu^{(b)}}^{V_b} s^{\otimes n} \neq 0 \label{ee2}\\
&\prod_{\mathclap{b \in \supp \theta}} P_{\nu^{(b)}}^{W_b} t^{\otimes n} \neq 0.\label{ee3}
\end{align}
By \eqref{ee1} the Kronecker coefficient $g_{\lambda^{(b)}, \mu^{(b)}, \nu^{(b)}}$ is nonzero and so by \cref{dim} we have $H(\overline{\lambda^{(b)}}) \leq H(\overline{\mu^{(b)}}) + H(\overline{\nu^{(b)}})$. Therefore
\[
\sum_{\mathclap{b \in \supp \theta}} \theta(b) H(\overline{\lambda^{(b)}}) \leq \sum_{\mathclap{b \in \supp \theta}} \theta(b) H(\overline{\mu^{(b)}}) + \sum_{\mathclap{b \in \supp \theta}} \theta(b) H(\overline{\nu^{(b)}})
\]
which because of \eqref{ee2} and \eqref{ee3} is at most $E^\theta(s) + E^\theta(t)$. Taking the supremum over the admissible tuples $(\lambda^{(b)})_b$ gives $E^\theta(s \otimes t) \leq E^\theta(s) + E^\theta(t)$.
\end{proof}

\subsection{Lower quantum functional}

To define the lower quantum functional we need concepts from quantum information theory.
We work with finite-dimensional Hilbert spaces.
A \defin{state} or \defin{density operator} on a Hilbert space $\HH$ is a positive semidefinite linear map $\rho : \HH \to \HH$ satisfying ${\Tr \rho = 1}$. Let $\states(\HH)$ be the set of states on $\HH$. The \defin{von Neumann entropy} or \defin{quantum entropy} of $\rho$ is defined as $H(\rho) = -\Tr \rho \log \rho$.  If $\HH = \HH_1 \otimes \cdots \otimes \HH_k$, then the $j$th \defin{marginal} of a state  $\rho \in \states(\HH)$ is $\rho_j = (\Tr_{\HH_1} \otimes \cdots \otimes \Tr_{\HH_{j-1}} \otimes \Id\otimes \Tr_{\HH_{j+1}} \otimes \cdots \otimes \Tr_{\HH_k}) \rho$. More generally, if $S\subseteq [k]$, then $\rho_S = (\bigotimes_{j \in S} \id_{\HH_j} \otimes \bigotimes_{j \in \overline{S}} \Tr_{\HH_{j}}) \rho \in \states(\HH_S)$.
A state $\rho$ is \defin{pure} if $\rho$ has rank one as a linear map. If $\rho \in \states(\HH_1 \otimes \cdots \otimes \HH_k)$ is pure and $S\subseteq [k]$, then $\rho_S$ and $\rho_{\overline{S}}$ are unitarily equivalent up to enlarging the underlying Hilbert spaces $\HH_1, \ldots, \HH_k$, and therefore the entropy of $\rho_S$ equals the entropy of~$\rho_{\overline{S}}$.

\begin{remark}
A probability distribution $P \in \prob(X)$ gives rise to a state $\rho \in \states(\CC^X)$ as $\rho = \sum_{x \in X} P(x) \ketbra{x}{x}$. This gives a bijection between probability distributions on $X$ and those states on $\CC^X$ which are diagonal in the standard basis. In this way $\prob(X)$ can be identified  with a subset of $\states(\CC^X)$, and this identification is compatible with the notations of marginal distribution/marginal state and Shannon entropy/von Neumann entropy.
\end{remark}

\begin{definition}
Let $\HH = \HH_1 \otimes \cdots \otimes \HH_k$ be a Hilbert space. Given $\theta \in \prob(B)$ and a nonzero vector $\psi \in \HH$, define
\[
H_\theta(\psi) \coloneqq \sum_{S \in B} \theta(S)\, H\Bigl( \Bigl( \tfrac{1}{\langle\psi|\psi\rangle}{\ketbra{\psi}{\psi}} \Bigr)_{\!\!S} \Bigr).
\]
\end{definition}

\begin{definition}\label{quantumfunc}
Let $t \in V_1 \otimes \cdots \otimes V_k$. Choose inner products on each $V_j$ so that they become inner product spaces. For any $\theta \in \prob(B)$ we define
\[
E_\theta(t) = \sup_{\mathclap{A_1, \ldots, A_k}} H_\theta\bigl( (A_1 \otimes \cdots \otimes A_k) t \bigr)
\]
where the supremum is over invertible linear maps $A_j \in \GL(V_j)$. For $t = 0$ we set $E_\theta(t) = -\infty$. We also define $F_\theta(t) = 2^{E_\theta(t)}$ if $t \neq 0$ and $F_\theta(0) = 0$. We call~$E_\theta$ the \defin{logarithmic lower quantum functional} and we call $F_\theta$ the \defin{lower quantum functional}.
\end{definition}

\begin{remark}\label{tricks}
The supremum in the definition of $E_\theta$ is independent of the chosen inner products, since different inner products on a vector space are related by invertible linear maps. We could equivalently define $E_\theta(t)$ as a supremum of~$H_\theta(t)$ over the choice of local inner products. We may as well restrict to those $k$-tuples of inner products for which $\langle t | t \rangle$ equals one.

The definition of $E_\theta$ is not sensitive to embedding each $V_j$ in some larger vector space. In fact, we get another equivalent definition of $E_\theta$ by choosing large enough inner product spaces $\HH_1, \ldots, \HH_k$ and taking the supremum of $H_\theta((A_1 \otimes \cdots \otimes A_k) t)$ over all injective linear maps $A_j : V_j \to \HH_j$.
\end{remark}

\begin{example}\label{graphtensex}
$F_\theta(\langle r\rangle) = r$.
The proof is as follows.
Let $\psi = \langle r \rangle$. If we make the local bases orthonormal, then $\ketbra{\psi}{\psi}_S$ is a projector of rank $r$, and the entropy of the normalised spectrum equals~$\log_2(r)$, for any $\emptyset\neq S \subsetneq [k]$.  %
This gives the lower bound. On the other hand, the quantum entropy of a density matrix $\rho$ is at most the matrix rank of $\rho$. Therefore, $H(\ketbra{\psi}{\psi}_S) \leq \rank(\ketbra{\psi}{\psi}_S) \leq \rank( \flatten_S(\psi))$. This gives the upper bound.
\end{example}

The key properties of the lower quantum functional are as follows.

\begin{theorem}\label{lowerkeyprops}
Let $\theta \in \prob(B)$. Let $s \in V_1 \otimes \cdots \otimes V_k$, $t\in W_1 \otimes \cdots \otimes W_k$.
\begin{enumerate}
\item $F_\theta(\langle r \rangle) = r$ for any $r\in \NN$.
\item $F_\theta(s \oplus t) \geq F_\theta(s) + F_\theta(t)$.
\item $F_\theta(s \otimes t) \geq F_\theta(s) F_\theta(t)$.
\item If $s \degengeq t$, then $F_\theta(s) \geq F_\theta(t)$.
\item $0 \leq F_\theta(s) \leq \prod_{\{S,\overline{S}\}\in B}\min\{\dim(V_S),\dim(V_{\overline{S}})\}^{\theta(\{S,\overline{S}\})}$. %
\end{enumerate}
\end{theorem}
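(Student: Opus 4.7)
The normalization statement (1) is already sketched in Example~\ref{graphtensex}: orthonormalize the local bases for $\langle r\rangle$, in which case the pure-state marginal on any nonempty subset $S$ is $\tfrac{1}{r}\Id_r$, whose von Neumann entropy is $\log_2 r$; the matching upper bound is immediate from $H(\rho_S)\le\log_2\rank(\rho_S)\le\log_2\rank(\flatten_S\psi)=\log_2 r$, and these bounds are invariant under the invertible local action used to define $E_\theta$. For super-multiplicativity~(3) the plan is to take near-optimal $A_i\in\GL(V_i)$ for $s$ and $B_i\in\GL(W_i)$ for $t$ and apply $A_i\otimes B_i\in\GL(V_i\otimes W_i)$ to $s\otimes t$: the resulting unit vector is the tensor product $\psi_s\otimes\psi_t$, whose marginal on any bipartition side $S$ factorizes as $(\psi_s)_S\otimes(\psi_t)_S$. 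Additivity of von Neumann entropy under tensor product then gives $H_\theta(\psi_s\otimes\psi_t)=H_\theta(\psi_s)+H_\theta(\psi_t)$, and taking suprema yields $E_\theta(s\otimes t)\ge E_\theta(s)+E_\theta(t)$, i.e.\ property~(3).

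For super-additivity~(2) the key idea is to build a suitable superposition over the direct sum. Given near-optimal $A_i$ for $s$ and $B_i$ for $t$, define $\tilde A_i\in\GL(V_i\oplus W_i)$ to act as $\alpha A_i$ on $V_i$ and $\beta B_i$ on $W_i$ with positive scalars $\alpha,\beta$ chosen so that $\tilde A_1\otimes\cdots\otimes\tilde A_k$ sends $s\oplus t$ to the unit vector $\tilde\psi=\sqrt{p}\,\psi_s\oplus\sqrt{1-p}\,\psi_t$, where $\psi_s=(\otimes_i A_i)s$ and $\psi_t=(\otimes_i B_i)t$ are unit vectors and $p\in(0,1)$ is a free parameter. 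The cross terms in the marginal $\ketbra{\tilde\psi}{\tilde\psi}_S$ vanish because $V_{\overline S}$ and $W_{\overline S}$ sit in orthogonal summands of $(V\oplus W)_{\overline S}$, so the marginal is block-diagonal of the form $p(\psi_s)_S\oplus(1-p)(\psi_t)_S$. The standard identity $H(p\rho_1\oplus(1-p)\rho_2)=pH(\rho_1)+(1-p)H(\rho_2)+h(p)$ then gives $H_\theta(\tilde\psi)=pH_\theta(\psi_s)+(1-p)H_\theta(\psi_t)+h(p)$; optimizing $p$ via Lemma~\ref{entropytrick} upgrades the resulting bound $E_\theta(s\oplus t)\ge pE_\theta(s)+(1-p)E_\theta(t)+h(p)$ to $F_\theta(s\oplus t)\ge F_\theta(s)+F_\theta(t)$. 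I expect this step to be the main technical obstacle, because the superposition must be set up so that the block-diagonality, the invertibility of $\tilde A_i$, and the normalization all mesh correctly.

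For degeneration monotonicity~(4), the plan is to observe that over $\CC$ the condition $s\degengeq t$ means $t\in\overline{G\cdot s}$, hence $g\cdot t\in\overline{G\cdot s}$ for every $g\in G=\prod_i\GL(V_i)$. Approximate $g\cdot t$ by a sequence $g_n\cdot s$; as long as $t\neq 0$ the limit is nonzero and all sufficiently large terms are nonzero, so continuity of the map $\psi\mapsto H_\theta(\psi)$ on nonzero vectors (which follows from continuity of normalization, of partial trace, and of the von Neumann entropy on density operators) gives $H_\theta(g_n\cdot s)\to H_\theta(g\cdot t)$. Since each $H_\theta(g_n\cdot s)\le E_\theta(s)$, taking supremum over $g$ yields $E_\theta(t)\le E_\theta(s)$. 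Finally, for the bounds~(5), the lower bound $F_\theta(s)\ge 0$ is immediate from the definition, and the upper bound uses purity: for a pure state $\rho$, $\rank(\rho_S)=\rank(\rho_{\overline S})\le\min(\dim V_S,\dim V_{\overline S})$, hence $H(\rho_S)\le\log_2\min(\dim V_S,\dim V_{\overline S})$; weighting by $\theta$ and exponentiating gives the claimed product bound.
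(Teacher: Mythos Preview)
Your proposal is correct and follows essentially the same approach as the paper. The only cosmetic difference is that you phrase the arguments for (2) and (3) in terms of explicit invertible maps $A_i,B_i$ and scaling factors $\alpha,\beta$, whereas the paper uses the equivalent language (cf.\ Remark~\ref{tricks}) of choosing local inner products so that $s$ and $t$ are unit vectors and then forming $u=\sqrt{p}\,s\oplus\sqrt{1-p}\,t$; the block-diagonality of marginals, the recursion identity for von Neumann entropy, and the optimization via Lemma~\ref{entropytrick} are identical in both treatments.
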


Statement 1 is \cref{graphtensex}. Statement 5 is easy to prove. In the rest of this section we prove statement 2--4.

\begin{lemma}
If $s \degengeq t$, then $F_\theta(s) \geq F_\theta(t)$.
\end{lemma}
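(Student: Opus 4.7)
The plan is to reduce the statement to the continuity of $H_\theta$ at nonzero vectors and the simple observation that $\GL(V_1)\times\cdots\times\GL(V_k)$ is closed under right multiplication by a fixed invertible tuple. We may assume $t\neq 0$; the case $t=0$ is trivial since $F_\theta(0)=0$.

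By the definition of degeneration over $\CC$ (cf.~\cref{degenremark}), $s\degengeq t$ means that $t$ lies in the Euclidean closure of the orbit $\bigl(\GL(V_1)\times\cdots\times\GL(V_k)\bigr)\cdot s$. Hence there exists a sequence of invertible tuples $(g_1^{(n)},\ldots,g_k^{(n)})$ with
\[
(g_1^{(n)}\otimes\cdots\otimes g_k^{(n)})\, s \;\longrightarrow\; t \qquad\textnormal{as } n\to\infty.
\]
Fix any invertible tuple $(A_1,\ldots,A_k)\in\GL(V_1)\times\cdots\times\GL(V_k)$ and set $h_i^{(n)}=A_ig_i^{(n)}\in\GL(V_i)$. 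Then
\[
(h_1^{(n)}\otimes\cdots\otimes h_k^{(n)})\, s \;=\; (A_1\otimes\cdots\otimes A_k)(g_1^{(n)}\otimes\cdots\otimes g_k^{(n)})\, s \;\longrightarrow\; (A_1\otimes\cdots\otimes A_k)\, t,
\]
and since $t\neq 0$ and each $A_i$ is invertible, the limit is a nonzero vector.

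Next I would invoke the continuity of $H_\theta$ away from the zero vector: for nonzero $\psi$, the normalised state $\tfrac{1}{\langle\psi|\psi\rangle}\ketbra{\psi}{\psi}$ depends continuously on $\psi$, the partial trace is linear, and the von Neumann entropy is continuous on the space of density operators. Consequently
\[
H_\theta\bigl((h_1^{(n)}\otimes\cdots\otimes h_k^{(n)})\,s\bigr)\;\longrightarrow\; H_\theta\bigl((A_1\otimes\cdots\otimes A_k)\,t\bigr).
\]
Each term on the left is at most $E_\theta(s)$ by definition of the supremum in \cref{quantumfunc}, so the limit satisfies $H_\theta((A_1\otimes\cdots\otimes A_k)\,t)\leq E_\theta(s)$. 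Taking the supremum over invertible tuples $(A_1,\ldots,A_k)$ yields $E_\theta(t)\leq E_\theta(s)$, i.e.~$F_\theta(t)\leq F_\theta(s)$.

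The only substantive point is the continuity claim for $H_\theta$; this is standard but worth pointing out explicitly, since it is what upgrades monotonicity under restriction (which would only require acting by group elements, not by limits of them) to monotonicity under degeneration. Note also that working with the Euclidean-closure formulation of degeneration is essential here — the $\CC(\eps)$-formulation used for arbitrary fields would require extra work to translate into an analytic limit, but over $\CC$ the two definitions coincide.
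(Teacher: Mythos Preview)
Your proof is correct and follows essentially the same route as the paper: pick a sequence in the $\GL$-orbit of $s$ converging to $t$, left-multiply by an arbitrary invertible tuple, invoke continuity of $H_\theta$ at nonzero vectors, bound each term by $E_\theta(s)$, and take the supremum. The paper additionally remarks that one may assume $s$ and $t$ live in the same ambient space (via \cref{tricks}), which you leave implicit in your appeal to \cref{degenremark}, but this is a harmless omission.
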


\begin{proof}
We may without loss of generality assume that $s$ and $t$ are elements of the same space $V_1 \otimes \cdots \otimes V_k$ (\cref{tricks}). Let $G = \GL(V_1) \times \cdots \times \GL(V_k)$. By assumption, there is a sequence $(A_{1,i}, \ldots, A_{k, i}) \in G$ such that $\lim_{i \to \infty} (A_{1, i} \otimes \cdots \otimes A_{k, i}) s = t$. For any $k$-tuple $(B_1, \ldots, B_k) \in G$ we have therefore
\begin{align*}
E_\theta(s) &= \sup_{C_1, \ldots, C_k} H_\theta\bigr( (C_1 \otimes \cdots \otimes C_k) s \bigl)\\
&\geq \lim_{i \to \infty} H_\theta\bigl( (B_1 A_{1, i} \otimes \cdots \otimes B_k A_{k,i}) s \bigr)\\
&= H_\theta\bigl( (B_1 \otimes \cdots \otimes B_k) t\bigr)
\end{align*}
by continuity of $H_\theta$. Now take the supremum over all $(B_1, \ldots, B_k)\in G$ to obtain $E_\theta(s) \geq E_\theta(t)$ and thus $F_\theta(s) \geq F_\theta(t)$.
\end{proof}

\begin{lemma}[{Recursion property of quantum entropy, e.g.~\cite[Eq.~12.19]{MR2230995}}]\label{quantrec}
Let $\rho_i$ be density matrices with support in orthogonal subspaces $\HH_i$ of a Hilbert space $\HH = \oplus_{i=1}^M \HH_i$. Then the density matrix $\rho = \sum_i p_i \rho_i$ has quantum entropy $H(\rho) = H(p) + \sum_{i=1}^M p_i H(\rho_i)$.
\end{lemma}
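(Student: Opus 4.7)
The plan is to diagonalise each $\rho_i$ in an orthonormal eigenbasis of $\HH_i$ and then glue these bases together, using the orthogonality of the subspaces, to produce an eigenbasis of $\rho$ on all of $\HH$. First I would pick, for each $i \in [M]$, an orthonormal basis $\{\ket{e_{i,j}}\}_j$ of $\HH_i$ consisting of eigenvectors of $\rho_i$ with eigenvalues $\lambda_{i,j} \geq 0$; since $\Tr \rho_i = 1$ we have $\sum_j \lambda_{i,j} = 1$. Because the $\HH_i$ are mutually orthogonal and span $\HH$, the collection $\{\ket{e_{i,j}}\}_{i,j}$ is an orthonormal basis of $\HH$, and in this basis the block-diagonal operator $\rho = \sum_i p_i \rho_i$ is diagonal with eigenvalues $p_i \lambda_{i,j}$.

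Next I would reduce $H(\rho) = -\Tr \rho \log \rho$ to a direct Shannon-entropy calculation on this eigenvalue list. Expanding
\[
\log(p_i \lambda_{i,j}) = \log p_i + \log \lambda_{i,j}
\]
(with the standard convention $0 \log 0 = 0$) gives
\[
H(\rho) = -\sum_{i,j} p_i \lambda_{i,j} \log p_i - \sum_{i,j} p_i \lambda_{i,j} \log \lambda_{i,j}.
\]
Using $\sum_j \lambda_{i,j} = 1$ collapses the first sum to $-\sum_i p_i \log p_i = H(p)$, and the second sum factors as $\sum_i p_i \bigl( -\sum_j \lambda_{i,j} \log \lambda_{i,j}\bigr) = \sum_i p_i H(\rho_i)$, yielding the claimed identity.

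There is no substantive obstacle; the only care needed is with the boundary cases $p_i = 0$ or $\lambda_{i,j} = 0$, which are absorbed into the $0 \log 0 = 0$ convention that also underlies the definition of $H(\rho_i)$ and $H(p)$. An equivalent viewpoint, which one could present instead, is to regard $\rho$ as the classical-quantum state $\sum_i p_i \ketbra{i}{i} \otimes \rho_i$ on $\CC^M \otimes \HH'$ after identifying $\HH_i$ with a common ambient space $\HH'$; the stated formula is then the standard entropy of a classical-quantum state, proved by exactly the same block-diagonal diagonalisation.
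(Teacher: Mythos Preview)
Your proof is correct and is the standard argument. Note that the paper does not actually prove this lemma: it is stated with a citation to a textbook (Nielsen--Chuang, Eq.~12.19) and used as a black box, so there is no paper proof to compare against. Your block-diagonalisation argument is exactly the one behind the cited reference.
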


\begin{lemma}
$F_\theta(s \oplus t) \geq F_\theta(s) + F_\theta(t)$.
\end{lemma}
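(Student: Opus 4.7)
The plan is to mimic the analogous argument for $F^\theta$: construct an admissible witness for $s\oplus t$ out of near-optimal witnesses for $s$ and $t$, and then optimise a mixing parameter using \cref{entropytrick}.

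First I would fix inner products on each $V_j$ and $W_j$ and equip $V_j\oplus W_j$ with the orthogonal direct sum inner product, so that $V_j$ and $W_j$ are orthogonal inside $V_j\oplus W_j$. Given $\eps>0$, pick invertible $A_j\in\GL(V_j)$ and $B_j\in\GL(W_j)$ such that $\psi_s\coloneqq(A_1\otimes\cdots\otimes A_k)s$ and $\psi_t\coloneqq(B_1\otimes\cdots\otimes B_k)t$ are unit vectors satisfying $H_\theta(\psi_s)\geq E_\theta(s)-\eps$ and $H_\theta(\psi_t)\geq E_\theta(t)-\eps$ (using \cref{tricks}). For any tuple of positive scalars $(\alpha_j)$ define $C_j=A_j\oplus\alpha_jB_j\in\GL(V_j\oplus W_j)$, and set $\alpha=\alpha_1\cdots\alpha_k$. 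Since $C_j$ restricts to $A_j$ on $V_j$ and to $\alpha_jB_j$ on $W_j$, one computes
\[
(C_1\otimes\cdots\otimes C_k)(s\oplus t)=\psi_s+\alpha\,\psi_t,
\]
viewed as a vector in $V_{[k]}\oplus W_{[k]}\subseteq(V_1\oplus W_1)\otimes\cdots\otimes(V_k\oplus W_k)$. Setting $p=1/(1+\alpha^2)\in(0,1)$, the normalisation is $\hat\psi=\sqrt{p}\,\psi_s+\sqrt{1-p}\,\psi_t$.

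The main step is to evaluate the marginals of $\ketbra{\hat\psi}{\hat\psi}$. For any bipartition $\{S,\overline S\}\in B$, write $\psi_s=\sum_ia_i\otimes b_i$ with $a_i\in V_S$, $b_i\in V_{\overline S}$ and $\psi_t=\sum_jc_j\otimes d_j$ with $c_j\in W_S$, $d_j\in W_{\overline S}$. Because $V_{\overline S}\perp W_{\overline S}$ inside $(V\oplus W)_{\overline S}$, the cross contribution satisfies
\[
\Tr_{\overline S}\ketbra{\psi_s}{\psi_t}=\sum_{i,j}\langle d_j\,|\,b_i\rangle\,\ketbra{a_i}{c_j}=0,
\]
and symmetrically for $\Tr_{\overline S}\ketbra{\psi_t}{\psi_s}$. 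Moreover, $(\ketbra{\psi_s}{\psi_s})_S$ is supported in $V_S$ while $(\ketbra{\psi_t}{\psi_t})_S$ is supported in $W_S$, and $V_S\perp W_S$. Hence
\[
(\ketbra{\hat\psi}{\hat\psi})_S=p\,(\ketbra{\psi_s}{\psi_s})_S\oplus(1-p)\,(\ketbra{\psi_t}{\psi_t})_S
\]
is a direct sum of density operators supported in orthogonal subspaces. Applying the recursion property \cref{quantrec} to each term and summing against $\theta$ yields
\[
H_\theta(\hat\psi)=h(p)+p\,H_\theta(\psi_s)+(1-p)\,H_\theta(\psi_t).
\]

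From this identity, $E_\theta(s\oplus t)\geq h(p)+p(E_\theta(s)-\eps)+(1-p)(E_\theta(t)-\eps)$. Letting $\eps\to 0$ and ranging $\alpha$ (equivalently $p$) over $(0,1)$ gives
\[
E_\theta(s\oplus t)\geq\sup_{0<p<1}\bigl(p\,E_\theta(s)+(1-p)\,E_\theta(t)+h(p)\bigr),
\]
and exponentiating via \cref{entropytrick} produces $F_\theta(s\oplus t)\geq F_\theta(s)+F_\theta(t)$, as desired. The only delicate point is the vanishing of the off-diagonal marginal terms, which is what forces us to work with the orthogonal direct sum inner product; once that is in place the recursion property of quantum entropy does the rest and the argument parallels the super-multiplicativity-style estimates used elsewhere in the paper.
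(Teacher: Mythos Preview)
Your proof is correct and follows essentially the same approach as the paper: build the vector $\sqrt{p}\,\psi_s+\sqrt{1-p}\,\psi_t$ in the orthogonal direct sum, observe that its $S$-marginal is the block-direct sum $p(\ketbra{\psi_s}{\psi_s})_S\oplus(1-p)(\ketbra{\psi_t}{\psi_t})_S$, apply \cref{quantrec}, and optimise $p$ via \cref{entropytrick}. The only cosmetic difference is that the paper invokes \cref{tricks} to phrase the supremum as one over local inner products (so it writes $u=\sqrt{p}\,s\oplus\sqrt{1-p}\,t\cong s\oplus t$ directly), whereas you keep the inner products fixed and build the explicit group element $C_j=A_j\oplus\alpha_j B_j$; these are equivalent formulations of the same argument.
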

\begin{proof}
Choose inner products $\langle\cdot, \cdot\rangle_j$ on $V_j$ and $\langle \cdot, \cdot \rangle'_j$ on $W_j$ in such a way that~$s$ and $t$ have norm one, and consider $H_\theta(s)$ and $H_\theta(t)$ with respect to these inner products (cf.~\cref{tricks}). Equip $V_j \oplus W_j$ with the direct sum of the inner products. Let $u = \sqrt{p}\, s \oplus \sqrt{1-p}\, t \in (V_1 \oplus W_1) \otimes \cdots \otimes (V_k \oplus W_k)$. For any $p \in [0,1]$ the vector $u$ has norm one and $u \cong s \oplus t$. For any subset $S \subseteq [k]$ the reduced states are related as $\ketbra{u}{u}_S = p \ketbra{s}{s}_S \oplus (1-p) \ketbra{t}{t}_S$. Therefore, by recursivity of quantum entropy (\cref{quantrec}),
\[
H\bigl( \ketbra{u}{u}_S \bigr) = p H( \ketbra{s}{s}_S ) + (1-p) H( \ketbra{t}{t}_S) + h(p).
\]
Taking the $\theta$-weighted average of both sides gives
\[
H_\theta(u) = p H_\theta(s) + (1-p) H_\theta(t) + h(p).
\]
Since $E_\theta(s \oplus t) \geq H_\theta(u)$ we may take the supremum over the choices of inner products to get
\[
E_\theta(s \oplus t) \geq p E_\theta(s) + (1-p) E_\theta(t) + h(p).
\]
Apply \cref{entropytrick} to see that $F_\theta(s \oplus t) \geq F_\theta(s) + F_\theta(t)$.
\end{proof}

\begin{lemma}
$F_\theta(s \otimes t) \geq F_\theta(s) F_\theta(t)$.
\end{lemma}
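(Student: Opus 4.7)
The plan is to mirror the proof of super-additivity, but using the fact that the marginal of a product pure state factors as a tensor product and the quantum entropy is additive over tensor products (so we will not even need the binary-entropy trick of \cref{entropytrick}).

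First, following \cref{tricks}, I choose inner products on each $V_j$ and $W_j$ so that $s$ and $t$ have norm one, and I equip each $V_j \otimes W_j$ with the tensor product inner product. Then $s \otimes t$ has norm one in $(V_1 \otimes W_1) \otimes \cdots \otimes (V_k \otimes W_k)$. The key observation is that for any $S \subseteq [k]$ the partial trace factors:
\[
\ketbra{s \otimes t}{s \otimes t}_S = \ketbra{s}{s}_S \otimes \ketbra{t}{t}_S,
\]
which follows by writing out the partial trace over the complementary tensor factors and using that a partial trace of a tensor product is the product of the partial traces.

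By additivity of the von Neumann entropy under tensor products, $H(\rho \otimes \sigma) = H(\rho) + H(\sigma)$, so applying this termwise and averaging with weights $\theta$ gives
\[
H_\theta(s \otimes t) = H_\theta(s) + H_\theta(t).
\]
Next I invoke invertible maps separately on each factor. For any $A_j \in \GL(V_j)$ and $B_j \in \GL(W_j)$, the map $A_j \otimes B_j$ lies in $\GL(V_j \otimes W_j)$, and $(A_j \otimes B_j)(s \otimes t)$ under the natural rearrangement equals $((A_1 \otimes \cdots \otimes A_k) s) \otimes ((B_1 \otimes \cdots \otimes B_k) t)$. Thus the displayed identity applied to this vector in place of $s \otimes t$ yields
\[
E_\theta(s \otimes t) \;\geq\; H_\theta\bigl((A_1 \otimes \cdots \otimes A_k) s\bigr) + H_\theta\bigl((B_1 \otimes \cdots \otimes B_k) t\bigr).
\]
Taking the supremum over the $A_j$ and $B_j$ independently gives $E_\theta(s \otimes t) \geq E_\theta(s) + E_\theta(t)$, and exponentiating yields $F_\theta(s \otimes t) \geq F_\theta(s) F_\theta(t)$.

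There is no real obstacle here; the only subtle point is the bookkeeping that an invertible map of the form $A_j \otimes B_j$ on the product factor $V_j \otimes W_j$ reaches product states but not all states, which is fine because we only need a lower bound on the supremum defining $E_\theta(s \otimes t)$.
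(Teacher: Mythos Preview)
Your proof is correct and is essentially the same as the paper's: both exploit that the $S$-marginal of $\ketbra{s\otimes t}{s\otimes t}$ factors as $\ketbra{s}{s}_S\otimes\ketbra{t}{t}_S$, use additivity of von Neumann entropy to get $H_\theta(s\otimes t)=H_\theta(s)+H_\theta(t)$, and then pass to the supremum. The only cosmetic difference is that the paper phrases the optimisation as a supremum over choices of local inner products (invoking \cref{tricks}), whereas you write it out explicitly as applying $A_j\otimes B_j\in\GL(V_j\otimes W_j)$; your remark that such product maps do not exhaust $\GL(V_j\otimes W_j)$ but suffice for the lower bound is exactly the point.
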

\begin{proof}
Choose inner products $\langle\cdot,\cdot \rangle_j$ on $V_j$ and $\langle \cdot, \cdot \rangle'_j$ on $W_j$ in such a way that~$s$ and $t$ have norm one, and consider $H_\theta(s)$ and $H_\theta(t)$ with respect to these inner products. Equip $V_j \otimes W_j$ with the tensor product of the inner products. Then $s \otimes t \in (V_1 \otimes W_1) \otimes \cdots \otimes (V_k \otimes W_k)$ has norm one. For any subset $S \subseteq [k]$ the reduced states are related as $\ketbra{s \otimes t}{s \otimes t}_S = \ketbra{s}{s}_S \otimes \ketbra{t}{t}_S$. Therefore $H(\ketbra{s \otimes t}{s \otimes t}_S) = H(\ketbra{s}{s}_S) + H(\ketbra{t}{t}_S)$. Taking the $\theta$-weighted average of both sides gives $H_\theta(s \otimes t) = H_\theta(s) + H_\theta(t)$.
Since $E_\theta(s \otimes t) \geq H_\theta(s \otimes t)$, taking the supremum over the choice of inner products gives $E_\theta(s \otimes t) \geq E_\theta(s) + E_\theta(t)$ and thus $F_\theta(s\otimes t) \geq F_\theta(s) F_\theta(t)$.
\end{proof}

\subsection{Comparing the quantum functionals}

In this section we show that in the general regime $\theta \in \prob(B)$ the upper quantum functional is at least the lower quantum functional, justifying the names.

\begin{theorem}\label{lowerupper}
Let $\theta \in \prob(B)$. %
Then
\[
E^{\theta}(t) \geq E_\theta(t).
\]
\end{theorem}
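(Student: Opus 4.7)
The plan is to use a Keyl-type spectrum-estimation argument: for a pure state $s$ with marginal spectrum $r^{(b)}$ on the $S$-side of the bipartition $b=\{S,\overline S\}$, measuring $s^{\otimes n}$ via the isotypic projectors $P_\lambda^{V_b}$ produces partitions $\overline\lambda$ concentrated around $r^{(b)}$, so the continuous supremum defining $E_\theta$ can be approximated by discrete admissible partitions contributing to $E^\theta$. This is the quantum analogue of the filtration-based comparison of upper and lower support functionals in the previous section.

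Concretely, given $\eps>0$, first choose invertible $A_i\in\GL(V_i)$ so that $s=(A_1\otimes\cdots\otimes A_k)t$, rescaled to unit norm, satisfies $H_\theta(s)\geq E_\theta(t)-\eps$; write $r^{(b)}$ for the spectrum of $\ketbra{s}{s}_S$, so that $H_\theta(s)=\sum_{b\in\supp\theta}\theta(b)H(r^{(b)})$. Since $P_\lambda^{V_b}$ is $\GL(V_b)$-equivariant, the covariance computation in the proof of \cref{degen} gives that $(A_1\otimes\cdots\otimes A_k)^{\otimes n}$ commutes with each $P_{\lambda^{(b)}}^{V_b}$, so $\prod_b P_{\lambda^{(b)}}^{V_b} t^{\otimes n}\neq 0$ if and only if $\prod_b P_{\lambda^{(b)}}^{V_b} s^{\otimes n}\neq 0$; we may therefore search for admissible partitions with respect to $s$. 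For each single $b$ the operators $\{P_\lambda^{V_b}\}_\lambda$ form an orthogonal resolution of the identity on the symmetric subspace containing $s^{\otimes n}$, and Keyl's spectrum-estimation theorem applied to the identity $\|P_\lambda^{V_b}s^{\otimes n}\|^2=\tr(P_\lambda^{V_S}\rho_S^{\otimes n})$ with $\rho_S=\ketbra{s}{s}_S$ shows that this probability distribution concentrates on partitions with $\overline\lambda$ in any prescribed $\ell_1$-neighbourhood of $r^{(b)}$, with exponentially small complement mass in $n$.

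The crux is to produce a single tuple $(\lambda^{(b)})_{b\in\supp\theta}$ with $\prod_b P_{\lambda^{(b)}}^{V_b} s^{\otimes n}\neq 0$ and every $\overline{\lambda^{(b)}}$ simultaneously close to $r^{(b)}$. When $\theta\in\prob_{\nc}(B)$ the projectors commute by \cref{comm}, so the map $(\lambda^{(b)})_b\mapsto\|\prod_b P_{\lambda^{(b)}}^{V_b}s^{\otimes n}\|^2$ is a genuine joint probability distribution whose $b$-marginal is the single-$b$ Keyl distribution, and a union bound across $b$ delivers a typical joint event of positive mass. For general $\theta$ one fixes an ordering of $\supp\theta$ and applies the projectors iteratively, using a gentle-measurement bound to ensure that projection onto a typical partition shifts $s^{\otimes n}$ by only $o(1)$ in norm, so the Keyl concentration for the subsequent projectors is preserved. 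Once the tuple is found, continuity of Shannon entropy gives $\sum_b\theta(b)H(\overline{\lambda^{(b)}})\to H_\theta(s)\geq E_\theta(t)-\eps$, and letting $\eps\to 0$ yields the theorem. The main obstacle is precisely this simultaneous step: for noncrossing $\theta$ it reduces to a clean union bound over commuting measurements, but for general $\theta$ the noncommutativity of the projectors forces a carefully quantified perturbation estimate to propagate concentration through the ordered product.
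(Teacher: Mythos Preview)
Your proposal is correct and follows essentially the same approach as the paper: the paper likewise chooses (via inner products rather than an explicit $\GL$-action, cf.\ \cref{tricks}) a representative with near-optimal marginal entropies, applies Keyl's spectrum estimation (\cref{specest}, \cref{specestcor}) to make each $X_i=\sum_{H(\overline\lambda)\geq r_b-\eps}P_\lambda^{V_b}$ have mass close to $1$ on $t^{\otimes n}$, and then uses the gentle measurement lemma (\cref{gentle}, \cref{gentlecor}) to push through the ordered product in the general noncommuting case. Your separate treatment of the noncrossing case via a union bound on the commuting joint measurement is a pleasant simplification, but the paper does not split cases and simply uses gentle measurement throughout.
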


To prove \cref{lowerupper} we use the \emph{gentle measurement lemma} and the \emph{spectrum estimation theorem}. A proof of the gentle measurement lemma can be found in~\cite{MR1725132}, see also \cite[Lemma 5]{ogawa2002new} for the specific bound that we use.  Keyl and Werner proved the spectrum estimation theorem \cite{MR1878924}, see also \cite{MR1955142} and \cite[Theorem~1]{MR2197548} for a more succinct proof. 
Let $\states_{\leq}(\HH)$ be the set of positive semidefinite operators $\rho : \HH \to \HH$ satisfying $\Tr \rho \leq 1$.

\begin{lemma}[Gentle measurement]\label{gentle}
Let $\rho \in \states_{\leq}(\HH)$. Let $X : \HH \to \HH$ be a linear map such that $0 \leq X \leq I$. Then
\[
\norm[0]{X \rho X - \rho}_1 \leq 2 \sqrt{1 - \Tr(X \rho X)}.
\]
\end{lemma}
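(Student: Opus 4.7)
The plan is to combine an algebraic splitting of $\rho - X\rho X$ with the Schatten-norm Hölder inequality $\norm[0]{AB}_1 \leq \norm[0]{A}_2\norm[0]{B}_2$ and the elementary operator inequality $X^2 \leq X$, valid whenever $0 \leq X \leq I$. First I would write
\[
\rho - X\rho X = (I-X)\rho + X\rho(I-X)
\]
and use the triangle inequality to reduce the claim to bounding the two pieces $\norm[0]{(I-X)\rho}_1$ and $\norm[0]{X\rho(I-X)}_1$ separately.

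For each piece I would factor $\rho = \rho^{1/2}\rho^{1/2}$ and apply Hölder to obtain
\[
\norm[0]{(I-X)\rho}_1 \leq \sqrt{\Tr((I-X)^2\rho)}\,\sqrt{\Tr\rho}, \qquad \norm[0]{X\rho(I-X)}_1 \leq \sqrt{\Tr(X^2\rho)}\,\sqrt{\Tr((I-X)^2\rho)}.
\]
Since $\Tr\rho \leq 1$ and $X^2 \leq I$, the ``outer'' factors $\sqrt{\Tr\rho}$ and $\sqrt{\Tr(X^2\rho)}$ are both at most $1$, so the entire problem reduces to showing the single scalar inequality $\Tr((I-X)^2\rho) \leq 1 - \Tr(X\rho X)$.

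The main step is this last inequality, which hinges on $X^2 \leq X$: since $X$ and $I-X$ commute and are both positive semidefinite, their product $X(I-X) = X - X^2$ is positive, giving $X^2 \leq X$ and hence $\Tr(X^2\rho) \leq \Tr(X\rho)$. Expanding $(I-X)^2 = I - 2X + X^2$ and using this, together with $\Tr\rho \leq 1$ and the cyclic identity $\Tr(X\rho X) = \Tr(X^2\rho)$, yields $\Tr((I-X)^2\rho) \leq \Tr\rho - \Tr(X\rho X) \leq 1 - \Tr(X\rho X)$, and plugging back gives $\norm[0]{\rho - X\rho X}_1 \leq 2\sqrt{1-\Tr(X\rho X)}$. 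There is no deep obstacle here; the only subtlety to watch is that the operator inequality $X^2 \leq X$ uses commutativity of $X$ with $I-X$, which is automatic in this setting but would not hold for a product of two generic positive operators.
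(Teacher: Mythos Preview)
Your proof is correct. The paper does not give its own proof of this lemma; it simply cites Winter's original paper and \cite[Lemma~5]{ogawa2002new} for the specific constant~$2$, so there is no in-paper argument to compare against. Your splitting $\rho - X\rho X = (I-X)\rho + X\rho(I-X)$ followed by Hölder and the operator inequality $X^2 \le X$ is the standard short route to this bound and matches what one finds in the cited references.
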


\begin{corollary}\label{gentlecor}
Let $\rho_0 \in \states(\HH)$ and $0\leq X_1, \ldots, X_n \leq I$. Suppose that for each~$i\in [n]$ the inequality $\Tr (X_i\rho_0 X_i) \geq 1-\eps $ holds. Then
\begin{equation}
\norm[0]{X_n \cdots X_2 X_1 \rho_0 X_1 X_2 \cdots X_n - \rho_0}_1 \leq 2n \sqrt{\eps}.\label{toprove}
\end{equation}
In particular,
\[
\Tr(X_n \cdots X_2 X_1 \rho_0 X_1 X_2 \cdots X_n) \geq 1 - 2n \sqrt{\eps}.
\]
\end{corollary}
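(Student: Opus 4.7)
The plan is to set $\sigma_i = X_i \cdots X_1 \rho_0 X_1 \cdots X_i$ for $0 \leq i \leq n$, so that $\sigma_0 = \rho_0$ and $\sigma_n$ is the operator whose distance from $\rho_0$ we want to control. I would prove by induction on $i$ that $\norm[0]{\sigma_i - \rho_0}_1 \leq 2i\sqrt{\eps}$; the case $i = 0$ is trivial.

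For the inductive step I would insert $X_i \rho_0 X_i$ and apply the triangle inequality:
\[
\norm[0]{\sigma_i - \rho_0}_1 \leq \norm[0]{\sigma_i - X_i \rho_0 X_i}_1 + \norm[0]{X_i \rho_0 X_i - \rho_0}_1.
\]
The first summand equals $\norm[0]{X_i(\sigma_{i-1} - \rho_0)X_i}_1$, which is bounded by $\norm[0]{X_i}_\infty^2 \norm[0]{\sigma_{i-1} - \rho_0}_1 \leq \norm[0]{\sigma_{i-1} - \rho_0}_1$ since $0 \leq X_i \leq I$. By the inductive hypothesis this is at most $2(i-1)\sqrt{\eps}$. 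For the second summand I apply the gentle measurement lemma (\cref{gentle}) directly to $\rho_0$ with measurement operator $X_i$; the hypothesis $\Tr(X_i \rho_0 X_i) \geq 1 - \eps$ yields $\norm[0]{X_i \rho_0 X_i - \rho_0}_1 \leq 2\sqrt{\eps}$. Adding the two bounds completes the induction and gives the stated estimate \eqref{toprove} at $i = n$.

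The second inequality is an immediate consequence: since the trace norm dominates the difference of traces,
\[
\bigl| \Tr(X_n \cdots X_1 \rho_0 X_1 \cdots X_n) - \Tr(\rho_0) \bigr| \leq \norm[0]{\sigma_n - \rho_0}_1 \leq 2n\sqrt{\eps},
\]
and $\Tr(\rho_0) = 1$ gives $\Tr(\sigma_n) \geq 1 - 2n\sqrt{\eps}$.

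There is no real obstacle here: the key trick is to compare $\sigma_i$ to $\rho_0$ through the intermediate $X_i \rho_0 X_i$, rather than trying to apply gentle measurement directly to $\sigma_{i-1}$ (which would only satisfy a weakened version of the hypothesis and force us into a messier recursion). The $X_i \leq I$ bound on the trace norm factor is what allows the telescoping to give a clean linear-in-$n$ dependence.
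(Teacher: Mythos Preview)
Your proof is correct and follows essentially the same approach as the paper: define the sequence $\sigma_i = X_i\sigma_{i-1}X_i$ (the paper's $\rho_i$), insert the intermediate $X_i\rho_0 X_i$, and combine the operator-norm bound $\norm[0]{X_i}_\infty\leq 1$ with the gentle measurement lemma to get the recursion $\norm[0]{\sigma_i-\rho_0}_1 \leq \norm[0]{\sigma_{i-1}-\rho_0}_1 + 2\sqrt{\eps}$. Your explicit derivation of the trace lower bound and the closing remark about why one compares to $X_i\rho_0 X_i$ rather than applying gentle measurement to $\sigma_{i-1}$ are nice additions that the paper leaves implicit.
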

\begin{proof}
Let $\rho_i = X_i \rho_{i-1} X_i$ for $i \in [n]$. By the Hölder inequality and the gentle measurement lemma (\cref{gentle}), we have
\begin{align*}
\norm{\rho_i - \rho_0}_1 &\leq \norm[0]{\rho_i - X_i \rho_0 X_i}_1 + \norm[0]{X_i \rho_0 X_i - \rho_0}_1\\
&= \norm[0]{X_i(\rho_{i-1} - \rho_0)X_i}_1 + \norm[0]{X_i \rho_0 X_i - \rho_0}_1\\
&\leq \norm[0]{\rho_{i-1} - \rho_0}_1 \cdot \norm[0]{X_i}^2_\infty + 2 \sqrt{1 - \Tr( X_i \rho_0 X_i)}\\
&\leq \norm[0]{\rho_{i-1} - \rho_0}_1 + 2 \sqrt{\eps}.
\end{align*}
The statement follows by induction on $i$.
\end{proof}

\begin{theorem}[Spectrum estimation]\label{specest}
Let~$\HH$ have dimension $d$.
Let $\rho \in \states(\HH)$ be a density matrix.
Let~$r \in \RR^d$ be the sequence of eigenvalues of $\rho$ ordered nonincreasingly.
Let $\lambda \vdash n$.
Then
\[
\Tr(P_\lambda^{\HH} \rho^{\otimes n}) \leq (n+1)^{d(d-1)/2} 2^{-n D(\lambda \| r)}
\]
where $D(p\| q) = \sum_i p_i( \log_2 p_i - \log_2 q_i)$ is the Kullback--Leibler divergence of probability distributions $p$ and $q$.
\end{theorem}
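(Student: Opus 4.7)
The plan is to reduce the trace to a Schur polynomial evaluation and then bound that via the combinatorics of semistandard Young tableaux. The key identity to establish first is
\[
\Tr(P_\lambda^{\HH}\rho^{\otimes n}) = \dim[\lambda] \cdot s_\lambda(r),
\]
where $s_\lambda$ is the Schur polynomial. To see this, I would diagonalize $\rho$ and use that $\rho^{\otimes n}$ preserves the Schur--Weyl decomposition $\HH^{\otimes n} \cong \bigoplus_\mu \SSS_\mu(\HH)\otimes[\mu]$ because it commutes with the $S_n$-action. On the $\lambda$-isotypic component it acts as $\pi_\lambda(\rho)\otimes \Id_{[\lambda]}$, whose trace factors as $\chi_\lambda^{\GL}(\rho)\cdot\dim[\lambda] = s_\lambda(r)\dim[\lambda]$ by the usual extension of the Weyl character formula from $\GL(\HH)$ to $\End(\HH)$.

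Next I would bound $s_\lambda(r)$ using the combinatorial expansion $s_\lambda(r)=\sum_T r^T$ over semistandard Young tableaux $T$ of shape $\lambda$ with entries in $[d]$, where $r^T = \prod_j r_j^{c_j(T)}$ and $c(T)\in\NN^d$ is the content. The number of such tableaux is $\dim \SSS_\lambda(\HH)\leq (n+1)^{d(d-1)/2}$ by the bound \eqref{gl}, so it suffices to prove that every term satisfies $r^T \leq r^\lambda$. This reduces to the inequality $\sum_j \mu_j \log r_j \leq \sum_j \lambda_j\log r_j$ whenever $\mu\preceq \lambda$ in dominance order and $r_1\geq\cdots\geq r_d$. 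The dominance is automatic for SSYT contents: in any SSYT, the entries in column $j$ are strictly increasing, so all entries $\leq k$ lie in rows $\leq k$, giving $\sum_{j\leq k}c_j(T)\leq \lambda_1+\cdots+\lambda_k$. The inequality on the weighted sums then follows by Abel summation using that $\log r_j - \log r_{j+1}\geq 0$.

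Finally I would combine the pieces. Writing $r^\lambda = 2^{n\sum_j \bar\lambda_j\log_2 r_j} = 2^{-n(H(\bar\lambda)+D(\bar\lambda\|r))}$ and using the elementary bound $\dim[\lambda]\leq n!/\prod_\ell \lambda_\ell! \leq 2^{nH(\bar\lambda)}$ from \eqref{sym}, I get
\[
\Tr(P_\lambda^{\HH}\rho^{\otimes n}) = \dim[\lambda]\, s_\lambda(r) \leq 2^{nH(\bar\lambda)}\cdot (n+1)^{d(d-1)/2}\cdot 2^{-n(H(\bar\lambda)+D(\bar\lambda\|r))},
\]
and the two $H(\bar\lambda)$ contributions cancel to give the claim.

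The main obstacle is the bound $r^T\leq r^\lambda$ uniformly over SSYTs $T$; the rest is either a standard character identity or a dimension bound already recorded in the paper. The dominance-plus-Abel-summation argument is the essential combinatorial content, and it is exactly where the ordering assumption on $r$ is used (without it one would need a permutation of coordinates before comparing to $\lambda$).
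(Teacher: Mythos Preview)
The paper does not actually prove this theorem; it cites Keyl--Werner and, for a ``more succinct proof'', Christandl--Mitchison \cite{MR2197548}. Your argument is correct and is essentially the Christandl--Mitchison proof: the identity $\Tr(P_\lambda^{\HH}\rho^{\otimes n})=\dim[\lambda]\,s_\lambda(r)$, the SSYT expansion of $s_\lambda$, the dominance bound $c(T)\preceq\lambda$ on contents, and the multinomial bound on $\dim[\lambda]$ combine exactly as you describe. One small caveat worth recording is the degenerate case where some eigenvalue $r_j$ vanishes: if $\lambda_j>0$ for such $j$ then $r^\lambda=0$ and the stated bound is trivially true (and $D(\overline{\lambda}\|r)=+\infty$), while if $\lambda_j=0$ one can simply drop the zero coordinates and work in the smaller space; either way the Abel-summation step is unaffected.
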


\begin{corollary}\label{specestcor}
Let $\rho \in \states(\HH)$. Let $\eps > 0$. Let $X = \sum P_\lambda^\HH$ where the sum is over partitions $\lambda\vdash n$ such that $H(\overline{\lambda}) \geq H(\rho) - \eps$.
Then $\lim_{n \to \infty} \Tr\bigl(\rho^{\otimes n} X \bigr) = 1$.
\end{corollary}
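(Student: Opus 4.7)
The plan is to use the spectrum estimation theorem (\cref{specest}) combined with a compactness argument on the probability simplex. I would first observe that
\[
1 - \Tr(\rho^{\otimes n} X) = \sum_{\lambda \,:\, H(\overline{\lambda}) < H(\rho) - \eps} \Tr\bigl(P_\lambda^\HH \rho^{\otimes n}\bigr),
\]
so it suffices to show that the right-hand side goes to 0 as $n\to\infty$.

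Let $r$ denote the ordered spectrum of $\rho$, so that $H(\rho) = H(r)$. Consider the subset
\[
K_\eps = \bigl\{ p \in \prob([d]) \,\big|\, H(p) \leq H(r) - \eps \bigr\}
\]
of the probability simplex. This set is closed (since Shannon entropy is continuous on the simplex with the convention $0\log 0 = 0$), hence compact, and it does not contain $r$. Since the Kullback--Leibler divergence $D(\wcdot \| r)$ is lower semicontinuous and vanishes only at $p = r$, the infimum
\[
c \coloneqq \inf_{p \in K_\eps} D(p \| r)
\]
is strictly positive (possibly $+\infty$ if $r$ has zero entries and every $p \in K_\eps$ is supported outside $\supp r$, which only helps).

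Now I would apply \cref{specest}: for every partition $\lambda \vdash n$ with $H(\overline{\lambda}) < H(\rho) - \eps$, we have $\overline{\lambda} \in K_\eps$ and hence $D(\overline{\lambda}\| r) \geq c$, so
\[
\Tr\bigl(P_\lambda^\HH \rho^{\otimes n}\bigr) \leq (n+1)^{d(d-1)/2} 2^{-nc}.
\]
Combined with the crude bound that the number of partitions of $n$ with at most $d$ parts is at most $(n+1)^d$, we conclude
\[
1 - \Tr(\rho^{\otimes n} X) \leq (n+1)^{d + d(d-1)/2} 2^{-nc},
\]
which tends to $0$ as $n\to\infty$, proving the claim.

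The only genuinely delicate point is step of lower-bounding $D(\overline{\lambda}\| r)$ uniformly, which is handled by compactness of $K_\eps$ and lower semicontinuity of $D(\wcdot\|r)$; everything else is bookkeeping around \cref{specest}.
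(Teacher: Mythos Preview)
Your proof is correct. The paper itself does not give an argument here but simply defers to Corollary~2 in \cite{MR2197548}; your approach---bounding the complementary sum via \cref{specest} and using compactness of the sublevel set $K_\eps$ together with lower semicontinuity of $D(\wcdot\|r)$ to get a uniform positive lower bound on the divergence---is precisely the standard proof one finds in that reference.
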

\begin{proof}
We refer to Corollary 2 in \cite{MR2197548}.
\end{proof}

\begin{proof}[\bfseries\upshape Proof of \cref{lowerupper}]
Let $\ell = \abs[0]{\supp \theta}$.
Let $\eps > 0$ be arbitrary. Choose inner products on $V_1, \ldots, V_k$ such that $t$ is a unit vector and such that the $\theta$-weighted average of the marginal quantum entropies of $t$ is at least $E_\theta(t) - \eps$. Let $(r_b)_{b \in B}$ be the tuple of marginal quantum entropies of $t$.%
By \cref{specestcor} there is an $n_0$ such that for every $n \geq n_0$ and $i \in [\ell]$ the inequality $\Tr( \ketbra{t}{t}^{\otimes n} X_i) \geq 1 - \eps$ holds, where
\[
X_i = \sum_{\mathclap{\substack{\lambda \vdash n:\\H(\overline{\lambda}) \geq r_{\pi(i)} - \eps }}} P_\lambda^\HH.
\] 
According to \cref{gentlecor} this in turn implies
\[
\Tr\bigl(X_\ell \cdots X_2 X_1 (\ketbra{t}{t})^{\otimes n} X_1 X_2 \cdots X_\ell\bigr) \geq 1- 2 \ell \sqrt{\eps}.
\]
In particular, $X_\ell \cdots X_2 X_1 t^{\otimes n} \neq 0$ for $\eps$ small enough and $n$ large enough. By the definition of the operators $X_i$ this implies that there is a tuple $(\lambda^{(i)})_{i \in [\abs[0]{\supp \theta}]}$ of partitions $\lambda^{(i)} \vdash n$ such that
\[
P_{\lambda^{(\pi(\ell))}}^{V_{\pi(\ell)}} \cdots P_{\lambda^{(\pi(2))}}^{V_{\pi(2)}}P_{\lambda^{(\pi(1))}}^{V_{\pi(1)}} t^{\otimes n} \neq 0
\]
and such that $H(\overline{\lambda^{(i)}}) \geq r_{\pi(i)} - \eps$ for each $i \in [\ell]$. Therefore,
\[
E^{\theta}(t) \geq \sum_{\mathclap{b\in \supp \theta}} \theta(b) r_b - \eps \geq E_\theta(t) - 2 \eps.
\]
This holds for any $\eps > 0$, so $E^{\theta}(t) \geq E_\theta(t)$.
\end{proof}

\subsection{Universal spectral points}

We have the upper quantum functional $F^\theta$ which is sub-additive and sub-multiplicative, we have the lower quantum functional $F_\theta$ which is super-additive and super-multiplicative, both are $\degenleq$-monotone and normalised on $\langle r \rangle$, and we know that $F^\theta(t) \geq F_\theta(t)$. (See \cref{upperkeyprops}, \cref{lowerkeyprops} and \cref{lowerupper}.)
In this section we show that $F^\theta(t)$ and $F_\theta(t)$ in fact coincide for the singleton regime $\theta \in \prob_\s(B)$ and for any tensor $t$, which shows that $F^\theta=F_\theta$ is a universal spectral point.
The ingredient for the proof is an object called the entanglement polytope and two characterisations for this object.

Let $\HH = \HH_1 \otimes \cdots \otimes \HH_k$ be a Hilbert space. Let $G = \GL(\HH_1) \times \cdots \times \GL(\HH_k)$. Let $\ket{\psi} \in \HH$ be a unit vector. Then $\ketbra{\psi}{\psi} \in \states(\HH)$ and therefore we can consider the marginals $\ketbra{\psi}{\psi}_j$ of $\ketbra{\psi}{\psi}$ for $j \in [k]$. For each~$j \in [k]$, let~$r_j(\psi)$ be the nonincreasingly ordered tuple of eigenvalues of the $j$th marginal.  The \defin{entanglement polytope} of a nonzero tensor $t \in \HH_1 \otimes \cdots \otimes \HH_k$ is the subset $\Delta_t \subseteq \RR^{\dim \HH_1 + \cdots + \dim \HH_k}$ defined as
\[
\Delta_t = \bigl\{ \bigl(r_1(\psi), \ldots, r_k(\psi)\bigr) \,\big|\, \psi \in \overline{G \cdot t} \textnormal{ and } \norm{\psi} = 1 \bigr\}.
\]
Note that $\Delta_t$ does not depend on the choice of inner products on $\HH_j$. To relate~$F^\theta$ to $F_\theta$ we crucially use the following equivalent characterisation of $\Delta_t$, which is due to \cite[Theorem 1]{MR3087706} and \cite{MR3195184} and based on \cite{MR932055}.

\begin{theorem}
\label{characterization}
The entanglement polytope $\Delta_t$ is the Euclidean closure of the set
\[
\bigl\{ (\overline{\lambda^{(1)}}, \ldots, \overline{\lambda^{(k)}}) \,\big|\, (P_{\lambda^{(1)}}^{\HH_1} \otimes \cdots \otimes P_{\lambda^{(k)}}^{\HH_k}) t^{\otimes n} \neq 0 \bigr\}.
\]
\end{theorem}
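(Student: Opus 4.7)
The plan is to prove the two inclusions separately. The ``$\subseteq$'' direction follows by a gentle-measurement and spectrum-estimation argument closely paralleling the proof of Theorem~\ref{lowerupper}; the ``$\supseteq$'' direction invokes the classical invariant-theoretic equivalence between non-vanishing of isotypic projections of $t^{\otimes n}$ and rational points of the moment polytope $\Delta_t$.

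For the reverse inclusion, let $\psi \in \overline{G\cdot t}$ be a unit vector and write $\psi = \lim_i g_i \cdot t$ for some $g_i \in G$. Each $P_{\lambda}^{\HH_j}$ is equivariant under $\GL(\HH_j)$ acting diagonally on $\HH_j^{\otimes n}$ and hence commutes with $g_i^{\otimes n}$; together with invertibility of $g_i^{\otimes n}$ this implies that $(P_{\lambda^{(1)}}^{\HH_1}\otimes\cdots\otimes P_{\lambda^{(k)}}^{\HH_k})\,t^{\otimes n}\neq 0$ if and only if the same expression with $t$ replaced by $\psi$ is nonzero. Fix $\eps>0$ and apply the spectrum-estimation theorem (Theorem~\ref{specest}) to the single-system marginal $\ketbra{\psi}{\psi}_j$: the coarse-grained projector $X_j \coloneqq \sum_{\lambda} P_\lambda^{\HH_j}$, summed over partitions $\lambda\vdash n$ with $\|\overline{\lambda}-r_j(\psi)\|<\eps$, satisfies $\Tr(X_j\,\ketbra{\psi}{\psi}^{\otimes n})\to 1$ as $n\to\infty$. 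Applying the gentle-measurement corollary (Corollary~\ref{gentlecor}) to $X_1,\ldots,X_k$ yields $X_k\cdots X_1\,\psi^{\otimes n}\neq 0$ for all sufficiently large $n$, producing partitions $\lambda^{(j)}$ with $\overline{\lambda^{(j)}}$ within $\eps$ of $r_j(\psi)$ and witnessing the required non-vanishing. Letting $\eps\to 0$ places $(r_1(\psi),\dots,r_k(\psi))$ in the Euclidean closure of the right-hand set.

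For the forward inclusion, assume $(P_{\lambda^{(1)}}^{\HH_1}\otimes\cdots\otimes P_{\lambda^{(k)}}^{\HH_k})\,t^{\otimes n}\neq 0$. By Schur--Weyl duality this is equivalent to the irreducible $G$-module $\SSS_{\lambda^{(1)}}(\HH_1)\otimes\cdots\otimes\SSS_{\lambda^{(k)}}(\HH_k)$ occurring inside $\Sym^n(\HH_1\otimes\cdots\otimes\HH_k)$ with a highest-weight covariant that does not vanish on $t$; after fixing a Borel subgroup in each $\GL(\HH_j)$, this is precisely the existence of a nonzero semi-invariant of weight $(\lambda^{(1)},\dots,\lambda^{(k)})$ in the coordinate ring $\CC[\overline{G\cdot t}]_n$. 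The Mumford--Kempf--Ness / Brion theorem identifies the normalised weights of such nonzero semi-invariants with the rational points of $\Delta_t$, whence $(\overline{\lambda^{(j)}})_j \in \Delta_t$.

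The main obstacle is the forward direction: rephrasing non-vanishing of the isotypic projection as non-vanishing of a suitable semi-invariant on the orbit closure, and then invoking the full Mumford--Kempf--Ness / Brion correspondence between such semi-invariants and $\Delta_t$. This is where the substantive invariant-theoretic content lies, and where one would cite \cite{MR932055,MR3087706,MR3195184}. The reverse direction, by contrast, is essentially a re-run of the machinery already deployed in the proof of Theorem~\ref{lowerupper}.
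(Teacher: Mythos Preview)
The paper does not give its own proof of this theorem: it is quoted as a result from the literature, attributed to \cite{MR3087706,MR3195184} and based on \cite{MR932055}. So there is no proof in the paper to compare your proposal against; your sketch goes beyond what the paper does.

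Your outline is essentially sound and correctly locates the substantive content in the invariant-theoretic direction, where you (appropriately) defer to the cited references. One small slip in the spectrum-estimation direction: the claim that the isotypic projection is nonzero on $t^{\otimes n}$ \emph{if and only if} it is nonzero on $\psi^{\otimes n}$ is too strong when $\psi$ lies in $\overline{G\cdot t}\setminus G\cdot t$. Equivariance plus invertibility of $g_i$ gives the equivalence only along the orbit; passing to the limit gives just the implication ``nonzero on $\psi^{\otimes n}$ $\Rightarrow$ nonzero on $t^{\otimes n}$'' (cf.\ the argument of Lemma~\ref{degen}). Fortunately that is exactly the implication you need, so the argument survives once the ``iff'' is weakened. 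A second minor point: spectrum estimation controls $D(\overline{\lambda}\|r_j(\psi))$, not the Euclidean distance directly; a one-line appeal to Pinsker (or simply continuity in finite dimension) converts this to the closeness you assert.
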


We now return to the quantum functionals.

\begin{theorem}\label{upperequalslower}
Let $\theta \in \prob_\s(B)$. Then $E^\theta(t) = E_\theta(t)$.
\end{theorem}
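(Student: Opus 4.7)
Since \cref{lowerupper} already establishes $E^\theta(t) \geq E_\theta(t)$ for every $\theta \in \prob(B)$, the plan is to prove the reverse inequality under the restriction $\theta \in \prob_\s(B)$. Identifying $\prob_\s(B)$ with $\prob([k])$ in the obvious way, I will show that both quantities coincide with the convex optimum
\[
M(t) \coloneqq \sup\Bigl\{ \sum_{j\in [k]} \theta(j)\, H(p_j) \;\Big|\; (p_1,\ldots,p_k)\in \Delta_t \Bigr\}
\]
over the entanglement polytope of $t$, with \cref{characterization} serving as the bridge in both directions.

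For $E^\theta(t)\leq M(t)$, fix a tuple $(\lambda^{(j)})_{j \in \supp\theta}$ contributing to the supremum defining $E^\theta(t)$ at some tensor power $n$, and extend it to a tuple over all of $[k]$. Applying $\prod_{j \in \supp\theta}P^{V_j}_{\lambda^{(j)}}$ to the Schur--Weyl decomposition of $t^{\otimes n}$ keeps only those summands whose partitions agree with $\lambda^{(j)}$ in each coordinate $j\in \supp\theta$, because the projectors for distinct singleton-type bipartitions commute by \cref{comm} and distinct isotypic components are orthogonal. Nonvanishing of the product therefore forces the existence of partitions $\lambda^{(j)}$ for $j \notin \supp\theta$ with $(P^{V_1}_{\lambda^{(1)}} \otimes \cdots \otimes P^{V_k}_{\lambda^{(k)}})\, t^{\otimes n} \neq 0$. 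By \cref{characterization}, $(\overline{\lambda^{(1)}},\ldots,\overline{\lambda^{(k)}})\in \Delta_t$, and since only coordinates in $\supp\theta$ carry positive $\theta$-weight, the quantity $\sum_j \theta(j) H(\overline{\lambda^{(j)}})$ is bounded above by $M(t)$; taking the supremum finishes this direction.

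For $M(t) \leq E_\theta(t)$, I specialise $H_\theta$ at $\theta \in \prob_\s(B)$: for a unit vector $\psi$ the entropy of the one-body marginal $\ketbra{\psi}{\psi}_{\{j\}}$ equals the Shannon entropy of its spectrum $r_j(\psi)$, so $H_\theta(\psi) = \sum_j \theta(j) H(r_j(\psi))$. Absorbing normalising scalars into the maps $A_i$ gives $E_\theta(t) = \sup\{ \sum_j \theta(j) H(r_j(\psi)) : \psi \in G\cdot t,\, \norm{\psi}=1\}$. Any $p \in \Delta_t$ is realised as $(r_1(\psi),\ldots,r_k(\psi))$ for some unit $\psi \in \overline{G\cdot t}$; approximating such $\psi$ by a sequence in $G\cdot t$ (and renormalising) and invoking continuity of the marginal-spectrum map and of the Shannon entropy yields $\sum_j \theta(j) H(p_j) \leq E_\theta(t)$. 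Chaining $E_\theta(t) \leq E^\theta(t) \leq M(t) \leq E_\theta(t)$ collapses these to equalities. The only real work is the extension step in the second paragraph; everything else is continuity and normalisation bookkeeping on top of the deep input \cref{characterization}, which is also precisely why the argument cleanly handles the singleton regime but does not immediately extend to general $\theta \in \prob(B)$.
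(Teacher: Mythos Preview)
Your argument is correct and follows essentially the same route as the paper's proof: both identify the common value with the optimisation $M(t)$ over the entanglement polytope $\Delta_t$ and then invoke \cref{characterization}. The paper is slightly more direct, asserting $E_\theta(t)=M(t)$ outright (your continuity/approximation paragraph makes this explicit) and then reading off $M(t)=E^\theta(t)$ from \cref{characterization} without detouring through \cref{lowerupper}; your extension step for partitions at coordinates $j\notin\supp\theta$ is exactly the point the paper leaves implicit when matching the supremum over full $k$-tuples with the defining supremum over $\supp\theta$.
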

\begin{proof}
Since $\theta \in \prob_\s(B)$, the value of $E_\theta(t)$ is related to the entanglement polytope~$\Delta_t$ as follows
\begin{equation}\label{polyt}
E_\theta(t) = \max \bigl\{ \theta(1) H(r_1) + \cdots + \theta(k) H(r_k) \,\big|\, (r_1, \ldots, r_k) \in \Delta_t \bigr\}.
\end{equation}
Using the characterisation in \cref{characterization} we can write \eqref{polyt} as 
\[
\sup \bigl\{ \theta(1) H(\overline{\lambda^{(1)}}) + \cdots + \theta(k) H(\overline{\lambda^{(k)}}) \,\big|\, (P_{\lambda^{(1)}}^{V_1} \otimes \cdots \otimes P_{\lambda^{(k)}}^{V_k}) t^{\otimes n} \neq 0 \bigr\}
\]
which equals $E^\theta(t)$.
\end{proof}

By the discussion at the beginning of this section the following is immediate.

\begin{corollary}\label{univ}
Let $\theta \in \prob_s(B)$. Then $F^\theta = F_\theta$ is a point in the asymptotic spectrum $\Delta(\efftens)$, a universal spectral point.
\end{corollary}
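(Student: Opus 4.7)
The plan is to assemble this corollary directly from the three main results already established in the section: the upper/lower bounding properties for $F^\theta$ and $F_\theta$, and the equality \cref{upperequalslower}.

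First I would observe that $\prob_\s(B)\subseteq\prob_\nc(B)$, so \cref{upperkeyprops} applies and gives the inequalities
\begin{align*}
F^\theta(\langle r\rangle) &= r, & F^\theta(s\oplus t) &\leq F^\theta(s)+F^\theta(t), & F^\theta(s\otimes t) &\leq F^\theta(s)F^\theta(t),
\end{align*}
together with $\degengeq$-monotonicity. Similarly \cref{lowerkeyprops} yields
\begin{align*}
F_\theta(\langle r\rangle) &= r, & F_\theta(s\oplus t) &\geq F_\theta(s)+F_\theta(t), & F_\theta(s\otimes t) &\geq F_\theta(s)F_\theta(t),
\end{align*}
and $\degengeq$-monotonicity. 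Since $s\geq t$ implies $s\degengeq t$, the monotonicity under restriction is immediate.

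The decisive input is \cref{upperequalslower}, which asserts $F^\theta(t)=F_\theta(t)$ for every complex tensor $t$ when $\theta\in\prob_\s(B)$. Given this, let $\xi\coloneqq F^\theta=F_\theta$. For any $s,t$ I would chain the two families of inequalities:
\[
\xi(s)+\xi(t)=F^\theta(s)+F^\theta(t)\geq F^\theta(s\oplus t)=F_\theta(s\oplus t)\geq F_\theta(s)+F_\theta(t)=\xi(s)+\xi(t),
\]
forcing equality throughout, hence additivity under $\oplus$. The analogous chain with $\otimes$ in place of $\oplus$, using sub- and super-multiplicativity, gives multiplicativity. Normalisation $\xi(\langle r\rangle)=r$ is shared by both functionals, and monotonicity under $\geq$ (indeed under $\degengeq$) follows from either of them.

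There is no hard step here: the content has all been loaded into the three previous theorems. The only thing to double-check is that sub- and super-multiplicativity of $F^\theta$ are available on the same semiring of complex tensors as $F_\theta$; this is precisely the reason for singling out $\prob_\nc(B)$ in \cref{upperkeyprops}, and the inclusion $\prob_\s(B)\subseteq\prob_\nc(B)$ noted at the start of the section is what makes the argument go through. Thus $\xi$ satisfies all four defining properties \ref{a}--\ref{d} of a spectral point on the whole semiring $\efftens$, and \cref{spectraltheorem} then identifies $\xi$ with an element of $\Delta(\efftens)$, a universal spectral point.
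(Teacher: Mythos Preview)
Your argument is correct and is essentially the same as the paper's: the paper declares the corollary ``immediate'' from the paragraph preceding it, which records exactly the ingredients you assemble (sub-additivity/sub-multiplicativity of $F^\theta$ from \cref{upperkeyprops}, super-additivity/super-multiplicativity of $F_\theta$ from \cref{lowerkeyprops}, monotonicity and normalisation for both, and the equality $F^\theta=F_\theta$ from \cref{upperequalslower}). Your explicit sandwich chains just spell out what the paper leaves to the reader.
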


\begin{definition}
[Quantum simplex]
Analogous to the support simplex (\cref{supportsimplex}), we define for any family $\semiring{X}$ the \defin{quantum simplex} $F_{\prob([k])}(\semiring{X})$ as the image  of the map $\prob([k]) \to \Delta(\semiring{X}) : \theta \mapsto F^\theta$.
\end{definition}

We relate the Strassen upper support functional $\rho^\theta$ to the new upper quantum functional $E^\theta$. %
Recall that $\rho^\theta$ is defined for $\theta \in \prob_s(B) \subseteq \prob(B)$, that is, for probability distributions $\theta$ supported on bipartitions of the form $\{\{j\}, \overline{j}\}$.
Let $t \in V_1 \otimes \cdots \otimes V_k$.

\begin{remark}[Majorisation property]
Let $v_1, \ldots, v_d$ be a basis of $V$. Let $T \subseteq \GL(V)$ be the subgroup of linear maps having diagonal matrix in this basis, a maximal torus for $\GL(V)$. For a tuple $i  = (i_1, \ldots, i_n) \in [d]^n$ let the vector $v_i \in V^{\otimes n}$ be defined as
\[
v_i \coloneqq v_{i_1} \otimes \cdots \otimes v_{i_n}.
\]
Let $i \in [d]^n$ and let $Q$ be the type of $i$. Then the vector $v_i$ is a \defin{weight vector} for $T$ with weight $nQ$, meaning that for any $t  = \diag(t_1, \ldots, t_d) \in T$ we have $t \cdot v_i = \prod_{j=1}^d (t_j)^{nQ(j)} \,v_i$. 
The module $\SSS_\lambda(V)$ has so-called highest weight $\lambda$, and thus $P_\lambda^{\smash{V}} v_i \neq 0$ implies that $nQ\domleq\lambda$, where $\domleq$ is the dominance order or majorisation order on partitions. We call this the \defin{majorisation property}. In particular, $P_\lambda^V v_i \neq 0$ implies the inequality of Shannon entropies $H(Q) \geq H(\overline{\lambda})$, since the Shannon entropy is Schur-concave.
\end{remark}

\begin{theorem}\label{suppinv}
Let $\theta \in \prob_\s(B)$. Then $\rho^{\theta}(t) \geq E^{\theta}(t)$.
\end{theorem}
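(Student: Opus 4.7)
The plan is to exploit the majorisation property recalled just before the theorem. I would fix a basis $C = ((v_{j, \alpha_j})_{\alpha_j}) \in \bases(t)$ achieving the minimum in the definition of $\rho^\theta(t)$ and write $t = \sum_{\alpha \in \supp_C t} t_\alpha \, v_{1,\alpha_1} \otimes \cdots \otimes v_{k,\alpha_k}$. For each $j$, the basis $(v_{j,\alpha_j})$ determines a maximal torus $T_j \subset \GL(V_j)$, and a product basis vector $v_{\underline{\alpha}}$ of $(V_1 \otimes \cdots \otimes V_k)^{\otimes n}$ indexed by an $n$-tuple $\underline{\alpha} = (\alpha^{(1)}, \ldots, \alpha^{(n)})$ is a joint weight vector whose $T_j$-weight equals $n Q_j$, where $Q_j$ denotes the empirical type of $(\alpha^{(1)}_j, \ldots, \alpha^{(n)}_j)$.

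Next, I would pick an admissible tuple $(\lambda^{(j)})_j$ nearly attaining the supremum in $E^\theta(t)$. Since $\theta \in \prob_\s(B)$ means every $b \in \supp \theta$ has the form $\{\{j\}, \overline{j}\}$, and since $t^{\otimes n}$ is already $S_n$-symmetric so $P_{(n)}^{V_{[k]}} t^{\otimes n} = t^{\otimes n}$, admissibility amounts to $\prod_j P_{\lambda^{(j)}}^{V_j}\, t^{\otimes n} \neq 0$. The projectors $P_{\lambda^{(j)}}^{V_j}$ act on disjoint tensor factors, so they commute and each is equivariant for the joint torus $T_1 \times \cdots \times T_k$. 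Expanding $t^{\otimes n}$ in the product basis and grouping by joint weight, nonvanishing of the product forces some joint weight class $(nQ_1, \ldots, nQ_k)$ with $nQ_j \domleq \lambda^{(j)}$ for every $j$ (by the majorisation property) to contain a nonzero contribution from some $\underline{\alpha}$ with all $\alpha^{(\ell)} \in \supp_C t$.

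The distribution $P \in \prob(\supp_C t)$ defined by $P(\alpha) = |\{\ell : \alpha^{(\ell)} = \alpha\}|/n$ then has marginals $P_j = Q_j$. Schur-concavity of Shannon entropy combined with $nQ_j \domleq \lambda^{(j)}$ gives $H(P_j) \geq H(\overline{\lambda^{(j)}})$, hence
\[
H_\theta(\supp_C t) \;\geq\; H_\theta(P) \;=\; \sum_{j} \theta(j)\, H(P_j) \;\geq\; \sum_j \theta(j)\, H(\overline{\lambda^{(j)}}).
\]
Taking the supremum over admissible tuples yields $H_\theta(\supp_C t) \geq E^\theta(t)$, and then the minimum over $C$ delivers $\rho^\theta(t) \geq E^\theta(t)$.

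The main obstacle is the extraction step in the second paragraph: arguing cleanly that nonvanishing of the commuting projectors' product really produces a single $\underline{\alpha} \in (\supp_C t)^n$ whose empirical marginals are simultaneously majorised by all the $\lambda^{(j)}$. The justification is that each $P_{\lambda^{(j)}}^{V_j}$ annihilates every $T_j$-weight not dominated by $\lambda^{(j)}$, and the joint $T_1 \times \cdots \times T_k$-weight spaces are mutually orthogonal, so at least one joint weight class with $nQ_j \domleq \lambda^{(j)}$ for all $j$ must survive projection; any $\underline{\alpha}$ contributing to that class then supplies the required empirical distribution $P$.
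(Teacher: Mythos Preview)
Your proposal is correct and follows essentially the same approach as the paper's proof: both fix an optimal basis $C$, expand $t^{\otimes n}$ over $(\supp_C t)^n$, and use the majorisation property to extract a distribution on $\supp_C t$ whose marginals are dominated by the $\lambda^{(j)}$. The only cosmetic difference is that the paper groups terms by the full empirical type $Q \in \prob_N(\supp_C t)$ (writing $t^{\otimes N} = \sum_Q t_Q v_Q$ and noting some $v_Q$ survives projection), whereas you group by the coarser joint torus weight $(nQ_1,\ldots,nQ_k)$ and then pick any contributing $\underline{\alpha}$; both routes yield the same marginal-majorisation conclusion.
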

\begin{proof}
Choose a $k$-tuple of bases $C = ((v_{1,\alpha_1}), \ldots, (v_{k, \alpha_k}))$ in $\bases(t)$ such that $\rho^{\theta}(t) = H_\theta(\supp_C t)$. Suppose $(P_{\lambda^{(1)}}^{V_1} \otimes \cdots \otimes P_{\lambda^{(k)}}^{V_k})t^{\otimes N} \neq 0$ for some $k$-tuple of partitions $(\lambda^{(1)}, \ldots, \lambda^{(k)})$ and $N\in \NN$. 
Given an $N$-type $Q \in \prob_N([n_1] \times \cdots \times [n_k])$, define the vector $v_Q \in V_1^{\otimes N} \otimes \cdots \otimes V_k^{\otimes N}$ as
\[
v_Q \coloneqq \!\sum_{x} \bigotimes_{m=1}^N v_{1, x_{m,1}} \otimes v_{2, x_{m,2}} \otimes \cdots \otimes v_{k, x_{m,k}}
\]
where the sum is over all $N$-tuples $x = (x_1, \ldots, x_N) \in T_{\smash{Q}}^N$ of type $Q$,
and define the element $t_Q \in \CC$ as $t_Q \coloneqq \prod_{i \in \supp Q}\; (t_{i_1, \ldots, i_k})^{NQ(i)}$. 
Then the tensor power~$t^{\otimes N}$ can be written as
\begin{equation}\label{decomp}
t^{\otimes N} = \sum_{Q} t_Q \, v_Q
\end{equation}
where the sum is over $Q \in \prob_N(\supp_C t)$.
Then $(P_{\lambda^{(1)}}^{V_1} \otimes \cdots \otimes P_{\lambda^{(k)}}^{V_k}) v_Q \neq 0$ for some $Q \in \prob_N(\supp_C t)$. By the majorization property this implies for all $j \in [k]$ that $N Q_j \domleq \lambda^{(j)}$  and thus $H(Q_j) \geq H(\overline{\lambda^{(j)}})$. This shows that
\[
\rho^{\theta}(t) = H_\theta(\supp_C t) \geq H_\theta(Q) = \sum_{j=1}^k \theta(j) H(Q_j) \geq \sum_{j=1}^{\smash{k}} \theta(j) H(\overline{\lambda^{(j)}}).
\]
Take the supremum of the right hand side over the possible $(\lambda^{(1)}, \ldots, \lambda^{(k)})$ to get the required inequality $\rho^{\theta}(t) \geq E^{\theta}(t)$.
\end{proof}

By \cref{suppinv}, for $\theta \in \prob([k])$, $E^\theta(t) \leq \rho^\theta(t) \leq n_1^{\smash{\theta(1)}} \cdots n_k^{\smash{\theta(k)}}$. Therefore, if $E^\theta(t)=n_1^{\smash{\theta(1)}}\cdots n_k^{\smash{\theta(k)}}$, then $\rho^\theta(t) = n_1^{\smash{\theta(1)}}\cdots n_k^{\smash{\theta(k)}}$.
We will show the converse of this in \cref{hm}, that is, when $\theta_i >0$ for all $i$, if $E^\theta(t) < n_1^{\smash{\theta(1)}}\cdots n_k^{\smash{\theta(k)}}$, then $\rho^\theta(t) < n_1^{\smash{\theta(1)}}\cdots n_k^{\smash{\theta(k)}}$.

The final result of this section is that the regularisation of the upper support functional equals the upper quantum functional.

\begin{theorem}\label{regsupp}
Let $t\in V_1\otimes\cdots\otimes V_k$, $\theta\in\mathcal{P}([k])$. Then
\begin{equation*}
\lim_{n\to\infty}\frac{1}{n}\rho^\theta(t^{\otimes n})=E^\theta(t)
\end{equation*}
\end{theorem}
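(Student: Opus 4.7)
The plan is to establish the two inequalities $\liminf_n \frac{1}{n}\rho^\theta(t^{\otimes n}) \geq E^\theta(t)$ and $\limsup_n \frac{1}{n}\rho^\theta(t^{\otimes n}) \leq E^\theta(t)$, having first verified that the limit exists. The existence follows from Fekete's lemma: by sub-multiplicativity of $\zeta^\theta$ under $\otimes$ (Theorem~\ref{us}), the sequence $\rho^\theta(t^{\otimes n})$ is sub-additive, so $\lim_n \frac{1}{n}\rho^\theta(t^{\otimes n}) = \inf_n \frac{1}{n}\rho^\theta(t^{\otimes n})$.

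For the lower bound, I would apply Theorem~\ref{suppinv} with $s = t^{\otimes n}$ to obtain $\rho^\theta(t^{\otimes n}) \geq E^\theta(t^{\otimes n})$. Since $\theta \in \prob_\s(B)$ (implicitly, via the identification $\theta \in \prob([k])$), Theorem~\ref{upperequalslower} gives $E^\theta = E_\theta$, and combining the sub-multiplicativity in Theorem~\ref{upperkeyprops} with the super-multiplicativity in Theorem~\ref{lowerkeyprops} yields strict multiplicativity $E^\theta(t^{\otimes n}) = n E^\theta(t)$. Dividing by $n$ gives $\frac{1}{n}\rho^\theta(t^{\otimes n}) \geq E^\theta(t)$ for all $n$.

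The upper bound is where the real work lies. The plan is to construct, for every $\epsilon > 0$ and $N$ large, a $k$-tuple $C_N$ of bases of $V_j^{\otimes N}$ such that
\[
H_\theta(\supp_{C_N} t^{\otimes N}) \leq N(E^\theta(t) + \epsilon).
\]
Since $\zeta^\theta$ is invariant under the action of $\GL(V_1) \times \cdots \times \GL(V_k)$ (as that action realizes $\sim$-equivalences), I may replace $t$ by $s = (A_1 \otimes \cdots \otimes A_k)\,t$ where, after fixing inner products, the single-party marginals $\rho_j$ of $s$ satisfy $\sum_j \theta(j)\, H(\rho_j) \geq E_\theta(t) - \epsilon$. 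I would diagonalize each $\rho_j$ in an orthonormal basis of $V_j$ and then pass to $V_j^{\otimes N}$, using its Schur--Weyl decomposition $V_j^{\otimes N} \cong \bigoplus_\lambda \SSS_\lambda(V_j) \otimes [\lambda]$ to build a flag $F_j \in \filt_\complete$ in which the ``bottom'' refines a subspace of dimension $\mathrm{poly}(N)\cdot 2^{N(H(\rho_j)+\epsilon)}$ containing the support of $s^{\otimes N}$ up to negligible mass. The spectrum estimation theorem (Theorem~\ref{specest}) together with the gentle measurement lemma (Corollary~\ref{gentlecor}) shows that after simultaneously projecting to these ``typical'' pieces across all legs, the tensor is essentially preserved, and by the flag characterisation of $\rho^\theta$ (Proposition~\ref{upperfilt}) the marginal of $\supp_{F_j} t^{\otimes N}$ on the $j$-th leg is contained in a set of size $\mathrm{poly}(N)\cdot 2^{N(H(\rho_j)+\epsilon)}$. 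Bounding $H_\theta$ by $\sum_j \theta(j) \log_2 |\mathrm{marg}_j|$ and dividing by $N$ finishes the bound.

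The hard part will be the construction of the flag so that the support with respect to it is \emph{exactly} (not just approximately) contained in the typical isotypic components: the upper support functional is sensitive to arbitrarily small but nonzero contributions, whereas quantum typicality only guarantees concentration up to exponentially small errors. I expect this obstacle to be overcome by placing all atypical directions at the top of the flag (where $V_{j,\alpha}$ is still large) so that they contribute at most a sub-leading term $O(\log N)$ to the marginal support sizes, while exploiting the majorisation property together with Schur--Weyl to argue that for large $\alpha_j$ (small $V_{j,\alpha_j}$) the only surviving components lie in isotypic blocks labelled by partitions $\lambda^{(j)}$ with $H(\overline{\lambda^{(j)}})$ near $H(\rho_j)$.
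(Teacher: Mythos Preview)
Your lower bound is correct and coincides with the paper's argument (the paper packages the multiplicativity step as Corollary~\ref{univ}).

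The upper bound, however, has a genuine gap, and your own last paragraph locates it precisely. By passing to a specific state $s=(A_1\otimes\cdots\otimes A_k)t$ with near-optimal marginal entropies and then invoking spectrum estimation, you only learn where most of the \emph{mass} of $s^{\otimes N}$ lies, not where its \emph{support} lies. The upper support functional is insensitive to coefficients: a single nonzero entry in an atypical block forces that block into $\supp_{C_N} t^{\otimes N}$ and can inflate $H_\theta$. Your proposed fix of placing atypical directions at the top of the flag does not help, because a flag basis still records every nonzero component; moving a subspace to the top does not delete support points, it just relabels them.

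The paper's approach avoids this obstacle entirely by working with $E^\theta$ \emph{as defined}, not via $E_\theta$ and a well-chosen state. Take the Schur--Weyl basis of each $V_i^{\otimes n}$, i.e.\ a basis adapted to $\bigoplus_\lambda \SSS_\lambda(V_i)\otimes[\lambda]$. By the very definition of $E^\theta(t)$ as a supremum over tuples $(\lambda^{(1)},\ldots,\lambda^{(k)})$ with $(P_{\lambda^{(1)}}\otimes\cdots\otimes P_{\lambda^{(k)}})t^{\otimes n}\neq 0$, the support of $t^{\otimes n}$ in this basis is \emph{exactly} contained in the union $\Phi$ of blocks $I_{1,\lambda^{(1)}}\times\cdots\times I_{k,\lambda^{(k)}}$ satisfying $\sum_i\theta(i)H(\overline{\lambda^{(i)}})\leq E^\theta(t)$. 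No approximation, no typicality, no $\epsilon$. One then bounds $H_\theta(\Phi)$ by exploiting the block structure (the optimal $P$ is uniform within each block by symmetry), the dimension estimates $|I_{i,\lambda}|\leq 2^{nH(\overline\lambda)+o(n)}$, and the fact that there are only polynomially many blocks, giving $H_\theta(\Phi)\leq nE^\theta(t)+o(n)$. The detour through $E_\theta$ and gentle measurement is neither needed nor, in the form you sketch, sufficient.
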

\begin{proof}
By \cref{suppinv}, $E^{\theta}(t) \leq \rho^\theta(t)$, so $\lim_{n \to \infty} \frac{1}{n}E^\theta(t^{\otimes n}) \leq \lim_{n\to\infty}\frac{1}{n}\rho^\theta(t^{\otimes n})$.
We know $E^\theta(t)=\lim_{n \to \infty} \frac{1}{n}E^\theta(t^{\otimes n})$ by \cref{univ}. Therefore
\begin{equation*}
\lim_{n\to\infty}\frac{1}{n}\rho^\theta(t^{\otimes n}) \ge E^\theta(t).
\end{equation*}
We now prove $\leq$. 
Let $n\in\mathbb{N}$ and consider the decomposition
\begin{equation*}
V_i^{\otimes n}=\bigoplus_{\lambda\vdash n}\mathbb{S}_\lambda(V_i)\otimes[\lambda].
\end{equation*}
In each direct summand on the right hand side choose a basis with (disjoint) index sets $I_{i,\lambda}$, $\lambda\vdash n$. Together they form a basis of $V_i^{\otimes n}$. Let $C\in \bases(t^{\otimes n})$ be this $k$-tuple of bases. Define the set
\begin{equation*}
\Phi=\bigcup \bigl\{ I_{1,\lambda^{(1)}}\times\cdots\times I_{k,\lambda^{(k)}} \,\big|\, \lambda^{(1)}\!,\ldots,\lambda^{(k)}:    \sum_{i=1}^k\theta(i)H(\overline{\lambda^{(i)}})\le E^\theta(t)  \bigr\}.
\end{equation*}
By definition, $\supp_C t^{\otimes n}\subseteq\Phi$, and therefore
\begin{equation*}
\rho^\theta(t^{\otimes n})\le H_\theta(\supp_C t^{\otimes n})\le H_\theta(\Phi).
\end{equation*}
To estimate $H_\theta(\Phi)$, note that by concavity of the entropy and invariance of $\Phi$ under permutations within each $I_{i,\lambda}$, the maximum is attained at a distribution which is uniform within each block $I_{1,\lambda^{(1)}}\times\cdots\times I_{k,\lambda^{(k)}}$. Let $P$ be such a distribution with $H_\theta(\Phi)=H_\theta(P)$, and let the total probability within block $I_{1,\lambda^{(1)}}\times\cdots\times I_{k,\lambda^{(k)}}$ be $Q(\lambda^{(1)},\ldots,\lambda^{(k)})$. Thus $Q$ describes a ``coarse'' structure of $P$, which is uniform conditioned on $Q$.
The marginals of $P$ inherit a similar structure: $P_i$ is uniform within each block $I_{i,\lambda^{(i)}}$ and the total probability of such a block is $Q_i(\lambda^{(i)})$. The recursion property allows us to express $H(P_i)$ as
\begin{equation*}
\begin{split}
H(P_i)
 & = H(Q_i)+\sum_{\lambda^{(i)}}Q_i(\lambda^{(i)})\log_2|I_{i,\lambda^{(i)}}|  \\
 & = H(Q_i)+\sum_{\mathclap{\lambda^{(1)}\!,\ldots,\lambda^{(k)}}}Q(\lambda^{(1)},\ldots,\lambda^{(k)})\log_2|I_{i,\lambda^{(i)}}|,
\end{split}
\end{equation*}
and therefore
\begin{equation}\label{sumoverlambda}
H_\theta(P)=H_\theta(Q)+\sum_{\mathclap{\lambda^{(1)}\!,\ldots,\lambda^{(k)}}}Q(\lambda^{(1)},\ldots,\lambda^{(k)})\sum_{i=1}^{\smash{k}}\theta(i)\log_2|I_{i,\lambda^{(i)}}|.
\end{equation}
We use the estimates
\begin{equation*}
|I_{i,\lambda}|=\dim (\mathbb{S}_\lambda(V_i)\otimes[\lambda] ) \le (n+1)^{d_i(d_i-1)/2}\binom{n}{\lambda_1,\ldots,\lambda_{d_i}}\le 2^{nH(\overline{\lambda})+o(n)}
\end{equation*}
and
\begin{equation*}
H_\theta(Q)\le H(Q)\le d_1d_2\cdots d_k\log_2(n+1)=o(n).
\end{equation*}
In the sum over $\lambda^{(1)},\ldots,\lambda^{(k)}$ in \eqref{sumoverlambda} only those terms can have nonzero probability which satisfy
\begin{equation*}
\sum_{i=1}^k\theta(i)H(\overline{\lambda^{(i)}})\le E^\theta(t).
\end{equation*}
For these terms we have the upper bound
\begin{equation*}
\sum_{i=1}^k\theta(i)\log_2|I_{i,\lambda^{(i)}}|\le n\sum_{i=1}^k\theta(i)H(\overline{\lambda^{(i)}})+o(n)\le nE^\theta(t)+o(n).
\end{equation*}
The sum of all $Q(\lambda^{(1)}, \ldots, \lambda^{(k)})$ equals $1$, and therefore
\begin{equation*}
\rho^\theta(t^{\otimes n})\le H_\theta(P)\le nE^\theta(t)+o(n),
\end{equation*}
which implies
$\lim_{n\to\infty}\frac{1}{n}\rho^\theta(t^{\otimes n})\le E^\theta(t)$.
\end{proof}

\begin{remark}
In the language of comment 7.(ii) in \cite{strassen1991degeneration}, our \cref{regsupp} shows that the set $\mathsf{Z}^\theta$ is a single point.
\end{remark}

Using \cref{regsupp} we can see that the quantum functionals are at least the lower support functional.

\begin{corollary}
Let $t \in V_1 \otimes \cdots \otimes V_k$, $\theta \in \prob_s(B)$. Then $E^\theta(t) \geq \rho_\theta(t)$.
\end{corollary}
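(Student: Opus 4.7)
The plan is to chain together three facts already established in the excerpt. By \cref{regsupp}, the left-hand side $E^\theta(t)$ equals the regularised limit $\lim_{n\to\infty}\tfrac{1}{n}\rho^\theta(t^{\otimes n})$, so it suffices to bound $\rho^\theta(t^{\otimes n})$ from below by $n\,\rho_\theta(t)$ for every $n$. For this I would apply \cref{supportupperlower}, which gives $\rho^\theta(t^{\otimes n}) \geq \rho_\theta(t^{\otimes n})$, and then use the super-multiplicativity of $\zeta_\theta$ (statement 3 of \cref{ls}), which in logarithmic form says $\rho_\theta(t^{\otimes n}) \geq n\,\rho_\theta(t)$.

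Combining these gives
\begin{equation*}
\tfrac{1}{n}\rho^\theta(t^{\otimes n}) \;\geq\; \tfrac{1}{n}\rho_\theta(t^{\otimes n}) \;\geq\; \rho_\theta(t)
\end{equation*}
for every $n \in \NN$. Passing to the limit $n\to\infty$ on the leftmost expression and invoking \cref{regsupp} yields $E^\theta(t) \geq \rho_\theta(t)$, which is the desired inequality.

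There is no real obstacle here: the work has been done in proving the regularisation identity (\cref{regsupp}), in proving $\rho^\theta \geq \rho_\theta$ (\cref{supportupperlower}), and in establishing super-multiplicativity of the lower support functional (\cref{ls}). The only thing to note is that the direct inequality $E^\theta(t) \geq \rho_\theta(t)$ (without the $\tfrac{1}{n}$ and limit) is \emph{not} obvious from \cref{supportupperlower} alone, because $E^\theta$ compares to $\rho^\theta$ only asymptotically; the regularisation step is essential to close the gap, and super-multiplicativity of $\rho_\theta$ is what allows the single-letter quantity $\rho_\theta(t)$ to survive on the right-hand side after taking the limit.
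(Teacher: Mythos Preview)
Your proof is correct and follows essentially the same approach as the paper's own proof. The only cosmetic difference is that you work throughout in the logarithmic form (with $\rho^\theta$, $\rho_\theta$, and $E^\theta$), whereas the paper phrases the same three steps multiplicatively (with $\zeta^\theta$, $\zeta_\theta$, and $F^\theta$); the chain of inequalities and the three ingredients invoked (\cref{regsupp}, \cref{supportupperlower}, and super-multiplicativity from \cref{ls}) are identical.
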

\begin{proof}
By \cref{regsupp},
\[
F^\theta(t) = \lim_{n\to \infty} \zeta^\theta(t^{\otimes n})^{1/n}.
\]
We know $\zeta^\theta(t) \geq \zeta_\theta(t)$ by \cref{supportupperlower} and thus
\[
\lim_{n\to \infty} \zeta^\theta(t^{\otimes n})^{1/n} \geq \lim_{n \to \infty} \zeta_\theta(t^{\otimes n})^{1/n}.
\]
The lower support functional $\zeta_\theta$ is super-multiplicative under $\otimes$ (\cref{ls}), so 
\[
\lim_{n \to \infty} \zeta_\theta(t^{\otimes n})^{1/n} \geq \zeta_\theta(t)
\]
Combining these statements proves the theorem.
\end{proof}

\section{Several families of tensors}\label{tightfreegeneric}

With the definitions and key properties of the Strassen support functionals and the quantum functionals in place, we will now focus on various classes of tensors. We begin with the class of tight tensors that was introduced by Strassen. Tight tensors are oblique and Strassen showed that for tight tensors the support functionals are sufficient to compute the asymptotic subrank.

Next we discuss reduced polynomial multiplication and the cap set problem. Tightness is useful in this context.

Then we introduce free tensors. Oblique tensors are free. For complex free tensors we show that the upper support functional and the quantum functionals coincide in the singleton regime. %

Finally we compute generic values of the quantum functionals and compare this to the known results on the generic value of the Strassen support functionals. Related to this we apply the Hilbert--Mumford criterion to see that if the upper quantum functional is not maximal, then the upper support functional is not maximal.

\subsection{Tight tensors} Let $\FF$ be an arbitrary field. Strassen defined the family of tight tensors as follows.

\begin{definition}[Tight]
Let $\Phi \subseteq I_1 \times \cdots \times I_k$. We say the set $\Phi$ is \defin{tight} if there are injective maps $u_i : I_i \to \ZZ$ $(i\in [k])$ such that
\begin{equation}\label{tightnesscondition}
u_1(\alpha_1) + \cdots + u_k(\alpha_k) = 0\quad \textnormal{for each}\quad (\alpha_1, \ldots, \alpha_k) \in \Phi.
\end{equation}
We say a tensor $f$ is \defin{tight} if there is a $C\in \bases(f)$ such that the set $\supp_C f$ is tight.
\end{definition}

\begin{remark}Clearly, tight tensors are oblique. To summarise the families of tensors that we have defined up to now, we have
\[
\{\textnormal{tight}\} \subseteq \{ \textnormal{oblique} \} \subseteq \{ \textnormal{robust} \} \subseteq \{\textnormal{$\theta$-robust}\}.
\]
Recall that the families of oblique, robust and $\theta$-robust tensors each form a semiring under $\otimes$ and $\oplus$. Tight tensors have the same property \cite[Section~5]{strassen1991degeneration}. Another property is that any subset of a tight set is tight.
\end{remark}

\begin{example}\label{tightexample}
Let $k\geq 3$ be fixed.
For any integer $n\geq 1$ and $c\in [n]$ the following set is clearly tight:
\begin{align*}
\Phi_n(c) &= \{ \alpha \in \{0,\ldots, n-1\}^{\smash{k}} \mid \alpha_1 + \cdots + \alpha_k = c \}.\\
\intertext{For any integer $n\geq 2$ and any $c\in [n]$ the following set is \emph{not} tight (cf.~Exercise~15.20 in~\cite{burgisser1997algebraic}):}
\Psi_n(c) &= \{ \alpha \in \{0,\ldots, n-1\}^k \mid \alpha_1 + \cdots + \alpha_k = c \bmod n \}.
\end{align*}
The \emph{tensor} $f_n = \sum_{\alpha \in \Psi_n(n-1)} e_{\alpha_1} \otimes \cdots \otimes e_{\alpha_k} \in (\FF^n)^{\otimes k}$ is tight, however, when the ground field $\FF$ contains a primitive $n$th root of unity $\zeta$. Namely, the elements $v_j = \sum_{i=1}^n \zeta^{ij} e_i$ $(j\in [n])$ form a basis of $\FF^n$.  Let  $C \in \bases(f_n)$ be the corresponding product basis. We have $f_n = \sum_{j=1}^n v_j \otimes \cdots \otimes v_j$, and thus $\supp_C f_n = \{ \alpha \in [n]^{\smash{k}} \mid \alpha_1 = \cdots = \alpha_k  \}$ which is clearly tight. (See also \cite[Exercise 15.25]{burgisser1997algebraic}.)
When the characteristic of $\FF$ equals $n$ the tensor $f_n$ is also tight, as we will see in \cref{capset}.
\end{example}

We care about tight tensors because of the following remarkable theorem for tight 3-tensors, proved by Strassen in \cite[Lemma 5.1]{strassen1991degeneration} using the method of Coppersmith and Winograd~\cite{MR1056627}.  We need the concept of asymptotic subrank of a set (cf.~\cite[Section 5]{strassen1991degeneration}).
We say $D \subseteq I_1 \times \cdots \times I_k$ is a \defin{diagonal} when any two distinct $\alpha, \beta \in D$ are distinct in all $k$ coordinates. In other words, for elements in~$D$, the value at one coordinate uniquely determines the value at the other~$k-1$ coordinates.
Let $\Phi \subseteq I_1 \times \cdots \times I_k$.
We say a diagonal $D \subseteq I_1 \times \cdots \times I_k$ is \defin{free for $\Phi$} or simply $D \subseteq \Phi$ is a \defin{free diagonal} if $D = \Phi \cap (D_1 \times \cdots \times D_k)$, where $D_i = \{x_i \mid (x_1, \ldots, x_k) \in D\}$.
 Define the \defin{subrank} $\subrank(\Phi)$ as the size of the largest free diagonal $D \subseteq \Phi$.
For $\Phi \subseteq I_1 \times \cdots \times I_k$ and $\Psi \subseteq J_1 \times \cdots \times J_k$ we naturally define the product $\Phi \times \Psi \subseteq (I_1 \times J_1) \times \cdots \times (I_k \times J_k)$.
Define the \defin{asymptotic subrank} $\asympsubrank(\Phi) = \lim_{n \to \infty} \subrank(\Phi^{\times n})^{1/n}$. Let $t \in \FF^{n_1} \otimes \cdots \otimes \FF^{n_k}$ and let~$\Phi$ be the support of $t$ in the standard basis. Then $\subrank(\Phi) \leq \subrank(t)$ and $\asympsubrank(\Phi) \leq \asympsubrank(t)$. The number $\subrank(\Phi)$ may be interpreted as the largest number $n$ such that $\langle n\rangle$ can be obtained from $t$ using a restriction that consists of matrices that have at most one nonzero entry in each row and in each column. (This is called M-restriction in \cite[Section 6]{strassen1987relative} which stands for monomial restriction.)

Let $\Phi \subseteq [n_1] \times \cdots \times [n_k]$ and let $t \in \FF^{n_1} \otimes \cdots \otimes \FF^{n_k}$ be any tensor with support equal to $\Phi$. Then 
\[
\subrank(\Phi) \leq \subrank(t), \textnormal{ and } \asympsubrank(\Phi) \leq \asympsubrank(t).
\]
So we may upper bound $\asympsubrank(\Phi)$ by choosing the field $\FF$ and the tensor $t$ cleverly and upper bounding $\asympsubrank(t)$.

\begin{theorem}\label{cwconstr}
Let $\Phi \subseteq I_1 \times I_2 \times I_3$ be tight. Then
\begin{equation}\label{cwformula}
\asympsubrank(\Phi) = \max_{P \in \prob(\Phi)} \min \{ 2^{H(P_1)}\!,\, 2^{H(P_2)}\!,\, 2^{H(P_3)} \}.
\end{equation}
\end{theorem}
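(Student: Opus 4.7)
The two inequalities will be handled separately. The direction $\leq$ follows cleanly from the Strassen upper support functional combined with a minimax argument. The direction $\geq$ is the hard one: it requires an explicit construction of asymptotically large free diagonals inside $\Phi^{\times N}$, in the style of Coppersmith and Winograd, where tightness plays the essential role.

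\emph{Upper bound.} Fix any nonzero tensor $t \in \FF^{I_1}\otimes \FF^{I_2}\otimes \FF^{I_3}$ whose support in the standard basis is $\Phi$; the remark following the definition of asymptotic subrank of a set gives $\asympsubrank(\Phi)\leq \asympsubrank(t)$. A tight set is in particular an antichain in the product order, so $t$ is oblique, and \cref{obliquetheorem} yields
\[
\zeta^\theta(t) = 2^{H_\theta(\Phi)} = 2^{\max_{P\in\prob(\Phi)} H_\theta(P)} \quad \text{for every } \theta \in \prob([3]).
\]
Because $\zeta^\theta$ is sub-multiplicative, $\geq$-monotone and normalised on unit tensors, the standard iteration gives $\asympsubrank(t)\leq \zeta^\theta(t)$ for every $\theta$. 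Taking the infimum over $\theta$ and invoking Sion's minimax theorem---valid because $H_\theta(P) = \sum_i \theta(i) H(P_i)$ is linear in $\theta$, concave in $P$, and both simplices are compact convex---yields
\[
\inf_\theta \max_P H_\theta(P) = \max_P \inf_\theta \sum_i \theta(i) H(P_i) = \max_P \min_{i \in [3]} H(P_i),
\]
the last equality because a linear functional on a simplex is minimised at a vertex. This is the required upper bound.

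\emph{Lower bound.} Fix $P \in \prob(\Phi)$; by continuity it is enough to treat $P$ with rational weights $P(\alpha) = n_\alpha/N$. The goal is to produce a free diagonal $D \subseteq \Phi^{\times N}$ of size at least $2^{N \min_i H(P_i) - o(N)}$. Start from the type class $T_P \subseteq \Phi^N$ of sequences of joint type $P$, which has size $2^{NH(P)-o(N)}$ and projections $\pi_i(T_P)\subseteq I_i^N$ of sizes $2^{NH(P_i)-o(N)}$. Tightness furnishes injective maps $u_i : I_i \to \ZZ$ with $u_1+u_2+u_3=0$ on $\Phi$; writing $U_i(x) = \sum_j u_i(x_i^j)$ one obtains the global relation $U_1(x)+U_2(x)+U_3(x)=0$ for every $x \in \Phi^N$. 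I then follow the Coppersmith--Winograd template: pick a prime $p$ of suitable size, hash the marginal sequences by the values of $U_i \bmod p$, and further thin the images $\pi_i(T_P)$ through a Salem--Spencer arithmetic-progression-free set. The AP-free property, combined with the tight relation between $U_1,U_2,U_3$, rules out every non-trivial triple $(\pi_1(x),\pi_2(y),\pi_3(z)) \in \Phi^N$ with $x,y,z \in D$ not all equal, so $D$ is a free diagonal; and the size loss from the hashing and the AP-free thinning is subexponential in $N$, giving the claimed bound.

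\emph{Main obstacle.} The technically delicate part is this lower-bound construction, specifically the simultaneous balancing of the two restrictions: the hashing via tightness must shrink the fibres $\pi_i^{-1}(y)$ enough to make the three projections injective on $D$, while the Salem--Spencer thinning must block every spurious non-diagonal triple in $\Phi^N\cap (\pi_1(D)\times\pi_2(D)\times\pi_3(D))$, and the two restrictions together must cost only $o(N)$ in the exponent. This is the only place in the proof where the full force of the tightness hypothesis---not merely obliqueness---is genuinely required.
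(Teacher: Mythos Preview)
The paper does not give its own proof of this theorem; it attributes the result to Strassen \cite{strassen1991degeneration} via the Coppersmith--Winograd method and uses it as a black box. Your outline matches that approach.

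Your upper bound is correct and complete: tightness implies obliqueness, so $\zeta^\theta(t) = 2^{H_\theta(\Phi)}$ for any tensor $t$ with support $\Phi$ in the standard basis, and the minimax step is valid.

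In the lower-bound sketch there is one concrete slip. As you define it, $U_i(x) = \sum_j u_i(x_i^j)$ depends only on the \emph{type} of the marginal sequence $x_i \in I_i^N$; restricted to the type class $T_P$, each $U_i$ is the constant $N\sum_{a} P_i(a)\,u_i(a)$. Hashing by $U_i \bmod p$ therefore does nothing to separate points of $T_P$. The actual Coppersmith--Winograd hash introduces random position weights: pick $w_1,\dots,w_N$ uniformly in $\ZZ/M\ZZ$ and set $h_i(y) = \sum_j w_j\, u_i(y_j) \bmod M$. This still satisfies $h_1+h_2+h_3 \equiv 0$ on $\Phi^N$ (so the arithmetic-progression structure persists and interacts with the Salem--Spencer set exactly as you intend), but is now genuinely random on each marginal type class, which is what drives the fibre-size and injectivity estimates. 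With this correction the rest of your outline---type classes, the Salem--Spencer thinning, and the subexponential loss accounting---is the standard argument and goes through.
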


The amazing consequence of \cref{cwconstr} is that the Strassen support functionals are sufficiently powerful to compute the asymptotic subrank of tight 3-tensors.

\begin{corollary}[{\cite[Proposition 5.4]{strassen1991degeneration}}]\label{cwcor}
Let $f\in V_1 \otimes V_2 \otimes V_3$ be tight. Then
\[
\asympsubrank(f) = \min_{\theta \in \prob([3])} \zeta^\theta(f).
\]
Moreover, if $\Phi$ is a tight support for $f$, then $\asympsubrank(f) = \asympsubrank(\Phi)$.
\end{corollary}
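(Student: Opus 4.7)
The plan is to sandwich $\asympsubrank(f)$ between $\min_{\theta} \zeta^{\theta}(f)$ (from the spectral theorem) and $\asympsubrank(\Phi)$ (from monomial restrictions), and then identify $\min_{\theta} \zeta^{\theta}(f)$ with $\asympsubrank(\Phi)$ by combining \cref{cwconstr} with a minimax swap.

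First I would fix a basis $C \in \bases(f)$ in which $\Phi \coloneqq \supp_C f$ is tight. Reordering each $[n_i]$ so that the tightness maps $u_i$ become order-preserving turns $\Phi$ into an antichain in the product order, so $f$ is oblique, and \cref{obliquetheorem} gives $\zeta^{\theta}(f) = 2^{H_\theta(\Phi)}$ for every $\theta \in \prob([3])$, with the $\zeta^{\theta}$ forming spectral points on the semiring of oblique $3$-tensors. Applying \cref{minmax} on that semiring yields at once
\[
\asympsubrank(f) \;\le\; \min_{\theta \in \prob([3])} \zeta^{\theta}(f).
\]
For the reverse direction I use the elementary inequality $\asympsubrank(\Phi) \le \asympsubrank(f)$ recorded in the text, which holds because any free diagonal in $\Phi$ pulls back through the basis $C$ to a monomial restriction of $f$ extracting a unit subtensor.

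It then suffices to prove $\min_{\theta} \zeta^{\theta}(f) \le \asympsubrank(\Phi)$. Unfolding the definitions gives
\[
\min_{\theta \in \prob([3])} \zeta^{\theta}(f) \;=\; 2^{\min_{\theta}\, \max_{P \in \prob(\Phi)} \sum_{i=1}^{3} \theta(i)\, H(P_i)}.
\]
The function $(\theta,P) \mapsto \sum_i \theta(i) H(P_i)$ is linear in $\theta$ and concave in $P$ (by concavity of the Shannon entropy), and both simplices $\prob([3])$ and $\prob(\Phi)$ are convex and compact, so Sion's minimax theorem permits interchanging the extrema. Since a linear function on a simplex attains its minimum at a vertex, $\min_{\theta} \sum_i \theta(i) H(P_i) = \min_{i} H(P_i)$, collapsing the right-hand side to $\max_{P} 2^{\min_i H(P_i)} = \max_P \min_i 2^{H(P_i)}$, which by \cref{cwconstr} is exactly $\asympsubrank(\Phi)$.

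Chaining the three inequalities forces them all to be equalities, simultaneously giving $\asympsubrank(f) = \min_{\theta} \zeta^{\theta}(f)$ and $\asympsubrank(f) = \asympsubrank(\Phi)$. The substantive content is carried entirely by \cref{cwconstr}; the only real maneuver is the minimax exchange, which converts the Strassen formula $2^{\sum_i \theta(i) H(P_i)}$ into the Coppersmith--Winograd formula $\min_i 2^{H(P_i)}$ after optimisation. The anticipated obstacle is merely avoiding circularity and checking that the hypotheses of Sion's theorem are met on the nose; no new tensor-theoretic ideas are required.
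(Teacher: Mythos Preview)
Your proof is correct and is precisely the intended derivation: the paper does not spell out a proof of \cref{cwcor} (it is cited directly from Strassen), but the corollary is meant to follow from \cref{cwconstr} together with the fact that tight tensors are oblique, \cref{obliquetheorem}, \cref{minmax}, and the displayed inequality $\asympsubrank(\Phi)\le\asympsubrank(f)$, combined via the minimax swap you perform. The hypotheses of Sion's theorem are indeed satisfied (linear in $\theta$, concave and continuous in $P$, both domains compact convex), so there is no gap.
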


\begin{remark}
Strassen conjectured in \cite[Conjecture~5.3]{MR1341854} that for the family $\semiring{Y}$ of tight 3-tensors the support functionals give all spectral points in the asymptotic spectrum~$\Delta(\semiring{Y})$, i.e.~the support simplex and the asymptotic spectrum coincide for $\semiring{Y}$. In \cite{strassen1991degeneration} numerous examples are given of subfamilies of $\semiring{Y}$ for which this is the case. We focus on one such example in the next section.
\end{remark}

\begin{remark}
The assumption $\Phi\subseteq I_1 \times I_2 \times I_3$ in \cref{cwconstr} is crucial. Namely, statement~\eqref{cwformula} becomes false when instead we let $\Phi\subseteq I_1 \times \cdots \times I_k$ with $k\geq4$ and we let the right-hand side of the equation be $\max_{P \in \prob(\Phi)} \min_i 2^{H(P_i)}$, see \cite[Example 1.1.38]{christandl2016asymptotic}. In~\cite{christandl2016asymptotic} the construction of \cref{cwconstr} is extended to obtain a lower bound for $\asympcombsubrank(\Phi)$ when $k\geq 4$. This lower bound is not known to be tight in general. We give an illustration in \cref{dicke} of a case in which the lower bound is tight.
\end{remark}

\begin{example}[Type tensors]\label{dicke}
Let $\lambda = (\lambda_1,\ldots,\lambda_n)$ be a partition of $k$. %
Define the set $\Phi_\lambda$ and the \defin{type tensor}~$D_\lambda$ as
\begin{align*}
\Phi_\lambda &\coloneqq \{ \alpha \in [n]^k \mid \type(\alpha) = \lambda \}\\
D_\lambda &\coloneqq \sum_{\mathclap{\alpha \in \Phi_\lambda}} e_{\alpha_1} \otimes \cdots \otimes e_{\alpha_k} \in (\FF^n)^{\otimes k}
\end{align*}
where $\type(\alpha)=\lambda$ means that the $k$-tuple $\alpha$ is a permutation of the $k$-tuple consisting of $\lambda_1$ ones, $\lambda_2$ twos, etc.
In quantum information theory (let ${\FF = \CC}$) each type tensor $D_\lambda$ encodes a so-called \defin{Dicke state of type $\lambda$}. The type tensor~$D_{(k-1, 1)}$ encodes what is called the  \defin{(generalised) W state}.
Clearly $D_\lambda$ is oblique, since the support in the standard basis $\Phi_\lambda$ is an antichain (in fact, $\Phi_\lambda$ is clearly tight), and therefore
\[
\rho^\theta(D_\lambda) = \rho_\theta(D_\lambda) =   H_\theta(\Phi_\lambda)
\]
(and over $\CC$ the quantum functionals $E^\theta(D_\lambda)$ and $E_\theta(D_\lambda)$ take the same value as $\rho^\theta(D_\lambda)$).
One verifies that $H_\theta( \Phi_\lambda ) \geq H((\tfrac{\lambda_1}{k}, \ldots, \tfrac{\lambda_n}{k}))$, with equality when~$\theta$ is uniform. Of course $H_\theta( \Phi_\lambda ) \leq n$, with equality when $\theta$ equals one of $(1,0,0,\ldots, 0)$, $(0,1,0, \ldots, 0)$, etc.
It is shown in \cite{vrana2015asymptotic} and \cite{christandl2016asymptotic}, and independently in~\cite{haviv}, that for all $k\geq3$ we have 
$\asympsubrank(D_{(1, k-1)}) = \asympcombsubrank(\Phi_{(1, k-1)}) = 2^{h(k^{-1})}$,
where $h(p)$ denotes the binary entropy function. In \cite{christandl2016asymptotic} it is shown that
$\asympsubrank(D_{(2, 2)}) = \asympcombsubrank(\Phi_{(2, 2)}) = 2$
which in \cite{srini} is extended to, for all even $k\geq4$,
$\asympsubrank(D_{(k/2, k/2)}) = \asympcombsubrank(\Phi_{(k/2, k/2)}) = 2$.
\end{example}

We are going to extend \cref{cwcor}.
Suppose $\Psi \subseteq I_1 \times I_2 \times I_3$ is not tight, but has a tight subset $\Phi \subseteq \Psi$. In the rest of this section we focus on obtaining a lower bound on $\asympcombsubrank(\Psi)$ via $\Phi$. This has an application in the context of tri-colored sum-free sets (\cref{capset}) for example. We begin with the following standard notion.

\begin{definition}\label{tighton}
Let $\Phi \subseteq \Psi \subseteq I_1 \times \cdots \times I_k$. We say that $\Phi$ is a \defin{combinatorial degeneration of $\Psi$}, and write  $\Psi \degengeq \Phi$, if there are maps $u_i : I_i \to \ZZ$ $(i\in [k])$ such that for all $\alpha \in I_1 \times \cdots \times I_k$, if $\alpha \in \Psi\setminus \Phi$, then $\sum_{i=1}^k u_i(\alpha_i) > 0$, and if~$\alpha \in \Phi$, then $\sum_{i=1}^k u_i(\alpha_i) = 0$. Note that the maps $u_i$ need not be injective.
\end{definition}

Combinatorial degeneration gets its name from the following standard proposition, see e.g.~\cite[Proposition 15.30]{burgisser1997algebraic}.

\begin{proposition}\label{combtodegen}
Let $f \in V_1 \otimes \cdots \otimes V_k$. Let $\Psi = \supp_C f$. Let $\Phi \subseteq \Psi$ such that $\Psi \degengeq \Phi$. Then $f \degengeq f|_\Phi$.
\end{proposition}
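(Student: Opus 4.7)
The plan is standard: use the integer-valued weight functions $u_i$ witnessing the combinatorial degeneration $\Psi \degengeq \Phi$ to build a diagonal one-parameter family of linear maps, invertible over $\FF(\eps)$, whose action on $f$ produces $f|_\Phi$ as the lowest-order coefficient in $\eps$. This is precisely the algebraic characterisation of $\degengeq$ recalled in \cref{degenremark}.

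Concretely, write $C = ((v_{i, 1}, \ldots, v_{i, n_i}))_{i \in [k]}$ and expand
\[
f = \sum_{\alpha \in \Psi} c_\alpha\, v_{1, \alpha_1} \otimes \cdots \otimes v_{k, \alpha_k},
\]
with $c_\alpha \neq 0$ for every $\alpha \in \Psi = \supp_C f$. For each $i \in [k]$, define the diagonal map $A_i(\eps) \in \GL(\FF(\eps)^{n_i})$ by $A_i(\eps) v_{i, \alpha_i} = \eps^{u_i(\alpha_i)} v_{i, \alpha_i}$; this is invertible over $\FF(\eps)$ since each diagonal entry is a unit. Applying $(A_1(\eps), \ldots, A_k(\eps))$ leg-by-leg yields
\[
(A_1(\eps), \ldots, A_k(\eps)) \cdot f = \sum_{\alpha \in \Psi} c_\alpha\, \eps^{\sum_{i=1}^k u_i(\alpha_i)}\, v_{1, \alpha_1} \otimes \cdots \otimes v_{k, \alpha_k}.
\]
By \cref{tighton}, the exponent $\sum_i u_i(\alpha_i)$ is $0$ for $\alpha \in \Phi$ and a positive integer for $\alpha \in \Psi \setminus \Phi$. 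Splitting the sum according to these two cases gives
\[
(A_1(\eps), \ldots, A_k(\eps)) \cdot f = f|_\Phi + \eps\, g_1 + \cdots + \eps^e\, g_e
\]
for suitable tensors $g_1, \ldots, g_e \in V_1 \otimes \cdots \otimes V_k$, which is exactly the condition $f \degengeq f|_\Phi$.

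There is no real obstacle. The only subtlety worth flagging is that the $u_i$ may take negative values, so the individual diagonal entries of $A_i(\eps)$ can be negative powers of $\eps$; this does not cause any coefficient of the transformed tensor to blow up, however, because any $\alpha$ with $\sum_i u_i(\alpha_i) < 0$ would force $\alpha \notin \Psi$ and hence $c_\alpha = 0$, so the corresponding term simply vanishes.
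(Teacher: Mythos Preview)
Your argument is correct and is exactly the standard one-parameter-subgroup argument that the paper has in mind; indeed the paper does not give a proof but simply cites \cite[Proposition~15.30]{burgisser1997algebraic}, where the same construction appears. Your handling of possible negative individual exponents $u_i(\alpha_i)$ is fine: the maps $A_i(\eps)$ are still invertible over $\FF(\eps)$, and since the sum over $\alpha$ runs only over $\Psi$, every total exponent $\sum_i u_i(\alpha_i)$ is nonnegative.
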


\cref{combtodegen} brings us only slightly closer to our goal. Namely, given $f \in V_1 \otimes \cdots \otimes V_k$ with $\Psi = \supp_C f$, and given $\Phi \subseteq \Psi$ such that $\Psi \degengeq \Phi$, it follows directly from \cref{combtodegen} that $f \degengeq f|_\Phi$ and thus $\asympsubrank(f) \geq \asympsubrank(f|_\Phi)$. This, however, does not give us a lower bound on $\asympcombsubrank(\Psi)$. The following theorem does. Our theorem extends the result in \cite{kleinberg2016growth}.

\begin{theorem}\label{cwwithoutbasistrans}
Let $\Phi\subseteq\Psi\subseteq I_1\times\cdots\times I_k$. If $\Psi\degengeq\Phi$, then
\[
\asympsubrank(\Psi)\ge\asympsubrank(\Phi).
\]
\end{theorem}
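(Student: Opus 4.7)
My plan is to extract, from any large free diagonal in $\Phi^{\times n}$, a slightly smaller subdiagonal that remains free inside the \emph{larger} box $\Psi^{\times n}$, losing only a subexponential factor which disappears after taking $n$th roots. The key idea is to exploit the maps $u_i \colon I_i \to \ZZ$ from \cref{tighton} as a $\ZZ$-grading that separates $\Phi^{\times n}$ from $\Psi^{\times n}\setminus \Phi^{\times n}$. Concretely, for $\alpha_i \in I_i^{\times n}$ I will set $U_i(\alpha_i) = \sum_{j=1}^n u_i(\alpha_i^{(j)})$; then $U(\alpha) = \sum_i U_i(\alpha_i)$ is nonnegative on $\Psi^{\times n}$ and vanishes exactly on $\Phi^{\times n}$.

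Fix $n$ and a free diagonal $D \subseteq \Phi^{\times n}$ with $\abs{D} = \subrank(\Phi^{\times n})$. I will stratify $D$ according to the vector $v(\alpha) = (U_1(\alpha_1), \ldots, U_k(\alpha_k)) \in \ZZ^k$. Since each $u_i$ takes only finitely many integer values, each $U_i$ ranges over a set of size $O(n)$, so there are at most $\poly(n)$ possible values of $v$. By pigeonhole some level set $D' \subseteq D$ has size at least $\abs{D}/\poly(n)$ and a common value $v$; and since $U(\alpha) = 0$ on $\Phi^{\times n}$ we automatically have $\sum_i v_i = 0$.

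The heart of the argument is to verify $D' = \Psi^{\times n} \cap (D'_1 \times \cdots \times D'_k)$. Given $\beta$ in the right hand side, each $\beta_i \in D'_i$ gives $U_i(\beta_i) = v_i$, so $U(\beta) = \sum_i v_i = 0$; this forces $\beta \in \Phi^{\times n}$. But then $\beta \in \Phi^{\times n} \cap (D_1 \times \cdots \times D_k) = D$, and because $D$ is a diagonal (coordinate projections are bijections), $\beta_1 \in D'_1$ together with $\beta \in D$ forces $\beta \in D'$. Thus $D'$ is free in $\Psi^{\times n}$, so $\subrank(\Psi^{\times n}) \geq \abs{D'} \geq \subrank(\Phi^{\times n})/\poly(n)$; taking $n$th roots and letting $n \to \infty$ makes the polynomial factor disappear and yields $\asympsubrank(\Psi) \geq \asympsubrank(\Phi)$.

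The one step that needs real care is the mutual containment that establishes freeness of $D'$ in the bigger ambient $\Psi^{\times n}$: both the $\ZZ$-grading (which excludes any $\beta \in \Psi^{\times n} \setminus \Phi^{\times n}$ whose $U$-value must be strictly positive) and the diagonal property of the parent set $D$ (which upgrades membership in $D$ with $\beta_1 \in D'_1$ to membership in $D'$) are simultaneously needed. Everything else is routine pigeonholing and the observation that $\lim_{n\to\infty}\poly(n)^{1/n}=1$.
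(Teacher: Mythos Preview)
Your proof is correct. Both your argument and the paper's hinge on the same key observation: the additive extension $U_i(\alpha_i)=\sum_j u_i(\alpha_i^{(j)})$ furnishes a $\ZZ$-grading under which $\Psi^{\times n}$ and $\Phi^{\times n}$ coincide on any level set $\{U_i=v_i\}_i$ with $\sum_i v_i=0$. The execution differs. The paper first proves an intermediate lemma (\cref{finite}) that $\asympsubrank(\Psi)\ge\subrank(\Phi)$ by fixing a free diagonal $D\subseteq\Phi$, choosing the \emph{specific} level set $v_i=n\sum_{x\in D_i}u_i(x)$ inside $\Psi^{\times n|D|}$, and lower-bounding $|D^{\times n|D|}\cap(W_1\times\cdots\times W_k)|$ via the multinomial count of uniform-type strings; it then bootstraps by applying this to $\Psi^{\times n}\degengeq\Phi^{\times n}$. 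You instead work directly with an optimal free diagonal $D\subseteq\Phi^{\times n}$ and pigeonhole over the $O(n^k)$ possible level vectors $v$ to extract a subdiagonal $D'$ free in $\Psi^{\times n}$, yielding $\subrank(\Psi^{\times n})\ge\subrank(\Phi^{\times n})/\poly(n)$ in one shot. Your route is shorter and avoids the auxiliary lemma; the paper's route makes the construction explicit (a concrete level set rather than an existential one) and yields the cleaner non-asymptotic statement $\asympsubrank(\Psi)\ge\subrank(\Phi)$ along the way.
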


\begin{lemma}\label{trickslem}
Let $S\subseteq T\subseteq I_1\times\cdots\times I_k$, and $W_i\subseteq I_i$. Let $n \in \NN$. %
\begin{itemize}
\item If $T\degengeq S$, then $T^{\times n}\degengeq S^{\times n}$.
\item If $S$ is a free diagonal in $T$, then $S^{\times n}$ is a free diagonal in $T^{\times n}$.
\item If $S$ is a free diagonal in $T$, then $S\cap(W_1\times\cdots\times W_k)$ is a free diagonal in $T\cap(W_1\times\cdots\times W_k)$.
\item If $S$ is a free diagonal in $T\cap(W_1\times\cdots\times W_k)$, then $S$ is a free diagonal in~$T$.
\end{itemize}
\end{lemma}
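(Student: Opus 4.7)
The plan is to verify each of the four items directly from the relevant definitions; all four are routine bookkeeping, but I highlight the one mildly nontrivial step below.

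For item 1, let $u_i : I_i \to \ZZ$ be maps witnessing $T \degengeq S$, and lift them to $u_i^{(n)} : I_i^n \to \ZZ$ defined by the coordinatewise sum $u_i^{(n)}(\alpha_1, \ldots, \alpha_n) = \sum_{j=1}^n u_i(\alpha_j)$. For any element $(\alpha^{(1)}, \ldots, \alpha^{(k)}) \in T^{\times n}$, the quantity $\sum_{i=1}^k u_i^{(n)}(\alpha^{(i)})$ equals $\sum_{j=1}^n \sum_{i=1}^k u_i(\alpha^{(i)}_j)$, and each inner sum is nonnegative because the $j$-th slice lies in $T$. The total sum vanishes exactly when every slice lies in $S$, that is, precisely when the element lies in $S^{\times n}$, which is what item 1 asks.

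For item 2, I would first check that $S^{\times n}$ is a diagonal: if two distinct elements of $S^{\times n}$ agreed in the $i$-th coordinate, then picking an index $j$ where they disagree in some other coordinate, their $j$-th slices would be distinct elements of $S$ agreeing in the $i$-th coordinate, contradicting that $S$ itself is a diagonal. For freeness, note $(S^{\times n})_i = S_i^n$, and any element of $T^{\times n} \cap (S_1^n \times \cdots \times S_k^n)$ has every slice in $T \cap (S_1 \times \cdots \times S_k) = S$ by freeness of $S$ in $T$, hence lies in $S^{\times n}$.

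For items 3 and 4, a subset of a diagonal is a diagonal, so in each case only freeness needs to be checked. For item 3, write $S' = S \cap (W_1 \times \cdots \times W_k)$; if $\alpha \in T \cap (W_1 \times \cdots \times W_k) \cap ((S')_1 \times \cdots \times (S')_k)$, then $\alpha_i \in (S')_i \subseteq S_i$ for all $i$, so freeness of $S$ in $T$ gives $\alpha \in S$, and combined with $\alpha \in W_1 \times \cdots \times W_k$ this yields $\alpha \in S'$. For item 4, if $\alpha \in T \cap (S_1 \times \cdots \times S_k)$, then $\alpha_i \in S_i \subseteq W_i$ forces $\alpha \in T \cap (W_1 \times \cdots \times W_k)$, and freeness of $S$ inside $T \cap (W_1 \times \cdots \times W_k)$ then gives $\alpha \in S$.

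The only real subtlety is the diagonal check in item 2, which relies on the observation that a single coordinate disagreement between two elements of $S^{\times n}$ propagates, via one slice, to a disagreement in \emph{every} coordinate; otherwise the argument is pure definition-chasing.
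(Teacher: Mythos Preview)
Your proof is correct and is precisely the kind of direct verification the paper has in mind: the paper does not give a proof at all, stating only that ``the proof of \cref{trickslem} is straightforward and left to the reader.'' Your four-item check from the definitions is exactly what was intended, and the one place you flag as mildly nontrivial (the diagonal property in item~2) is handled correctly.
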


The proof of \cref{trickslem} is straightforward and left to the reader.
First we lower bound $\asympsubrank(\Psi)$ by the (nonasymptotic) subrank $\subrank(\Phi)$.

\begin{lemma}\label{finite}
Let $\Phi\subseteq\Psi\subseteq I_1\times\cdots\times I_k$. If $\Psi\degengeq\Phi$, then $\asympsubrank(\Psi)\ge\subrank(\Phi)$.
\end{lemma}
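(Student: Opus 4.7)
The plan is to combinatorially filter a large subset of $D^n \subseteq \Phi^n \subseteq \Psi^n$, where $D \subseteq \Phi$ is a free diagonal of size $s = \subrank(\Phi)$ witnessing the subrank, so that the filtered subset is a genuine free diagonal in $\Psi^n$ of size at least $s^n/\poly(n)$; taking $n$-th roots will then give $\asympsubrank(\Psi) \geq s$.

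Concretely, I would fix witnessing weights $u_i : I_i \to \ZZ$ for $\Psi \degengeq \Phi$, so that $u(\alpha) := \sum_i u_i(\alpha_i)$ vanishes on $\Phi$ and is strictly positive on $\Psi \setminus \Phi$. Extend each $u_i$ additively to $I_i^n$ by $U_i(\beta_i^{(1)}, \ldots, \beta_i^{(n)}) := \sum_j u_i(\beta_i^{(j)})$, and to each tuple $\vec\alpha \in (I_1 \times \cdots \times I_k)^n$ attach the profile $(U_1(\pi_1 \vec\alpha), \ldots, U_k(\pi_k \vec\alpha)) \in \ZZ^k$. The crucial observation is: if $\vec\alpha \in \Psi^n$ has profile summing to $0$, then the nonnegative summands $u(\alpha^{(j)})$ all vanish, forcing $\vec\alpha \in \Phi^n$. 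Since every $\vec d \in D^n$ has profile summing to $0$ with entries bounded in absolute value by $nM$, where $M = \max_i \max_{\alpha_i \in I_i} |u_i(\alpha_i)|$, pigeonholing over the $O(n^{k-1})$ admissible profiles produces a profile $t^* \in \ZZ^k$ shared by a subset $D^* \subseteq D^n$ with $|D^*| \geq s^n / (2nM+1)^{k-1}$.

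What remains is to check that $D^*$ is a free diagonal in $\Psi^n$. The diagonal property is inherited from $D^n$, which is itself a diagonal since $D$ is. For freeness I plan to apply item~(4) of Lemma~\ref{trickslem} twice in order to reduce the claim to showing that $D^*$ is a free diagonal inside $\Psi'^n \cap (L_1^{t_1^*} \times \cdots \times L_k^{t_k^*})$, where $\Psi' := \Psi \cap (D_1 \times \cdots \times D_k)$ and $L_i^{t_i^*} := U_i^{-1}(t_i^*)$. The key observation above, combined with the freeness of $D$ in $\Phi$, will force this intersection to lie in $\Phi^n \cap (D_1 \times \cdots \times D_k)^n = D^n$, and the profile constraint will then pin it down to $D^*$ itself, so that $D^*$ equals the whole intersection and is trivially a free diagonal therein. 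The main subtlety I expect is threading the freeness/diagonal property cleanly through the several restriction steps---this is precisely what Lemma~\ref{trickslem} is tailored to handle---while the remainder is just pigeonhole and the definition of $\asympsubrank$.
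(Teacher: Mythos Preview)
Your proposal is correct and follows essentially the same approach as the paper: both fix the additive weight profile $(U_1,\ldots,U_k)$ to force $\Psi^n$ down to $\Phi^n$, then use freeness of $D$ in $\Phi$ together with \cref{trickslem} to extract a large free diagonal in $\Psi^n$. The only cosmetic difference is that the paper works at power $n|D|$ and picks the explicit ``uniform-type'' profile $t_i^* = n w_i$ with $w_i = \sum_{x\in D_i} u_i(x)$, counting via the multinomial $\binom{n|D|}{n,\ldots,n}$, whereas you pigeonhole over all $O(n^{k-1})$ possible profiles; both yield $|D|^{n-o(n)}$.
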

\begin{proof}
Pick maps $u_i:I_i\to\mathbb{Z}$ such that
\begin{align*}
\sum_{i=1}^k u_i(\alpha_i) & = 0\quad\text{for $\alpha\in\Phi$}  \\
\sum_{i=1}^k u_i(\alpha_i) & > 0\quad\text{for $\alpha\in\Psi\setminus\Phi$.}
\end{align*}
Let $D$ be a free diagonal in $\Phi$ with $|D|=\subrank(\Phi)$ and let
\begin{equation*}
w_i=\sum_{x\in D_i}u_i(x).
\end{equation*}
Let $n\in\mathbb{N}$ and define
\begin{equation*}
W_i=\Bigl\{(x_1,\ldots,x_{n|D|})\in I_i^{\times n|D|} \,\Big|\, \sum_{\smash{j=1}}^{\mathclap{n|D|}}u_i(x_j)=nw_i \Bigr\}.
\end{equation*}
Then
\begin{equation*}
\Psi^{\times n|D|}\cap(W_1\times\cdots\times W_k)=\Phi^{\times n|D|}\cap(W_1\times\cdots\times W_k).
\end{equation*}
The inclusion $\supseteq$ is clear. To show $\subseteq$, let $(x_1,\ldots,x_k)\in\Psi^{\times n|D|}\cap(W_1\times\cdots\times W_k)$. Write $x_i=(x_{i,1},x_{i,2},\ldots,x_{i,n|D|})$ and consider the $n|D|\times k$ matrix of evaluations
\begin{equation*}
\begin{matrix}
u_1(x_{1,1}) & u_2(x_{2,1}) & \cdots & u_k(x_{k,1})  \\
u_1(x_{1,2}) & u_2(x_{2,2}) & \cdots & u_k(x_{k,2})  \\
\vdots & \vdots & \ddots & \vdots  \\
u_1(x_{1,n|D|}) & u_2(x_{2,n|D|}) & \cdots & u_k(x_{k,n|D|})
\end{matrix}
\end{equation*}
The sum of the $i$th column is $nw_i$ by definition of $W_i$, and $\sum_{i=1}^k nw_i=0$. The row sums are nonnegative by definition of the maps $u_1,\ldots,u_k$. We conclude that the row sums are zero. Therefore $(x_1,\ldots,x_k)$ is an element of $\Phi^{\times n|D|}$.

Since $D$ is a free diagonal in $\Phi$, $D^{\times n|D|}$ is a free diagonal in $\Phi^{\times n|D|}$, and also $D^{\times n|D|}\cap(W_1\times\cdots\times W_k)$ is a free diagonal in $\Phi^{\times n|D|}\cap(W_1\times\cdots\times W_k)$, which in turn is equal to $\Psi^{\times n|D|}\cap(W_1\times\cdots\times W_k)$. Therefore $D^{\times n|D|}\cap(W_1\times\cdots\times W_k)$ is also a free diagonal in $\Psi^{\times n|D|}$, i.e.
\begin{equation*}
\subrank(\Psi^{\times n|D|})\ge|D^{\times n|D|}\cap(W_1\times\cdots\times W_k)|.
\end{equation*}

In the set $D^{\times n|D|}$ consider the strings with uniform type, i.e.~where all $|D|$ elements of $D$ occur exactly $n$ times. These are clearly in $W_1\times\cdots\times W_k$, and their number is $\binom{n|D|}{n,\ldots,n}$. Therefore
\begin{equation*}
\subrank(\Psi^{\times n|D|})\ge\binom{n|D|}{n,\ldots,n}=|D|^{n|D|-o(n)},
\end{equation*}
which implies
$\asympsubrank(\Psi)=\lim_{n\to\infty}\subrank(\Psi^{\times n|D|})^{\frac{1}{n|D|}}\ge |D|$.
\end{proof}

\begin{proof}[\upshape\bfseries Proof of \cref{cwwithoutbasistrans}]
Clearly, we may write $\asympsubrank(\Psi) = \lim_{n\to\infty}\asympsubrank(\Psi^{\times n})^{1/n}$. By \cref{finite}, 
\[
 \lim_{n\to\infty}\asympsubrank(\Psi^{\times n})^{1/n} \geq \lim_{n\to\infty}\subrank(\Phi^{\times n})^{1/n}.
 \]
The right-hand side equals $\asympsubrank(\Phi)$ by definition.
\end{proof}

\subsection{The cap set problem}\label{capsetsec} %

In this section we go on a combinatorial excursion. A subset $A \subseteq (\ZZ/3\ZZ)^n$ is called a \defin{cap set} if any line in $A$ is a point, a line being a triple of points of the form $(u, u+v, u+2v)$. The \defin{cap set problem} is to decide whether the maximal size of a cap set in $(\ZZ/3\ZZ)^n$ grows like $3^{n - o(n)}$ or like $c^{n-o(n)}$ for some $c < 3$. Gijswijt and Ellenberg in \cite{MR3583358}, inspired by the work of Croot, Lev and Pach in~\cite{MR3583357}, settled this problem, showing that $c\leq 3 (207 + 33 \sqrt{33})^{1/3} /8 \approx 2.755$.
Tao realised in \cite{tao} that the cap set problem may naturally be phrased as the problem of computing the size of the largest main diagonal in powers of the \defin{cap set tensor}
\[
\sum_{\alpha} e_{\alpha_1} \otimes e_{\alpha_2} \otimes e_{\alpha_3}
\]
where the sum is over $\alpha_1, \alpha_2, \alpha_3 \in \FF_3$ with $\alpha_1 + \alpha_2 + \alpha_3 = 0$. Here \emph{main diagonal} refers to a subset $A$ of the basis elements such that restricting the cap set tensor to $A\times A \times A$ gives the tensor $\sum_{v \in A} v \otimes v \otimes v$.  We show (in hindsight!)\ that the cap set tensor is in the $\GL_3(\FF_3)^{\times 3}$-orbit of the \defin{reduced polynomial multiplication} tensor, the prime example in \cite{strassen1991degeneration}, and we show how all recent results follow from this connection, using \cref{cwwithoutbasistrans}. We first state Strassen's result on reduced polynomial multiplication and then the cap set problem.

\subsubsection{Reduced polynomial multiplication} %
\label{redpolmult}
Let $F \in \FF[x]$ be a polynomial of positive degree $n$. 
Let~$\FF[x]/(F)$ be the algebra of univariate polynomials modulo the ideal generated by the polynomial $F$. \emph{Reduced polynomial multiplication} refers to multiplying in this algebra. For any algebra~$A$ with multiplication map $m: A\times A \to A$ the structure tensor is defined as $\sum a_3^*( m(a_1,a_2) )\,a_1\otimes a_2 \otimes a_3$, where the sum goes over elements~$a_1, a_2, a_3$ in some basis of~$A$. Let~$\FF[x]/(F)$ denote the structure tensor of the algebra~$\FF[x]/(F)$. 
Let us take $F = x^n$. For example, $\FF[x]/(x^2)$ is the tensor
\[
e_0 \otimes e_0 \otimes e_0 + e_0 \otimes e_1 \otimes e_1 + e_1 \otimes e_0 \otimes e_1
\]
and $\FF[x]/(x^3)$ is the tensor
\begin{align*}
&e_0 \otimes e_0 \otimes e_0 + e_0 \otimes e_1 \otimes e_1 + e_1 \otimes e_0 \otimes e_1\\
+\, &e_0 \otimes e_2 \otimes e_2 + e_2 \otimes e_0 \otimes e_2 + e_1 \otimes e_1 \otimes e_2
\end{align*}
and $\FF[x]/(x^n)$ is the tensor $\sum_{\alpha} e_{\alpha_1} \otimes e_{\alpha_2} \otimes e_{\alpha_3}$
where the sum is over $(\alpha_1, \alpha_2, \alpha_3)$ in $\{0, 1, \ldots, n-1\}^3$ such that $\alpha_1 + \alpha_2 = \alpha_3$.
The support of $\FF[x]/(x^n)$ equals
\[
\bigl\{(\alpha_1, \alpha_2, \alpha_3) \in \{0, \ldots, n-1\}^3 \,\big|\, \alpha_1 + \alpha_2 = \alpha_3 \bigr\}.
\]
which via $\alpha_3 \mapsto n-1 - \alpha_3$ we may identify with the set
\begin{equation} \label{reducedpol}
\Phi_n = \bigl\{ (\alpha_1, \alpha_2, \alpha_3) \in \{0,\ldots, n-1\}^3 \,\big|\, \alpha_1 + \alpha_2 + \alpha_3 = n-1 \bigr\}.
\end{equation}
The support $\Phi_n$ is tight (cf.~\cref{tightexample}).
We thus already know that the support simplex $\zeta^{\prob([3])}(\FF[x]/(x^n)) = \{\zeta^\theta(\FF[x]/(x^n)) \mid \theta \in \prob([3])\}$ is a subset of $\Delta(\FF[x]/(x^n))$, and we know that the minimum value of $\zeta^\theta(\FF[x]/(x^n))$ over $\theta\in  \prob([3])$ equals the asymptotic subrank $\asympsubrank(\Phi_n)$ (\cref{cwcor}).
Strassen proves in \cite[Theorem~6.7]{strassen1991degeneration} the remarkable result that for $\FF[x]/(x^n)$ the asymptotic spectrum and the support simplex in fact coincide and are equal to the interval
\[
\Delta\bigl(\FF[x]/(x^n)\bigr) = \zeta^{ \prob([3])}\bigl(\FF[x]/(x^n)\bigr) =  [z(n),\, n]
\]
where $z(n)$ is defined as
\begin{equation}\label{defz}
z(n) \coloneqq \frac{\gamma^n - 1}{\gamma - 1} \gamma^{-2(n-1)/3}
\end{equation}
with $\gamma$ equal to the unique real positive solution of the equation $\tfrac{1}{\gamma-1} - \frac{n}{\gamma^n-1} = \frac{n-1}{3}$.
In particular, $\asympsubrank(\Phi_n) = z(n)$.
We collect some (rounded) values of $z(n)$ for small~$n$ in the following table. For~$z(2)$ and~$z(3)$ one can write down the exact value. See also \cite[Table 1]{strassen1991degeneration}.
\begin{center}
\begin{tabular}{lll}
\toprule
$n$ 	& $z(n)$ &\\
\cmidrule(l){2-3}
	& rounded & exact\\
\midrule
2  & 1.88988 & $3/2^{2/3} = 2^{h(1/3)}$\\
3  & 2.75510 & $3 (207 + 33 \sqrt{33})^{1/3} /8$\\
4  & 3.61072 \\
5  & 4.46158 \\
6  & 5.30973 \\
7  & 6.15620 \\
8  & 7.00155 \\
9  & 7.84612 \\
10 & 8.69012 \\
\bottomrule
\end{tabular}
\end{center}

Strassen in \cite[Theorem~6.7]{strassen1991degeneration} in fact computes the asymptotic spectrum of $\FF[x]/(F)$ for arbitrary~$F$. The result is as follows. Let $F = \beta \prod_{i=1}^r (x - \alpha_i)^{n_i}$ be the prime factorisation of~$F$ over the algebraic closure of $\FF$ such that the $\alpha_i$ are pairwise distinct (i.e.~the~$n_i$ are the multiplicities of the roots of $F$). Then the asymptotic spectrum and support simplex of $\FF[x]/(F)$ coincide and equal the following interval,
\[
\Delta\bigl(\FF[x]/(F)\bigr) = \zeta^{\prob([3])}\bigl(\FF[x]/(F)\bigr) =  \Bigl[ \sum_{i=1}^r z(n_i),\, n \Bigr].
\]
where $z(n_i)$ is the function defined in \eqref{defz}.

Franz Mauch in \cite{mauch} computes the support simplex for a natural higher-order generalisation of the support $\Phi_n$ and the tensor $\FF[x]/(x^n)$ which he calls~a~\emph{Hang}.

\subsubsection{The cap set problem}\label{capset}

With the asymptotic spectrum of reduced polynomial multiplication at our disposal, we turn to the cap set problem. We begin by reintroducing the problem in the terminology that is commonly used in the literature.

\begin{definition}
A \defin{three-term progression-free set} is a set $A\subseteq (\ZZ/m\ZZ)^n$ satisfying the following. For all $(x_1, x_2, x_3) \in A^{\times 3}$: there are $u,v \in (\ZZ/m\ZZ)^n$ such that $(x_1, x_2, x_3) = (u, u+v, u+2v)$ if and only if $x_1 = x_2 = x_3$. 
Let~$r_3((\ZZ/m\ZZ)^n)$ be the size of the largest three-term progression-free set in $(\ZZ/m\ZZ)^n$  and define the regularisation $\underaccent{\wtilde}{r}_3(\ZZ/m\ZZ) = \lim_{n\to \infty} r_3((\ZZ/m\ZZ)^n)^{1/n}$.
\end{definition}

A three-term progression-free set in $(\ZZ/3\ZZ)^n$ is called a \defin{cap} or \defin{cap set}.
The \defin{cap set problem} is to decide whether $\underaccent{\wtilde}{r}_3(\ZZ/3\ZZ)$ equals 3 or is strictly smaller than~3. We next introduce an asymmetric variation on three-term progression free sets, called tri-colored sum-free sets, which are potentially larger. They are interesting since all known upper bound techniques for the size of three-term progression-free set turn out to be upper bounds on the size of tri-colored sum-free sets.

\begin{definition}
Let $G$ be an abelian group. Let $\Gamma \subseteq G\times G \times G$. For~$i \in [3]$ we define the marginal sets $\Gamma_i = \{x \in G \mid \exists \alpha \in \Gamma : \alpha_i = x\}$. We say $\Gamma$ is \defin{tricolored sum-free} if the following holds. The set $\Gamma$ is a diagonal, and for any $\alpha \in \Gamma_1 \times \Gamma_2 \times \Gamma_3$: $\alpha_1 +\alpha_2 + \alpha_3 = 0$ if and only if $\alpha \in \Gamma$. 
(Recall that $\Gamma\subseteq I_1 \times I_2 \times I_3$ is a \defin{diagonal} when any two distinct $\alpha, \beta \in \Gamma$ are distinct in all coordinates.)
Let~$s_3(G)$ be the size of the largest tricolored sum-free set in $G\times G \times G$ and define the regularisation $\regularize{s}_3(G) = \lim_{n\to \infty} s_3(G^{\times n})^{1/n}$.
\end{definition}

Equivalently, $\Gamma \subseteq G\times G\times G$ is a tricolored sum-free set if and only if $\Gamma$ is a free diagonal in $\{\alpha \in G\times G\times G \mid \alpha_1 + \alpha_2 + \alpha_3 = 0\}$.

Clearly, if the set $A\subseteq G = (\ZZ/m\ZZ)^n$ is three-term progression-free, then the set $\Gamma = \{(a, a, -2a) : a\in A\}\subseteq G \times G \times G$ is tri-colored sum-free. Therefore, we have %
$\regularize{r}_3(\ZZ/m\ZZ) \leq \regularize{s}_3(\ZZ/m\ZZ)$. 

Let us briefly summarise the recent history on the cap set problem. For clarity we focus on $m=3$; we refer the reader to the references for the general results.
Edel in \cite{MR2031694} proved the lower bound $2.21739 \leq \underaccent{\wtilde}{r}_3(\ZZ/3\ZZ)$.
Ellenberg and Gijswijt in~\cite{MR3583358} proved the upper bound 
\begin{align*}
\underaccent{\wtilde}{r}_3(\ZZ/3\ZZ) &\leq  3 (207 + 33 \sqrt{33})^{1/3} /8 \approx 2.755,
\intertext{settling the cap set problem, but leaving open the problem of computing $\underaccent{\wtilde}{r}_3(\ZZ/3\ZZ)$.
Blasiak et~al.~\cite{MR3631613} proved that in fact}
\regularize{s}_3(\ZZ/3\ZZ) &\leq 3 (207 + 33 \sqrt{33})^{1/3} /8. \label{blasiakbound} %
\end{align*}
This upper bound was shown to be an equality in the three papers \cite{kleinberg2016growth, norin2016distribution, pebody2016proof}.

\begin{theorem}\label{capsetth}
$\regularize{s}_3(\ZZ/3\ZZ) = 3 (207 + 33 \sqrt{33})^{1/3} /8$.
\end{theorem}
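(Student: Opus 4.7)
The plan is to identify $\regularize{s}_3(\ZZ/3\ZZ)$ with a combinatorial asymptotic subrank and then squeeze it between matching upper and lower bounds both equal to $z(3)$ from~\eqref{defz}. Identifying $\ZZ/3\ZZ$ with $\{0,1,2\}$ and setting $\Psi = \{(a,b,c) \in (\ZZ/3\ZZ)^3 : a+b+c=0\}$, a tri-coloured sum-free set in $((\ZZ/3\ZZ)^n)^3$ is by definition a free diagonal in $\Psi^{\times n}$, so that $s_3((\ZZ/3\ZZ)^n) = \subrank(\Psi^{\times n})$ and hence $\regularize{s}_3(\ZZ/3\ZZ) = \asympsubrank(\Psi)$.

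For the upper bound I will use that the cap set tensor $T_\Psi = \sum_{(a,b,c) \in \Psi} e_a \otimes e_b \otimes e_c$ over $\FF_3$ lies in the $\GL_3(\FF_3)^{\times 3}$-orbit of the structure tensor of $\FF_3[x]/(x^3)$, as announced at the beginning of \cref{capset}. Since asymptotic subrank of a tensor is invariant under this action, Strassen's computation recalled in \cref{redpolmult} gives $\asympsubrank(T_\Psi) = \asympsubrank(\FF_3[x]/(x^3)) = z(3)$. Because the combinatorial subrank of a support is bounded by the tensor subrank, $\asympsubrank(\Psi) \leq \asympsubrank(T_\Psi) = z(3)$.

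For the matching lower bound I will apply \cref{cwwithoutbasistrans} to an explicit combinatorial degeneration $\Psi \degengeq \Phi$, where $\Phi$ is a coordinatewise relabelling of the tight six-element support $\Phi_3 = \{(a,b,c) \in \{0,1,2\}^3 : a+b+c=2\}$ of $\FF_3[x]/(x^3)$. Although $\Phi_3 \not\subseteq \Psi$, the bijection $c \mapsto c-2 \bmod 3$ on the third coordinate carries $\Phi_3$ onto
\[
\Phi = \{(0,0,0),\, (0,1,2),\, (0,2,1),\, (1,0,2),\, (1,1,1),\, (2,0,1)\} \subseteq \Psi.
\]
Taking $u_1 = u_2 = (0,1,2)$ and $u_3 = (0,-2,-1)$, a direct check on the nine triples of $\Psi$ shows that $u_1(a) + u_2(b) + u_3(c)$ vanishes on $\Phi$ and equals $3$ on each of $(2,2,2)$, $(1,2,0)$, $(2,1,0)$, witnessing $\Psi \degengeq \Phi$. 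Then \cref{cwwithoutbasistrans}, combined with invariance of combinatorial asymptotic subrank under coordinatewise bijections, yields $\asympsubrank(\Psi) \geq \asympsubrank(\Phi) = \asympsubrank(\Phi_3) = z(3)$, finishing the proof.

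The main conceptual obstacle will be to spot that an isomorphic copy of the integer-supported set $\Phi_3$ lives inside the modularly-supported $\Psi$ after shifting one coordinate by a residue, and that this shift extends consistently to integer-valued separable weights $u_i$ that strictly dominate on the complement; once this is in place, everything else reduces to evaluating a separable integer function at nine triples and invoking \cref{cwwithoutbasistrans}.
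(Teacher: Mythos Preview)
Your argument is correct and follows essentially the same two-step route as the paper: a combinatorial degeneration $\Psi \degengeq \Phi$ together with \cref{cwwithoutbasistrans} for the lower bound, and the orbit equivalence of the cap set tensor with $\FF_3[x]/(x^3)$ for the upper bound. Your lower bound differs from the paper's only cosmetically: the paper first relabels the third coordinate so that $\Psi$ becomes $\Psi_3 = \{\alpha : \alpha_1+\alpha_2+\alpha_3 \equiv 2 \bmod 3\}$, after which $\Phi_3 \subseteq \Psi_3$ and the identity maps $u_i(\alpha_i)=\alpha_i$ already witness the degeneration; you instead keep $\Psi$ and push the shift into $\Phi$ and into $u_3$. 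The computations are equivalent.

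The one point to flag is your upper bound: you invoke the orbit equivalence ``as announced at the beginning of \cref{capset},'' but in the paper that announcement is a promise, and the promise is discharged precisely inside the proof of this theorem via the explicit basis $\bigl(\sum_x \binom{x}{a} e_x\bigr)_a$ over $\FF_3$, which turns the support $\Psi$ into the tight set $\Phi_3$. So while your outline is right, by citing the orbit claim rather than supplying it you are omitting the one nontrivial calculation the paper contributes here; to make your proof self-contained you would still need to exhibit that basis change (or some other proof of the isomorphism).
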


We reprove \cref{capsetth} by proving that $\regularize{s}_3(\ZZ/m\ZZ)$ equals the asymptotic subrank $z(m)$ of $\FF_m[x]/(x^m)$ discussed in \cref{redpolmult}, when $m$ is a prime power. We emphasise that the significance of our proof lies in the explicit connection to the framework of asymptotic spectra and not in the obtained value, which also for prime powers~$m$ was already computed in \cite{MR3631613,kleinberg2016growth, norin2016distribution, pebody2016proof}. %

\begin{proof}
We will prove $\regularize{s}_3(\ZZ/m\ZZ) = z(m)$ when $m$ is a prime power.
By definition $\regularize{s}_3(\ZZ/m\ZZ)$ is equal to the asymptotic subrank of the set
\[
\{\alpha \in \{0, \ldots, m-1\}^3 \mid \alpha_1 + \alpha_2 + \alpha_3 = 0 \bmod m\}
\]
which via $\alpha_3 \mapsto \alpha_3 - (m-1)$ we may identify with the set
\begin{align*}
\Psi_m &= \{ \alpha \in \{0,\ldots, m-1\}^3 \mid \alpha_1 + \alpha_2 + \alpha_3 = m-1 \bmod m \}
\intertext{and so $\regularize{s}_3(\ZZ/m\ZZ) = \asympsubrank(\Psi_m)$. Define the tight set (cf.~\cref{tightexample})}
\Phi_m &= \{ \alpha \in \{0,\ldots, m-1\}^{\smash{3}} \mid \alpha_1 + \alpha_2 + \alpha_3 = m-1 \}.
\end{align*}
We know $\asympsubrank(\Phi_m) = z(m)$ (\cref{redpolmult}). We will show that $\asympsubrank(\Phi_m) = \asympsubrank(\Psi_m)$ when $m$ is a prime power. This proves the theorem.

We first prove $\asympsubrank(\Phi_m) \leq \asympsubrank(\Psi_m)$.
There is %
a combinatorial degeneration $\Psi_m \degengeq \Phi_m$. 
Indeed, let $u_i : \{0,\ldots, m-1\} \to \{0,\ldots, m-1\}$ be the identity map. If $\alpha \in \Phi_m$, then $\sum_{i=1}^3 u_i(\alpha_i) = m-1$, and if $\alpha \in \Psi_m \setminus \Phi_m$, then $\sum_{i=1}^3 u_i(\alpha_i)$ equals $m-1$ plus a positive multiple of~$m$.
This means \cref{cwwithoutbasistrans} applies, and we thus obtain
$\asympsubrank(\Phi_m) \leq \asympsubrank(\Psi_m)$. %
This proves the claim.

We show $\asympsubrank(\Psi_m) \leq \asympsubrank(\Phi_m)$ when $m$ is a power of the prime $p$.
Let $\FF = \FF_p$.
Let $f_m \in \FF^m \otimes \FF^m \otimes \FF^m$ have support~$\Psi_m$ with all nonzero coefficients equal to~1. Obviously, $\asympsubrank(\Psi_m) \leq \asympsubrank(f_m)$.
To compute $\asympsubrank(f_m)$ we show that there is a basis in which the support of $f_m$ equals the tight set $\Phi_m$. Then $\asympsubrank(f_m) = \asympsubrank(\Phi_m)$ (\cref{cwcor}). This implies the claim.
We prepare to give the basis (which is the same basis as used in \cite{MR3631613}).
First observe that the rule $x \mapsto \binom{x}{a}$ gives a well-defined map $\ZZ/m\ZZ \to \ZZ/p\ZZ$, since for $a\in \{0,1,\ldots, m-1\}$, if $x = y \bmod m$ then $\binom{x}{a} = \binom{y}{a} \bmod p$ by Lucas' theorem. 
Let~$(e_x)_x$ be the standard basis of~$\FF^m$.
The elements $(\sum_{x\in \ZZ/m\ZZ} \binom{x}{a} e_x)_{a \in \ZZ/m\ZZ}$ form a basis of $\FF^m$ since the matrix $(\binom{x}{a})_{a,x}$ is upper triangular with ones on the diagonal. We will now rewrite $f_m$ in the basis $((\sum_x \binom{x}{a} e_x)_a, (\sum_y \binom{y}{b} e_y)_b, (\sum_z \binom{z}{c} e_z)_c)$.
Observe that $\binom{x}{m-1}$ equals 1 if and only if $x$ equals $m-1$, and hence
\begin{align*}
f_m &= \hspace{-0.5em}\sum_{\substack{x, y, z \in \ZZ/m\ZZ:\\ x+y+z = m-1}} \hspace{-1em} e_x \otimes e_y \otimes e_z
\hspace{0.5em} = \hspace{-0.5em} \sum_{x, y, z \in \ZZ/m\ZZ}\! \binom{x+y+z}{m-1} e_x \otimes e_y \otimes e_z.
\end{align*}
The identity $\binom{x+y+z}{w} = \sum \binom{x}{a} \binom{y}{b} \binom{z}{c}$ with sum over $a,b,c \in \{0,1, \ldots, m-1\}$ such that $a+ b+ c= w$ is true and thus
\begin{align}
&\sum_{x, y, z \in \ZZ/m\ZZ} \binom{x+y+z}{m-1} e_x \otimes e_y \otimes e_z \nonumber\\
 &\quad= \sum_{x, y, z \in \ZZ/m\ZZ}\; \sum_{\substack{a,b,c\in \{0, 1, \ldots, m-1\}:\\a+b+c=m-1}} \binom{x}{a}\binom{y}{b}\binom{z}{c} e_x \otimes e_y \otimes e_z.\label{xxx}
\end{align}
We may simply rewrite \eqref{xxx} as
\[
\sum_{\substack{a,b,c\in \{0, 1, \ldots, m-1\}:\\a+b+c=m-1}}\hspace{0.5em}  \sum_{x\in \ZZ/m\ZZ}\!\! \binom{x}{a} e_x \,\otimes \!\sum_{y\in \ZZ/m\ZZ}\!\! \binom{y}{b} e_b\, \otimes \!\sum_{z\in \ZZ/m\ZZ} \!\!\binom{z}{c} e_z.
\]
Therefore, with respect to the basis $((\sum_x \binom{x}{a} e_x)_a, (\sum_y \binom{y}{b} e_y)_b, (\sum_z \binom{z}{c} e_z)_c)$, the support of $f_m$ equals the tight set $\Phi_m$. 
(And even stronger, $f_m$ is isomorphic to the tensor $\FF[x]/(x^m)$ of \cref{redpolmult}.)  %
\end{proof}

\subsection{Free tensors}

Let $\FF$ be the complex numbers $\CC$.
We consider a family of tensors called free tensors. %
These were introduced by Franz in \cite{MR1923785}.
Recall that for any $\theta \in \prob_\s(B)$ and any tensor $t$
\begin{align*}
E^\theta(t) &= E_\theta(t) \textnormal{\quad(\cref{upperequalslower})}
\shortintertext{and}
\rho^\theta(t) &\geq E^\theta(t) \textnormal{\quad(\cref{suppinv})}.
\end{align*}
We care about free tensors, because for any free tensor $t$ we can show that the three values $\rho^\theta(t)$, $E^\theta(t)$, $E_\theta(t)$ coincide for $\theta \in \prob_s(B)$. 

\begin{definition}[Free]
Let $\Phi \subseteq I_1 \times \cdots \times I_k$. We say $\Phi$ is \defin{free} if the following holds. For any $x,y \in \Phi$, if $x\neq y$, then the tuples $x$ and $y$ differ in at least two positions.
Let $t \in V_1 \otimes \cdots \otimes V_k$. We say $t$ is \defin{free} if $\supp_C t$ is free for some choice of bases~$C \in \bases(t)$.
\end{definition}

\begin{remark}
Clearly, oblique tensors are free. We thus have $\{\textnormal{tight}\} \subseteq \{ \textnormal{oblique} \} \subseteq \{ \textnormal{free} \}$.
Free tensors from a semigroup under $\otimes$ and $\oplus$, like the tight tensors and oblique tensors.
\end{remark}

\begin{remark}
We prove there exist tensors that are not free in $\CC^n \otimes \CC^n \otimes \CC^n$ for $n \geq5$.
First we upper bound the maximal size of a free support.
Let $\Phi \subseteq [n] \times [n] \times [n]$ be free.
Then $\abs[0]{\Phi} = \abs[0]{\{(\alpha_1, \alpha_2) : \alpha \in \Phi\}} \leq n^2$.
Second we apply a lemma of Bürgisser~\cite{burg}, which is as follows. Let 
\[
Z_n = \{t \in \CC^n \otimes \CC^n \otimes \CC^n : \exists C \in \bases(t)\, \abs[0]{\supp_C t} < n^3 - 3n^2\}.
\]
Let $Y_n = \CC^n\otimes \CC^n \otimes \CC^n \setminus \overline{Z_n}$. Then the set $Y_n$ is Zariski open and nonempty.
Now let $n \geq 5$ and let $t \in Y_n$. Then $\forall C \in \bases(t)$\, $\abs[0]{\supp_C t} \geq n^3 - 3n^2 > n^2$. We conclude~$t$ is not free.
\end{remark}

\begin{theorem}\label{freetheorem}
Let $t$ be free. Let $\theta \in \prob_\s(B)$. Then $\rho^\theta(t) = E^\theta(t) = E_\theta(t)$.
\end{theorem}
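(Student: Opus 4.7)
\medskip
\noindent\textbf{Proof plan.} The paper already establishes $\rho^\theta(t)\ge E^\theta(t)$ (\cref{suppinv}) and $E^\theta(t)=E_\theta(t)$ for $\theta\in\prob_\s(B)$ (\cref{upperequalslower}), so it remains to prove the reverse inequality $E_\theta(t)\ge\rho^\theta(t)$. I will in fact prove the sharper statement $E_\theta(t)\ge H_\theta(\Phi)$ whenever $\Phi=\supp_C t$ is free, which suffices since $\rho^\theta(t)\le H_\theta(\Phi)$ by the definition of $\rho^\theta$ as a minimum over bases.

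Fix $C\in\bases(t)$ with $\Phi=\supp_C t$ free and write $t=\sum_{x\in\Phi}t_x\ket{x}$ in this basis. The crucial property of freeness is that every vector $\psi=\sum_{x\in\Phi}c_x\ket{x}$ has diagonal single-party marginals in $C$. Indeed, an off-diagonal entry $\bra{\alpha}\rho_j(\psi)\ket{\beta}$ with $\alpha\neq\beta$ is a sum of terms $\overline{c_y}c_x$ over pairs $x,y\in\Phi$ satisfying $x_j=\alpha$, $y_j=\beta$, $x_{\overline{j}}=y_{\overline{j}}$; any such pair differs only in position $j$, so freeness forces $x=y$, and hence $\alpha=\beta$, a contradiction. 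Consequently $\rho_j(\psi)$ is diagonal with entries $P_j(\alpha)=\sum_{x\in\Phi:\,x_j=\alpha}|c_x|^2$, so the eigenvalues of $\rho_j(\psi)$ coincide with the $j$-th marginal of $P=(|c_x|^2)_{x\in\Phi}\in\prob(\Phi)$.

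Let $T=T_1\times\cdots\times T_k\subseteq G$ be the maximal torus of matrices diagonal in $C$. The orbit closure $\overline{T\cdot[t]}\subseteq\mathbb{P}(V_1\otimes\cdots\otimes V_k)$ is a projective toric variety, and by the Atiyah--Guillemin--Sternberg convexity theorem its $T$-moment polytope equals the convex hull of the weights of $t$; under the standard identification this polytope is exactly $\{(P_1,\ldots,P_k):P\in\prob(\Phi)\}$. Hence for every $P\in\prob(\Phi)$ there is a unit vector $\psi\in\overline{T\cdot t}\subseteq\overline{G\cdot t}$ whose $T$-moment image is $(P_1,\ldots,P_k)$, and by the preceding paragraph the sorted marginal spectra of $\psi$ are $(P_1^{\downarrow},\ldots,P_k^{\downarrow})$. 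Therefore $(P_1^{\downarrow},\ldots,P_k^{\downarrow})\in\Delta_t$, and using the entanglement-polytope formula for $E_\theta$ invoked in the proof of \cref{upperequalslower},
\begin{equation*}
E_\theta(t)\;\ge\;\max_{P\in\prob(\Phi)}\sum_{j=1}^k\theta(j)\,H(P_j)\;=\;H_\theta(\Phi)\;\ge\;\rho^\theta(t),
\end{equation*}
which combined with $\rho^\theta(t)\ge E^\theta(t)=E_\theta(t)$ yields the triple equality.

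The only delicate step is the identification of the torus moment polytope with the convex hull of the weights of $t$; if one prefers to avoid invoking the convexity theorem, one can argue directly via \cref{characterization}: for rational $P\in\prob(\Phi)$ with $nP(x)\in\NN$, set $\lambda^{(j)}=(nP_j)^{\downarrow}$ and verify that $(P^{V_1}_{\lambda^{(1)}}\otimes\cdots\otimes P^{V_k}_{\lambda^{(k)}})\,t^{\otimes n}\neq 0$ by isolating the type-$P$ summand of $t^{\otimes n}$ and observing that freeness of $\Phi^{\times n}$ prevents cancellation from contributions of other types in the $(\lambda^{(1)},\ldots,\lambda^{(k)})$ isotypic component.
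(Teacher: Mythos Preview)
Your main argument is correct and is essentially the paper's own proof: both reduce to showing that for every $P\in\prob(\Phi)$ the tuple of ordered marginals $(P_1^{\downarrow},\ldots,P_k^{\downarrow})$ lies in $\Delta_t$, and both do this via the torus moment map---using convexity of $\mu_T(\overline{T\cdot t})$ to get $M=\{(P_1,\ldots,P_k):P\in\prob(\Phi)\}\subseteq\mu_T(\overline{T\cdot t})$, and using freeness to identify $\mu_T$ with $\mu_G$ on vectors supported on $\Phi$ (hence on $\overline{T\cdot t}$), so that $M_\downarrow\subseteq\Delta_t$. Your explicit verification that freeness forces the single-party marginals to be diagonal is exactly what the paper asserts in one line (``the freeness implies that the marginal density matrices of $t$ are diagonal'').

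One caution: the alternative route you sketch at the end, via \cref{characterization}, is underspecified. You would need to show that $(P_{\lambda^{(1)}}\otimes\cdots\otimes P_{\lambda^{(k)}})\,t^{\otimes n}\neq 0$ with $\lambda^{(j)}=(nP_j)^\downarrow$, but ``isolating the type-$P$ summand'' does not immediately do this: other types $Q\neq P$ with the same marginals $Q_j=P_j$ (which can certainly exist) contribute to the same weight space and hence potentially to the same isotypic component, so freeness of $\Phi^{\times n}$ alone does not obviously prevent cancellation. Nor is it evident without further argument that the type-$P$ summand itself has nonzero projection onto that isotypic component. This alternative would need more work to stand on its own; the moment-map argument you give first is the clean one.
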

\begin{proof}
By \cref{suppinv} and \cref{upperequalslower} we have $\rho^\theta(t) \geq E^\theta(t) = E_\theta(t)$. We prove $\rho^\theta(t) \leq E_\theta(t)$. 
Let $C \in \bases(t)$ such that $\supp_C t$ is free.
It is sufficient to prove the following claim. For any $P \in \prob(\supp_C t)$ the tuple of ordered marginals of $P$ is in the entanglement polytope~$\Delta_t$. This result can be found in \cite{MR1645052, MR1923785, MR2193441}, see also \cite{wernli}. Then $\rho^\theta(t) \leq \max_{P \in \prob(\supp_C t)} H_\theta(P) \leq E_\theta(t)$.
We give a proof of the above claim.
Write $C = ((e_{\alpha_1}), \ldots, (e_{\alpha_k}))$. Let $T \subseteq G = \SL_{n_1} \times \cdots \times \SL_{n_k}$ be the corresponding torus.
Let $\mu_G$ map any density matrix $\rho$ to its reduced density matrices,
\[
\mu_G : \rho \mapsto (\rho_1, \ldots, \rho_k).
\]
Let $\mu_T$ map any density matrix to its reduced density matrices projected onto diagonal matrices by setting the off-diagonal entries to zero,
\[
\mu_T : \rho \mapsto ( P_{\mathrm{diag}}\, \rho_1, \ldots, P_{\mathrm{diag}}\, \rho_k ).
\]
The maps $\mu_G$ and $\mu_T$ can be identified with the moment map for $K = \SU_{n_1} \times \cdots \times \SU_{n_k}$ and abelian moment map for $T \subseteq K$ in the sense of \cite{MR932055, MR2193441}. We write $\mu_G(\psi)$ for $\mu_G(\ketbra{\psi}{\psi})$ and we write $\mu_T(\psi)$ for $\mu_T(\ketbra{\psi}{\psi})$.
Let $D_{\downarrow}$ be the set of $k$-tuples of diagonal marginals with ordered diagonal entries.
It is a nontrivial fact that $\mu_T(\overline{T\cdot t})$ is a convex polytope, and that $\mu_G(\overline{G\cdot t})\cap D_{\downarrow}$ is a convex polytope, namely the entanglement polytope $\Delta_t$ \cite{MR932055}.
The elements of $\mu_T(\overline{T\cdot t})$ and $\mu_G(\overline{G\cdot t})\cap D_{\downarrow}$ are tuples of diagonal matrices whose diagonal entries are a probability distribution. We will identify these elements with these tuples of probability distributions.
Let $\Phi = \supp_C t$ be the support of $t$. Let $\alpha \in \Phi$. Then
\[
e_{\alpha_1} \otimes \cdots \otimes e_{\alpha_k} \in \overline{T \cdot t}.
\]
Therefore, the $k$-tuple of point distributions $(\delta_{\alpha_1}, \ldots, \delta_{\alpha_k})$ is in $\mu_T(\overline{T\cdot t})$. 
Since the set $\mu_T(\overline{T\cdot t})$ is convex, the convex hull $\conv \{(\delta_{\alpha_1}, \ldots, \delta_{\alpha_k}) \mid \alpha \in \Phi\}$ is a subset of $\mu_T(\overline{T\cdot t})$.
Observe that $\conv \{(\delta_{\alpha_1}, \ldots, \delta_{\alpha_k}) \mid \alpha \in \Phi\}$ equals the set $M = \{(P_1, \ldots, P_k) \mid P \in \prob(\supp_C t) \}$. So 
\[
M \subseteq \mu_T(\overline{T\cdot t}).
\]
Let $M_{\downarrow} \subseteq M$ consist of the tuples of ordered marginal distributions, then
\[
M_{\downarrow} \subseteq \mu_T(\overline{T\cdot t}) \cap D_{\downarrow}.
\]
Now suppose that $\supp_C t$ is free. 
The freeness implies that the marginal density matrices of $t$ are diagonal matrices and thus $\mu_G(\overline{T\cdot t})$ consists of diagonal matrices, i.e.~$\mu_T(\overline{T\cdot t}) = \mu_G(\overline{T\cdot t})$. Clearly, $\mu_G(\overline{T\cdot t}) \subseteq \mu_G(\overline{G\cdot t})$. Intersecting with $D_{\downarrow}$ gives
\[
M_{\downarrow} \subseteq \mu_T(\overline{T\cdot t}) \cap D_{\downarrow} \subseteq \mu_G(\overline{G\cdot t}) \cap D_{\downarrow} = \Delta_t.
\]
We conclude that for any $P \in \prob(\supp_C t)$ the tuple of ordered marginals of $P$ is in the entanglement polytope $\Delta_t$. 
\end{proof}

\begin{remark}
In the special case of $t$ being oblique, the statement in our \cref{freetheorem} corresponds to Corollary 12 in the paper \cite{MR2138544} of Strassen.
\end{remark}

\begin{example}\label{CW}
Coppersmith and Winograd in \cite{MR1056627} obtained the upper bound $\omega \leq 2.41$ on the matrix multiplication exponent $\omega$ by analysing the \defin{easy Coppersmith--Winograd tensor}, which for $q\geq 1$, is defined as
\[
\CW_q = \frac{1}{\sqrt{3q}}\Bigl( \sum_{i=1}^q \ket{0 i i} + \ket{i 0 i} + \ket{i i 0}\Bigr)  \in \CC^{q+1} \otimes \CC^{q+1} \otimes \CC^{q+1}.
\]
This tensor is free, so $E_\theta(\CW_q)$ equals $\rho^\theta(\CW_q)$. We compute for any $\theta \in \prob([3])$ the lower bound
\begin{align*}
E_\theta(\CW_q) &\geq H\Bigl( \frac{q}{3q} \ketbra{0}{0} + \frac{2}{3q} \sum_{j=1}^q \ketbra{j}{j} \Bigr) \\
&= H\bigl(\tfrac13, \underbrace{\tfrac23 \tfrac1q, \ldots, \tfrac23 \tfrac1q}_{q}\bigr)\\
&= \tfrac23 \log_2 q + h(\tfrac13).
\end{align*}
On the other hand, let $\theta = (\tfrac13,\tfrac13,\tfrac13)$ be uniform. Then using the KKT-conditions (see \cite[Proposition 2.1]{strassen1991degeneration}) and the recursion property of Shannon entropy
we compute the upper bound
\begin{align*}
\rho^{\bigl(\tfrac13,\tfrac13,\tfrac13\bigr)}(\CW_q) &\leq H_{\bigl(\tfrac13,\tfrac13,\tfrac13\bigr)}\bigl( \{(0,i,i), (i,0,i), (i,i,0) : i \in [q]\}\bigr)\\ 
&= H\bigl(\tfrac13, \underbrace{\tfrac23 \tfrac1q, \ldots, \tfrac23 \tfrac1q}_{q}\bigr)\\
&= \tfrac23 \log_2 q + h(\tfrac13).
\end{align*}
We conclude that $\rho^\theta(\CW_q) \geq \tfrac23 \log_2 q + h(\tfrac13)$ with equality when~$\theta$ is uniform. Obviously, $\rho^\theta(\CW_q) \leq \log_2 (q+1)$ with equality when $\theta$ equals $(1,0,0)$ or $(0,1,0)$ or $(0,0,1)$.

Let $q = 1$. Then the tensor $\CW_1$ is tight and therefore oblique, which implies that $\rho_\theta(\CW_1) = \rho^\theta(\CW_1) \geq h(\tfrac13)$ with equality when $\theta$ is uniform. In fact $\CW_1$ equals the tensor $D_{(2,1)}$ (the W state) from \cref{dicke}.
\end{example}

\begin{remark}
As shown in \cref{CW}, the support and quantum functionals give for all $q\geq 2$ the lower bound $\asymprank(\CW_q) \geq q+1$.
If %
the asymptotic rank $\asymprank(\CW_2)$ equals $3$, then the exponent of matrix multiplication $\omega$ equals~2 
(see e.g.~\cite[Remark 15.44]{burgisser1997algebraic} and \cite[Section 9]{blaser2013fast}). Bläser and Lysikov in \cite{blser_et_al:LIPIcs:2016:6434} obtained the border rank lower bound $\borderrank(\CW_q^{\otimes n}) \geq (q+1)^n + 2^n - 1$ which asymptotically also implies $\asymprank(\CW_q) \geq q+1$, and thus leaves open the possibility that $\asymprank(\CW_q) = q+1$.
\end{remark}

\begin{remark}
We summarise the known relationships among the support and quantum functionals.
\begin{enumerate}
\item $F^{\theta}(t) \geq F_\theta(t)$\, if $\theta \in \prob(B)$\,\hfill (\cref{lowerupper})%
\item $\zeta^\theta(t) \geq \lim_{n\to \infty} \zeta^{\theta}(t^{\otimes n})^{1/n} = F^\theta(t) =  F_\theta(t) \geq \lim_{n\to \infty} \zeta_{\theta}(t^{\otimes n})^{1/n} \geq \zeta_\theta(t)$\\[1ex]\, if $\theta \in \prob_\s(B)$\,\\[1ex]
\null\hfill (\cref{suppinv,upperequalslower,regsupp})
\item $\zeta^\theta(t) = \zeta_\theta(t)$\, if $\theta \in \prob_\s(B)$ and $t$ is oblique\, \hfill(\cref{obliquetheorem})
\item $\zeta^\theta(t) = F^\theta(t) = F_\theta(t) \geq \zeta_\theta(t)$\, if $\theta \in \prob_s(B)$ and $t$ is free\,\hfill (\cref{freetheorem})
\end{enumerate}
\end{remark}

\subsection{Generic tensors}\label{generic}

The quantum functionals over $\CC$ and the support functionals over algebraically closed fields, when restricted to tensors of a specific format, have a generic value, i.e.~there is a Zariski open set on which the value of the functional is constant. Recall that in an irreducible affine variety (e.g.~$\CC^{n_1 \times n_2 \times n_3}$) the intersection of any two nonempty Zariski open sets is nonempty, and therefore in an irreducible affine variety any nonempty Zariski open set is Zariski dense.

\subsubsection{Generic value of the Strassen support functionals}
Let $\FF$ be algebraically closed.
For any format $(n_1, n_2, n_3)$ there is a nonempty Zariski open subset of $\FF^{n_1 \times n_2 \times n_3}$ on which $\zeta^\theta$ and $\zeta_\theta$ each have a constant value. (Indeed, a constructible set $X$ in an affine variety~$Y$ contains a subset that is open and dense in the closure of $X$ in $Y$, see e.g.~\cite{MR1102012}. There are only finitely many possible supports for tensors in $V = \FF^{n_1 \times n_2 \times n_3}$. The map~$\zeta^\theta$ therefore attains only finitely many values on $V$ and thus has finitely many fibres, each of which is a constructible set. At least one of these fibres is dense in $V$ by irreducibility of~$V$. This fibre contains a subset that is open and dense in $V$. The same argument holds for $\zeta_\theta$.) 

\begin{definition}
The above value of $\zeta^\theta$ and $\zeta_\theta$ is called \defin{typical} or \defin{generic}, and is denoted by $\zeta^\theta(n_1, n_2, n_3)$ and $\zeta_\theta(n_1, n_2, n_3)$ respectively.
\end{definition}

Obviously,  $0 \leq \zeta_\theta(n_1, n_2, n_3) \leq \zeta^\theta(n_1, n_2, n_3) \leq n_1^{\theta(1)} n_2^{\theta(2)} n_3^{\theta(3)}$.

\begin{definition}
Let $\Psi\subseteq [n_1] \times [n_2] \times [n_3]$. We call $\Psi$ \defin{comfortable} if there is a subset $\Phi\subseteq \Psi$ and a probability distribution $P \in \prob(\Phi)$ such that $\Phi$ is an antichain and each marginal $P_i$ is uniform on $[n_i]$. We call a format $(n_1, n_2, n_3)$ comfortable if $[n_1] \times [n_2] \times [n_3]$ itself is comfortable. (In \cite{tobler} comfortable is called \emph{bequem}.)
\end{definition}

\begin{example}
The format $(n,n,n)$ is comfortable. Namely, let $\Phi$ be the set $\{(\alpha, \alpha, n+1-\alpha) \mid \alpha \in [n]\}$ and let $P \in \prob(\Phi)$ be the uniform probability distribution. Clearly $\Phi$ is an antichain and each marginal $P_i$ is uniform on~$[n_i]$. The format $(ab,bc,ca)$ is comfortable. Namely let $\Phi$ be the support of the matrix multiplication tensor $\langle a,b,c \rangle$ and let $P \in \prob(\Phi)$ be the uniform probability distribution. The set $\Phi$ is tight and hence an antichain with respect to some product order, and $P_1$ is uniform on $[ab]$, $P_2$ is uniform on $[bc]$ and $P_3$ is uniform on $[ca]$.
\end{example}

\begin{theorem}[\cite{tobler, burg}]\label{gen} Let $\theta \in \prob([3])$. \hfill
\begin{enumerate}
\item $\zeta^\theta(n_1,n_2,n_3) = n_1^{\theta(1)} n_2^{\theta(2)} n_3^{\theta(3)}$ when $(n_1, n_2, n_3)$ is comfortable.
\item $\zeta_\theta(n,n,n) = n^{1- \min_i \theta(i) + o(1)}$ when $n \to \infty$.
\end{enumerate}
\end{theorem}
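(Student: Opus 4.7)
The two parts of the theorem need rather different arguments. The upper bound in part~(1) is immediate from \cref{us}, while part~(2) is Bürgisser's theorem~\cite{burg} whose upper direction is substantially more subtle.

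For part~(1), my strategy is \emph{exhibit plus lower semicontinuity}. Comfortability of $(n_1,n_2,n_3)$ provides an antichain $\Phi\subseteq[n_1]\times[n_2]\times[n_3]$ carrying a distribution $P$ with uniform marginals, so the tensor $t_0=\sum_{\alpha\in\Phi}e_{\alpha_1}\otimes e_{\alpha_2}\otimes e_{\alpha_3}$ is oblique and \cref{obliquetheorem} yields $\rho^\theta(t_0)=H_\theta(\Phi)=\sum_{i=1}^{3}\theta(i)\log_2 n_i$, saturating the universal upper bound of \cref{us}. I would then show that $\rho^\theta$ is lower semicontinuous on $V\coloneqq\FF^{n_1}\otimes\FF^{n_2}\otimes\FF^{n_3}$: by the filtration characterization (\cref{upperfilt}),
\[
\{f\in V:\rho^\theta(f)\leq\alpha\}=\pi_V\Bigl(\bigcup_{\Phi':H_\theta(\Phi')\leq\alpha}\bigl\{(F,f)\in\filt_\complete\times V:\supp_F f\subseteq\Phi'\bigr\}\Bigr),
\]
where each term in the union is cut out by polynomial vanishing conditions (for every $(\alpha_1,\alpha_2,\alpha_3)\notin\Phi'$ the evaluation of $f$ on $V_{1,\alpha_1}\otimes V_{2,\alpha_2}\otimes V_{3,\alpha_3}$ vanishes) and is therefore Zariski closed in $\filt_\complete\times V$, while the projection $\pi_V$ is closed because $\filt_\complete$ is a projective variety. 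Hence every sublevel set of $\rho^\theta$ is Zariski closed; since $\rho^\theta$ takes only finitely many values on $V$, its generic value on the irreducible variety $V$ equals its maximum, which is attained at $t_0$ and matches the upper bound.

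For part~(2) the two bounds are established separately; both are due to Bürgisser~\cite{burg}. The lower bound is a construction: assuming WLOG that $\theta(3)=\min_i\theta(i)$, the task is to produce, for generic $f$, a basis $C$ in which the maximal antichain $M_Cf$ has first- and second-marginal entropies close to $\log_2 n$ and third-marginal entropy close to $0$, yielding
\[
H_\theta(M_Cf)\geq(1-\min\nolimits_i\theta(i))\log_2 n - o(\log n).
\]
The upper bound requires controlling $H_\theta(M_Cf)$ \emph{uniformly} in $C$. The combinatorial input is that for any antichain $A\subseteq[n]^3$ the projection to any two coordinates is injective, so $|A_i|\cdot|A_j|\geq|A|$. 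Combined with Bürgisser's generic support-size lemma (quoted in \cref{tightfreegeneric} in the discussion of free tensors), which guarantees $|\supp_Cf|\geq n^3-3n^2$ on a Zariski open set and hence constrains the possible shape of the maximal antichain, the forced trade-off between the three marginal sizes together with concavity of entropy gives $H_\theta(M_Cf)\leq(1-\min_i\theta(i))\log_2 n+o(\log n)$.

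The chief obstacle is the upper bound of part~(2). The pointwise semicontinuity argument that works for part~(1) fails here, because $\rho_\theta$ is a maximum rather than a minimum and $M_Cf$ does not vary monotonically under specialization of $f$. A genuinely uniform-in-$C$ bound is needed, blending an algebraic-geometric genericity statement (Bürgisser's lemma) with a delicate combinatorial analysis of antichains in the three-dimensional grid; by contrast the semicontinuity step in part~(1) and the explicit construction for the lower bound of part~(2) are comparatively routine given the framework already developed.
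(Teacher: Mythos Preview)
The paper does not prove this theorem; it is stated with citation to Tobler and B\"urgisser and no argument is given. So there is no paper proof to compare against, and your proposal should be judged on its own merits.

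Your argument for part~(1) is correct and complete. Exhibiting the oblique tensor $t_0$ supported on the comfortable antichain and then proving lower semicontinuity of $\rho^\theta$ via closedness of the projection from the complete flag variety is the natural approach (and is presumably what Tobler does). One small point: the sets $\Phi'$ in your union may as well be taken to be downward closed, since $\supp_F f$ always is; this does not affect the argument.

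For part~(2), you are right to defer to B\"urgisser, and your identification of the target structure for the lower bound (two marginals of full entropy, one marginal of vanishing entropy, so that $H_\theta\approx(1-\min_i\theta(i))\log_2 n$) is correct. However, neither direction is really argued. For the lower bound you do not indicate how to produce, for a generic $f$, a basis in which $M_Cf$ has this shape; this is a genuine construction, not a consequence of anything in the present paper. For the upper bound, the antichain projection observation ($|A_i|\cdot|A_j|\geq|A|$) is true but by itself says nothing about the marginal \emph{entropies} of distributions on $M_Cf$, and you do not explain how the generic support-size lemma $|\supp_Cf|\geq n^3-3n^2$ constrains the antichain of maximal points (which can be much smaller than the support). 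As you yourself flag, this is where the real content lies; your sketch names the ingredients but does not connect them. Since the paper itself only cites the result, this level of detail is acceptable as a proposal, but it is not a proof.
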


Recall that the upper support functional is additive. An interesting byproduct of Bürgisser's proof of statement 2 (see \cite[Theorem 3.1]{burg}) is that the lower support functional is not additive.

\begin{proposition}
Let $\theta \in \prob([3])$ with $\theta_i > 0$ for all $i$. Then $\zeta_\theta$ is not additive.
\end{proposition}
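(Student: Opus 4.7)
The plan is to argue by contradiction, using the generic value of $\zeta_\theta$ from part 2 of \cref{gen}. I would suppose that $\zeta_\theta$ is additive under $\oplus$ and derive a quantitative inconsistency with the growth rate $n^{1-\min_i \theta(i) + o(1)}$.

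First, I would take a generic tensor $t \in \CC^n \otimes \CC^n \otimes \CC^n$; by part 2 of \cref{gen}, $\zeta_\theta(t) = n^{1 - \min_i \theta(i) + o(1)}$ as $n \to \infty$. If $\zeta_\theta$ were additive, then choosing two independent generic tensors $t_1, t_2$ would give
\[
\zeta_\theta(t_1 \oplus t_2) = 2\,n^{1-\min_i \theta(i) + o(1)}.
\]
The next step is to match this from above by running the argument behind the upper bound in \cref{gen} part 2 on the subvariety of block-diagonal tensors $\{t_1 \oplus t_2 : t_i \in \CC^n \otimes \CC^n \otimes \CC^n\}$ inside $\CC^{2n} \otimes \CC^{2n} \otimes \CC^{2n}$. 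If one can show that this upper bound still reads $\zeta_\theta(t_1 \oplus t_2) \leq (2n)^{1-\min_i \theta(i) + o(1)}$ for generic $t_1, t_2$, then comparing with the display above gives $2 \leq 2^{1-\min_i \theta(i) + o(1)}$ after cancelling the common factor $n^{1-\min_i \theta(i) + o(1)}$. Since $\min_i \theta(i) > 0$, this fails for $n$ sufficiently large, yielding the contradiction.

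The main obstacle is the matching upper bound on the block-diagonal subvariety. In \cref{gen} part 2, the upper bound is established for tensors in a Zariski-open dense subset of $\CC^{2n} \otimes \CC^{2n} \otimes \CC^{2n}$, and the block-diagonal locus is a proper, much lower-dimensional subvariety, so genericity does not transfer automatically: in particular $t_1 \oplus t_2$ has rank roughly $2n^2/3$ while a truly generic tensor in $\CC^{2n} \otimes \CC^{2n} \otimes \CC^{2n}$ has rank roughly $4 n^2/3$, so $t_1 \oplus t_2$ sits in a Zariski-closed proper subset. What is needed is a basis-change / filtration argument showing that for generic $t_1, t_2 \in \CC^n \otimes \CC^n \otimes \CC^n$ no choice of product basis of $\CC^{2n} \otimes \CC^{2n} \otimes \CC^{2n}$ can arrange the maximal points of $\supp(t_1 \oplus t_2)$ so that $H_\theta$ exceeds $(1-\min_i \theta(i)) \log_2(2n) + o(\log n)$. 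This is the substance of Bürgisser's argument, and, in view of the remark in the excerpt, the ``byproduct'' observation is precisely that the resulting upper bound is incompatible with additivity whenever $\min_i \theta(i) > 0$.
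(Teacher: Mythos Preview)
The paper does not give its own proof; it simply records the proposition as a byproduct of B\"urgisser's argument for the generic upper bound in \cref{gen}(2) and cites his Theorem~3.1. So there is nothing in the paper to compare against beyond that attribution, and your instinct to locate the content inside B\"urgisser's upper-bound proof is correct.

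That said, your proposed skeleton has a genuine quantitative gap on top of the qualitative one you already flag. You want to compare
\[
2\,n^{1-\min_i\theta(i)+o(1)} \quad\text{with}\quad (2n)^{1-\min_i\theta(i)+o(1)}
\]
and ``cancel the common factor $n^{1-\min_i\theta(i)+o(1)}$'' to get $2\le 2^{1-\min_i\theta(i)+o(1)}$. But the two $o(1)$ terms are \emph{different} functions of $n$ (one comes from the format-$n$ generic value, the other from whatever bound you establish in format $2n$), and their difference, multiplied by $\log n$, can easily dominate the fixed constant $\min_i\theta(i)\cdot\log 2$. Concretely, writing the two error terms as $f(n)$ and $g(n)$, your comparison reduces to $2^{\min_i\theta(i)-g(n)}\le n^{g(n)-f(n)}$, and nothing prevents $g(n)-f(n)>0$, in which case the right side blows up and there is no contradiction. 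Two summands are simply not enough at this level of precision; at minimum you would need $m$ summands with $m\to\infty$ for \emph{fixed} $n$, so that the discrepancy $m$ versus $m^{1-\min_i\theta(i)}$ is not an $o(1)$-in-the-exponent effect.

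Even after that fix, the obstacle you name remains the entire content: you need the B\"urgisser-type upper bound to hold on the block-diagonal locus $\{t_1\oplus\cdots\oplus t_m\}$, which is closed and lower-dimensional, while the generic value can be strictly \emph{exceeded} on such loci (e.g.\ $\zeta_\theta(\langle N\rangle)=N$ versus the generic $N^{1-\min_i\theta(i)+o(1)}$). So the statement of \cref{gen}(2) alone cannot close the argument; one really has to go into B\"urgisser's proof and check that his bound on the maximal points in every basis survives for direct sums of generic tensors. Your proposal correctly isolates this as the crux but does not supply it, so as written it is a pointer to B\"urgisser rather than a proof.
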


\subsubsection{Generic value of the quantum functionals} Let $\FF$ be the complex numbers~$\CC$. We consider the singleton regime $\theta \in \prob([k])$, so that we can apply the theory of entanglement polytopes. Namely, recall that by \cref{upperequalslower}, in this regime, for $t \in \CC^{n_1} \otimes \cdots \otimes \CC^{n_k}$,  the value of $E_\theta(t)$ is obtained by the following maximisation over the entanglement polytope $\Delta_t$,
\[
E_\theta(t) = E^\theta(t) = \max \bigl\{ \theta(1) H(r_1) + \cdots + \theta(k) H(r_k) \,\big|\, (r_1, \ldots, r_k) \in \Delta_t \bigr\}.
\]
Fixing the tensor format $(n_1, \ldots, n_k)$, there exist finitely many covariants $P_1, \ldots, P_m$ such that, with $(\lambda^{(j,1)}, \ldots, \lambda^{(j, k)})$ being the type of $P_j$, the entanglement polytope of any $t \in \CC^{n_1} \otimes \cdots \otimes \CC^{n_k}$ equals the convex hull
\[
\Delta_t = \conv \{ (\overline{\lambda^{(j,1)}}, \ldots, \overline{\lambda^{(j, k)}}) \mid P_j(t) \neq 0 \},
\]
see \cite[Theorem 2]{MR3087706}. This implies that there are only finitely many possible entanglement polytopes~$\Delta_t$ associated to $t \in \CC^{n_1 \times \cdots \times n_k}$ and thus $F_\theta$ takes only finitely many values. Moreover there is a nonempty Zariski open subset $U$ of $\CC^{n_1 \times \cdots \times n_k}$ on which $F_\theta$ has a constant value. Namely let $U$ be the Zariski open set of tensors for which all generators are nonzero, 
\[
U = \bigcap_{j=1}^m \{ t \in \CC^{n_1\times \cdots \times n_k} \mid P_j(t) \neq 0\}.
\]
For every $t \in U$ the entanglement polytope $\Delta_t$ is maximal, i.e.~equal to the convex hull $\conv \{ (\overline{\lambda^{(j,1)}}, \ldots, \overline{\lambda^{(j, k)}}) \mid j \in [m] \}.$

\begin{definition}
The above value of $F_\theta$ is called \defin{typical} or \defin{generic}, and is denoted by $F_\theta(n_1, \ldots n_k)$.
\end{definition}

Obviously, $0 \leq F_\theta(n_1, \ldots, n_k) \leq n_1^{\theta(1)}\cdots n_k^{\theta(k)}$. 

We say that $\ket{\psi} \in \CC^{n_1} \otimes \cdots \otimes \CC^{n_k}$ has \defin{completely mixed marginals} if for each $i \in [k]$ the marginal quantum entropy $H(\ketbra{\psi}{\psi}_i)$ equals the maximal value~$\log_2 n_i$.
Bryan et al.~in \cite{bryan2017existence} give a characterisation of the formats $(n_1, \ldots, n_k)$ for which a state with completely mixed marginals exists. (The earlier characterisation by Klyachko in \cite[Corollary 2.5.2.2]{klyachko2002coherent} appears to be incorrect.)

\begin{theorem}\label{quantumgeneric}
Let $\theta \in \prob([k])$ such that $\theta_i > 0$ for all $i$.
We have 
\[
F_\theta(n_1, \ldots, n_k) = n_1^{\theta(1)} \cdots n_k^{\theta(k)}
\]
if and only if $\CC^{n_1} \otimes \cdots \otimes \CC^{n_k}$ contains a pure quantum state with completely mixed marginals. %
\end{theorem}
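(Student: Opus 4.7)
The plan is to reformulate both sides of the equivalence in terms of the generic entanglement polytope and then match them.

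First I would invoke \cref{upperequalslower}, which for $\theta \in \prob([k]) = \prob_\s(B)$ expresses
\[
E_\theta(t) = \max \Bigl\{ \sum_{i=1}^k \theta(i)\, H(r_i) \,\Big|\, (r_1, \ldots, r_k) \in \Delta_t \Bigr\}.
\]
Together with the bound $H(r_i) \le \log_2 n_i$, this immediately yields the universal inequality $F_\theta(t) \le n_1^{\theta(1)} \cdots n_k^{\theta(k)}$. Since $\theta(i) > 0$ for every $i$, equality holds exactly when the uniform tuple $u = (u_1, \ldots, u_k)$, with $u_i$ the uniform distribution on $[n_i]$, lies in $\Delta_t$: any maximiser must satisfy $H(r_i) = \log_2 n_i$ for every $i$, which forces each $r_i = u_i$.

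Next I would use the covariant description of $\Delta_t$ recalled just before the statement. Setting $\Delta_{\mathrm{gen}} \coloneqq \conv\{(\overline{\lambda^{(j,1)}}, \ldots, \overline{\lambda^{(j,k)}}) : j \in [m]\}$ and $U \coloneqq \{t : P_j(t) \neq 0 \text{ for all } j\}$, the characterisation gives $\Delta_t \subseteq \Delta_{\mathrm{gen}}$ for every $t$, with equality on the dense Zariski open set $U$; this $U$ is precisely where $F_\theta(t)$ attains its generic value $F_\theta(n_1, \ldots, n_k)$. Combined with the first step, the identity $F_\theta(n_1, \ldots, n_k) = n_1^{\theta(1)} \cdots n_k^{\theta(k)}$ becomes equivalent to the purely combinatorial condition $u \in \Delta_{\mathrm{gen}}$.

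It then remains to show that $u \in \Delta_{\mathrm{gen}}$ if and only if $\CC^{n_1} \otimes \cdots \otimes \CC^{n_k}$ contains a unit vector with completely mixed marginals. For the forward direction I would pick any $t \in U$, use $u \in \Delta_{\mathrm{gen}} = \Delta_t$, and appeal to the defining property of $\Delta_t$ as the set of marginal spectra of unit vectors in $\overline{G \cdot t} \subseteq \CC^{n_1} \otimes \cdots \otimes \CC^{n_k}$ to extract such a vector. For the reverse direction, given any unit vector $\phi$ with completely mixed marginals, the uniform tuple lies in $\Delta_\phi$, and the covariant inclusion $\Delta_\phi \subseteq \Delta_{\mathrm{gen}}$ closes the equivalence.

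The argument is essentially bookkeeping once \cref{upperequalslower} and the covariant characterisation are in hand. The one step that deserves care, and that I expect to be the main subtle point, is the use of $\theta(i) > 0$ for all $i$ to upgrade equality of $\theta$-weighted entropy sums into simultaneous uniformity of each marginal spectrum; without this hypothesis the conclusion fails.
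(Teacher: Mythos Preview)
Your proposal is correct and follows essentially the same approach as the paper: both reduce the question to whether the uniform spectrum tuple lies in the generic (maximal) entanglement polytope, via the formula $E_\theta(t) = \max_{(r_1,\ldots,r_k)\in\Delta_t}\sum_i\theta(i)H(r_i)$ together with the covariant description of $\Delta_t$. Your write-up is slightly more explicit about why the hypothesis $\theta(i)>0$ is needed and about the inclusion $\Delta_\phi\subseteq\Delta_{\mathrm{gen}}$ in the reverse direction, but these are exactly the ingredients the paper's proof uses implicitly.
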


\begin{proof}
Let $(n_1, \ldots, n_k)$ be a format such that $\CC^{n_1} \otimes \cdots \otimes \CC^{n_k}$ contains a pure quantum state with completely mixed marginals.
Let $U$ be the Zariski open set defined above. Let $t \in U$. Then the entanglement polytope $\Delta_t$ is the maximal entanglement polytope and thus the maximisation
\[
E_\theta(t) = E^\theta(t) = \max \bigl\{ \theta(1) H(r_1) + \cdots + \theta(k) H(r_k) \,\big|\, (r_1, \ldots, r_k) \in \Delta_t \}
\]
achieves the value $\theta(1)\log_2 n_1 + \cdots + \theta(k) \log_2 n_k$.
We conclude that $F_{\theta}(n_1, \ldots, n_k)$ equals the upper bound $n_1^{\smash{\theta(1)}} \cdots n_k^{\smash{\theta(k)}}$. On the other hand, if $\CC^{n_1} \otimes \cdots \otimes \CC^{n_k}$ does not contain a pure quantum state with completely mixed marginals, then $((\tfrac{1}{n_1},\ldots, \tfrac{1}{n_1}), \ldots, (\tfrac{1}{n_k}, \ldots, \tfrac{1}{n_k}))$ is not in the maximal entanglement polytope and thus $\max \bigl\{ \theta(1) H(r_1) + \cdots + \theta(k) H(r_k) \,\big|\, (r_1, \ldots, r_k) \in \Delta_t \}$ is strictly smaller than $\theta(1)\log_2 n_1 + \cdots + \theta(k) \log_2 n_k$ for $t \in U$ (and in fact for any $t$).
\end{proof}

\begin{remark}
In particular, $F_\theta(n_1, \ldots, n_k) = n_1^{\theta(1)} \cdots n_k^{\theta(k)}$ for any comfortable format $(n_1, \ldots, n_k)$. Indeed, if $(n_1, \ldots, n_k)$ is comfortable, then by definition there is a set $\Phi \subseteq [n_1] \times \cdots \times [n_k]$ and a probability distribution $P \in \prob(\Phi)$ such that $\Phi$ is an antichain and each marginal $P_i$ is uniform on $[n_i]$. Define the tensor
\[
t = \sum_{\alpha \in \Phi} \sqrt{P(\alpha)}\, e_{\alpha_1} \otimes \cdots \otimes e_{\alpha_k} \in \CC^{n_1} \otimes \cdots \otimes \CC^{n_k}.
\]
Since $\Phi$ is an antichain and thus free, $t$ has completely mixed marginals. Following the proof of \cref{quantumgeneric} the value of $F_\theta(n_1, \ldots, n_k)$ equals $n_1^{\smash{\theta(1)}} \cdots n_k^{\smash{\theta(k)}}$.
\end{remark}

\subsubsection{Hilbert--Mumford criterion and instability}

We have seen that generically for certain formats $E^\theta$ and $\rho^\theta$ are maximal.
We give a related application of the Hilbert--Mumford criterion.
Let $t \in \CC^{n_1} \otimes \cdots \otimes \CC^{n_k}$.
Let $\theta \in \prob([k])$. We know that 
\[
E^\theta(t) \leq \rho^\theta(t) \leq n_1^{\smash{\theta(1)}} \cdots n_k^{\smash{\theta(k)}}.
\]
Therefore, 
\[
E^\theta(t)=n_1^{\smash{\theta(1)}}\cdots n_k^{\smash{\theta(k)}} \implies \rho^\theta(t) = n_1^{\smash{\theta(1)}}\cdots n_k^{\smash{\theta(k)}}.
\]
We show the following converse.

\begin{theorem}\label{hm}
Let $t \in \CC^{n_1} \otimes \cdots \otimes \CC^{n_k}$.
Let $\theta \in \prob([k])$ such that $\theta_i > 0$ for all $i$.
If $E^\theta(t) < n_1^{\smash{\theta(1)}}\cdots n_k^{\smash{\theta(k)}}$\!, then $\rho^\theta(t) < n_1^{\smash{\theta(1)}}\cdots n_k^{\smash{\theta(k)}}$.
\end{theorem}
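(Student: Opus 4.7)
The plan is to prove the contrapositive: assume $\rho^\theta(t)$ attains its maximum value $\sum_i\theta(i)\log_2 n_i$ and show $E^\theta(t)$ attains the same maximum.

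The first step is to translate $E^\theta$-maximality into a semistability statement. By \cref{upperequalslower},
\[
E^\theta(t)=\max\Bigl\{\sum_{i=1}^k\theta(i)H(r_i) \,\Big|\, (r_1,\ldots,r_k)\in\Delta_t\Bigr\}.
\]
Since $\theta(i)>0$ for each $i$ and Shannon entropy is strictly concave with unique maximiser the uniform distribution, the upper bound $\sum_i\theta(i)\log_2 n_i$ is achieved iff the tuple $u := ((\tfrac{1}{n_1},\ldots,\tfrac{1}{n_1}),\ldots,(\tfrac{1}{n_k},\ldots,\tfrac{1}{n_k}))$ lies in the entanglement polytope $\Delta_t$. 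By Kempf--Ness applied to the moment map for $\prod_i\SU(n_i)$ acting on $V_1\otimes\cdots\otimes V_k$, the polytope $\Delta_t$ contains $u$ if and only if $t$ is semistable under the action of $G_0 := \SL(V_1)\times\cdots\times\SL(V_k)$, i.e.\ $0\notin\overline{G_0\cdot t}$.

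The second step is to apply Hilbert--Mumford. Suppose, for contradiction, that $\rho^\theta(t)$ is maximal but $t$ is not $G_0$-semistable. Then the Hilbert--Mumford criterion provides a one-parameter subgroup $\lambda:\CC^\times\to G_0$ with $\lim_{s\to 0}\lambda(s)\cdot t=0$. Picking a basis $C\in\bases(t)$ in which $\lambda$ acts diagonally yields integer weights $(a_{i,j})_{j=1}^{n_i}$ satisfying $\sum_{j=1}^{n_i}a_{i,j}=0$ for each $i$ (the $\SL$-condition) and
\[
\sum_{i=1}^k a_{i,\alpha_i}>0\quad\textnormal{for every }\alpha\in\supp_C t.
\]

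The third step is to close the argument by averaging. The hypothesis $\rho^\theta(t)=\min_{C'}H_\theta(\supp_{C'}t)=\sum_i\theta(i)\log_2 n_i$ forces $H_\theta(\supp_C t)=\sum_i\theta(i)\log_2 n_i$ for the basis $C$ of the previous step as well. Any optimiser $P\in\prob(\supp_C t)$ of $H_\theta(\supp_C t)=\max_P\sum_i\theta(i)H(P_i)$ must then satisfy $H(P_i)=\log_2 n_i$ for each $i$ (using $\theta(i)>0$ and the termwise bound $H(P_i)\leq\log_2 n_i$), so every marginal $P_i$ is uniform on $[n_i]$. But then
\[
0<\E_{\alpha\sim P}\sum_{i=1}^k a_{i,\alpha_i}=\sum_{i=1}^k\sum_{j=1}^{n_i}P_i(j)\,a_{i,j}=\sum_{i=1}^k\frac{1}{n_i}\sum_{j=1}^{n_i}a_{i,j}=0,
\]
a contradiction. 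The hard part is the first step: justifying ``$u\in\Delta_t \Leftrightarrow t$ is $G_0$-semistable'' cleanly, which rests on Kempf--Ness and the orbit-closure moment-polytope correspondence, and on ensuring that semistability (rather than the stronger polystability) is what is detected. Once this equivalence is in place, the remaining Hilbert--Mumford weight-counting argument is essentially immediate.
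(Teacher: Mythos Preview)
Your proof is correct and follows essentially the same route as the paper: reduce $E^\theta$-maximality to $G_0$-semistability via the moment polytope (the paper's \cref{withlemma}), then invoke Hilbert--Mumford to produce a destabilising one-parameter subgroup, and finally argue that in the resulting basis no probability distribution on the support can have all uniform marginals. The only cosmetic difference is that where the paper phrases the last step abstractly via the inclusion $M\subseteq\mu_T(\overline{T\cdot t})$ and $O\notin\mu_T(\overline{T\cdot t})$, you unwind this into an explicit weight-averaging contradiction $0<\E_{\alpha\sim P}\sum_i a_{i,\alpha_i}=0$; these are dual formulations of the same convexity statement.
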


We will use terminology and results from \cite{MR765581}.  The point in the entanglement polytopes corresponding to quantum states that are locally maximally mixed will be called the \defin{origin} and denoted by $O$.

\begin{lemma}\label{withlemma}
$O \not \in \mu_G(\overline{G\cdot t})$ if and only if $t$ is $G$-unstable i.e.~$0\in \overline{G\cdot t}$.
\end{lemma}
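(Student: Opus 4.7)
The plan is to derive \cref{withlemma} as an immediate consequence of the Kempf--Ness theorem (equivalently, the Hilbert--Mumford criterion in its symplectic form) for the linear action of $G = \SL_{n_1} \times \cdots \times \SL_{n_k}$ on $V = \CC^{n_1} \otimes \cdots \otimes \CC^{n_k}$, with maximal compact subgroup $K = \SU_{n_1} \times \cdots \times \SU_{n_k}$. Taking $G$ semisimple here is harmless: the orbit closure under $\GL_{n_1}\times\cdots\times \GL_{n_k}$ always contains $0$ by scalar rescaling, so the instability statement as phrased is only meaningful for the $\SL$-version, and the entanglement polytope is insensitive to this replacement because it is defined on unit vectors.

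First I would identify the origin $O$ of the entanglement polytope with the zero of the $K$-moment map. A direct computation shows that the infinitesimal $\mathfrak{su}_{n_j}$-moment sends a unit vector $\psi \in V$ to $\rho_j(\psi) - I/n_j$, viewed as a traceless Hermitian matrix; the tuple of these vanishes for all $j$ exactly when every single-system marginal is maximally mixed, i.e.~when $\mu_G(\psi) = O$.

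Next I would invoke the Kempf--Ness theorem \cite{MR765581}: for a linear action of a complex reductive group $G$ on $V$, a nonzero point $t \in V$ is $G$-semistable (meaning $0 \notin \overline{G\cdot t}$) if and only if the $K$-moment map attains the value $0$ somewhere on the orbit closure $\overline{G\cdot t}$. Together with the identification of the preceding paragraph this reads $O \in \mu_G(\overline{G\cdot t}) \iff 0 \notin \overline{G\cdot t}$, which is the contrapositive of the lemma.

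The main bookkeeping obstacle will be reconciling the affine picture, in which $\overline{G\cdot t}$ sits inside $V$ and the origin of $V$ is meaningful, with the normalised picture, in which $\mu_G$ is defined on unit vectors only. This is standard but requires care: one restricts attention to the intersection of $\overline{G\cdot t}$ with the unit sphere, uses continuity of the marginal map there, and uses the fact that, since $G$ is semisimple, this intersection is nonempty and closed in the sphere precisely when $0 \notin \overline{G\cdot t}$. Once this reconciliation is in place the equivalence is immediate from Kempf--Ness and no further argument is needed.
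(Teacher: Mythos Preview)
Your proposal is correct and takes essentially the same approach as the paper: both arguments reduce the lemma to the Kempf--Ness/Ness theory in \cite{MR765581}, identifying $O$ with the zero of the $K$-moment map and using that a nonzero vector is semistable precisely when the moment map hits zero on its orbit closure. The paper unpacks the two directions separately (closed orbit in the semistable case, vanishing of all invariants in the unstable case), whereas you cite the combined Kempf--Ness statement at once, but the substance is identical.
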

\begin{proof}
According to \cite[Lemma 2.3]{MR765581}, if $t$ is $G$-semistable, then $\overline{G\cdot t}$ contains a closed orbit. This implies according to \cite[Theorem 2.2(ii)]{MR765581} and \cite[Definition 2.1(iii)]{MR765581} that $O \in \mu_G(\overline{G\cdot t})$. If $t$ is $G$-unstable, then all $G$-invariants vanish, which implies $O \not\in \mu_G(\overline{G\cdot t})$.
\end{proof}

\begin{proof}[\bfseries\upshape Proof of \cref{hm}]
Let $G = \SL(\CC^{n_1}) \times \cdots \times \SL(\CC^{n_k})$ act naturally on $V = \CC^{n_1} \otimes \cdots \otimes \CC^{n_k}$.
Let $t \in V$.
If $E^\theta(t) < n_1^{\smash{\theta(1)}}\cdots n_k^{\smash{\theta(k)}}$, then $O \not\in \Delta_t$. This means $t$ is $G$-unstable (\cref{withlemma}) %
i.e.~$0 \in\overline{G\cdot t}$. %
By the Hilbert--Mumford criterion \cite[Theorem 3.1]{MR765581} this implies that there exists a one-parameter subgroup $\lambda$ such that $t$ is $\lambda$-unstable. This implies that there exists a maximal torus $T \subseteq G$ such that $t$ is $T$-unstable.
 By \cref{withlemma} this implies that $O \not\in \mu_T(\overline{T \cdot t})$.
Let $C\in \bases(t)$ be a choice of bases compatible with $T$.
As shown in the proof of \cref{freetheorem},
\begin{equation}\label{freeeq}
M = \{(P_1, \ldots, P_k) \mid P \in \prob(\supp_C t) \} \subseteq \mu_T(\overline{T\cdot t}).
\end{equation}
Therefore $O \not \in M$. Noting that $H_\theta(\supp_C t)$ equals $n_1^{\smash{\theta(1)}}\cdots n_k^{\smash{\theta(k)}}$ if and only if $O \in M$, we see that $H_\theta(\supp_C t)< n_1^{\smash{\theta(1)}}\cdots n_k^{\smash{\theta(k)}}$.
Therefore we conclude $\rho^{\theta}(t) \leq \max_{P \in \supp_C t} H_\theta(P) < n_1^{\smash{\theta(1)}}\cdots n_k^{\smash{\theta(k)}}$.
\end{proof}

We prove a quantitative version of \cref{hm} in terms of instability, which moreover holds over all fields. Our result improves a result of Blasiak et al.~in \cite{MR3631613} when the tensor format $(n_1, \ldots, n_k)$ is nonuniform enough.

Instability is a standard notion in geometric invariant theory. Let $\FF$ be algebraically closed. Let $t \in \FF^{n_1} \otimes \cdots \otimes \FF^{n_k}$. Let $G = \SL_{n_1}(\FF) \times \cdots \times \SL_{n_k}(\FF)$. The tensor $t$ is called unstable if $0$ is in the orbit closure $\overline{G\cdot t}$, with the closure taken in the Zariski topology, and otherwise $t$ is called semistable.

We use the quantitative notion of instability for a tensor $t \in \FF^{n_1} \otimes \cdots \otimes \FF^{n_k}$ defined in \cite{MR3631613},
with $\FF$ an arbitrary field,
\begin{multline*}
\instab(t) = \max_{C \in \bases(t)} \max_{w_1, \ldots, w_k} \sup \Bigl\{ \eps \geq 0 \,\Big|\, \forall a \in \supp_C t :\\ \sum_{i=1}^k w_i(a_i) \leq \sum_{i=1}^k \Bigl( \frac{1}{\abs[0]{I_i}} \sum_{\mathclap{x \in I_i}} w_i(x) - \eps \max_x w_i(x) \Bigr) \Bigr\}
\end{multline*}
where the first maximum is over the choice of bases of $\FF^{n_1}, \ldots, \FF^{n_k}$, with index sets $I_1, \ldots, I_k$, and the second maximum is over weight functions $w_i : I_i \to \RR_{\geq 0}$ that are not identically zero. When $\FF$ is algebraically closed, the Hilbert--Mumford criterion says that $t$ is unstable if and only if $\instab(t) > 0$.

For probability distributions $P, Q \in \prob(X)$, define the total variation distance $V(P,Q) = \tfrac12 \sum_{x \in X} \abs[0]{P(x) - Q(x)}$ and define the Kullback-Leibler divergence $D(P||Q)= \sum_{x \in X} P(x) \log_2 (P(x)/Q(x))$. We will use Pinsker's inequality, which says
\begin{equation}\label{pinsker}
V(P,Q) \leq \sqrt{\tfrac{\ln 2}{2} D(P||Q)},
\end{equation}
see e.g.~\cite[Lemma 2.5]{MR2724359}.

\begin{theorem}
For $t \in \FF^{n_1} \otimes \cdots \otimes \FF^{n_k}$ and $\theta \in \prob([k])$,
\begin{equation}\label{instabeq}
\rho^{\theta}(t) \leq \sum_{i=1}^k \theta(i) \log_2 n_i - \bigl(\tfrac{2}{\ln2} \min_i \theta(i) \bigr) \instab(t)^2.
\end{equation}
\end{theorem}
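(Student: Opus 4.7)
The overall strategy is to pick any basis $C'$ and any admissible weight system $(w_i)$ witnessing a value $\varepsilon$ for $\instab(t)$, use the defining inequality of $\instab$ to force an entropy-maximising distribution on $\supp_{C'} t$ to be far (in total variation) from uniform on at least one leg, and then convert this gap to an entropy gap via Pinsker \eqref{pinsker}. Since $\rho^\theta(t) \le H_\theta(\supp_{C'} t)$ for every $C'$, this is already enough.

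In more detail, the plan is as follows. Fix a basis $C' \in \bases(t)$ with index sets $I_i$ and weight functions $w_i: I_i \to \RR_{\ge 0}$ (not all identically zero), and let $\varepsilon$ be any admissible parameter for them in the definition of $\instab(t)$. Write $m_i = \max_x w_i(x)$, $\mu_i = \frac{1}{n_i}\sum_x w_i(x) = \E_{u_i}[w_i]$, where $u_i$ is the uniform distribution on $I_i$, and $\Phi = \supp_{C'} t$. By definition, $\sum_i w_i(a_i) \le \sum_i \mu_i - \varepsilon \sum_i m_i$ for every $a \in \Phi$. Let $P \in \prob(\Phi)$ be a maximiser of $H_\theta$ on $\Phi$. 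Averaging the inequality under $P$ gives $\sum_i (\mu_i - \E_{P_i}[w_i]) \ge \varepsilon \sum_i m_i$. Since $0 \le w_i \le m_i$, we have $|\E_{u_i}[w_i] - \E_{P_i}[w_i]| \le m_i \cdot V(u_i, P_i)$, so that $\sum_i m_i V(u_i, P_i) \ge \varepsilon \sum_i m_i$. Hence some index $i^*$ (with $m_{i^*} > 0$) satisfies $V(u_{i^*}, P_{i^*}) \ge \varepsilon$.

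Pinsker's inequality then gives $\log_2 n_{i^*} - H(P_{i^*}) = D(P_{i^*} \| u_{i^*}) \ge \frac{2}{\ln 2} V(u_{i^*}, P_{i^*})^2 \ge \frac{2}{\ln 2} \varepsilon^2$. Because $\log_2 n_i - H(P_i) \ge 0$ for every $i$, keeping only the $i^*$-term gives
\[
\sum_i \theta(i)\log_2 n_i - H_\theta(P) = \sum_i \theta(i)\bigl(\log_2 n_i - H(P_i)\bigr) \ge \theta(i^*) \cdot \tfrac{2}{\ln 2}\varepsilon^2 \ge \bigl(\tfrac{2}{\ln 2}\min_i \theta(i)\bigr)\varepsilon^2.
\]
Since $H_\theta(P) = H_\theta(\Phi) \ge \rho^\theta(t)$, this rearranges to the desired inequality with $\varepsilon^2$ in place of $\instab(t)^2$. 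Finally, taking the supremum of $\varepsilon$ over admissible parameters for the chosen $(C', w)$ and then over $(C', w)$ replaces $\varepsilon^2$ by $\instab(t)^2$, proving \eqref{instabeq}.

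The only real subtlety is the bookkeeping in step~1, in particular the normalisation: one must keep the weights $m_i$ in the inequality rather than rescaling to $m_i = 1$ per leg, so that one ends up with a weighted average of total variations and can conclude that \emph{some} leg has variation at least $\varepsilon$. Everything else is a direct application of Pinsker and the definition of $\rho^\theta$.
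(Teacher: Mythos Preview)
Your proof is correct and follows the same overall structure as the paper's: fix a basis and weights witnessing an admissible $\varepsilon$, average the instability constraint against an $H_\theta$-maximising $P$ to obtain $\varepsilon \sum_i m_i \le \sum_i m_i\, V(u_i,P_i)$, and then turn a total-variation gap into an entropy gap via Pinsker. The one place you diverge is in how you pass from the weighted sum of total variations to the $\theta$-weighted KL deficit. The paper applies Pinsker termwise, then uses Cauchy--Schwarz together with $(\sum_i m_i^2)^{1/2}\le \sum_i m_i$ and $\sum_i D(P_i\|u_i)\le \tfrac{1}{\min_i\theta(i)}\sum_i\theta(i)D(P_i\|u_i)$ to reach the same inequality. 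You instead use a pigeonhole step---the $m_i$-weighted average of the $V(u_i,P_i)$ is at least $\varepsilon$, so some single leg $i^*$ already has $V(u_{i^*},P_{i^*})\ge\varepsilon$---and then apply Pinsker only at that leg and drop the remaining nonnegative terms. Your route is slightly more elementary (no Cauchy--Schwarz needed) and lands on exactly the same constant $\tfrac{2}{\ln 2}\min_i\theta(i)$; the paper's route keeps all legs in play throughout, which in principle could be sharpened if one cared about constants, but as written both yield the identical bound \eqref{instabeq}.
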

\begin{proof}
The two sides of \eqref{instabeq} are continuous in $\theta$, so we may without loss of generality assume that $\theta(i) > 0$ for all $i \in [k]$. Choose bases $C \in \bases(t)$, weights $w_1, \ldots, w_k$, and $\eps \geq 0$, such that for every $a \in \supp_C t$
\[
\sum_{i=1}^k w_i(a_i) \leq \sum_{i=1}^k \Bigl( \frac{1}{\abs[0]{I_i}} \sum_{\mathclap{x \in I_i}} w_i(x) - \eps \max_x w_i(x)\Bigr)
\]
i.e.~for every $a \in \supp_C t$
\[
\eps \sum_{i=1}^k \max_x w_i(x) \leq \sum_{i=1}^k \frac{1}{\abs[0]{I_i}} \sum_{\mathclap{x \in I_i}} w_i(x) - \sum_{i=1}^k w_i(a_i).
\]
Let $P \in \prob(\supp_C t)$ be a probability distribution on $\supp_C t$. Then 
\begin{align*}
\eps \sum_{i=1}^k \max_x w_i(x) &\leq \sum_{i=1}^k \frac{1}{\abs[0]{I_i}} \sum_{\mathclap{x \in I_i}} w_i(x) - \sum_{\mathclap{a \in \supp_C t}} P(a) \sum_{i=1}^k w_i(a_i)\\
&= \sum_{i=1}^k \sum_{x \in I_i} \Bigl( \frac{1}{\abs[0]{I_i}} - P_i(x) \Bigr)\, w_i(x)\\
&\leq \tfrac12 \sum_{i=1}^k \sum_{x \in I_i} \Bigl| \frac{1}{\abs[0]{I_i}} - P_i(x) \Bigr|\, \max_x w_i(x).
\end{align*}
Let $U_i$ be the uniform probability distribution on $I_i$. We apply Pinsker's inequality \eqref{pinsker} to get
\begin{align*}
\tfrac12\sum_{i=1}^k \sum_{x\in I_i}\, {\abs[2]{\frac{1}{\abs[0]{I_i}} - P_i(x)}}\, \max_x w_i(x) &\leq  \sum_{i=1}^k \sqrt{\tfrac{\ln2}{2} D(P_i || U_i)} \max_x w_i(x)
\end{align*}
Next we apply the Cauchy-Schwarz inequality to obtain
\begin{align*}
&\hspace{-1em}\sum_{i=1}^k \sqrt{\tfrac{\ln2}{2} D(P_i || U_i)} \max_x w_i(x)\\ &\leq
\sqrt{\tfrac{\ln2}{2}} \Bigl(\sum_{i=1}^k D(P_i || U_i)\Bigr)^{1/2}  \Bigl(\sum_{i=1}^k \max_x w_i(x)^2 \Bigr)^{1/2}\\
&\leq
\sqrt{\tfrac{\ln2}{2}} \Bigl(\sum_{i=1}^k D(P_i || U_i)\Bigr)^{1/2}  \sum_{i=1}^k \max_x w_i(x)\\
&\leq
\sqrt{\tfrac{\ln2}{2} \tfrac{1}{\min_i \theta(i)} } \Bigl(\sum_{i=1}^k \theta(i) D(P_i || U_i)\Bigr)^{1/2}  \sum_{i=1}^k \max_x w_i(x)\\
&= \sqrt{\tfrac{\ln2}{2} \tfrac{1}{\min_i \theta(i)} } \Bigl(\sum_{i=1}^k \theta(i) \log_2 \abs[0]{I_i} - \theta(i) H(P_i) \Bigr)^{1/2}  \sum_{i=1}^k \max_x w_i(x).
\end{align*}
We conclude
\[
\eps^2 \leq \frac{\ln 2}{2} \frac{1}{\min_i \theta(i)} \Bigl( \sum_{i=1}^k \theta(i) \log_2 \abs[0]{I_i} - \theta(i)H(P_i) \Bigr)
\]
and thus
\[
\sum_{i=1}^k \theta(i) H(P_i) \leq \sum_{i=1}^k \theta(i) \log_2 \abs[0]{I_i} - \frac{2}{\ln 2} \min_i {\theta(i)}\, \eps^2.
\]
Now take the supremum over $P$, and the infimum over $\eps$, $w_i$ and $C$.
\end{proof}

\section{Subrank, slice rank and multi-slice rank}\label{slicesec}

This section is about two variations on tensor rank, namely slice rank, which was introduced by Tao \cite{tao}, and multi-slice rank, which was introduced by Naslund \cite{naslund2017multi}. %
We know that subrank is asymptotically upper bounded by the quantum functionals. It is known that subrank is at most slice rank and multi-slice rank.
The result of this section is that, remarkably, even slice rank and multi-slice rank are asymptotically upper bounded by the quantum functionals. In fact, we show that the asymptotic slice rank $\lim_{n\to \infty} \slicerank(t^{\otimes n})^{1/n}$ exists and equals the minimum $\min_{\theta \in \prob([k])}F^\theta(t)$.

\subsection{Definition} Recall that a simple tensor is a tensor of the form $v_1 \otimes \cdots \otimes v_k \in V_1 \otimes \cdots \otimes V_k$ with $v_i \in V_i$ for $i\in [k]$, and that the rank $\rank(t)$ of a tensor $t \in V_1 \otimes \cdots \otimes V_k$ is the smallest number $r$ such that $t$ can be written as a sum of $r$ simple tensors. 

\begin{definition}
A \defin{slice} is a tensor of the form $v\otimes w \in V_1 \otimes \cdots \otimes V_k$ where $v \in V_j$ and $w \in V_{\overline j}$ for some $j \in [k]$. A \defin{multi-slice} is a tensor of the form $v \otimes w \in V_1 \otimes \cdots \otimes V_k$ where $v \in V_S$ and $w \in V_{\overline S}$ for some subset $S \subseteq [k]$. Let $t \in V_1 \otimes \cdots \otimes V_k$. The \defin{slice rank} of $t$, denoted $\slicerank(t)$, is the smallest number~$r$ such that $t$ can be written as a sum of $r$ slices. The \defin{multi-slice rank} of~$t$, denoted $\multislicerank(t)$, is the smallest number $r$ such that $t$ can be written as a sum of $r$ multi-slices.
\end{definition}

\subsection{Key properties}
Slice rank and multi-slice rank are clearly $\leq$-monotones. The slice rank and multi-slice rank of $\langle r \rangle$ equals $r$ \cite{tao, naslund2017multi}. It follows that subrank is at most multi-slice rank which is at most slice rank,
\[
\subrank(t) \leq \multislicerank(t) \leq \slicerank(t).
\]
Computing upper bounds on subrank $\subrank(t)$ and asymptotic subrank $\asympsubrank(t)$ was the main motivation for introducing slice rank in \cite{tao}.

\begin{example}
While tensor rank is easily seen to be sub-multiplicative with respect to tensor products, and subrank to be super-multiplicative, slice rank and multi-slice rank are neither sub-multiplicative nor super-multiplicative. 
For example, the tensors $
\sum_{i=1}^n e_i \otimes e_i \otimes 1$, $\sum_{i=1}^n e_i \otimes 1 \otimes e_i$, $\sum_{i=1}^n 1 \otimes e_i \otimes e_i$
have slice rank one, while their tensor product equals the matrix multiplication tensor $\langle n,n,n \rangle$ which has slice rank $n^2$ (see \cite[Remark~4.6]{MR3631613}). This shows slice rank is not sub-multiplicative.
To see slice rank is not super-multiplicative, take for example $W$ to be the tensor ${e_1 \otimes e_1 \otimes e_2 + e_1 \otimes e_2 \otimes e_1 + e_2 \otimes e_1 \otimes e_1}$. The slice rank of $W$ equals two. The value of the logarithmic upper support functional $\rho^{(1/3, 1/3, 1/3)}(W)$ equals $h(1/3)\approx0.918296$ (see~\cref{dicke}). In \cref{slicerankzeta} we will show that for any $\theta \in \prob([k])$ the value of $\slicerank(t^{\otimes n})$ is at most $2^{\rho^\theta(t)n + o(n)}$ when $n\to \infty$. We thus obtain the upper bound $\slicerank(W^{\otimes n}) \leq  2^{h(1/3)n + o(n)}$, which proves the claim.
Now it is also clear that slice rank is not equal to subrank or border subrank in general, since subrank and border subrank are super-multiplicative.
\end{example}

Since slice rank and multi-slice rank are not sub-multiplicative and not super-multiplicative, the limit $\lim_{n\to \infty} \slicerank(t^{\otimes n})^{1/n}$ and the analogous limit for multi-slice rank might not exist.
We will show that $\lim_{n\to \infty} \slicerank(t^{\otimes n})^{1/n}$ in fact does exist.
For now define
\begin{align*}
\mathrm{SR}^{\sim} &= \limsup_{n\to \infty} \slicerank(t^{\otimes n})^{1/n}\\
\msr(t) &=  \limsup_{n\to \infty} \multislicerank(t^{\otimes n})^{1/n}.
\end{align*}
We have
\[
\asympsubrank(t) \leq \msr(t)
 \leq
\sr(t).
\]

\subsection{Upper bound by support and quantum functionals}
We show that slice rank and multi-slice rank are asymptotically upper bounded by the quantum functionals.

\begin{theorem}\label{slicerankF}
Let $\FF$ be the complex numbers $\CC$.
Let $t \in V_1 \otimes \cdots \otimes V_k$.\\
If $\theta \in \prob(B)$,
then
$\msr(t) 
\leq F^{\theta}(t)$. %
If $\theta \in \prob_\s(B)$, then
$\sr(t)
\leq F^{\theta}(t)$.
\end{theorem}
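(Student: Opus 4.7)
The plan is to bound both $\multislicerank(t^{\otimes n})$ and $\slicerank(t^{\otimes n})$ by decomposing $t^{\otimes n}$ into isotypic pieces and bounding the multi-slice or slice rank of each piece by a matrix rank of a suitable flattening; asymptotics then follow by taking $n$-th roots.

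First I would write
\[
t^{\otimes n} \;=\; \sum_{(\lambda^{(b)})_{b\in\supp\theta}}\; \prod_{b\in\supp\theta} P_{\lambda^{(b)}}^{V_b}\, t^{\otimes n},
\]
using that $t^{\otimes n}$ lies in the symmetric subspace and $\sum_{\lambda\vdash n}P_\lambda^{V_b}=P_{(n)}^{V_{[k]}}$ on that subspace (cf.~\cref{SSbar}). Restricting to $\theta \in \prob_\nc(B)$ (which includes $\prob_\s(B)$) the projectors commute by \cref{comm}; in the slice-rank case $\theta\in\prob_\s(B)$ the projectors $P_{\lambda^{(j)}}^{V_j}$ act on distinct tensor factors and so commute automatically. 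The number of tuples $(\lambda^{(b)})$ with nonzero summand is polynomial in $n$, since each partition $\lambda^{(b)}\vdash n$ has boundedly many parts.

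Next I would fix a nonzero summand $x = \prod_b P_{\lambda^{(b)}}^{V_b}\, t^{\otimes n}$. By the defining supremum of $E^\theta$ in \cref{upperdef} one has $\sum_b \theta(b)\,H(\overline{\lambda^{(b)}}) \leq E^\theta(t)$. By commutativity and idempotency, $x$ is fixed by every $P_{\lambda^{(b)}}^{V_b}$; equivalently, under the natural identification $V_{[k]}^{\otimes n} \cong V_S^{\otimes n}\otimes V_{\overline S}^{\otimes n}$ for $b=\{S,\overline S\}$, the flattening $\flatten_S(x)$ has its $V_S^{\otimes n}$-factor supported in the isotypic subspace $\SSS_{\lambda^{(b)}}(V_S)\otimes [\lambda^{(b)}]$, of dimension at most $2^{nH(\overline{\lambda^{(b)}})+o(n)}$ by \eqref{sym} and \eqref{gl}. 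Hence $\rank(\flatten_S(x))\leq 2^{nH(\overline{\lambda^{(b)}})+o(n)}$, and since any rank decomposition of $\flatten_S(x)$ yields a multi-slice decomposition of $x$ with the same number of summands, $\multislicerank(x)\leq\rank(\flatten_S(x))$, with the analogous bound $\slicerank(x)\leq\rank(\flatten_j(x))$ in the $\theta\in\prob_\s(B)$ case. Minimising over $b\in\supp\theta$ and using $\min_b H(\overline{\lambda^{(b)}})\leq \sum_b \theta(b)H(\overline{\lambda^{(b)}})$ gives both $\multislicerank(x),\slicerank(x)\leq 2^{nE^\theta(t)+o(n)}$.

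Finally, by subadditivity of slice and multi-slice rank under addition of tensors, summing over the polynomially many tuples $(\lambda^{(b)})$ yields $\multislicerank(t^{\otimes n}),\,\slicerank(t^{\otimes n})\leq 2^{nE^\theta(t)+o(n)}$; taking $n$-th roots and $\limsup$ gives $\msr(t)\leq F^\theta(t)$ and (in the $\prob_\s$ case) $\sr(t)\leq F^\theta(t)$. The main conceptual step to check carefully is that $x$ is fixed by every projector: this is what restricts the argument to noncrossing (respectively singleton) $\theta$, since only there do the projectors commute and the image is simultaneously contained in every isotypic component. The remaining ingredients are soft, namely dimension counting for isotypic components and the elementary inequality between the minimum and the $\theta$-weighted mean.
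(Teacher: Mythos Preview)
Your argument is correct for the slice-rank statement ($\theta\in\prob_\s(B)$) and for the multi-slice statement when $\theta\in\prob_\nc(B)$, but the first claim of the theorem asserts $\msr(t)\le F^\theta(t)$ for \emph{all} $\theta\in\prob(B)$, including distributions supported on crossing bipartitions. You explicitly acknowledge restricting to noncrossing $\theta$, and this restriction is essential to your method: without commutativity the term $x=\prod_b P_{\lambda^{(b)}}^{V_b}\,t^{\otimes n}$ is only guaranteed to lie in the image of the \emph{outermost} projector, so you cannot minimise the flattening rank over $b\in\supp\theta$ as you do.

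The paper sidesteps commutativity by a telescoping rather than a simultaneous decomposition. Fix an ordering $\supp\theta=\{b_1,\ldots,b_\ell\}$, set $c=E^\theta(t)$, and define $t_0=t^{\otimes n}$, $s_i=\sum_{\lambda:H(\overline\lambda)\le c}P_\lambda^{V_{b_i}}t_{i-1}$, $t_i=t_{i-1}-s_i$. Then $t^{\otimes n}=s_1+\cdots+s_\ell+t_\ell$, while $t_\ell$ expands as a sum of products $P_{\lambda^{(\ell)}}^{V_{b_\ell}}\cdots P_{\lambda^{(1)}}^{V_{b_1}}t^{\otimes n}$ in which \emph{every} $H(\overline{\lambda^{(i)}})>c$; the $\theta$-average of these entropies then exceeds $c=E^\theta(t)$, so each such product vanishes by definition of $E^\theta$ and $t_\ell=0$. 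Each $s_i$ lies, by construction, in the image of $\sum_{\lambda:H(\overline\lambda)\le c}P_\lambda^{V_{b_i}}$, which bounds its multi-slice rank along the single bipartition $b_i$ by $2^{nc+o(n)}$. Only one flattening direction is needed per summand---the direction of the projector applied last---so commutativity never enters. Where your argument does apply it is perfectly sound and arguably more transparent than the telescoping, but it does not reach the full statement.
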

\begin{proof}
Let $c = E^{\theta}(t)$. %
Let $\ell = \abs[0]{\supp \theta}$.
Let $n \in \NN$. Let $t_0 = t^{\otimes n}$. Define a sequence of tensors $t_1, \ldots, t_\ell$ by 
\[
t_i = \Bigl(\id -\sum_{\mathclap{\substack{\lambda^{(i)} \vdash n:\\H(\overline{\lambda^{(i)}}) \leq c}}}P_{\lambda^{(i)}}^{V_{i}} \Bigr) \,t_{i-1}. %
\]
Let
\[
s_i = \sum_{\mathclap{\substack{\lambda^{(i)} \vdash n:\\H(\overline{\lambda^{(i)}}) \leq c}}}P_{\lambda^{(i)}}^{V_{i}} \,t_{i-1}.
\]
We can write $t_\ell$ in two ways. On the one hand,
\[
t_\ell = t^{\otimes n} - s_1 - s_2 - \cdots - s_{\ell}.
\]
On the other hand,
\begin{align*}
t_\ell &= \Bigl(\id - \sum_{\mathclap{\substack{\lambda^{(\ell)}\vdash n:\\H(\overline{\lambda^{(\ell)}}) \leq c}}} P_{\lambda^{(\ell)}}^{V_{\ell}}\Bigr) \cdots \Bigl(\id - \sum_{\mathclap{\substack{\lambda^{(1)}\vdash n:\\ H(\overline{\lambda^{(1)}}) \leq c}}} P_{\lambda^{(1)}}^{V_{1}} \Bigr)\, t^{\otimes n}\\
&= \;\sum_{\mathclap{\substack{\lambda^{(\ell)} \vdash n:\\ H(\overline{\lambda^{(\ell)}}) > c}}} \;\;\cdots\;\; \sum_{\mathclap{\substack{\lambda^{(1)} \vdash n:\\ H(\overline{\lambda^{(1)}}) > c}}}\; P_{\lambda^{(\ell)}}^{V_{\ell}} \cdots P_{\lambda^{(1)}}^{V_{1}}\, t^{\otimes n}
\end{align*}
which is 0 by definition of $E^{\theta}$.
Therefore $t^{\otimes n} = \sum_{i=1}^\ell s_i$. 

The multi-slice rank of an element in the image of $P_\lambda^{V_i}$ is at most $2^{\smash{nH(\overline{\lambda})}}$.
Each $s_i$ is in the image of $\sum P_\lambda^{\smash{V_{i}}}$ where the sum is over partitions $\lambda \vdash n$ with $H(\overline{\lambda}) \leq c$ and with at most $d_1 d_2\cdots d_k$ parts. There are at most $(n+1)^{d_1 d_2 \cdots d_k}$ such partitions.
Therefore $\multislicerank(s_i) \leq (n+1)^{d_1 d_2\cdots d_k} 2^{n c}$. This implies
\begin{align*}
\msr(t) \leq \limsup_{n \to \infty} \bigl( \ell (n+1)^{d_1 d_2 \cdots d_k}\, 2^{n c}\bigr)^{1/n} = F^{\theta}(t).
\end{align*}
This proves the first statement. The proof of the second statement is the same, except that it uses the upper bound $\slicerank(s_i) \leq (n+1)^{d_i} 2^{n E^\theta(t)}$. %
\end{proof}

Let us focus on the singleton regime $\theta \in \prob([k])$. We show that the asymptotic slice rank exists and equals the minimum value over $F^\theta$ with $\theta \in \prob([k])$.

\begin{lemma}\label{slice_lem1}
Let $t \in \CC^{d_1} \otimes \cdots \otimes \CC^{d_k}$ be nonzero. For any $\eps > 0$ there is an $n_0 \in \NN$ such that for all $n \geq n_0$ there are partitions $\lambda^{(1)}, \ldots, \lambda^{(k)} \vdash n$ satisfying $(P_{\lambda^{(1)}} \otimes \cdots \otimes P_{\lambda^{(k)}}) t^{\otimes n} \neq 0$ and
\[
\min \{ H(\tfrac1n \lambda^{(1)}), \ldots, H(\tfrac1n \lambda^{(k)}) \} \geq \min_{\theta \in \prob([k])} E^{\theta}(t) - \eps.
\]
\end{lemma}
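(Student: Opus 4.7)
The plan is to reduce the lemma to a minimax problem on the entanglement polytope $\Delta_t$, use \cref{characterization} to find partitions at a single scale approximating the optimal point, and then bootstrap to every large $n$ by a semigroup argument combined with concavity of the Shannon entropy.

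First I would apply Sion's minimax theorem. By \cref{upperequalslower} and the formula for $E^\theta$ as a maximum over the entanglement polytope used in its proof, for $\theta\in\prob_\s(B)\simeq\prob([k])$ we have
\[
E^\theta(t) \;=\; \max_{(r_1,\ldots,r_k)\in\Delta_t} \sum_{i=1}^k \theta(i)\, H(r_i).
\]
The function $(r,\theta)\mapsto\sum_i\theta(i)H(r_i)$ is continuous, linear (in particular quasi-convex) in $\theta$, and concave in $r$ (as $H$ is concave), while both $\Delta_t$ and $\prob([k])$ are compact and convex. Sion's minimax theorem then yields
\[
c^* \;:=\; \min_{\theta\in\prob([k])} E^\theta(t) \;=\; \max_{(r_1,\ldots,r_k)\in\Delta_t}\, \min_{i\in[k]} H(r_i),
\]
so one may fix a point $r^*=(r_1^*,\ldots,r_k^*)\in\Delta_t$ with $\min_i H(r_i^*)\geq c^*$.

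Second, I would invoke \cref{characterization}, which says that the set of tuples $(\overline{\mu^{(1)}},\ldots,\overline{\mu^{(k)}})$ with $(\bigotimes_i P_{\mu^{(i)}}^{V_i})\,t^{\otimes N}\neq 0$ for some $N$ and some $\mu^{(i)}\vdash N$ is dense in $\Delta_t$. Combined with continuity of Shannon entropy on the probability simplex, this produces an $N_0\in\NN$ and partitions $\lambda_0^{(i)}\vdash N_0$ satisfying $(\bigotimes_i P_{\lambda_0^{(i)}}^{V_i})\,t^{\otimes N_0}\neq 0$ together with $\min_i H(\overline{\lambda_0^{(i)}})\geq c^*-\eps/2$.

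Third, I would bootstrap from size $N_0$ to every sufficiently large $n$ via a semigroup argument. The key claim is that if $(\bigotimes_i P_{\lambda^{(i)}}^{V_i})\,t^{\otimes n}\neq 0$ and $(\bigotimes_i P_{\mu^{(i)}}^{V_i})\,t^{\otimes m}\neq 0$, then $(\bigotimes_i P_{\lambda^{(i)}+\mu^{(i)}}^{V_i})\,t^{\otimes (n+m)}\neq 0$; this follows factor-by-factor from the semigroup property of Littlewood--Richardson coefficients (using $c^{\lambda+\mu}_{\lambda,\mu}\geq 1$) applied to the natural embedding $V_i^{\otimes n}\otimes V_i^{\otimes m}\hookrightarrow V_i^{\otimes (n+m)}$ on the symmetric subspace of $(V_1\otimes\cdots\otimes V_k)^{\otimes(n+m)}$. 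For $n=aN_0+b$ with $0\leq b<N_0$, pick any $\mu_b^{(i)}\vdash b$ with $(\bigotimes_i P_{\mu_b^{(i)}}^{V_i})\,t^{\otimes b}\neq 0$ (which exists because $t^{\otimes b}\neq 0$); iterating the claim yields $(\bigotimes_i P_{a\lambda_0^{(i)}+\mu_b^{(i)}}^{V_i})\,t^{\otimes n}\neq 0$. Since $\overline{a\lambda_0^{(i)}+\mu_b^{(i)}}=\tfrac{aN_0}{n}\overline{\lambda_0^{(i)}}+\tfrac{b}{n}\overline{\mu_b^{(i)}}$, concavity of $H$ gives
\[
H\bigl(\overline{a\lambda_0^{(i)}+\mu_b^{(i)}}\bigr) \;\geq\; \tfrac{aN_0}{n}\, H(\overline{\lambda_0^{(i)}}) \;\geq\; \bigl(1-\tfrac{N_0}{n}\bigr)\bigl(c^*-\eps/2\bigr),
\]
which is at least $c^*-\eps$ as soon as $n\geq n_0$ for $n_0$ large enough (e.g.\ $n_0\geq 2N_0 c^*/\eps$; the case $c^*=0$ is trivial since any valid tuple at $n$ has nonnegative entropies).

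The main obstacle will be verifying the semigroup claim in the third step. The Littlewood--Richardson and Kronecker semigroup properties are used elsewhere in the paper, but transferring them to joint non-vanishing of the product $\bigotimes_i P_{\lambda^{(i)}}^{V_i}$ at $t^{\otimes n}$ requires tracking the $S_n\times S_m\hookrightarrow S_{n+m}$-equivariant decomposition on the fully symmetric subspace simultaneously for all $k$ tensor factors.
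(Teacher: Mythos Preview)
Your proof is correct and follows essentially the same approach as the paper's: minimax on $\Delta_t$, approximation by a rational point via \cref{characterization}, and a semigroup/concavity bootstrap to all large $n$. The paper makes the slightly simpler choice $\mu_b^{(i)}=(b)$ for the remainder (using that $(P_{(1)}\otimes\cdots\otimes P_{(1)})\,t=t\neq 0$) and, like you, invokes the semigroup property of the nonvanishing tuples without a detailed in-paper justification---this is a standard fact underlying \cref{characterization} itself (the Cartan-product argument on covariants that forces the set of such tuples to be a sub-semigroup and hence $\Delta_t$ to be a rational polytope), so your flagged ``obstacle'' is real but is exactly what the paper also takes for granted.
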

\begin{proof}
Let $\Delta_t$ be the moment polytope of $t$. By the minimax theorem
\begin{align*}
\min_{\theta \in \prob([k])} E^{\theta}(t) &= \min_{\theta \in \prob([k])} \max_{(r_1, \ldots, r_k) \in \Delta_t} \sum_{j=1}^k \theta(j) H(r_j)\\
&=  \max_{(r_1, \ldots, r_k) \in \Delta_t}  \min_{\theta \in \prob([k])} \sum_{j=1}^k \theta(j) H(r_j)\\
&= \max_{(r_1, \ldots, r_k) \in \Delta_t} \min_{j \in [k]} H(r_j).
\end{align*}
Let $\mu^{(1)}, \ldots, \mu^{(k)} \vdash m$ be partitions such that
\[
\min_{j \in [k]} H(\tfrac1m \mu^{(j)}) \geq \min_{\theta \in \prob([k])} E^\theta(t) - \eps/2
\]
and $(P_{\mu^{(1)}} \otimes \cdots \otimes P_{\mu^{(k)}}) t^{\otimes m} \neq 0$. This is possible since $\Delta_t$ is the closure of such rescaled partitions and the entropy is continuous. We use that $(P_{(1)} \otimes \cdots \otimes P_{(1)}) t = t \neq 0$ and that the tuples of partitions $\lambda^{(1)}, \ldots, \lambda^{(k)}$ satisfying $(P_{\lambda^{(1)}} \otimes \cdots \otimes P_{\lambda^{(k)}}) t^{\otimes n} \neq 0$ form a semigroup.

Any $n \in \NN$ can be written as $n = mq + r$ where $q,r \in \NN$, $0\leq r < m$. Let $\lambda^{(j)} = q \mu^{(j)} + (r)$.
By the semigroup property, $(P_{\lambda^{(1)}} \otimes \cdots \otimes P_{\lambda^{(k)}}) t^{\otimes n} \neq 0$. The entropies can be estimated using the concavity of entropy as
\begin{align*}
H\bigl( \tfrac1n (q\mu^{(j)} + (r)) \bigr) &= H\bigl( \tfrac{qm}{n} \tfrac1m \mu^{(j)} + \tfrac{r}{n} \tfrac{1}{r} (r) \bigr) \geq \tfrac{qm}{n} H\bigl( \tfrac1m \mu^{(j)} \bigr)\\
&= \tfrac{m}{n} \floor{\tfrac{n}{m}} H\bigl( \tfrac1m \mu^{(j)} \bigr) \geq (1 - \tfrac{m}{n}) H\bigl( \tfrac{1}{m}\mu^{(j)} \bigr).
\end{align*}
When $n$ is large enough, this is greater than $H(\tfrac1m \mu^{(j)}) - \eps/2$. Choose $n_0$ such that this is true for all $j \in [k]$.
\end{proof}

\begin{lemma}\label{slice_lem2}
Let $t \in \CC^{d_1} \otimes \cdots \otimes \CC^{d_k}$ and suppose that $(P_{\lambda^{(1)}} \otimes \cdots \otimes P_{\lambda^{(k)}}) t^{\otimes n} \neq 0$ for some partitions $\lambda^{(1)}, \ldots, \lambda^{(k)} \vdash n$. Then
\[
\slicerank(t^{\otimes n}) \geq \min \{ \dim [\lambda_1], \ldots, \dim [\lambda_k]\}.
\]
\end{lemma}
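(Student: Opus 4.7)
The plan is to reduce the statement to a purely representation-theoretic claim about $S_n$-invariant tensors in Specht-module tensor products, and then appeal to Schur's lemma.

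First, I would observe that each $P_{\lambda^{(j)}}^{V_j}$ is a linear endomorphism of $V_j^{\otimes n}$, so the projected tensor $\tilde t := \bigl(\bigotimes_{j=1}^k P_{\lambda^{(j)}}^{V_j}\bigr) t^{\otimes n}$ is obtained from $t^{\otimes n}$ by applying linear maps to each tensor factor. Slice rank is monotone under such restriction, giving $\slicerank(\tilde t) \leq \slicerank(t^{\otimes n})$, so it suffices to lower bound $\slicerank(\tilde t)$. Moreover $\tilde t$ lies in $\bigotimes_j\bigl(\SSS_{\lambda^{(j)}}(V_j) \otimes [\lambda^{(j)}]\bigr)$ and is invariant under the diagonal $S_n$-action, because $t^{\otimes n}$ is and each $P_{\lambda^{(j)}}^{V_j}$ commutes with the $S_n$-action on its factor.

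Next I would reduce to the pure Specht case. Since $S_n$ acts trivially on $\SSS_{\lambda^{(j)}}(V_j)$, the invariant tensor $\tilde t$ in fact lies in $\bigl(\bigotimes_j\SSS_{\lambda^{(j)}}(V_j)\bigr)\otimes\bigl(\bigotimes_j[\lambda^{(j)}]\bigr)^{S_n}$. A generic choice of functionals $\phi_j\in\SSS_{\lambda^{(j)}}(V_j)^*$ and contraction on the multiplicity factors yields a nonzero $S_n$-invariant element $T\in\bigl(\bigotimes_j[\lambda^{(j)}]\bigr)^{S_n}$, and slice rank is monotone under this contraction as well, so $\slicerank(T)\leq\slicerank(\tilde t)$. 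Thus it suffices to prove: any nonzero $T\in\bigl(\bigotimes_j[\lambda^{(j)}]\bigr)^{S_n}$ has $\slicerank(T)\geq\min_j\dim[\lambda^{(j)}]$.

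For this reduced statement, I would pick $j_0$ achieving the minimum of $\dim[\lambda^{(j)}]$ and consider the flattening $T^{(j_0)}\colon [\lambda^{(j_0)}]^*\to\bigotimes_{l\neq j_0}[\lambda^{(l)}]$. This map is $S_n$-equivariant, and since $[\lambda^{(j_0)}]^*$ is irreducible and $T\neq 0$, Schur's lemma forces $T^{(j_0)}$ to be injective, so $\rank T^{(j_0)} = \dim[\lambda^{(j_0)}]$. Given a slice rank decomposition $T=\sum_l T_l$ with $T_l\in U_l\otimes W_l$ and $\dim U_l=r_l$, the idea is to extract the lower bound $\sum_l r_l\geq\dim[\lambda^{(j_0)}]$ by combining this flattening rank with an averaging argument: for each $l$, the $S_n$-average $\bar T_l = \frac{1}{|S_n|}\sum_\sigma\sigma(T_l)$ is $S_n$-invariant in $[\lambda^{(l)}]\otimes W_l$, so Schur's lemma (applied to its flattening $[\lambda^{(l)}]^*\to W_l$) forces $\bar T_l$ to be either zero or of rank $\dim[\lambda^{(l)}]$; since $T = \sum_l \bar T_l$ is itself nonzero, some $\bar T_l$ is nonzero and contributes a full copy of $[\lambda^{(l)}]$ to the decomposition.

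The main obstacle is the final conversion of this rank/averaging information into the actual slice rank inequality, since a priori the rank of the $j_0$-flattening need not bound slice rank directly: slices indexed by $l\neq j_0$ can contribute large flattening rank, as their flattenings along $j_0$ take values in $U_l\otimes\bigotimes_{m\neq l,j_0}[\lambda^{(m)}]$ rather than a low-dimensional subspace. The delicate part is combining the Schur-lemma dichotomy on each averaged $\bar T_l$ with the constraint on $\sum_l r_l$ that emerges from the injectivity of $T^{(j_0)}$; I expect the argument to hinge on the irreducibility of each $[\lambda^{(j)}]$ forcing any proper $U_l\subsetneq[\lambda^{(l)}]$ to fail to contain any copy of $[\lambda^{(l)}]$, and hence on a careful count of how many full Specht copies the decomposition $T=\sum_l\bar T_l$ can accommodate given that $\sum_l r_l<\dim[\lambda^{(j_0)}]$.
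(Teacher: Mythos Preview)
Your reduction matches the paper exactly: restrict to $\tilde t = (\bigotimes_j P_{\lambda^{(j)}}) t^{\otimes n}$, then contract the multiplicity spaces $\SSS_{\lambda^{(j)}}(V_j)$ by rank-one maps to land on a nonzero $S_n$-invariant $T \in \bigotimes_j [\lambda^{(j)}]$. The gap is precisely where you flag it, in the final step.

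Your averaging argument does not close. With the grouping $T = \sum_l T_l$, $T_l \in U_l \otimes \bigotimes_{m\neq l}[\lambda^{(m)}]$ and $\dim U_l = r_l$, the $S_n$-average $\bar T_l$ is indeed invariant and by Schur has $l$-flattening rank either $0$ or $\dim[\lambda^{(l)}]$. But averaging destroys the containment in $U_l$: $\bar T_l$ lives in all of $[\lambda^{(l)}] \otimes \bigotimes_{m\neq l}[\lambda^{(m)}]$, so its rank says nothing about $r_l$. Knowing that some $\bar T_{l_0}$ is nonzero gives you a lower bound on the flattening rank of $T$ along $l_0$, not on $\sum_l r_l$. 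And as you correctly note, flattening rank along $j_0$ does not bound slice rank, because a single slice in direction $l\neq j_0$ can contribute huge $j_0$-flattening rank.

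The paper avoids this entirely. It uses Schur's lemma in a different place: for each $j$, the $j$-th marginal $\rho_j$ of $\ketbra{T}{T}$ is an $S_n$-equivariant positive operator on the irreducible module $[\lambda^{(j)}]$, hence a scalar multiple of the identity. So $T$ has \emph{completely mixed marginals}. This makes $T$ semistable for the $\prod_j \SL([\lambda^{(j)}])$-action (indeed it sits at a zero of the moment map), and then one invokes the theorem of Blasiak et~al.\ \cite[Theorem~4.6]{MR3631613} that a semistable tensor has slice rank equal to the minimum local dimension, i.e.\ $\min_j \dim[\lambda^{(j)}]$. That external result is the missing ingredient; a direct elementary argument of the kind you sketch is not known.
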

\begin{proof}
$t^{\otimes n}$ restricts to $(P_{\lambda^{(1)}} \otimes \cdots \otimes P_{\lambda^{(k)}}) t^{\otimes n}$, which by assumption is nonzero. Choose rank one projections $A_j$ in the vector spaces $\SSS_{\lambda^{(j)}}(\CC^{d_j})$ such that
\[
\bigl( (\id_{[\lambda^{(1)}]} \otimes A_1) \otimes \cdots \otimes (\id_{[\lambda^{(k)}]} \otimes A_k) \bigr) ( P_{\lambda^{(1)}} \otimes \cdots \otimes P_{\lambda^{(k)}}) t^{\otimes n} \neq 0.
\]
The resulting tensor is invariant under the diagonal $S_n$-action, therefore the marginals are maximally mixed. In the GIT language, this means they are unstable, therefore the slice rank is the smallest local dimension by \cite[Theorem 4.6]{MR3631613}.
\end{proof}

\begin{theorem}
For any tensor $t \in \CC^{d_1} \otimes \cdots \otimes \CC^{d_k}$,
\[
\liminf_{n \to \infty} \slicerank(t^{\otimes n})^{1/n} \geq \min_{\theta \in \prob([k])} 2^{E^\theta(t)}.
\]
\end{theorem}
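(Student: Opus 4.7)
The plan is to combine \cref{slice_lem1} and \cref{slice_lem2}, converting the entropy lower bound on partitions into a dimension lower bound on the corresponding $S_n$-irreducibles via the standard estimate \eqref{sym}, and then take $n$-th roots and let $\eps \to 0$.

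Fix $\eps > 0$. First I would apply \cref{slice_lem1}: for all sufficiently large $n$ there exist partitions $\lambda^{(1)}, \ldots, \lambda^{(k)} \vdash n$ (with $\ell(\lambda^{(j)}) \leq d_j$ since $(P_{\lambda^{(j)}}^{V_j} \otimes \cdots)t^{\otimes n}\neq 0$ forces this) such that
\[
(P_{\lambda^{(1)}}^{V_1} \otimes \cdots \otimes P_{\lambda^{(k)}}^{V_k})\, t^{\otimes n} \neq 0
\quad\text{and}\quad
\min_{j\in[k]} H(\tfrac{1}{n}\lambda^{(j)}) \;\geq\; \min_{\theta \in \prob([k])} E^\theta(t) - \eps.
\]
Then \cref{slice_lem2} yields $\slicerank(t^{\otimes n}) \geq \min_{j}\dim[\lambda^{(j)}]$.

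Next I would convert the entropy bound into a dimension bound. By the lower estimate in \eqref{sym}, for any partition $\lambda \vdash n$ with $\ell(\lambda) \leq d$,
\[
\dim [\lambda] \;\geq\; \frac{n!}{\prod_{\ell=1}^d (\lambda_\ell + d - \ell)!} \;\geq\; \frac{1}{(n+d)^{d^2}}\binom{n}{\lambda_1,\ldots,\lambda_d} \;\geq\; 2^{nH(\overline{\lambda}) - O_d(\log n)},
\]
so $\dim[\lambda] \geq 2^{n H(\overline{\lambda}) - o(n)}$, with the $o(n)$ term depending only on $d = \max_j d_j$. Applying this to each $\lambda^{(j)}$ gives
\[
\slicerank(t^{\otimes n}) \;\geq\; \min_{j}\dim[\lambda^{(j)}] \;\geq\; 2^{n\min_j H(\overline{\lambda^{(j)}}) - o(n)} \;\geq\; 2^{n(\min_\theta E^\theta(t) - \eps) - o(n)}.
\]
Taking $n$-th roots and the $\liminf$ as $n \to \infty$ yields
\[
\liminf_{n\to \infty}\slicerank(t^{\otimes n})^{1/n} \;\geq\; 2^{\min_\theta E^\theta(t) - \eps}.
\]
Since $\eps > 0$ was arbitrary, the claimed inequality follows, using that $2^{\min_\theta E^\theta(t)} = \min_\theta 2^{E^\theta(t)}$ by monotonicity of the exponential.

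There is no real obstacle here: both ingredients (\cref{slice_lem1} and \cref{slice_lem2}) have already been proved, and the only analytic step is the elementary dimension estimate from \eqref{sym}, which is a direct consequence of Stirling's formula applied to the multinomial coefficient. The only point requiring a little care is ensuring that the $o(n)$ error in $\dim[\lambda] \geq 2^{nH(\overline\lambda) - o(n)}$ is uniform in $\lambda$, which it is because the bound only depends on the number of parts $d_j$, not on $\lambda$ itself.
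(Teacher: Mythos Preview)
Your proof is correct and follows essentially the same route as the paper: apply \cref{slice_lem1} to get partitions with high marginal entropy, apply \cref{slice_lem2} to lower-bound slice rank by $\min_j \dim[\lambda^{(j)}]$, use the estimate \eqref{sym} to convert entropy into dimension, and let $\eps\to 0$. You give slightly more detail than the paper on why the $o(n)$ term is uniform in $\lambda$ (bounded number of parts), which is a fine clarification.
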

\begin{proof}
Choose $\eps > 0$ and for all large enough $n$ choose $\lambda^{(1)}, \ldots, \lambda^{(k)}\vdash n$ as in \cref{slice_lem1}. By \cref{slice_lem2},
\begin{align*}
\slicerank(t^{\otimes n}) &\geq \min\{ \dim [\lambda^{(1)}], \ldots, \dim [\lambda^{(k)}] \}\\
&\geq \min \{2^{n H(\tfrac1n \lambda^{(1)})}, \ldots, 2^{n H(\tfrac1n \lambda^{(k)})}\} 2^{-o(n)}\\
&\geq 2^{n(\min_{\theta \in \prob([k])} E^\theta(t) - \eps)} 2^{-o(n)},
\end{align*}
therefore
\[
\liminf_{n\to\infty} \slicerank(t^{\otimes n})^{1/n} \geq 2^{\min_{\theta \in \prob([k])} E^\theta(t) - \eps }.
\]
This is true for any $\eps > 0$, therefore
\[
\liminf_{n \to\infty} \slicerank(t^{\otimes n})^{1/n} \geq 2^{\min_{\theta \in \prob([k])} E^\theta(t)}.
\]
\end{proof}

\begin{corollary}\label{minpoint}
For complex tensors, asymptotic slice rank exists and equals the minimum value of the support functionals over $\theta \in \prob([k])$,
\[
\regularize{\mathrm{SR}}(t)\coloneqq \lim_{n \to \infty} \slicerank(t^{\otimes n})^{1/n} = \min_{\theta \in \prob([k])} 2^{E^\theta(t)}.
\]
\end{corollary}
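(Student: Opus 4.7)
The plan is straightforward: this corollary is essentially a sandwich argument combining the upper bound from \cref{slicerankF} with the lower bound from the preceding theorem.

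First, I would invoke \cref{slicerankF} in the singleton regime $\theta \in \prob_\s(B)$, which (via the identification $\prob_\s(B) \cong \prob([k])$) states that
\begin{equation*}
\limsup_{n\to\infty} \slicerank(t^{\otimes n})^{1/n} \leq F^\theta(t) = 2^{E^\theta(t)}
\end{equation*}
for every $\theta \in \prob([k])$. Taking the infimum (which is attained, since $\prob([k])$ is compact and $\theta \mapsto E^\theta(t)$ is continuous), this gives $\limsup_{n\to\infty} \slicerank(t^{\otimes n})^{1/n} \leq \min_{\theta \in \prob([k])} 2^{E^\theta(t)}$.

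Next, the immediately preceding theorem supplies the matching lower bound
\begin{equation*}
\liminf_{n\to\infty} \slicerank(t^{\otimes n})^{1/n} \geq \min_{\theta \in \prob([k])} 2^{E^\theta(t)}.
\end{equation*}
Since trivially $\liminf \leq \limsup$, the three quantities coincide, the limit $\lim_{n\to\infty} \slicerank(t^{\otimes n})^{1/n}$ exists, and its value is the claimed minimum. There is no real obstacle here — the content of the corollary lies entirely in the two theorems it combines, and the corollary is just the observation that their bounds meet.
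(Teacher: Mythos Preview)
Your proposal is correct and matches the paper's intended argument: the corollary is stated without proof precisely because it is the sandwich of \cref{slicerankF} (the $\limsup$ upper bound) and the immediately preceding theorem (the $\liminf$ lower bound), exactly as you describe.
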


\begin{remark}
Connections between asymptotic slice rank and moment polytopes have also been observed in \cite[Corollary 6.5]{burgisser2017alternating} and \cite{CGN}.
\end{remark}
For tensors over arbitrary fields we have the following result. (See also \cite{sawin} for a similar statement.)

\begin{theorem}\label{slicerankzeta}
Let $t \in V_1 \otimes \cdots \otimes V_k$. Let $\theta \in \prob([k])$. Then
\[
\sr(t) \leq \zeta^{\theta}(t).
\]
\end{theorem}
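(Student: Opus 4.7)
The plan is to adapt the argument of \cref{slicerankF} to arbitrary fields by replacing isotypic projections with a type-class decomposition in a suitable basis. First I would fix bases $C \in \bases(t)$ achieving $H_\theta(\supp_C t) = \rho^\theta(t)$, and set $\Phi = \supp_C t$. Expanding $t$ in this basis and taking $n$-th tensor power, $t^{\otimes n}$ becomes a sum over sequences $\alpha \in \Phi^n$, which I would group by their empirical $n$-type $Q \in \prob_n(\Phi)$, producing a decomposition $t^{\otimes n} = \sum_Q t^{\otimes n}_Q$ where $t^{\otimes n}_Q$ collects the terms indexed by sequences of type $Q$. Subadditivity of slice rank then yields $\slicerank(t^{\otimes n}) \leq \sum_Q \slicerank(t^{\otimes n}_Q)$.

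The key estimate is for each piece. For fixed $Q$ and each $i \in [k]$, the $i$-th component of any $\alpha \in T_Q$ is a sequence of marginal type $Q_i$, and there are only $\binom{n}{nQ_i} \leq 2^{nH(Q_i)}$ such sequences. Hence the $i$-th flattening of $t^{\otimes n}_Q$ factors through a subspace of dimension at most $2^{nH(Q_i)}$, so its rank is bounded by $2^{nH(Q_i)}$. Since slice rank of any tensor is bounded by the rank of any one-sided flattening (a rank decomposition of the $i$-th flattening is a sum of slices in direction $i$), I obtain $\slicerank(t^{\otimes n}_Q) \leq \min_i 2^{nH(Q_i)}$.

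To finish, I would apply the elementary inequality $\min_i H(Q_i) \leq \sum_i \theta(i) H(Q_i) = H_\theta(Q) \leq H_\theta(\Phi) = \rho^\theta(t)$, so every summand is at most $2^{n\rho^\theta(t)}$, and the polynomial type-count bound $\abs[0]{\prob_n(\Phi)} \leq (n+1)^{|\Phi|}$ gives $\slicerank(t^{\otimes n}) \leq \poly(n) \cdot 2^{n\rho^\theta(t)}$. Taking $n$-th roots and $\limsup$ yields $\sr(t) \leq 2^{\rho^\theta(t)} = \zeta^\theta(t)$. The argument has no genuine obstacle; it is the Tao-style slice-rank/type argument used for cap sets, now with the single-marginal entropy replaced by the $\theta$-weighted average of marginal entropies, so that the support-functional characterisation $\rho^\theta(t) = \min_C H_\theta(\supp_C t)$ drops in directly at the end.
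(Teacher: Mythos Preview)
Your proposal is correct and is essentially the same argument as the paper's proof: both expand $t^{\otimes n}$ in a chosen basis, group terms by empirical type $Q$ on $\supp_C t$, bound the slice rank of each type piece by $\min_j 2^{nH(Q_j)} \leq 2^{nH_\theta(Q)}$ via the dimension of the $j$th marginal type class, and absorb the polynomial number $(n+1)^{|\supp_C t|}$ of types in the limit. The only cosmetic difference is that you fix an optimal basis $C$ at the outset, whereas the paper carries an arbitrary $C$ through and minimises at the end.
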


\begin{proof}
Choose a $k$-tuple of bases $C = ((v_{1, 1}, \ldots, v_{1, d_1}), \ldots, (v_{k, 1}, \ldots, v_{k, d_k}))$ in~$\bases(f)$. Let $n \in \NN$. As in the proof of \cref{suppinv}, given a distribution $Q \in \prob_n([d_1] \times \cdots \times [d_k])$, define the vector $v_Q \in V_1^{\otimes n} \otimes \cdots \otimes V_k^{\otimes n}$ as
\[
v_Q \coloneqq \!\sum_{x} \bigotimes_{m=1}^n v_{1, x_{m,1}} \otimes v_{2, x_{m,2}} \otimes \cdots \otimes v_{k, x_{m,k}}
\]
where the sum is over all $n$-tuples $x = (x_1, \ldots, x_n) \in T_Q^n$,
and define the element $t_Q \in \CC$ as $t_Q \coloneqq \prod_{i \in \supp Q}\; (t_{i_1, \ldots, i_k})^{nQ(i)}$. 
Then the tensor power $t^{\otimes n}$ can be written as
\begin{equation}\label{decomp}
t^{\otimes n} = \sum_{Q} t_Q \, v_Q
\end{equation}
where the sum is over $Q \in \prob_n(\supp_C t)$.
Let $W_1 \otimes \cdots \otimes W_k \subseteq V_1^{\otimes n} \otimes \cdots \otimes V_k^{\otimes n}$ be the subspace where $W_j$ is spanned by all vectors $v_{j,i_1} \otimes v_{j, i_2} \otimes \cdots \otimes v_{j, i_k}$ such that the empirical distribution of $(i_1, \ldots, i_k) \in [d_j]^n$ is equal to the marginal distribution $Q_j$. Then $v_Q \in W_1 \otimes \cdots \otimes W_k$. This implies that
\[
\slicerank(v_Q) \leq \min_{j \in [k]} 2^{n H(Q_j)} \leq 2^{n \sum_{j=1}^k \theta(j) H(Q_j)}.
\]
The number of terms in \eqref{decomp} is at most $(n+1)^{\abs[0]{\supp_C t}}$. Therefore,
\begin{align*}
\log_2 \sr(t)
&\leq \limsup_{n\to \infty} \frac1n \Bigl( {\abs[0]{\supp_C t}}\, \log_2 (n+1) + n \max_{P} \sum_{\smash{j=1}}^{\smash{k}} \theta(j) H(P_j)  \Bigr)\\
&= \max_{P} \sum_{j=1}^{\smash{k}} \theta(j) H(P_j)
\end{align*}
with both maximisations over $P \in \prob(\supp_C t)$.
This inequality holds for any basis choice $C\in \bases(f)$ and therefore we may minimise the right side over all basis choices. This proves the theorem.
\end{proof}

\begin{corollary}\label{slicetight}
Let $t$ be a tight 3-tensor. Then $\regularize{\mathrm{SR}}(t) = \lim_{n\to\infty} \slicerank(t^{\otimes n})^{1/n}$ exists and equals the asymptotic subrank $\asympsubrank(t) = \lim_{n\to \infty} \subrank(t^{\otimes n})^{1/n}$. 
\end{corollary}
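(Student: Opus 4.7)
The plan is to sandwich $\slicerank(t^{\otimes n})^{1/n}$ between two quantities that both converge to $\asympsubrank(t)$, which simultaneously proves the existence of the limit and identifies its value.

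First I would establish the easy lower bound. Since $\subrank(s) \leq \slicerank(s)$ for every tensor $s$, we have $\subrank(t^{\otimes n})^{1/n} \leq \slicerank(t^{\otimes n})^{1/n}$ for every $n$. The subrank is super-multiplicative under $\otimes$ (restriction composes), so by Fekete's lemma the limit $\asympsubrank(t) = \lim_{n\to\infty} \subrank(t^{\otimes n})^{1/n}$ genuinely exists, and passing to the liminf gives
\[
\asympsubrank(t) \leq \liminf_{n\to\infty} \slicerank(t^{\otimes n})^{1/n}.
\]

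Next I would establish the upper bound using the Strassen upper support functionals. By \cref{slicerankzeta} (which is stated for arbitrary fields, so no complexification is required), for every $\theta \in \prob([3])$,
\[
\limsup_{n\to\infty} \slicerank(t^{\otimes n})^{1/n} = \sr(t) \leq \zeta^\theta(t).
\]
Minimising the right-hand side over $\theta \in \prob([3])$ and invoking Strassen's \cref{cwcor}, which asserts that for a tight 3-tensor the asymptotic subrank is computed as $\asympsubrank(t) = \min_{\theta \in \prob([3])} \zeta^\theta(t)$, we obtain
\[
\limsup_{n\to\infty} \slicerank(t^{\otimes n})^{1/n} \leq \min_{\theta \in \prob([3])} \zeta^\theta(t) = \asympsubrank(t).
\]

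Combining the two inequalities yields
\[
\asympsubrank(t) \leq \liminf_{n\to\infty} \slicerank(t^{\otimes n})^{1/n} \leq \limsup_{n\to\infty} \slicerank(t^{\otimes n})^{1/n} \leq \asympsubrank(t),
\]
so every inequality is an equality, the limit exists, and equals the asymptotic subrank. There is no real obstacle here: all the work has already been done in \cref{slicerankzeta} (which hands us the upper bound for arbitrary fields) and \cref{cwcor} (which pins down $\min_\theta \zeta^\theta$ for tight 3-tensors via the Coppersmith--Winograd construction). The only subtle point worth flagging is that we must take care to apply the correct field-agnostic form of the slice rank bound, since tight 3-tensors of interest (such as those arising in the cap set problem) live over $\FF_p$ rather than $\CC$.
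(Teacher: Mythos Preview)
Your proof is correct and follows the same sandwich argument as the paper: the lower bound from $\subrank \leq \slicerank$ and the upper bound from \cref{slicerankzeta} combined with \cref{cwcor}. You are in fact slightly more careful than the paper, spelling out the liminf/limsup argument to justify existence of the limit, whereas the paper compresses this into the single line $\asympsubrank(t) \leq \sr(t) \leq \min_\theta \zeta^\theta(t)$.
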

\begin{proof}
We have
\[
\asympsubrank(t) \leq \sr(t) \leq \min_\theta \zeta^\theta(t)
\]
and $\min_\theta \zeta^\theta(t)$ equals $\asympsubrank(t)$ since $t$ is a tight 3-tensor (\cref{cwcor}).
\end{proof}

\begin{remark}
We summarise the relationships among the functionals and asymptotic subrank, rank and slice-rank. 
\begin{enumerate}
\item $\regularize{\mathrm{SR}}(t) \coloneqq \lim_{n\to\infty} \slicerank(t^{\otimes n})^{1/n} = \min_{\theta \in \prob_\s(B)}F^\theta(t)$\,\quad (\cref{minpoint})
\item $\asympsubrank(t) \leq \msr(t) \leq  \regularize{\mathrm{SR}}(t)  \leq F^\theta(t) \leq \zeta^\theta(t)$\, if $\theta \in \prob_\s(B)$\\[1ex] (\cref{slicerankzeta,slicerankF})
\item $\asympsubrank(t) = \msr(t) =  \regularize{\mathrm{SR}}(t) = \min_{\theta \in \prob_\s(B)}F^\theta(t) = \min_{\theta \in \prob_\s(B)} \zeta^\theta(t)$\\[1ex] if $t$ tight order-3 (\cref{slicetight})
\item $\asympsubrank(t) \leq \msr(t) \leq F^\theta(t)$\, if $\theta \in \prob(B)$\,\quad (\cref{slicerankF})
\item $\zeta_\theta(t) \leq \asymprank(t)$\, if $\theta \in \prob_\s(B)$
\item $F_\theta(t)\leq \asymprank(t)$\, if $\theta \in \prob_\nc(B)$.
\end{enumerate}
\end{remark}

\vspace{2em}
\parag{Acknowledgements} 
The authors thank the members of the QMATH Tensor reading group and Peter Bürgisser for much valuable discussion.
Part of this work was carried out while Jeroen Zuiddam and Péter Vrana were visiting QMATH.
We acknowledge financial support from the European Research Council (ERC Grant Agreement no.~337603), the Danish Council for Independent Research (Sapere Aude), and VILLUM FONDEN via the QMATH Centre of Excellence (Grant no.~10059). JZ is supported by~NWO (617.023.116).

\raggedright
\addcontentsline{toc}{section}{References}
\bibliographystyle{alphaurlpp}
\bibliography{all}

\newcommand{\etalchar}[1]{$^{#1}$}
\begin{thebibliography}{VDDMV02}

\bibitem[AS81]{ALDER1981201}
Alexander Alder and Volker Strassen.
\newblock \href
  {http://dx.doi.org/https://doi.org/10.1016/0304-3975(81)90070-0} {On the
  algorithmic complexity of associative algebras}.
\newblock {\em Theoret. Comput.~Sci.}, 15(2):201 -- 211, 1981.

\bibitem[ASU13]{Alon2013}
Noga Alon, Amir Shpilka, and Christopher Umans.
\newblock \href {http://dx.doi.org/10.1007/s00037-013-0060-1} {On sunflowers
  and matrix multiplication}.
\newblock {\em Comput. Complexity}, 22(2):219--243, Jun 2013.

\bibitem[AVZ]{srini}
Srinivasan Arunachalam, P{\'e}ter Vrana, and Jeroen Zuiddam.
\newblock Optimal distillation from balanced type tensors.
\newblock Manuscript.

\bibitem[BCC{\etalchar{+}}17]{MR3631613}
Jonah Blasiak, Thomas Church, Henry Cohn, Joshua~A. Grochow, Eric Naslund,
  William~F. Sawin, and Chris Umans.
\newblock \href {http://dx.doi.org/10.19086/da.1245} {On cap sets and the
  group-theoretic approach to matrix multiplication}.
\newblock {\em Discrete Anal.}, 2017.
\newblock \href {http://arxiv.org/abs/1605.06702} {\path{arXiv:1605.06702}}.

\bibitem[BCS97]{burgisser1997algebraic}
Peter B{\"u}rgisser, Michael Clausen, and M.~Amin Shokrollahi.
\newblock \href {http://dx.doi.org/10.1007/978-3-662-03338-8} {{\em Algebraic
  complexity theory}}, volume 315 of {\em Grundlehren der Mathematischen
  Wissenschaften}.
\newblock Springer-Verlag, Berlin, 1997.

\bibitem[BCSX10]{bhattacharyya2010testing}
Arnab Bhattacharyya, Victor Chen, Madhu Sudan, and Ning Xie.
\newblock \href {http://dx.doi.org/10.1007/978-3-642-16367-8_18} {{\em Testing
  Linear-Invariant Non-linear Properties: A Short Report}}, pages 260--268.
\newblock Springer Berlin Heidelberg, Berlin, Heidelberg, 2010.

\bibitem[BGO{\etalchar{+}}17]{burgisser2017alternating}
Peter B{\"u}rgisser, Ankit Garg, Rafael Oliveira, Michael Walter, and Avi
  Wigderson.
\newblock \href {} {Alternating minimization, scaling algorithms, and the
  null-cone problem from invariant theory}.
\newblock {\em arXiv}, 2017.
\newblock \href {http://arxiv.org/abs/1711.08039} {\path{arXiv:1711.08039}}.

\bibitem[BI11]{Burgisser:2011:GCT:1993636.1993704}
Peter B\"{u}rgisser and Christian Ikenmeyer.
\newblock \href {http://dx.doi.org/10.1145/1993636.1993704} {Geometric
  Complexity Theory and Tensor Rank}.
\newblock In {\em Proceedings of the Forty-third Annual ACM Symposium on Theory
  of Computing}, STOC '11, pages 509--518, New York, NY, USA, 2011. ACM.

\bibitem[BL16]{blser_et_al:LIPIcs:2016:6434}
Markus Bl{\"a}ser and Vladimir Lysikov.
\newblock \href {http://dx.doi.org/10.4230/LIPIcs.MFCS.2016.19} {{On
  Degeneration of Tensors and Algebras}}.
\newblock In {\em 41st International Symposium on Mathematical Foundations of
  Computer Science (MFCS 2016)}, pages 19:1--19:11, 2016.
\newblock \href {http://arxiv.org/abs/1606.04253} {\path{arXiv:1606.04253}}.

\bibitem[Bl{\"a}01]{blaser20015}
Markus Bl{\"a}ser.
\newblock \href {http://dx.doi.org/10.1007/3-540-44693-1_9} {A 5/2 $n^2$-Lower
  Bound for the Multiplicative Complexity of $n\times n$-Matrix
  Multiplication}.
\newblock {\em STACS 2001}, pages 99--109, 2001.

\bibitem[Bl{\"a}13]{blaser2013fast}
Markus Bl{\"a}ser.
\newblock \href {http://dx.doi.org/10.4086/toc.gs.2013.005} {{\em Fast Matrix
  Multiplication}}.
\newblock Number~5 in Graduate Surveys. Theory of Computing Library, 2013.

\bibitem[Bor91]{MR1102012}
Armand Borel.
\newblock \href {http://dx.doi.org/10.1007/978-1-4612-0941-6} {{\em Linear
  algebraic groups}}, volume 126 of {\em Graduate Texts in Mathematics}.
\newblock Springer-Verlag, New York, second edition, 1991.

\bibitem[BPR{\etalchar{+}}00]{bennett2000exact}
Charles~H. Bennett, Sandu Popescu, Daniel Rohrlich, John~A. Smolin, and
  Ashish~V. Thapliyal.
\newblock \href {http://dx.doi.org/10.1103/PhysRevA.63.012307} {Exact and
  asymptotic measures of multipartite pure-state entanglement}.
\newblock {\em Phys. Rev.~A}, 63(1):012307, 2000.

\bibitem[Bri87]{MR932055}
Michel Brion.
\newblock \href {http://dx.doi.org/10.1007/BFb0078526} {Sur l'image de
  l'application moment}.
\newblock In {\em S\'eminaire d'alg\`ebre {P}aul {D}ubreil et {M}arie-{P}aule
  {M}alliavin ({P}aris, 1986)}, volume 1296 of {\em Lecture Notes in Math.},
  pages 177--192. Springer, Berlin, 1987.

\bibitem[Bri05]{MR2143072}
Michel Brion.
\newblock \href {http://dx.doi.org/10.1007/3-7643-7342-3_2} {Lectures on the
  geometry of flag varieties}.
\newblock In {\em Topics in cohomological studies of algebraic varieties},
  Trends Math., pages 33--85. Birkh\"auser, Basel, 2005.
\newblock \href {http://arxiv.org/abs/math/0410240}
  {\path{arXiv:math/0410240}}.

\bibitem[BRVR17]{bryan2017existence}
Jim Bryan, Zinovy Reichstein, and Mark Van~Raamsdonk.
\newblock \href {} {Existence of locally maximally entangled quantum states via
  geometric invariant theory}.
\newblock {\em arXiv}, 2017.
\newblock \href {http://arxiv.org/abs/1708.01645} {\path{arXiv:1708.01645}}.

\bibitem[BS83]{MR707730}
Eberhard Becker and Niels~and Schwartz.
\newblock \href {http://dx.doi.org/10.1007/BF01192806} {Zum {D}arstellungssatz
  von {K}adison-{D}ubois}.
\newblock {\em Arch. Math. (Basel)}, 40(5):421--428, 1983.

\bibitem[BX15]{Bhattacharyya2015}
Arnab Bhattacharyya and Ning Xie.
\newblock \href {http://dx.doi.org/10.1007/s00037-014-0092-1} {Lower bounds for
  testing triangle-freeness in Boolean functions}.
\newblock {\em Comput. Complexity}, 24(1):65--101, Mar 2015.

\bibitem[BZ06]{MR2230995}
Ingemar Bengtsson and Karol \.Zyczkowski.
\newblock \href {http://dx.doi.org/10.1017/CBO9780511535048} {{\em Geometry of
  quantum states}}.
\newblock Cambridge University Press, Cambridge, 2006.
\newblock An introduction to quantum entanglement.

\bibitem[Bü90]{burg}
Peter Bürgisser.
\newblock {\em Degenerationsordnung und Trägerfunktional bilinearer
  Abbildungen}.
\newblock PhD thesis, Universität Konstanz, 1990.
\newblock \url{http://nbn-resolving.de/urn:nbn:de:bsz:352-opus-20311}.

\bibitem[CGN{\etalchar{+}}]{CGN}
Henry Cohn, Ankit Garg, Eric Naslund, Rafael Oliveira, and William~F. Sawin.
\newblock Personal communication.

\bibitem[CHM07]{MR2276458}
Matthias Christandl, Aram~W. Harrow, and Graeme Mitchison.
\newblock \href {http://dx.doi.org/10.1007/s00220-006-0157-3} {Nonzero
  {K}ronecker coefficients and what they tell us about spectra}.
\newblock {\em Comm. Math. Phys.}, 270(3):575--585, 2007.

\bibitem[CKSU05]{cohn2005group}
Henry Cohn, Robert Kleinberg, Balazs Szegedy, and Christopher Umans.
\newblock \href {http://dx.doi.org/10.1109/SFCS.2005.39} {Group-theoretic
  algorithms for matrix multiplication}.
\newblock In {\em Foundations of Computer Science, 2005. FOCS 2005. 46th Annual
  IEEE Symposium on}, pages 379--388. IEEE, 2005.

\bibitem[CLP17]{MR3583357}
Ernie Croot, Vsevolod~F. Lev, and P\'eter~P\'al Pach.
\newblock \href {http://dx.doi.org/10.4007/annals.2017.185.1.7}
  {Progression-free sets in {$\Bbb Z^n_4$} are exponentially small}.
\newblock {\em Ann. of Math. (2)}, 185(1):331--337, 2017.

\bibitem[CM06]{MR2197548}
Matthias Christandl and Graeme Mitchison.
\newblock \href {http://dx.doi.org/10.1007/s00220-005-1435-1} {The spectra of
  quantum states and the {K}ronecker coefficients of the symmetric group}.
\newblock {\em Comm. Math. Phys.}, 261(3):789--797, 2006.

\bibitem[CU03]{cohn2003group}
Henry Cohn and Christopher Umans.
\newblock \href {http://dx.doi.org/10.1109/SFCS.2003.1238217} {A
  group-theoretic approach to fast matrix multiplication}.
\newblock In {\em Foundations of Computer Science, 2003. Proceedings. 44th
  Annual IEEE Symposium on}, pages 438--449. IEEE, 2003.

\bibitem[CVZ16]{christandl2016asymptotic}
Matthias Christandl, P{\'e}ter Vrana, and Jeroen Zuiddam.
\newblock \href {https://arxiv.org/abs/1609.07476} {Asymptotic tensor rank of
  graph tensors: beyond matrix multiplication}.
\newblock {\em arXiv}, 2016.
\newblock \href {http://arxiv.org/abs/1609.07476} {\path{arXiv:1609.07476}}.

\bibitem[CW90]{MR1056627}
Don Coppersmith and Shmuel Winograd.
\newblock \href {http://dx.doi.org/10.1016/S0747-7171(08)80013-2} {Matrix
  multiplication via arithmetic progressions}.
\newblock {\em J. Symbolic Comput.}, 9(3):251--280, 1990.

\bibitem[DVC00]{MR1804183}
Wolfgang D\"ur, Guivre Vidal, and Juan~Ignacio Cirac.
\newblock \href {http://dx.doi.org/10.1103/PhysRevA.62.062314} {Three qubits
  can be entangled in two inequivalent ways}.
\newblock {\em Phys. Rev. A (3)}, 62(6):062314, 12, 2000.

\bibitem[Ede04]{MR2031694}
Yves Edel.
\newblock \href {http://dx.doi.org/10.1023/A:1027365901231} {Extensions of
  generalized product caps}.
\newblock {\em Des. Codes Cryptogr.}, 31(1):5--14, 2004.

\bibitem[EG17]{MR3583358}
Jordan~S. Ellenberg and Dion Gijswijt.
\newblock \href {http://dx.doi.org/10.4007/annals.2017.185.1.8} {On large
  subsets of {$\Bbb F^n_q$} with no three-term arithmetic progression}.
\newblock {\em Ann. of Math. (2)}, 185(1):339--343, 2017.

\bibitem[FK14]{fu_et_al:LIPIcs:2014:4730}
Hu~Fu and Robert Kleinberg.
\newblock \href {http://dx.doi.org/10.4230/LIPIcs.APPROX-RANDOM.2014.669}
  {{Improved Lower Bounds for Testing Triangle-freeness in Boolean Functions
  via Fast Matrix Multiplication}}.
\newblock In {\em Approximation, Randomization, and Combinatorial Optimization.
  Algorithms and Techniques (APPROX/RANDOM 2014)}, pages 669--676, 2014.

\bibitem[Fra02]{MR1923785}
Matthias Franz.
\newblock \href {http://emis.ams.org/journals/JLT/vol.12_no.2/16.html} {Moment
  polytopes of projective {$G$}-varieties and tensor products of symmetric
  group representations}.
\newblock {\em J.~Lie Theory}, 12(2):539--549, 2002.

\bibitem[HHHH09]{MR2515619}
Ryszard Horodecki, Pawe\l{} Horodecki, Micha\l{} Horodecki, and Karol
  Horodecki.
\newblock \href {http://dx.doi.org/10.1103/RevModPhys.81.865} {Quantum
  entanglement}.
\newblock {\em Rev. Modern Phys.}, 81(2):865--942, 2009.

\bibitem[HM02]{MR1955142}
Masahito Hayashi and Keiji Matsumoto.
\newblock \href {http://dx.doi.org/10.1103/PhysRevA.66.022311} {Quantum
  universal variable-length source coding}.
\newblock {\em Phys. Rev. A (3)}, 66(2):022311, 13, 2002.
\newblock \href {http://arxiv.org/abs/quant-ph/0209124}
  {\path{arXiv:quant-ph/0209124}}.

\bibitem[HX15]{haviv}
Ishay Haviv and Ning Xie.
\newblock \href {http://dx.doi.org/10.1145/2688073.2688084} {Sunflowers and
  testing triangle-freeness of functions}.
\newblock In {\em I{TCS}'15---{P}roceedings of the 6th {I}nnovations in
  {T}heoretical {C}omputer {S}cience}, pages 357--366. ACM, New York, 2015.
\newblock \href {http://arxiv.org/abs/1411.4692} {\path{arXiv:1411.4692}}.

\bibitem[HX17]{Haviv2017}
Ishay Haviv and Ning Xie.
\newblock \href {http://dx.doi.org/10.1007/s00037-016-0138-7} {Sunflowers and
  Testing Triangle-Freeness of Functions}.
\newblock {\em Comput. Complexity}, 26(2):497--530, Jun 2017.

\bibitem[Kly02]{klyachko2002coherent}
Alexander Klyachko.
\newblock \href {} {Coherent states, entanglement, and geometric invariant
  theory}.
\newblock {\em arXiv}, 2002.
\newblock \href {http://arxiv.org/abs/quant-ph/0206012}
  {\path{arXiv:quant-ph/0206012}}.

\bibitem[Kra84]{kraft1984geometrische}
Hanspeter Kraft.
\newblock \href {http://dx.doi.org/10.1007/978-3-663-10143-7} {{\em
  Geometrische {M}ethoden in der {I}nvariantentheorie}}.
\newblock Springer, 1984.

\bibitem[KS08]{Kaufman:2008:APT:1374376.1374434}
Tali Kaufman and Madhu Sudan.
\newblock \href {http://dx.doi.org/10.1145/1374376.1374434} {Algebraic Property
  Testing: The Role of Invariance}.
\newblock In {\em Proceedings of the Fortieth Annual ACM Symposium on Theory of
  Computing}, STOC '08, pages 403--412, New York, NY, USA, 2008. ACM.

\bibitem[KSS16]{kleinberg2016growth}
Robert Kleinberg, William~F. Sawin, and David~E. Speyer.
\newblock \href {} {The growth rate of tri-colored sum-free sets}.
\newblock {\em arXiv}, 2016.
\newblock \href {http://arxiv.org/abs/1607.00047} {\path{arXiv:1607.00047}}.

\bibitem[KW01]{MR1878924}
Michael Keyl and Reinhard~F. Werner.
\newblock \href {http://dx.doi.org/10.1103/PhysRevA.64.052311} {Estimating the
  spectrum of a density operator}.
\newblock {\em Phys. Rev. A (3)}, 64(5):052311, 5, 2001.
\newblock \href {http://arxiv.org/abs/quant-ph/0102027v1}
  {\path{arXiv:quant-ph/0102027v1}}.

\bibitem[Lan12]{landsberg2012tensors}
Joseph~M. Landsberg.
\newblock {\em Tensors: geometry and applications}, volume 128 of {\em Graduate
  Studies in Mathematics}.
\newblock American Mathematical Society, Providence, RI, 2012.

\bibitem[Lan14]{landsberg2014new}
Joseph~M. Landsberg.
\newblock \href {http://dx.doi.org/10.1137/120880276} {New lower bounds for the
  rank of matrix multiplication}.
\newblock {\em SIAM J.~Comput.}, 43(1):144--149, 2014.
\newblock \href {http://arxiv.org/abs/1206.1530v2} {\path{arXiv:1206.1530v2}}.

\bibitem[Lan17]{landsberg2017}
Joseph~M. Landsberg.
\newblock {\em Geometry and Complexity Theory}.
\newblock Cambridge Studies in Advanced Mathematics. Cambridge University
  Press, 2017.

\bibitem[LG14]{le2014powers}
Fran{\c{c}}ois Le~Gall.
\newblock \href {http://dx.doi.org/10.1145/2608628.2608664} {Powers of tensors
  and fast matrix multiplication}.
\newblock In {\em I{SSAC} 2014---{P}roceedings of the 39th {I}nternational
  {S}ymposium on {S}ymbolic and {A}lgebraic {C}omputation}, pages 296--303.
  ACM, New York, 2014.

\bibitem[Mau98]{mauch}
Franz Mauch.
\newblock {\em Ein Randverteilungsproblem und seine Anwendung auf das
  asymptotische Spektrum bilinearer Abbildungen}.
\newblock PhD thesis, Universität Konstanz, 1998.
\newblock \url{http://nbn-resolving.de/urn:nbn:de:bsz:352-opus-20543}.

\bibitem[Nas17]{naslund2017multi}
Eric Naslund.
\newblock \href {} {The multi-slice rank method and polynomial bounds for
  orthogonal systems in $\mathbb{F}_{q}^{n}$}.
\newblock {\em arXiv}, 2017.
\newblock \href {http://arxiv.org/abs/1701.04475} {\path{arXiv:1701.04475}}.

\bibitem[Nes84]{MR765581}
Linda Ness.
\newblock \href {http://dx.doi.org/10.2307/2374395} {A stratification of the
  null cone via the moment map}.
\newblock {\em Amer. J. Math.}, 106(6):1281--1329, 1984.
\newblock With an appendix by David Mumford.

\bibitem[Nor16]{norin2016distribution}
Sergey Norin.
\newblock \href {} {A distribution on triples with maximum entropy marginal}.
\newblock {\em arXiv}, 2016.
\newblock \href {http://arxiv.org/abs/1608.00243} {\path{arXiv:1608.00243}}.

\bibitem[ON02]{ogawa2002new}
Tomohiro Ogawa and Hiroshi Nagaoka.
\newblock \href {http://dx.doi.org/https://doi.org/10.1109/ISIT.2002.1023345}
  {A new proof of the channel coding theorem via hypothesis testing in quantum
  information theory}.
\newblock In {\em Information Theory, 2002. Proceedings. 2002 IEEE
  International Symposium on}, page~73. IEEE, 2002.
\newblock \href {http://arxiv.org/abs/quant-ph/0208139}
  {\path{arXiv:quant-ph/0208139}}.

\bibitem[Peb16]{pebody2016proof}
Luke Pebody.
\newblock \href {} {Proof of a Conjecture of Kleinberg-Sawin-Speyer}.
\newblock {\em arXiv}, 2016.
\newblock \href {http://arxiv.org/abs/1608.05740} {\path{arXiv:1608.05740}}.

\bibitem[Sha09]{Shapira:2009:GCT:1536414.1536438}
Asaf Shapira.
\newblock \href {http://dx.doi.org/10.1145/1536414.1536438} {Green's Conjecture
  and Testing Linear-invariant Properties}.
\newblock In {\em Proceedings of the Forty-first Annual ACM Symposium on Theory
  of Computing}, STOC '09, pages 159--166, New York, NY, USA, 2009. ACM.

\bibitem[Sja98]{MR1645052}
Reyer Sjamaar.
\newblock \href {http://dx.doi.org/10.1006/aima.1998.1739} {Convexity
  properties of the moment mapping re-examined}.
\newblock {\em Adv. Math.}, 138(1):46--91, 1998.

\bibitem[Smi04]{MR2193441}
A.~V. Smirnov.
\newblock \href {http://dx.doi.org/10.1090/S0077-1554-04-00143-8}
  {Decomposition of symmetric powers of irreducible representations of
  semisimple {L}ie algebras, and the {B}rion polytope}.
\newblock {\em Tr. Mosk. Mat. Obs.}, 65:230--252, 2004.

\bibitem[SOK14]{MR3195184}
Adam Sawicki, Micha\l{} Oszmaniec, and Marek Ku\'s.
\newblock \href {http://dx.doi.org/10.1142/S0129055X14500044} {Convexity of
  momentum map, {M}orse index, and quantum entanglement}.
\newblock {\em Rev. Math. Phys.}, 26(3):1450004, 39, 2014.

\bibitem[Sto10]{stothers2010complexity}
Andrew~James Stothers.
\newblock {\em On the complexity of matrix multiplication}.
\newblock PhD thesis, University of Edinburgh, 2010.
\newblock \url{http://hdl.handle.net/1842/4734}.

\bibitem[Str69]{strassen1969gaussian}
Volker Strassen.
\newblock \href {http://dx.doi.org/10.1007/BF02165411} {Gaussian elimination is
  not optimal}.
\newblock {\em Numer. Math.}, 13(4):354--356, 1969.

\bibitem[Str86]{Strassen:1986:AST:1382439.1382931}
Volker Strassen.
\newblock \href {http://dx.doi.org/10.1109/SFCS.1986.52} {The Asymptotic
  Spectrum of Tensors and the Exponent of Matrix Multiplication}.
\newblock In {\em Proceedings of the 27th Annual Symposium on Foundations of
  Computer Science}, SFCS '86, pages 49--54, Washington, DC, USA, 1986. IEEE
  Computer Society.

\bibitem[Str87]{strassen1987relative}
Volker Strassen.
\newblock \href {http://dx.doi.org/10.1515/crll.1987.375-376.406} {Relative
  bilinear complexity and matrix multiplication}.
\newblock {\em J. Reine Angew. Math.}, 375/376:406--443, 1987.

\bibitem[Str88]{strassen1988asymptotic}
Volker Strassen.
\newblock \href {http://dx.doi.org/10.1515/crll.1988.384.102} {The asymptotic
  spectrum of tensors}.
\newblock {\em J. Reine Angew. Math.}, 384:102--152, 1988.

\bibitem[Str91]{strassen1991degeneration}
Volker Strassen.
\newblock \href {http://dx.doi.org/10.1515/crll.1991.413.127} {Degeneration and
  complexity of bilinear maps: some asymptotic spectra}.
\newblock {\em J. Reine Angew. Math.}, 413:127--180, 1991.

\bibitem[Str94]{MR1341854}
Volker Strassen.
\newblock \href {http://dx.doi.org/10.1007/s10107-008-0221-1} {Algebra and
  complexity}.
\newblock In {\em First {E}uropean {C}ongress of {M}athematics, {V}ol.\ {II}
  ({P}aris, 1992)}, volume 120 of {\em Progr. Math.}, pages 429--446.
  Birkh\"auser, Basel, 1994.

\bibitem[Str05]{MR2138544}
Volker Strassen.
\newblock Komplexit\"at und {G}eometrie bilinearer {A}bbildungen.
\newblock {\em Jahresber. Deutsch. Math.-Verein.}, 107(1):3--31, 2005.

\bibitem[Str12]{strassentalk}
Volker Strassen.
\newblock Asymptotic Spectrum and Matrix Multiplication.
\newblock
  \url{https://www.math.uni-konstanz.de/~strassen/pdf/grenoble_long2012.pdf},
  2012.
\newblock ISSAC 2012, Grenoble.

\bibitem[Tao08]{tao2008structure}
Terence Tao.
\newblock {\em Structure and randomness: pages from year one of a mathematical
  blog}.
\newblock American Mathematical Soc., 2008.

\bibitem[Tao16]{tao}
Terence Tao.
\newblock \href
  {https://terrytao.wordpress.com/2016/05/18/a-symmetric-formulation-of-the-croot-lev-pach-ellenberg-gijswijt-capset-bound}
  {A symmetric formulation of the {C}root-{L}ev-{P}ach-{E}llenberg-{G}ijswijt
  capset bound}.
\newblock \url{https://terrytao.wordpress.com}, 2016.

\bibitem[Tob91]{tobler}
Verena Tobler.
\newblock {\em Spezialisierung und Degeneration von Tensoren}.
\newblock PhD thesis, Universität Konstanz, 1991.
\newblock \url{http://nbn-resolving.de/urn:nbn:de:bsz:352-opus-20324}.

\bibitem[TS16]{sawin}
Terence Tao and Will Sawin.
\newblock \href
  {https://terrytao.wordpress.com/2016/08/24/notes-on-the-slice-rank-of-tensors/}
  {Notes on the ``slice rank'' of tensors}.
\newblock \url{https://terrytao.wordpress.com}, 2016.

\bibitem[Tsy09]{MR2724359}
Alexandre~B. Tsybakov.
\newblock \href {https://doi.org/10.1007/b13794} {{\em Introduction to
  nonparametric estimation}}.
\newblock Springer Series in Statistics. Springer, New York, 2009.
\newblock Revised and extended from the 2004 French original, Translated by
  Vladimir Zaiats.

\bibitem[VC15]{vrana2015asymptotic}
P\'eter Vrana and Matthias Christandl.
\newblock \href {http://dx.doi.org/10.1063/1.4908106} {Asymptotic entanglement
  transformation between {W} and {GHZ} states}.
\newblock {\em J. Math. Phys.}, 56(2):022204, 12, 2015.
\newblock \href {http://arxiv.org/abs/1310.3244} {\path{arXiv:1310.3244}}.

\bibitem[VDDMV02]{MR1910235}
F.~Verstraete, J.~Dehaene, B.~De~Moor, and H.~Verschelde.
\newblock \href {http://dx.doi.org/10.1103/PhysRevA.65.052112} {Four qubits can
  be entangled in nine different ways}.
\newblock {\em Phys. Rev. A (3)}, 65(5, part A):052112, 5, 2002.

\bibitem[WDGC13]{MR3087706}
Michael Walter, Brent Doran, David Gross, and Matthias Christandl.
\newblock \href {http://dx.doi.org/10.1126/science.1232957} {Entanglement
  polytopes: multiparticle entanglement from single-particle information}.
\newblock {\em Science}, 340(6137):1205--1208, 2013.
\newblock \href {http://arxiv.org/abs/1208.0365} {\path{arXiv:1208.0365}}.

\bibitem[Wer13]{wernli}
Konstantin Wernli.
\newblock Computing entanglement polytopes, 2013.
\newblock Master thesis.

\bibitem[Wil12]{MR2961552}
Virginia~Vassilevska Williams.
\newblock \href {http://dx.doi.org/10.1145/2213977.2214056} {Multiplying
  matrices faster than {C}oppersmith-{W}inograd [extended abstract]}.
\newblock In {\em S{TOC}'12---{P}roceedings of the 2012 {ACM} {S}ymposium on
  {T}heory of {C}omputing}, pages 887--898. ACM, New York, 2012.

\bibitem[Win99]{MR1725132}
Andreas Winter.
\newblock \href {http://dx.doi.org/10.1109/18.796385} {Coding theorem and
  strong converse for quantum channels}.
\newblock {\em IEEE Trans. Inform. Theory}, 45(7):2481--2485, 1999.
\newblock \href {http://arxiv.org/abs/1409.2536} {\path{arXiv:1409.2536}}.

\end{thebibliography}
\vspace{1em}
\textbf{Matthias Christandl}\\
Department of Mathematical Sciences, University of Copenhagen, Universitetsparken 5, 2100 Copenhagen Ø, Denmark.\\
Email: \href{mailto:christandl@math.ku.dk}{christandl@math.ku.dk}\\[1em]
\textbf{Péter Vrana}\\
Department of Geometry, Budapest University of Technology and Economics, Egry József~u.~1., 1111 Budapest, Hungary.\\
Email: \href{mailto:vranap@math.bme.hu}{vranap@math.bme.hu}\\[1em]
\textbf{Jeroen Zuiddam}\\
Centrum Wiskunde \& Informatica and Institute for Logic, Language and Computation, University of Amsterdam,\\ Science Park~123, 1098~XG Amsterdam, Netherlands. \\
Email: \href{mailto:j.zuiddam@cwi.nl}{j.zuiddam@cwi.nl}
\end{document}